\title{Global behaviour of bistable solutions for hyperbolic gradient systems in one unbounded spatial dimension}
\author{Emmanuel \textsc{Risler}}
\begin{document}
\maketitle
\begin{abstract}
This paper is concerned with damped hyperbolic gradient systems of the form 
\[
\alpha u_{tt} + u_t = -\nabla V(u) + u_{xx}\,,
\]
where the spatial domain is the whole real line, the state variable $u$ is multidimensional, $\alpha$ is a positive quantity, and the potential $V$ is coercive at infinity. For such systems, under generic assumptions on the potential, the asymptotic behaviour of every \emph{bistable solution} --- that is, every solution close at both ends of space to stable homogeneous equilibria --- is described. Every such solution approaches, far to the left in space a stacked family of bistable fronts travelling to the left, far to the right in space a stacked family of bistable fronts travelling to the right, and in between a pattern of profiles of stationary solutions homoclinic or heteroclinic to stable homogeneous equilibria, going slowly away from one another. In the absence of maximum principle, the arguments are purely variational. This extends previous results obtained in companion papers for damped wave equations or parabolic gradient systems, in the spirit of the program initiated in the late seventies by Fife and McLeod on the global asymptotic behaviour of bistable solutions for parabolic equations.
\end{abstract}
\nnfootnote{%
\emph{2020 Mathematics Subject Classification:} 35B38, 35B40, 35L70.\\%
\emph{Key words and phrases:} hyperbolic gradient system, bistable solution, standing terrace of bistable stationary solutions, propagating terrace of bistable travelling fronts, global behaviour.
}
\thispagestyle{empty} 
\pagestyle{empty}
\hypersetup{pageanchor=false} 
\newpage
\tableofcontents
\newpage
\hypersetup{pageanchor=true} 
\pagestyle{plain}
\setcounter{page}{1}
\section{Introduction}
This paper deals with the global dynamics of nonlinear hyperbolic systems of the form
\begin{equation}
\label{hyp_syst}
\alpha u_{tt} + u_t = -\nabla V(u) + u_{xx}
\,,
\end{equation}
where the time variable $t$ and the space variable $x$ are real, the spatial domain is the whole real line, the function $(x,t)\mapsto u(x,t)$ takes its values in $\rr^d$ with $d$ a positive integer, $\alpha$ is a positive quantity, and the nonlinearity is the gradient of a scalar \emph{potential} function $V:\rr^d\to\rr$, which is assumed to be regular (of class $\ccc^2$) and coercive at infinity (see hypothesis \cref{hyp_coerc} in \vref{subsec:coerc_glob_exist}).

The aim of this paper is to extend to hyperbolic systems of the form \cref{hyp_syst} the results describing the global asymptotic behaviour of bistable solutions obtained in \cite{Risler_globalRelaxation_2016,Risler_globalBehaviour_2016} for parabolic systems of the form 
\begin{equation}
\label{parab_syst}
u_t = -\nabla V(u) + u_{xx}
\,.
\end{equation}
As was already observed by several authors, the long-time asymptotics of solutions of the two systems \cref{hyp_syst,parab_syst} present strong similarities, see \cite{GallayJoly_globStabDampedWaveBistable_2009} and references therein. The common feature of theses two systems that will be extensively used in this paper is the existence --- at least formally --- of an energy functional, not only for solutions considered in the laboratory frame (at rest), but also for solutions considered in every frame travelling at a constant speed. 

If $(v,w)$ is a pair of vectors of $\rr^d$, let $v\cdot w$ and $\abs{v} =\sqrt{v\cdot v}$ denote the usual Euclidean scalar product and the usual Euclidean norm, respectively, and let us write simply $v^2$ for $\abs{v}^2$. If $(x,t)\mapsto u(x,t)$ is a solution of system~\cref{hyp_syst}, the (formal) \emph{energy} of the solution reads
\begin{equation}
\label{form_en}
\eee[u(\cdot,t)] = \int_\rr\Bigl( \frac{\alpha}{2}u_t(x,t)^2 + \frac{1}{2}u_x(x,t)^2 + V\bigl(u(x,t)\bigr)\Bigr) \, dx
\,,
\end{equation}
and its time derivative reads, at least formally,
\begin{equation}
\label{dt_form_en}
\frac{d}{d t}\eee[u(\cdot,t)] = -\int_{\rr} u_t(x,t)^2\, dx \le 0
\,.
\end{equation}
In the parabolic case $\alpha=0$, the same properties hold with the same expression for the energy (the inertial term involving $\alpha$ vanishes); by the way, an additional feature in this case is the fact that the parabolic system \cref{parab_syst} is nothing but the (formal) gradient of energy functional \cref{form_en} (this does not hold for hyperbolic system \cref{hyp_syst}). 

A striking feature of both systems \cref{hyp_syst,parab_syst} is the fact that a formal (Lyapunov) energy functional exists not only in the laboratory frame, but also in every frame travelling at a constant speed (see \vref{subsubsec:energy_L2_trav} and specifically equality \cref{ddt_formal_en_tf}). In the parabolic case, this is known for long and was in particular used by P. C. Fife and J. B. McLeod to prove  global convergence towards bistable fronts and to study the global behaviour of bistable solutions in the scalar case $d$ equals $1$, \cite{FifeMcLeod_approachTravFront_1977,Fife_longTimeBistable_1979,FifeMcLeod_phasePlaneDisc_1981}. More recently, this property received a detailed attention from several authors (among which S. Heinze, C. B. Muratov, Th. Gallay, and the author \cite{Heinze_variationalApproachTW_2001,Muratov_globVarStructPropagation_2004,GallayRisler_globStabBistableTW_2007,Risler_globCVTravFronts_2008}), and it was shown that this structure is sufficient (in itself, that is without the use of the maximum principle) to prove results of global convergence towards travelling fronts. In the hyperbolic case, a similar strategy was successfully applied by Th. Gallay and R. Joly in the scalar case $d$ equals $1$ to prove global stability of travelling fronts for a bistable potential \cite{GallayJoly_globStabDampedWaveBistable_2009}. These ideas have been applied since in different contexts, to prove either global convergence or just existence results, see for instance \cite{Chapuisat_existenceCurvedFront_2007,ChapuisatJoly_asymptProfilesTravFrontBiolEqu_2010,MuratovNovaga_frontPropIVariational_2008,MuratovNovaga_frontPropIISharpReaction_2008,MuratovNovaga_globExpConvTW_2012,AlikakosKatzourakis_heteroclinicTW_2011,AlikakosFusco_ellipticSystemsPhaseTransType_2018,Luo_globStabDampedWaveEqu_2013,BouhoursNadin_variationalApproachRDForcedSpeedDim1_2015,BouhoursGiletti_extinctSpreadClimateAllee_2016,BouhoursGiletti_spreadVanishMonStabRDEqu_2018,OliverBonafoux_heteroclinicTW1dParabSystDegenerate_2021,OliverBonafoux_TWparabAllenCahn_2021,ChenChienHuang_varApproach3PhaseTWgradSyst_2021,ChenCotiZelati_TWSolAllenCahnEqu_2022,OliverBonafouxRisler_globCVPushedTravFronts_2023}. 
Using the same strategy, a full description of the global asymptotic behaviour of every bistable solution was recently obtained for parabolic systems \cite{Risler_globalRelaxation_2016,Risler_globalBehaviour_2016}. Roughly speaking, such a solution must approach: 
\begin{itemize}
\item far to the right a stacked family of fronts travelling to the right, 
\item far to the left a stacked family of fronts travelling to the left, 
\item in between a pattern made of bistable stationary solutions (possibly a singe homogeneous stable equilibrium) getting slowly away from one another.
\end{itemize} 
The aim of this paper is to extend this result to the case of hyperbolic systems of the form \cref{hyp_syst} (\vref{thm:1}). This will also provide an extension of the global stability result obtained par Gallay and Joly in the scalar case $d$ equals $1$ \cite{GallayJoly_globStabDampedWaveBistable_2009}. 
\section{Assumptions, notation, and statement of the results}
\subsection{Semi-flow in uniformly local Sobolev space and coercivity hypothesis}
\label{subsec:coerc_glob_exist}
Let us assume that the potential function $V:\rr^d\to\rr$ is of class $\ccc^2$ and that this potential function is strictly coercive at infinity in the following sense: 
\begin{gather}
\tag{$\text{H}_\text{coerc}$}
\lim_{R\to+\infty}\quad  \inf_{\abs{u}\ge R}\ \frac{u\cdot \nabla V(u)}{\abs{u}^2} >0
\label{hyp_coerc}
\end{gather}
(or in other words there exists a positive quantity $\varepsilon$ such that the quantity $u\cdot \nabla V(u)$ is greater than or equal to $\varepsilon\abs{u}^2$ as soon as $\abs{u}$ is large enough). 

System \cref{hyp_syst} defines a local semi-flow on the uniformly local energy space
\[
\HulofR{1} \times \LtwoulofR
\,,
\]
and, according to hypothesis \cref{hyp_coerc}, this semi-flow is actually global (see \vref{prop:exist_sol_att_ball}). Let us denote by $(S_t)_{t\ge0}$ this semi-flow. 

In the following, a \emph{solution of system \cref{hyp_syst}} will refer to a function 
\[
\rr\times[0,+\infty)\to\rr^d\,, \quad (x,t)\mapsto u(x,t)
\,,
\]
such that the function $u_0:x\mapsto u(x,t=0)$  is in $\HulofR{1}$, the function $\tilde{u}_0:x\mapsto u_t(x,t=0)$) is in $\LtwoulofR$, and $\bigl(u(\cdot,t),u_t(\cdot,t)\bigr)$ equals $S_t(u_0,\tilde{u}_0)$ for every nonnegative time $t$. 
\subsection{Minimum points and bistable solutions}
\subsubsection{Minimum points}
Everywhere in this paper, the term ``minimum point'' denotes a point where a function --- namely the potential $V$ --- reaches a local \emph{or} global minimum. 
\begin{notation}
Let $\mmm$ denote the set of nondegenerate minimum points of $V$:
\[
\mmm=\{u\in\rr^d: \nabla V(u)=0 
\quad\text{and}\quad 
D^2V(u)\text{ is positive definite}\}
\,.
\]
\end{notation}
\subsubsection{Bistable solutions}
Let us recall the following definition, already stated in \cite{Risler_globalRelaxation_2016}.
\begin{definition}[bistable solution]
\label{def_bist}
A solution $(x,t)\mapsto u(x,t)$ of system~\cref{hyp_syst} is called a \emph{bistable solution} if there are two (possibly equal) points $m_-$ and $m_+$ in $\mmm$ such that the quantities
\[
\limsup_{x\to-\infty} \abs{u(x,t)-m_-}
\quad\text{and}\quad
\limsup_{x\to+\infty} \abs{u(x,t)-m_+}
\]
both approach $0$ as time goes to $+\infty$. More precisely, such a solution is called a \emph{bistable solution connecting $m_-$ to $m_+$} (see \cref{fig:bist_sol}). 
\begin{figure}[!htbp]
	\centering
    \includegraphics[width=0.6\textwidth]{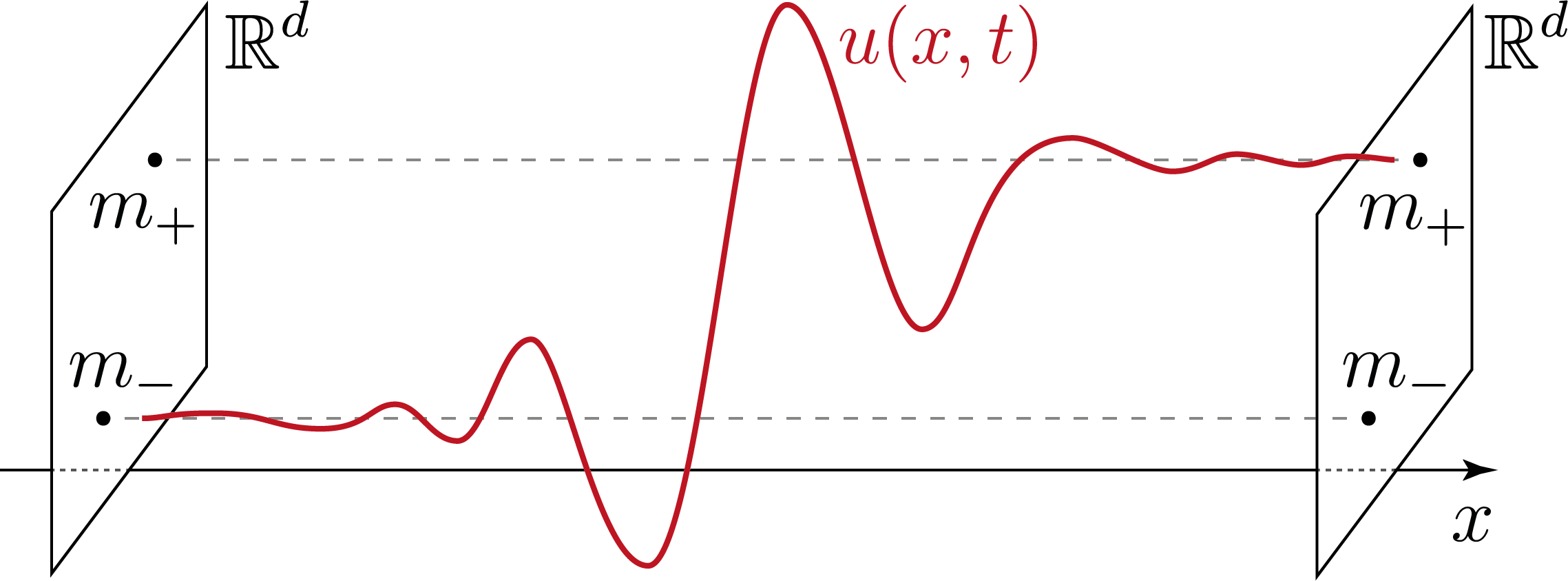}
    \caption{A bistable solution connecting $m_-$ to $m_+$.}
    \label{fig:bist_sol}
\end{figure}
\end{definition}
\subsection{Stationary solutions, travelling fronts, terraces, asymptotic pattern}
\subsubsection{Stationary solutions and travelling fronts}
\label{subsubsec:stat_sol_trav_fronts}
Let $c$ be a real quantity. A function
\[
\phi:\rr\to\rr^d,
\quad \xi\mapsto\phi(\xi)
\]
is the profile of a wave travelling at the speed $c$ (or is a stationary solution if $c$ vanishes) for the parabolic system \cref{parab_syst} if the function  $(x,t)\mapsto \phi(x-ct)$ is a solution of this system, that is if $\phi$ is a solution of the differential system
\begin{equation}
\label{syst_trav_front}
\phi''=-c\phi'+\nabla V(\phi) 
\,.
\end{equation}
In this case, for every real quantity $x_0$, the function
\[
(x,t)\mapsto \phi\bigl(\sqrt{1+\alpha c^2}\, x - ct - x_0\bigr)
\]
is a solution of the hyperbolic system \cref{hyp_syst}, more precisely a wave travelling at the \emph{physical speed} $\sigma$ related to the \emph{parabolic speed} $c$ by 
\[
\sigma = \frac{c}{\sqrt{1+\alpha c^2}} 
\iff
c = \frac{\sigma}{\sqrt{1-\alpha \sigma^2}}
\,.
\]
System \cref{syst_trav_front} can be viewed as a damped oscillator (or a conservative oscillator if $c$ vanishes) in the potential $-V$, the speed $c$ playing the role of the damping coefficient. 
\begin{notation}
If $m_-$ and $m_+$ are critical points of $V$ and $c$ is a real quantity, let $\Phi_c(m_-,m_+)$ denote the set of \emph{nonconstant} global solutions of system \cref{syst_trav_front} connecting $m_-$ to $m_+$. With symbols, 
\[
\begin{aligned}
\Phi_c(m_-,m_+) = \bigl\{ &
\phi:\rr\to\rr^d : 
\phi \text{ is a \emph{nonconstant} global solution of system \cref{syst_trav_front}}
\\
& \text{and}\quad\phi(\xi)\xrightarrow[\xi\to -\infty]{} m_-
\quad\text{and}\quad
\phi(\xi)\xrightarrow[\xi\to +\infty]{} m_+
\bigr\}
\,.
\end{aligned}
\]
And, if the quantity $c$ is positive, let $\Phi_c(m_+)$ denote the set of \emph{nonconstant} global and bounded solutions of system \cref{syst_trav_front} converging to $m_+$ at the right end of space. With symbols, 
\[
\begin{aligned}
\Phi_c(m_+) = \bigl\{ &
\phi:\rr\to\rr^d : 
\phi \text{ is a \emph{nonconstant} global solution of system \cref{syst_trav_front}}
\\
& \text{and}\quad
\sup_{\xi\in\rr}\abs{\phi(\xi)}<+\infty
\quad\text{and}\quad
\phi(\xi)\xrightarrow[\xi\to +\infty]{} m_+
\bigr\}
\,.
\end{aligned}
\]
\end{notation}
If $\phi$ is an element of some set $\Phi_c(m_-,m_+)$, then it follows from system \cref{syst_trav_front} that
\begin{equation}
\label{V_of_u_plus_minus_V_of_u_minus}
V(m_+)-V(m_-) = c \int_{\rr}\phi'(\xi)^2 \, d\xi
\,.
\end{equation}
\subsubsection{Propagating terrace of bistable travelling fronts}
\label{subsubsec:def_prop_terrace}
This \namecref{subsubsec:def_prop_terrace} is devoted to several definitions. Their purpose is to enable a compact formulation of the main result of this paper (\cref{thm:1} below). Some comments on the terminology and related references are given at the end of this \namecref{subsubsec:def_prop_terrace}.
\begin{figure}[!htbp]
	\centering
    \includegraphics[width=.8\textwidth]{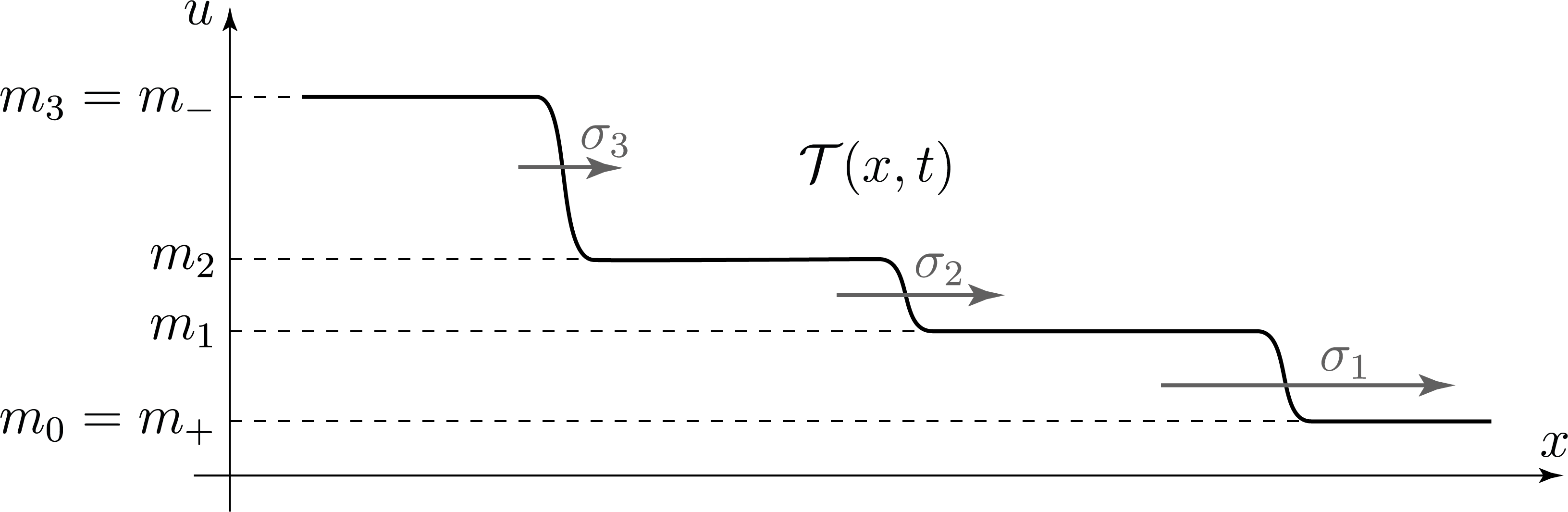}
    \caption{Propagating terrace of (bistable) fronts travelling to the right ($\sigma_i$ denotes the ``physical'' speed corresponding to $c_i$, that is: $\sigma_i = c_i/\sqrt{1+\alpha c_i^2}$).}
    \label{fig:prop_terrace}
\end{figure}
\begin{definition}[propagating terrace of bistable travelling fronts, \cref{fig:prop_terrace}]
Let $m_-$ and $m_+$ be two points of $\mmm$ (satisfying $V(m_-)\le V(m_+)$). A function 
\[
\ttt : \rr\times[0,+\infty)\to\rr^d,\quad (x,t)\mapsto \ttt(x,t)
\]
is called a \emph{propagating terrace of bistable fronts travelling to the right, connecting $m_-$ to $m_+$,} if there exists a nonnegative integer $q$ such that:
\begin{enumerate}
\item if $q$ equals $0$, then $m_-=m_+$ and, for every real quantity $x$ and every nonnegative time $t$, 
\[
\ttt(x,t)=m_-=m_+
\,;
\]
\item if $q$ equals $1$, then there exist
\begin{itemize}
\item a positive quantity $c_1$,
\item and a function $\phi_1$ in $\Phi_{c_1}(m_-,m_+)$ (that is, the profile of a bistable front travelling at parabolic speed $c_1$ and connecting $m_-$ to $m_+$),
\item and a $\ccc^1$-function $t\mapsto x_1(t)$, defined on $[0,+\infty)$, and such that $x_1'(t)$ goes to the quantity $c_1/\sqrt{1+\alpha c_1^2}$ (the corresponding physical speed) as time goes to $+\infty$,
\end{itemize} 
such that, for every real quantity $x$ and every nonnegative time $t$, 
\[
\ttt(x,t)=\phi_1\Bigl[\sqrt{1+\alpha c_1^2}\bigl(x-x_1(t)\bigr)\Bigr]
\,;
\]
\label{item:def_propagating_terrace_q_equals_one}
\item if $q$ is not smaller than $2$, then there exists $q-1$ points $m_1,\dots,m_{q-1}$ in $\mmm$, satisfying (if $m_+$ is denoted by $m_0$ and $m_-$ by $m_q$)
\[
V(m_0)>V(m_1)>\dots>V(m_q)
\,,
\]
and there exist $q$ positive quantities $c_1$, …, $c_q$ satisfying
\[
c_1\ge\dots\ge c_q
\,,
\]
and for each integer $i$ in $\{1,\dots,q\}$, there exist:
\begin{itemize}
\item a function $\phi_i$ in $\Phi_{c_i}(m_i,m_{i-1})$ (that is, the profile of a bistable front travelling at parabolic speed $c_i$ and connecting $m_i$ to $m_{i-1}$),
\item and a $\ccc^1$-function $t\mapsto x_i(t)$, defined on $[0,+\infty)$, and such that $x_i'(t)$ goes to the quantity $c_i/\sqrt{1+\alpha c_i^2}$ (the corresponding physical speed) as time goes to $+\infty$,
\end{itemize}
such that, for every integer $i$ in $\{1,\dots,q-1\}$, 
\[
x_{i+1}(t)-x_i(t)\to +\infty 
\quad\text{as}\quad
t\to +\infty
\,,
\]
and such that, for every real quantity $x$ and every nonnegative time $t$, 
\[
\ttt(x,t) = m_0 + \sum_{i=1}^q \biggl(\phi_i\Bigl[\sqrt{1+\alpha c_i^2}\bigl(x-x_i(t)\bigr)\Bigr]-m_{i-1}\biggr)
\,.
\]
\label{item:def_propagating_terrace_q_larger_than_one}
\end{enumerate}
\end{definition}
\begin{remark}
Item \cref{item:def_propagating_terrace_q_equals_one} may have been omitted in this definition, since it boils down to item \cref{item:def_propagating_terrace_q_larger_than_one} with $q$ equals $1$. 
\end{remark}
A \emph{propagating terrace of bistable fronts travelling to the left} may be defined similarly. 
\subsubsection{Standing terrace of bistable stationary solutions}
\label{subsubsec:def_stand_terrace}
The next three definitions deal with stationary solutions. They are exactly identical to those of \cite{Risler_globalRelaxation_2016,Risler_globalBehaviour_2016}.
\begin{figure}[!htbp]
\centering
\includegraphics[width=\textwidth]{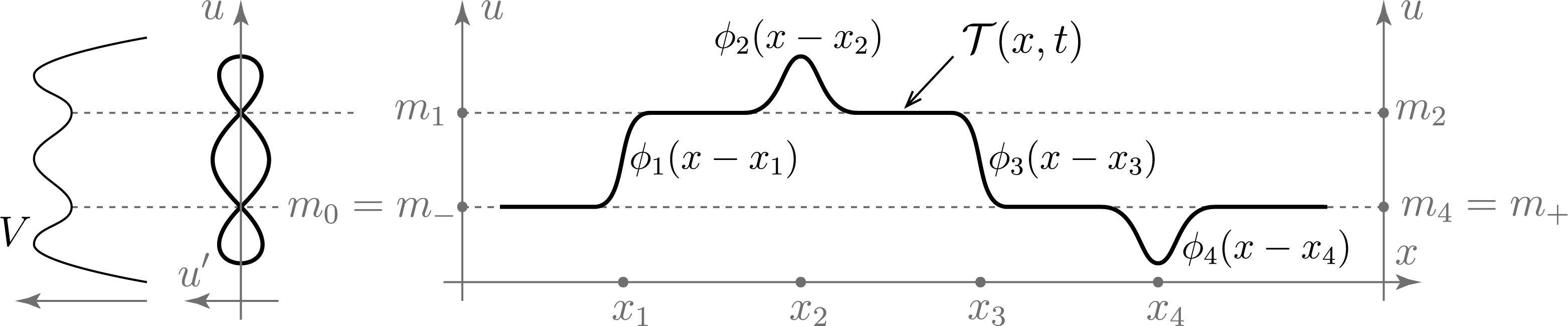}
\caption{Standing terrace (with four items, $q=4$).}
\label{fig:standing_terrace}
\end{figure}
\begin{definition}[standing terrace of bistable stationary solutions, \cref{fig:standing_terrace}]
Let $\valueOfV$ be a real quantity and let $m_-$ and $m_+$ be two points of $\mmm$ such that both quantities $V(m_-)$ and $V(m_+)$ are equal to $\valueOfV$. A function
\[
\ttt : \rr\times[0,+\infty)\to\rr^d,\quad (x,t)\mapsto \ttt(x,t)
\]
is called a \emph{standing terrace of bistable stationary solutions, connecting $m_-$ to $m_+$,} if there exists a nonnegative integer $q$ such that:
\begin{enumerate}
\item if $q$ equals $0$, then $m_-=m_+$ and, for every real quantity $x$ and every nonnegative time $t$, 
\[
\ttt(x,t)=m_-=m_+
\,;
\]
\item if $q=1$, then there exist:
\begin{itemize}
\item a bistable stationary solution $\phi_1$ connecting $m_-$ to $m_+$,
\item and a $\ccc^1$-function $t\mapsto x_1(t)$ defined on $[0,+\infty)$ and satisfying $x_1'(t)\to0$ as time goes to $+\infty$,
\end{itemize}
such that, for every real quantity $x$ and every nonnegative time $t$, 
\[
\ttt(x,t) = \phi_1\bigl( x-x_1(t)\bigr)
\,;
\]
\label{item:def_standing_terrace_q_equals_one}
\item if $q$ is not smaller than $2$, then there exist $q-1$ (not necessarily distinct) points $m_1,\dots,m_{q-1}$ in $\mmm$, all in the level set $V^{-1}(\{\valueOfV\})$, and if $m_-$ is denoted by $m_0$ and $m_+$ by $m_q$, then for each integer $i$ in $\{1,\dots,q\}$, there exist:
\begin{itemize}
\item a bistable stationary solution $\phi_i$ connecting $m_{i-1}$ to $m_i$,
\item and a $\ccc^1$-function $t\mapsto x_i(t)$ defined on $[0,+\infty)$ and satisfying $x_i'(t)\to0$ as time goes to $+\infty$,
\end{itemize}
such that, for every integer $i$ in $\{1,\dots,q-1\}$,
\[
x_{i+1}(t)-x_i(t)\to +\infty 
\quad\text{as}\quad
t\to +\infty
\,,
\]
and such that, for every real quantity $x$ and every nonnegative time $t$, 
\[
\ttt(x,t) = m_0 + \sum_{i=1}^q \Bigl[\phi_i\bigl(x-x_i(t)\bigr)-m_{i-1}\Bigr]
\,.
\]
\label{item:def_standing_terrace_q_larger_than_one}
\end{enumerate}
\end{definition}
\begin{remark}
Once again item \cref{item:def_standing_terrace_q_equals_one} may have been omitted in this definition, since it boils down to item \cref{item:def_standing_terrace_q_larger_than_one} with $q$ equals $1$. 
\end{remark} 
The terminology ``propagating terrace'' was introduced by A. Ducrot, T. Giletti, and H. Matano in \cite{DucrotGiletti_existenceConvergencePropagatingTerrace_2014} (and subsequently used by several other authors \cite{Polacik_propagatingTerracesAsymptOneDimSym_2017,Polacik_propTerracesProofGibbonsConj_2016,GilettiRossi_pulsatingSolMultBistMultiStab_2019,MatanoPolacik_dynNonnegSolOneDimRDII_2020,Polacik_propagatingTerracesDynFrontLikeSolRDEquationsR_2020,GilettiMatano_existenceUniquenessPropTerr_2020,PauthierRademacherU_WeakStrongInteractKinks_2021}) to denote a stacked family (a layer) of travelling fronts in a (scalar) reaction-diffusion equation. This led the author to keep the same terminology in the present context. This terminology is convenient to denote objects that would otherwise require a long description. It is also used in the companion papers \cite{Risler_globalBehaviour_2016,Risler_globalBehaviourRadiallySymmetric_2017}. Additional comments on this terminological choice can be found in \cite{Risler_globalBehaviour_2016}.
\subsubsection{Energy of a bistable stationary solution and of a standing terrace}
\label{subsubsec:def_energy_stat_sol_stand_terrace}
\begin{definition}[energy of a bistable stationary solution]
Let $x\mapsto u(x)$ be a bistable stationary solution connecting two points $m_-$ and $m_+$ of $\mmm$, and let $\valueOfV$ denote the quantity $V(m_+)$ (which is equal to $V(m_-)$). The quantity 
\[
\eee[u] = \int_{\rr}\Bigl(\frac{1}{2}\abs{u'(x)} ^2+V\bigl(u(x)\bigr)- \valueOfV\Bigr)\, dx
\]
is called the \emph{energy of the (bistable) stationary solution $u$}. Observe that this integral converges, since $u(x)$ approaches its limits $m_-$ and $m_+$ at both ends of space at an exponential rate. 
\end{definition}
\begin{definition}[energy of a standing terrace]
Let $\valueOfV$ denote a real quantity and let $\ttt$ denote a standing terrace of bistable stationary solutions connecting two points of $\mmm$ in the level set $V^{-1}(\{\valueOfV\})$. With the notation of the two definitions above, the quantity $\eee[\ttt]$ defined as
\begin{enumerate}
\item if $q$ equals $0$, then $\eee[\ttt]=0$,
\item if $q$ equals $1$, then $\eee[\ttt]=\eee[\phi_1]$,
\item if $q$ is not smaller than $2$, then $\eee[\ttt]=\sum_{i=1}^q\eee[\phi_i]$,
\end{enumerate}
is called the \emph{energy of the standing terrace $\ttt$}. 
\end{definition}
\subsubsection{Bistable asymptotic pattern}
\label{subsubsec:def_asympt_patt}
The next definition is identical to the one of \cite{Risler_globalBehaviour_2016}. 
\begin{figure}[!htbp]
\centering
\includegraphics[width=\textwidth]{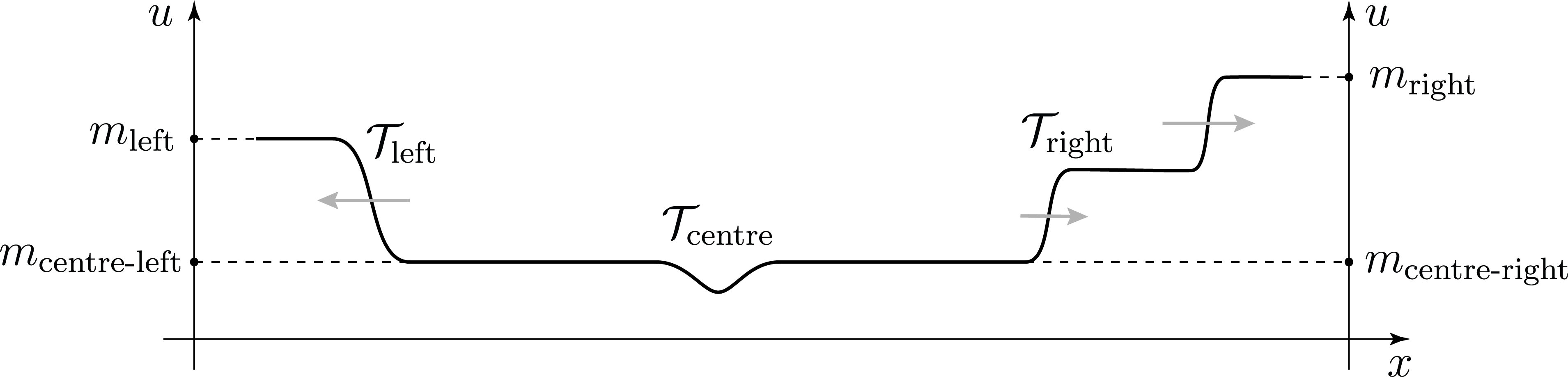}
\caption{Bistable asymptotic pattern.}
\label{fig:bist_asympt_pattern}
\end{figure}
\begin{definition}[bistable asymptotic pattern, \cref{fig:bist_asympt_pattern}]
Let $\mLeft$ and $\mRight$ be two points of $\mmm$. A function
\[
\ppp : \rr\times[0,+\infty)\to\rr^d,\quad (x,t)\mapsto \ppp(x,t)
\]
is called a \emph{bistable asymptotic pattern connecting $\mLeft$ to $\mRight$} if there exist:
\begin{itemize}
\item two points $\mLeftBehind$ and $\mRightBehind$ in $\mmm$, belonging to the same level set of $V$,
\item and a propagating terrace $\tttLeft$ of bistable fronts travelling to the left, connecting $\mLeft$ to $\mLeftBehind$, 
\item and a standing terrace $\tttCentre$ of bistable stationary solutions, connecting $\mLeftBehind$ to $\mRightBehind$, 
\item and a propagating terrace $\tttRight$ of bistable fronts travelling to the right, connecting $\mRightBehind$ to $\mRight$, 
\end{itemize} 
such that, for every real quantity $x$ and for every nonnegative time $t$, 
\[
\ppp(x,t) = \bigl[ \tttLeft(x,t) - \mLeftBehind \bigr] + \tttCentre(x,t) + \bigl[ \tttRight(x,t) - \mRightBehind \bigr]
\,.
\]
\end{definition}
\subsection{Generic hypotheses on the potential}
\label{subsec:generic_assupmt_pot}
\subsubsection{Escape distance}
\label{subsubsec:Escape_dist}
\begin{notation}
For every $u$ in $\rr^d$, let $\sigma\bigl(D^2V(u)\bigr)$ denote the spectrum (the set of eigenvalues) of the Hessian matrix of $V$ at $u$, and let $\eigVmin(u)$ denote the minimum of this spectrum:
\begin{equation}
\label{def_eigVmin_of_u}
\eigVmin(u) = \min \Bigl(\sigma\bigl(D^2V(u)\bigr)\Bigr)
\,.
\end{equation}
\end{notation}
\begin{definition}[Escape distance of a nondegenerate minimum point]
For every $m$ in $\mmm$, let us call \emph{Escape distance of $m$}, and let us denote by $\dEsc(m)$, the supremum of the set
\begin{equation}
\label{set_for_definition_Escape_distance}
\Bigl\{\delta \in[0,1]: \text{ for all } u \text{ in } \rr^d \text{ satisfying } \abs{u-m}\le \delta, \quad\eigVmin(u) \ge\frac{1}{2} \eigVmin(m) \Bigr\}
\,.
\end{equation}
\end{definition}
Since the quantity $\eigVmin(u)$ varies continuously with $u$, this Escape distance $\dEsc(m)$ is positive (thus in $(0,1]$). In addition, for all $u$ in $\rr^d$ such that $\abs{u-m}$ is not larger than $\dEsc(m)$, the following inequality holds:
\begin{equation}
\label{property_dEsc}
\eigVmin(u) \ge\frac{1}{2} \eigVmin(m)
\,.
\end{equation}
\subsubsection{Breakup of space translation invariance for stationary solutions and travelling fronts}
%
For every real quantity $c$, for every ordered pair $(m_-,m_+)$ of points of $\mmm$, and for every function $\phi$ in $\Phi_c(m_-,m_+)$, 
\[
\sup_{\xi \in\rr}\abs{\phi(\xi)-m_-}>\dEsc(m_-)
\quad\text{and}\quad
\sup_{\xi \in\rr}\abs{\phi(\xi)-m_+}>\dEsc(m_+)
\]
(assertion \cref{item:escape_spatial_asymptotics_tw} of \vref{lem:asympt_behav_tw_2}). See \cref{fig:esc_distance}. 
\begin{figure}[!htbp]
\centering
\includegraphics[width=0.6\textwidth]{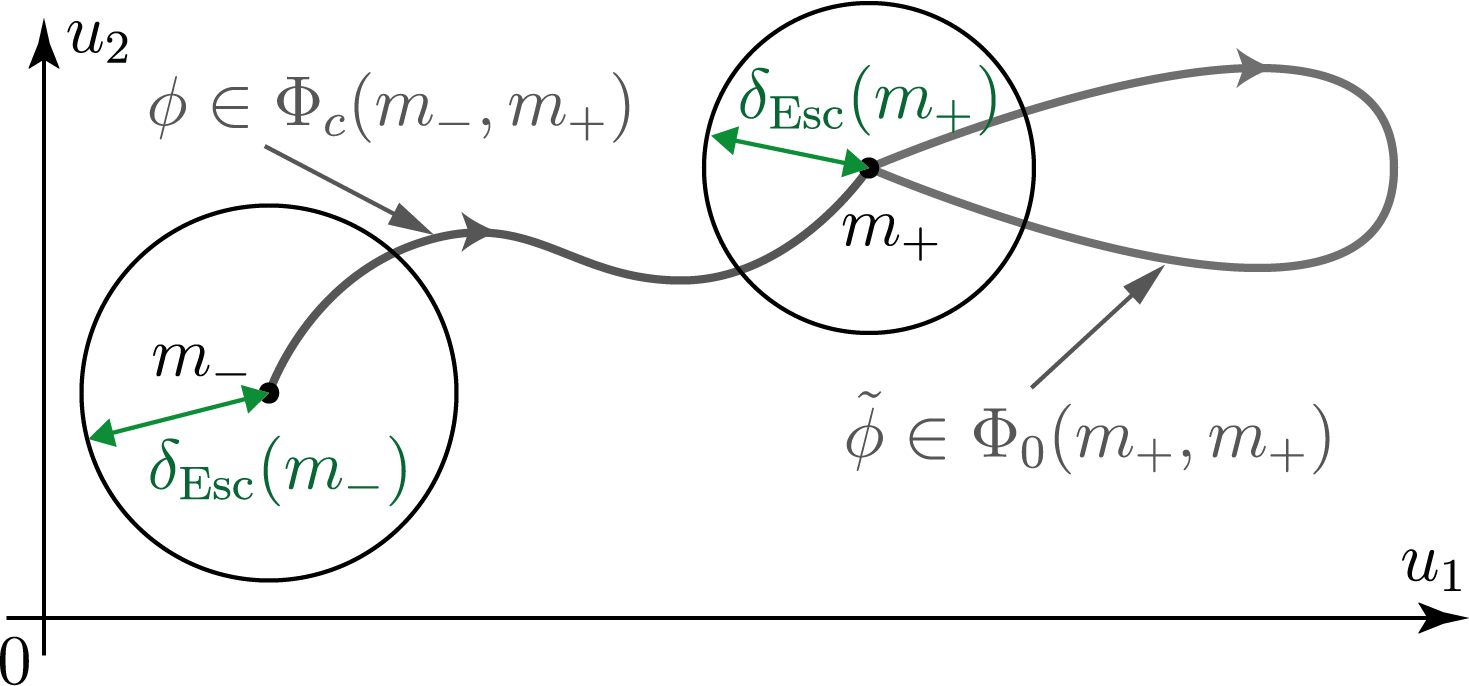}
\caption{Every function in $\Phi_{c}(m_-,m_+)$ escapes at least at distance $\dEsc(m_-)$ of $m_-$ and at distance $\dEsc(m_+)$ of $m_+$; every function in $\Phi_0(m_+,m_+)$ escapes at least at distance $\dEsc(m_+)$ of $m_+$.}
\label{fig:esc_distance}
\end{figure}
Thus, for $c$ in $\rr$ and $(m_-, m_+)$ in $\mmm^2$, let us introduce the set of \emph{normalized profiles of bistable fronts travelling at the parabolic speed $c$/stationary solutions connecting $m_-$ to $m_+$}, defined as
\begin{figure}[!htbp]
\centering
\includegraphics[width=0.75\textwidth]{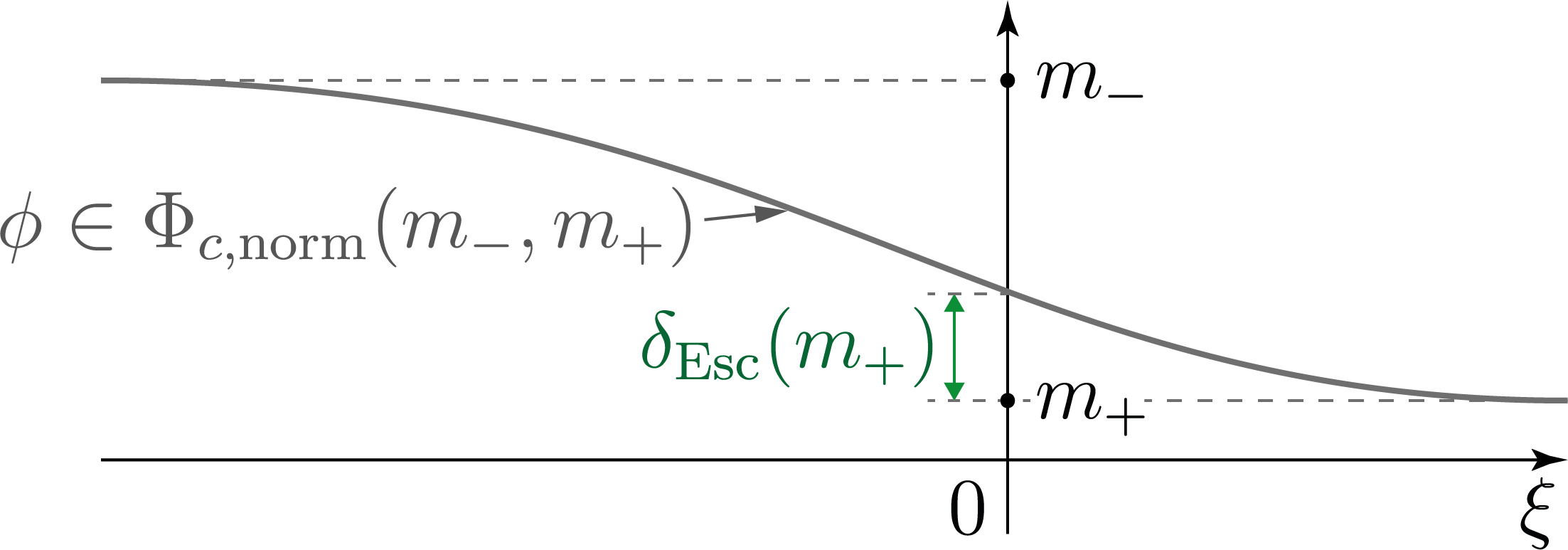}
\caption{Normalized (standing or travelling) bistable front.}
\label{fig:norm_stat}
\end{figure}
\begin{equation}
\label{def_norm}
\begin{aligned}
\PhicNorm{c}(m_-,m_+) = \bigl\{ & \phi\in\Phi_{c}(m_-,m_+): \abs{\phi(0)-m_+} =\dEsc(m_+) \\
& 
\text{and}\quad
\abs{\phi(\xi)-m_+} <\dEsc(m_+)\quad\text{for all}\quad \xi > 0\bigr\} 
\,,
\end{aligned}
\end{equation}
 see \cref{fig:norm_stat}. And if $c$ is positive, let us introduce the set of \emph{normalized profiles of bounded waves travelling at the parabolic speed $c$ and ``invading'' $m_+$}, defined as
\[
\begin{aligned}
\PhicNorm{c}(m_+) = \bigl\{ & \phi\in\Phi_{c}(m_+): \abs{\phi(0)-m_+} =\dEsc(m_+)\quad \text{and}\\
& 
\abs{\phi(\xi)-m_+} <\dEsc(m_+)\quad\text{for all}\quad \xi \text{ in } (0,+\infty)\bigr\} 
\,. 
\end{aligned}
\]
\subsubsection{Statement of the generic hypotheses}
\label{subsubsec:gen_hyp_V}
The main result of this paper (\cref{thm:1} below) requires additional generic hypotheses on the potential $V$, that will now be stated. A formal proof of the genericity (with respect to the potential $V$) of these hypotheses is provided in \cite{JolyRisler_genericTransversalityTravStandFrontsPulses_2023}.
\begin{description}
\item[\hypOnlyBistLabel]\hypertarget{hypOnlyBist} For every $m_+$ in $\mmm$ and every positive quantity $c$, 
\[
\begin{aligned}
\Phi_c(m_+) &= \bigcup_{m_-\in\mmm} \Phi_c(m_-,m_+) \,,\\
\text{or equivalently}\quad
\PhicNorm{c}(m_+) &= \bigcup_{m_-\in\mmm} \PhicNorm{c}(m_-,m_+)
\,.
\end{aligned}
\]
\end{description}
In the next two hypotheses, the subscript ``disc'' refers to the concept of ``discontinuity'' or ``discreteness''. 
\begin{description}
\item[\hypDiscVelLabel]\hypertarget{hypDiscVel} For every $m_+$ in $\mmm$, the set
\[
\bigl\{ c\text{ in }(0,+\infty) : \Phi_c(m_+)\not=\emptyset \bigr\} 
\]
has an empty interior. 
\item[\hypDiscFrontLabel]\hypertarget{hypDiscFront} For every point $m_+$ in $\mmm$ and every real quantity $c$, the set
\[
\bigl\{ \bigl(\phi(0),\phi'(0)\bigr) : \phi\in\PhicNorm{c}(m_+) \bigr\}
\]
is totally discontinuous --- if not empty --- in $\rr^{2d}$. That is, its connected components are singletons. Equivalently, the set $\PhicNorm{c}(m_+)$ is totally disconnected for the topology of compact convergence (uniform convergence on compact subsets of $\rr$).
\end{description}
The next hypothesis will be required to ensure that the number of travelling fronts involved in the asymptotic behaviour of a bistable solution is finite.
\begin{description}
\item[\hypCriticalValuesLabel]\hypertarget{hypCriticalValues} The set of \emph{critical values} of $V$, that is the set
\[
\bigl\{V(u) : u\in\rr^d\text{ and }\nabla V(u)=0 \bigr\} 
\,,
\]
is finite. 
\end{description}
The next hypothesis will be used (as in \cite{Risler_globalRelaxation_2016,Risler_globalBehaviour_2016}) to describe the relaxation of the solution between the propagating terraces of bistable travelling fronts. 
\begin{description}
\item[\hypOnlyMinLabel]\hypertarget{hypOnlyMin} Every critical point of $V$ that belongs to the same level set as a point of $\mmm$ is itself in $\mmm$. 
\end{description}
In other words, for all points $u_1$ and $u_2$ in $\rr^d$,
\[
\Bigl[
\nabla V (u_1) = \nabla V (u_2) = 0 
\ \text{and}\ 
V(u_1) = V(u_2)
\ \text{and}\ 
D^2V(u_1)>0
\Bigr]
\implies
D^2V(u_2)>0
\,.
\]
Finally, let us call \cref{hyp_gen} the union of these five generic hypotheses:
\begin{gather}
\tag{G}
\text{\textup{(\hyperlink{hypOnlyBist}{\hypOnlyBistRef})} and \textup{(\hyperlink{hypDiscVel}{\hypDiscVelRef})} and \textup{(\hyperlink{hypDiscFront}{\hypDiscFrontRef})} and \textup{(\hyperlink{hypCriticalValues}{\hypCriticalValuesRef})} and \textup{(\hyperlink{hypOnlyMin}{\hypOnlyMinRef})}}.
\label{hyp_gen}
\end{gather}
\subsection{Main results}
\begin{theorem}[global asymptotic behaviour]
\label{thm:1}
Let $V$ denote a function in $\ccc^2(\rr^d,\rr)$ satisfying the coercivity hypothesis \cref{hyp_coerc} and the generic hypotheses \cref{hyp_gen}. Then, for every bistable solution $(x,t)\mapsto u(x,t)$ of system \cref{hyp_syst}, there exists a bistable asymptotic pattern $\ppp$ such that
\[
\sup_{x\in\rr}\abs{u(x,t)-\ppp(x,t)}\to 0
\quad\text{as}\quad
t\to + \infty
\,.
\]
\end{theorem}
In this statement the convergence towards the asymptotic pattern is expressed with a uniform norm, but it follows from the proof that the same limit holds for the uniformly local $\Honeul \times \Ltwoul$-norm. Here is an additional conclusion to this theorem. 
\begin{proposition}[residual asymptotic energy]
\label{prop:resid_asympt_energy}
Assume that the assumptions of \cref{thm:1} hold. With the notation of this theorem, if $\tttCentre$ denotes the standing terrace involved in $\ppp$ and if $\valueOfVcentre$ denotes the value taken by $V$ at each of the two points of $\mmm$ connected by $\tttCentre$, then, for every small enough positive quantity $\varepsilon$, 
\[
\int_{-\varepsilon t}^{\varepsilon t} \Bigl( \frac{\alpha}{2}u_t(x,t)^2 + \frac{1}{2}u_x(x,t)^2 + V\bigl(u(x,t)\bigr) - \valueOfVcentre\Bigr) \, dx \to \eee[\tttCentre]
\quad\text{as}\quad
t\to+\infty
\,.
\]
\end{proposition}
These statements are identical to \cite[\GlobalBehaviourThmMain{} and \GlobalBehaviourPropResAsymptEn]{Risler_globalBehaviour_2016} (which are concerned with the parabolic case).
\subsection{Additional questions}
Let us briefly mention some questions that are naturally raised by this result; analogous questions were already discussed in \cite{Risler_globalRelaxation_2016,Risler_globalBehaviour_2016}, where additional comments can be found.
\begin{itemize}
\item Does the correspondence between a solution and its asymptotic pattern display some form of regularity? (some results and comments on this question can be found, in the parabolic case, in \cite{Risler_globalBehaviour_2016}).
\item Does \cref{thm:1} hold without hypothesis \textup{(\hyperlink{hypDiscVel}{\hypDiscVelRef})}? 
\item Is is possible to provide quantitative estimates on the rate of convergence of a solution towards its asymptotic pattern~?
\end{itemize}
\subsection{Organization of the paper}
The organization of this paper closely follows that of the companion paper \cite{Risler_globalBehaviour_2016} where the parabolic case is treated. 
\begin{itemize}
\item The next \cref{sec:preliminaries} is devoted to some preliminaries (existence of solutions, asymptotic compactness, preliminary computations on spatially localized functionals, notation). 
\item The main step in the proof of \cref{thm:1} is \cref{prop:inv_cv} ``invasion implies convergence'' which is proved in \cref{sec:inv_impl_cv} (this \namecref{sec:inv_impl_cv} takes a large part of the paper). This proves the approach towards the terraces of bistable fronts travelling to the left and to the right. 
\item The relaxation behind these terraces of bistable travelling fronts is pursued in \cref{sec:no_inv_implies_relax,sec:convergence}.
\item Finally, combining all these results, the proofs of \cref{thm:1,prop:resid_asympt_energy} are combined together in \cref{sec:proof_thm_1}. 
\item Elementary properties of the profiles of travelling fronts are recalled in \cref{sec:prop_stand_trav}.
\end{itemize}
\section{Preliminaries}
\label{sec:preliminaries}
As everywhere else, let us consider a function $V$ in $\ccc^2(\rr^d,\rr)$ satisfying the coercivity hypothesis \cref{hyp_coerc}. 
\subsection{Global existence of solutions and attracting ball for the flow}
\label{subsec:glob_exist_att_ball}
Let us consider the functional space (uniformly local energy space)
\[
X = \HulofR{1} \times \LtwoulofR
\,,
\]
and, for every $(u,v)$ in $X$, let 
\[
\norm{(u,v)}_X = \sqrt{ \norm{u}_{\HulofR{1}}^2 + \norm{v}_{\LtwoulofR}^2}
\,.
\]
The following proposition is stated and proved in \cite{GallayJoly_globStabDampedWaveBistable_2009} in the case $n=1$ (see Proposition 2.1 of \cite{GallayJoly_globStabDampedWaveBistable_2009}). The proof is identical in the case of systems $n>1$. 
In the statement of this proposition, existence of an attracting ball for the $L^\infty$-norm is redundant; the reason for this redundancy is that the radius $\Rattinfty$ of an attracting ball for the $L^\infty$-norm will be explicitly used in several estimates. 
\begin{proposition}[global existence of solutions and attracting ball]
\label{prop:exist_sol_att_ball}
For every initial condition $(u_0,\tilde{u}_0)$ in $X$, system \cref{hyp_syst} has a unique solution global solution $u$ in the space
\[
\ccc^0\bigl([0,+\infty),\HulofR{1}\bigr) \cap \ccc^1([0,+\infty),\LtwoulofR\bigr)
\]
satisfying $u(0)=u_0$ and $u_t(0)=\tilde{u}_0$. In addition, there exist positive quantities $\RattX$ and $\Rattinfty$ depending only on $V$ and $\alpha$ (radius of attracting balls for the $X$-norm and the $L^\infty$-norm, respectively), such that, for every large enough positive quantity $t$,
\[
\norm{x\mapsto u(x,t)}_{\Linfty}  \le \Rattinfty \
\qquad\text{and}\qquad
\norm{x\mapsto \bigl(u(x,t),u_t(x,t)\bigr)}_{X} \le \RattX
\,. 
\]
\end{proposition}
\subsection{Asymptotic compactness of the solutions}
The following proposition reproduces Proposition 2.3 of \cite{GallayJoly_globStabDampedWaveBistable_2009}. 
\begin{proposition}[asymptotic compactness]
\label{prop:asympt_comp}
For every solution
\[
u\in\ccc^0\bigl([0,+\infty),\HulofR{1}\bigr) \cap \ccc^1([0,+\infty),\LtwoulofR\bigr)
\]
of system \cref{hyp_syst} and for every sequence $\bigl((x_n,t_n)\bigr)_{n\in\nn}$ in $\rr\times[0,+\infty)$ such that $t_n$ goes to $+\infty$ as $n$ goes to $+\infty$, 
there exists a subsequence (still denoted by $\bigl((x_n,t_n)\bigr)_{n\in\nn}$) and there exists an entire solution 
\[
\widebar{u}\in\ccc^0\bigl(\rr,\HulofR{1}\bigr) \cap \ccc^1(\rr,\LtwoulofR\bigr)
\]
of system \cref{hyp_syst} such that, for all positive quantities $L$ and $T$, both quantities
\[
\begin{aligned}
& \sup_{s\in[-T,T]} \norm{y\mapsto u(x_n+y,t_n+s) - \widebar{u}(y,s)}_{H^1([-L,L],\rr^d)} \\
\text{and}\quad
& \sup_{s\in[-T,T]} \norm{u_t(x_n+y,t_n+s) - \widebar{u}_t(y,s)}_{L^2([-L,L],\rr^d)}
\end{aligned}
\]
go to $0$ as $n$ goes to $+\infty$. 
\end{proposition}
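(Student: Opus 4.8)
The plan is to follow the proof of Proposition~2.3 in \cite{GallayJoly_globStabDampedWaveBistable_2009} — which uses no maximum principle and carries over unchanged to systems — and I record here its main steps, flagging the one point that is genuinely delicate.

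\emph{Uniform bounds, extraction, and identification of the limit.} By \vref{prop:exist_sol_att_ball} there is a time $T_0$ beyond which $\norm{u(\cdot,t)}_{\HoneulofR}^2+\norm{u_t(\cdot,t)}_{\LtwoulofR}^2\le\RattX^2$ and $\norm{u(\cdot,t)}_{L^\infty}\le\Rattinfty$. Discarding finitely many indices, the translated functions $u_p(x,t):=u(x_p+x,\,t_p+t)$ are defined for $t\ge-p$, solve the same system \cref{hyp_syst}, and obey these two bounds uniformly in $p$ and in $t\ge-p$. Reading the equation as $\alpha(u_p)_{tt}=(u_p)_{xx}-(u_p)_t-\nabla V(u_p)$ shows that $(u_p)_{tt}$ is bounded, uniformly in $p$ and $t$, in $H^{-1}$ over every bounded space interval; together with the uniform bounds on $u_p$ in $H^1$ and on $(u_p)_t$ in $L^2$, the Aubin--Lions--Simon lemma (using the one-dimensional compact embedding of $H^1([-L,L])$ into $\ccc^0([-L,L])$) yields a subsequence, still denoted $(u_p)$, and a limit $\bar u$ such that $u_p\to\bar u$ uniformly on every product $[-L,L]\times[-T,T]$, $(u_p)_x\rightharpoonup\bar u_x$ and $(u_p)_t\rightharpoonup\bar u_t$ weakly-$*$ in $L^\infty$ with values in $L^2_{\mathrm{loc}}$, and $(u_p)_t\to\bar u_t$ in $\ccc^0$ with values in $H^{-1}_{\mathrm{loc}}$. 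Passing to the limit in the distributional formulation of \cref{hyp_syst} — the nonlinearity converging because $u_p\to\bar u$ uniformly on compacts and $\nabla V$ is continuous — shows that $\bar u$ solves \cref{hyp_syst}; weak-$*$ lower semicontinuity preserves the uniform bound, so $\bar u\in\ccc^0(\rr,\HoneulofR)\cap\ccc^1(\rr,\LtwoulofR)$, and a diagonal extraction over $T\to+\infty$ makes it a solution on all of $\rr$.

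\emph{Strengthening to local $H^1\times L^2$ convergence.} It remains to improve the above into $u_p(\cdot,t)\to\bar u(\cdot,t)$ in $H^1([-L,L])$ and $(u_p)_t(\cdot,t)\to\bar u_t(\cdot,t)$ in $L^2([-L,L])$, uniformly in $t\in[-T,T]$; since $u_p\to\bar u$ already holds uniformly, what must be shown is that the kinetic and gradient parts of the local energy of $w_p:=u_p-\bar u$ tend to $0$ uniformly on $[-T,T]$. The function $w_p$ solves the \emph{linear} damped wave equation $\alpha(w_p)_{tt}+(w_p)_t-(w_p)_{xx}+M_p(x,t)\,w_p=0$, where $M_p(x,t):=\int_0^1 D^2V(\bar u+\theta w_p)\,d\theta$ is bounded uniformly in $p$, $x$, $t$ (because $\norm{u_p}_{L^\infty}$ and $\norm{\bar u}_{L^\infty}$ do not exceed $\Rattinfty$), and $w_p\to0$ uniformly on compacts. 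Two structural ingredients are available: finite speed of propagation for \cref{hyp_syst} (the characteristic speed being $1/\sqrt\alpha$), so that the energy of $w_p$ carried by a backward light cone is controlled by the data at the base of that cone while its moving-boundary flux has the favourable sign; and the localized version of the dissipation identity \cref{dt_form_en}, in which the energy flux through a slowly decaying spatial weight is controlled by the uniform bound on $u$.

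This step is where I expect the real obstacle. In contrast with the parabolic case there is no regularizing effect, so the relative compactness of the velocity component $(u_p)_t$ in $L^2_{\mathrm{loc}}$ is not granted by any smoothing and must instead be drawn from the energy dissipated during the long time $t_p$ already elapsed: one uses the finite speed of propagation to reduce to a light cone emanating from a time pushed far into the past (legitimate since $t_p\to+\infty$) and invokes the localized dissipation to conclude that the part of the initial cone energy not yet known to be small becomes small once that elapsed time is large enough. Carried out as in \cite{GallayJoly_globStabDampedWaveBistable_2009}, this gives the desired strong convergence uniformly in $t\in[-T,T]$, and a final diagonal extraction over $L,T\to+\infty$ completes the proof.
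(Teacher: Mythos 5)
The soft half of your argument (uniform bounds from \cref{prop:exist_sol_att_ball}, Aubin--Lions, identification of the limit as a solution of \cref{hyp_syst}) is correct, but it only yields uniform convergence of $u_p$ on compact sets together with \emph{weak} convergence of $(u_p)_x$ and $(u_p)_t$; the entire content of the proposition is the \emph{strong} local convergence in $H^1\times L^2$, and that is precisely the step you leave open. The mechanism you gesture at for it --- finite speed of propagation combined with the localized dissipation identity, i.e.\ the finiteness of the time-integrated dissipation over the elapsed time $t_p$ --- does not close the gap: a bound on $\int\!\!\int u_t^2$ gives no control on the spatial oscillations of $u_t(\cdot,t)$ at a fixed time, so it cannot by itself produce relative compactness of the velocity component in $L^2_{\mathrm{loc}}$; and the linear equation you write for $w_p=u_p-\bar u$ carries the potential $M_p$, which need not be positive (think of a saddle of $V$), so no decay estimate can be read off that equation directly.

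The ingredient that actually does the work in the proof of Proposition~2.3 of Gallay--Joly --- and which the present paper reproduces later as \cref{lem:smooth_plus_small} --- is the Duhamel splitting $U(t)=e^{tA}U(0)+\int_0^t e^{(t-s)A}F\bigl(U(s)\bigr)\,ds$ with $A=\frac{1}{\alpha}\bigl(\begin{smallmatrix}0&\alpha\\ \partial_x^2-1&-1\end{smallmatrix}\bigr)$. Thanks to the damping and to the mass term $\partial_x^2-1$, the semigroup $e^{tA}$ decays exponentially in $X=\HoneulofR\times\LtwoulofR$ uniformly in frequency, so the first term evaluated at time $t_p+t$ tends to $0$ in $X$ as $t_p\to+\infty$; the integral term is bounded in $Y=\HtwoulofR\times\HoneulofR$ because $F$ takes values in $\{0\}\times\HoneulofR$, and Rellich's theorem then gives relative compactness of this smooth part in $H^1([-L,L],\rr^n)\times L^2([-L,L],\rr^n)$. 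A diagonal extraction over $L$ and $T$ finishes the proof. You should replace the light-cone/dissipation heuristic by this ``small plus smooth'' decomposition; as written, the decisive step of your proof is missing.
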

\subsection{Time derivative of (localized) energy and \texorpdfstring{$L^2$}{L2}-norm of a solution in a standing or travelling frame}
\label{subsec:1rst_ord}
Let $(x,t)\mapsto u(x,t)$ be a solution of system \cref{hyp_syst}, and let $m$ be a point of $\mmm$. 
\subsubsection{Standing frame}
\label{subsubsec:funct_sf}
As in \cite{GallayJoly_globStabDampedWaveBistable_2009}, taking the scalar product of system \cref{hyp_syst} either with $u_t$ or with $u-m$ and integrating this scalar product with respect to space leads to the following two functionals: the ``energy'' (Lagrangian):
\[
\int_\rr \Bigl(\frac{\alpha}{2}u_t(x,t)^2 + \frac{1}{2}u_x(x,t)^2 + V\bigl(u(x,t)\bigr)-V(m)\Bigr) \, dx
\,,
\]
and the following ``variant of the $L^2$-norm of the distance to $m$'':
\[
\int_\rr \Bigl(\alpha \bigl(u(x,t)-m\bigr) \cdot u_{t}(x,t) + \frac{1}{2}\bigl(u(x,t)-m\bigr)^2 \Bigr) \, dx
\,.
\]
To simplify the presentation, let us assume (only in this \cref{subsec:1rst_ord}) that
\[
m=0_{\rr^d}
\quad\text{and}\quad
V(m) = V(0_{\rr^d}) = 0
\,.
\]
In order to ensure the convergence of such integrals, it is necessary to localize the integrands. Let $x\mapsto\psi(x)$ denote a function in the space $W^{2,1}(\rr,\rr)$ (that is a function belonging to $L^1(\rr)$, together with its first and second derivatives). Then, the time derivatives of these two functionals --- localized by $\psi(x)$ --- read:
\begin{equation}
\label{ddt_loc_en_sf}
\frac{d}{dt} \int_\rr \psi \Bigl( \frac{\alpha}{2}u_t^2 + \frac{1}{2}u_x^2 + V(u)\Bigr) \, dx = 
\int_\rr(-  \psi u_t^2 - \psi' u_x\cdot u_t)\, dx
\,,
\end{equation}
and 
\begin{equation}
\label{ddt_loc_L2_sf}
\begin{aligned}
\frac{d}{dt} \int_\rr \psi \Bigl(\alpha u \cdot u_{t} + \frac{1}{2}u^2 \Bigr) \, dx = 
\int_\rr \Bigl( \psi \bigl(-u \cdot \nabla V(u) - u_x^2 + \alpha u_t^2 \bigr) + \frac{\psi''}{2}u^2\Bigr) \, dx 
\,.
\end{aligned}
\end{equation}
Let us see how these two functionals can be appropriately combined in order to prove, say, the local stability of the homogeneous solution $u\equiv m$ (here $u\equiv 0_{\rr^d}$). The combination must fulfil two properties (provided that the solution is close to $0_{\rr^d}$): coercivity and decrease with time. If the coefficient of the second functional is equal to $1$, then in order to ensure decrease with time, the (positive) coefficient of the first functional must be larger than $\alpha$ (so that the term $+\alpha u_t^2$ in the time derivative of the second functional be properly balanced); assume that this coefficient is equal to $\alpha + \beta$, where $\beta$ is a positive quantity to be chosen appropriately. In short, let us consider the following combination:
\begin{equation}
\label{combination_alpha_plus_beta_energy_plus_Ltwo}
(\alpha+\beta)\times \text{ energy } + L^2 \text{ variant}
\,.
\end{equation}
\begin{itemize}
\item With respect to the local coercivity, using the inequality
\[
\alpha u \cdot u_{t} \ge - \frac{\alpha^2}{2}u_t^2 - \frac{1}{2}u^2
\,,
\]
the combination \cref{combination_alpha_plus_beta_energy_plus_Ltwo} is bounded from below by the integral of an integrand made of $\psi$ times the expression
\[
\frac{\beta\alpha}{2}u_t^2 + \frac{\alpha+\beta}{2} u_x^2 + (\alpha+\beta) V(u)
\,.
\]
\item With respect to the decrease, \emph{neglecting the terms involving the derivatives of $\psi$}, the time derivative of the combination \cref{combination_alpha_plus_beta_energy_plus_Ltwo} reduces to the integral of an integrand made of $\psi$ times the expression
\[
-\beta u_t^2 - u \cdot \nabla V(u) - u_x^2
\,.
\]
\end{itemize}
In view of these two expressions, a reasonable choice is (as is \cite{GallayJoly_globStabDampedWaveBistable_2009}) to choose $\beta=\alpha$, or in other words to introduce the following combined functional:
\begin{equation}
\label{fire_def_sf}
2\alpha\times \text{ energy } + L^2 \text{ variant} = 
\int_\rr \psi \Bigl( \alpha^2 u_t^2 + \alpha u_x^2 + 2\alpha V(u) + \alpha u \cdot u_t + \frac{1}{2}u^2 \Bigr) \, dx
\,.
\end{equation}
\subsubsection{Travelling frame}
\label{subsubsec:energy_L2_trav}
Let $c$ and $\tInit$ and $\xInit$ denote three real quantities (the ``parabolic'' speed, origin of time, and initial origin of space for the travelling frame, see \vref{fig:trav_fr}), with $\tInit$ nonnegative. Usually, besides the \emph{parabolic speed} $c$ in $(0,+\infty)$, it is convenient to define the \emph{physical speed} $\sigma$ in $(0,1/\sqrt{\alpha})$, these two speeds being related by
\[
\sigma = \frac{c}{\sqrt{1+\alpha c^2}}
\iff
c = \frac{\sigma}{\sqrt{1-\alpha \sigma^2}}
\,.
\]
Let us introduce the function $(\xi,s)\mapsto v(\xi,s)$ defined, for every real quantity $\xi$ and nonnegative quantity $s$, as
\[
v(\xi,s) = u(x,t)
\,,
\]
where $(\xi,s)$ and $(x,t)$ are related by
\[
t = \tInit + s 
\quad\text{and}\quad
x = \xInit + \sigma s + \frac{\xi}{\sqrt{1+\alpha c^2}} 
\iff
\xi = \sqrt{1+\alpha c^2} (x - \xInit) - cs
\,.
\]
The evolution system for the function $(\xi,s)\mapsto v(\xi,s)$ reads
\begin{equation}
\label{hyp_syst_tf}
\alpha v_{ss} + v_s - 2\alpha c v_{\xi s} = -\nabla V(v) + c v_\xi + v_{\xi\xi}
\,.
\end{equation}
Let us introduce a function $(\xi,s)\mapsto \psi(\xi,s)$ such that, for every nonnegative quantity $s$, the function $\xi\mapsto \psi(\xi,s)$ belongs to $W^{2,1}(\rr,\rr)$ and its time derivative $\xi\mapsto \psi_s(\xi,s)$ is defined and belongs to $L^1(\rr,\rr)$. As in \cite{GallayJoly_globStabDampedWaveBistable_2009}, the natural analogues for the travelling frame of the two functionals considered above in a standing frame will now be introduced; again, they are obtained by taking the scalar product of system \cref{hyp_syst_tf} either with $v_s$ or with $v$ and integrating this scalar product with respect to space. The time derivatives of the resulting functionals read:
\begin{equation}
\label{ddt_loc_en_tf_first}
\begin{aligned}
&\frac{d}{ds}  \int_\rr \psi\Bigl(\frac{\alpha}{2}v_s^2 + \frac{1}{2}v_\xi^2 + V(v)\Bigr) \, d\xi = \\
&\qquad 
\int_\rr \biggl[ \psi_s \Bigl(\frac{\alpha}{2}v_s^2 + \frac{1}{2}v_\xi^2 + V(v)\Bigr) - (\psi + \alpha c \psi_\xi) v_s^2 + (c\psi - \psi_\xi) v_\xi \cdot v_s \biggr]\, d\xi
\,,
\end{aligned}
\end{equation}
and
\begin{equation}
\label{ddt_loc_L2_tf}
\begin{aligned}
\frac{d}{ds} \int_\rr \psi\Bigl( &\alpha v \cdot v_s  + \frac{1}{2}v^2 - 2\alpha c v \cdot v_\xi \Bigr) \, d\xi = \\
& \int_\rr \Biggl[ \psi_s \Bigl( \alpha v \cdot v_s + \frac{1}{2}v^2 - 2\alpha c v \cdot v_\xi \Bigr) \\
&+ \psi \Bigl( -v \cdot \nabla V(v) - v_\xi^2  + \alpha v_s^2 - 2\alpha c  v_\xi\cdot v_s \Bigr) + \frac{\psi_{\xi\xi}-c\psi_\xi}{2} v^2
\Biggr] \, d\xi
\,.
\end{aligned}
\end{equation}
\begin{remark}
Subtracting and adding the same quantity $\alpha c^2 \psi v_s^2$ to the integrand on the right-hand side of equality \cref{ddt_loc_en_tf_first}, this equality becomes
\begin{equation}
\label{ddt_loc_en_tf_second}
\begin{aligned}
&\frac{d}{ds}  \int_\rr \psi\Bigl(\frac{\alpha}{2}v_s^2 + \frac{1}{2}v_\xi^2 + V(v)\Bigr) \, d\xi = \\
&\quad 
\int_\rr \biggl[ -(1+\alpha c^2) \psi v_s^2 + \psi_s \Bigl(\frac{\alpha}{2}v_s^2 + \frac{1}{2}v_\xi^2 + V(v)\Bigr) + (c\psi - \psi_\xi) (\alpha c v_s^2+v_\xi\cdot v_s) \biggr]\, d\xi
\,,
\end{aligned}
\end{equation}
so that if $\psi(\xi,s)$ is replaced with $e^{c\xi}$, the previous equality reduces (formally) to
\begin{equation}
\label{ddt_formal_en_tf}
\frac{d}{ds} \int_\rr e^{c\xi}\Bigl(\frac{\alpha}{2}v_s^2 + \frac{1}{2}v_\xi^2 + V(v)\Bigr) \, d\xi = -(1+\alpha c^2) \int_\rr e^{c\xi} v_s^2 \, d\xi
\,.
\end{equation}
\end{remark}
\begin{remark}
The second (``$L^2$ variant'') integral (left-hand side of \cref{ddt_loc_L2_tf}) can be rewritten (after an integration by parts, assuming that the function $\psi$ does not vanish) as
\begin{equation}
\label{loc_L2_tf_alternative_expression}
\int_\rr \psi\Bigl( \alpha v \cdot v_s  + \frac{1}{2}v^2 - 2\alpha c v \cdot v_\xi \Bigr) \, d\xi = \int_\rr \psi\Bigl( \alpha v \cdot v_s  + \frac{1}{2}v^2 +\alpha c \frac{\psi_\xi}{\psi} v^2 \Bigr) \, d\xi
\,.
\end{equation}
\end{remark}
Let us assume that
\begin{itemize}
\item $\psi$ varies slowly with time, 
\item and that $\psi$ does not vanish,
\item and that the ratio $\psi_\xi/\psi$ is either small or close to $c$, 
\item and that the function $\psi_{\xi\xi}-c\psi_\xi$ is small,
\end{itemize}
and let us again wonder what would be an appropriate combination of these two functionals (those of \cref{ddt_loc_en_tf_first,ddt_loc_L2_tf}), to recover altogether decrease with time and coercivity where $v$ is small. Once again, if the coefficient of the second functional is equal to $1$, then the coefficient of the first functional must be larger than $\alpha$ (to ensure decrease due to dissipation). Once again, let us write $\alpha + \beta$ for the coefficient of the first functional, or in other words let us consider, again, the combination \cref{combination_alpha_plus_beta_energy_plus_Ltwo}. 
\begin{itemize}
\item With respect to the coercivity, again using the inequality
\[
\alpha v \cdot v_{s} \ge - \frac{\alpha^2}{2}v_s^2 - \frac{1}{2}v^2
\,,
\]
the combination \cref{combination_alpha_plus_beta_energy_plus_Ltwo} (using the expression of the right-hand side of \cref{loc_L2_tf_alternative_expression} for the second functional) is bounded from below by the integral of an integrand made of $\psi$ times the expression
\[
\frac{\alpha\beta}{2} v_s^2 + \frac{\alpha + \beta}{2} v_\xi^2 + (\alpha + \beta) V(v) + \alpha c \frac{\psi_\xi}{\psi} v^2
\,.
\]
\item With respect to the decrease with time, \emph{neglecting terms that are small according to the assumptions on $\psi$}, the time derivative of the combination \cref{combination_alpha_plus_beta_energy_plus_Ltwo} is bounded from above by the integral of an integrand made of $\psi$ times the following expression (using rather expression \cref{ddt_loc_en_tf_first} for the time derivative of the localized energy):
\begin{equation}
\label{ddt_loc_comb_integrand}
\Bigl(-\beta -(\alpha + \beta) \alpha c \frac{\psi_\xi}{\psi} \Bigr) v_s^2 + \Bigl( c(\beta-\alpha) - (\alpha+\beta) \frac{\psi_\xi}{\psi}\Bigr) v_\xi\cdot v_s - v\cdot\nabla V(v) - v_\xi^2
\,.
\end{equation}
\end{itemize}
As in the case of a standing frame, it thus turns out that a reasonable choice is $\beta = \alpha$ (as in \cite{GallayJoly_globStabDampedWaveBistable_2009}), and even that this choice is especially relevant here since it fires one of the terms in the derivative (the term with the factor $\beta-\alpha$). The corresponding combined functional thus reads
\begin{equation}
\label{fire_def_tf}
2\alpha\times \text{ energy } + L^2 \text{ variant} = 
\int_\rr \psi\biggl[\alpha^2 v_s^2 + \alpha v_\xi^2 + 2\alpha V(v) + \alpha v \cdot v_s + \Bigl(\frac{1}{2} +\alpha c \frac{\psi_\xi}{\psi} \Bigr)v^2  \biggr] \, d\xi
\,,
\end{equation}
and expression \cref{ddt_loc_comb_integrand} simplifies into
\[
-\alpha\Bigl( 1+2 \alpha c \frac{\psi_\xi}{\psi} \Bigr) v_s^2 -2\alpha \frac{\psi_\xi}{\psi} v_\xi \cdot v_s - v\cdot\nabla V(v) - v_\xi^2
\,.
\]
If $\psi_\xi/\psi$ is close to zero, this last quantity is roughly equal to
\[
-\alpha v_s^2 - v\cdot\nabla V(v) - v_\xi^2
\,,
\]
and if $\psi_\xi/\psi$ is close to $c$, it is roughly equal to
\begin{equation}
\label{rough_estimate_derivative_firewall}
-\alpha(1+2 \alpha c^2) v_s^2 - 2\alpha c v_\xi \cdot v_s - v\cdot\nabla V(v) - v_\xi^2 
\,,
\end{equation}
and using the inequality
\[
- 2\alpha c v_\xi \cdot v_s \le 2\alpha^2 c^2 v_s^2 + \frac{1}{2}v_\xi^2
\,,
\]
it follows that this last expression \cref{rough_estimate_derivative_firewall} is less than or equal to
\[
-\alpha v_s^2 - v\cdot\nabla V(v) - \frac{1}{2}v_\xi^2
\,;
\]
in both cases this provides the desired decrease with time (provided that $v$ is close to $0_{\rr^d}$).
\subsection{Miscellanea}
\label{subsec:misc}
\subsubsection{Second order estimates for the potential around a minimum point}
\begin{lemma}[second order estimates for the potential around a minimum point]
\label{lem:estim_from_def_escape}
For every $m$ in $\mmm$ and every vector $u$ in $\rr^d$ satisfying $\abs{u-m} \le\dEsc(m)$, the following estimates hold:
\begin{align}
\label{posit_pot_around_loc_min}
V(u)-V(m) &\ge \frac{\eigVmin(m)}{4} (u-m)^2 \,, \\
\text{and}\qquad
\label{v_nablaV_controls_square_around_loc_min}
(u-m)\cdot \nabla V(u) &\ge \frac{\eigVmin(m)}{2} (u-m)^2 \,, \\
\text{and}\qquad
\label{v_nablaV_controls_pot_around_loc_min}
(u-m)\cdot \nabla V(u) &\ge V(u)-V(m)
\,.
\end{align}
\end{lemma}
\begin{proof}
The three inequalities follow from inequality \vref{property_dEsc} ensured by the definition of $\dEsc(m)$ and from three variants of Taylor's theorem with Lagrange remainder applied to the function $f$ defined on $[0,1]$ by $f(\theta)=V\bigl(m+\theta (u-m)\bigr)$ (see \cite[\GlobalRelaxationLemEstimFromEscDist]{Risler_globalRelaxation_2016}).
\end{proof}
\subsubsection{Maximum split between the minimum values of the potential}
\label{subsubsec:max_distance_values_V}
\begin{notation}
Let us introduce the quantity
\begin{equation}
\label{def_Delta_V}
\begin{aligned}
\Delta_V &= \max\{ V(m_1) - V(m_2) : (m_1,m_2)\in\mmm^2 \} \\
&= \max \{V(m):m\in\mmm\} -\min(V)
\,,
\end{aligned}
\end{equation}
where $\min(V)$ is the minimum value of $V(v)$ over all $v$ in $\rr^d$. 
\end{notation}
\section{Invasion implies convergence}
\label{sec:inv_impl_cv}
\subsection{Definitions and hypotheses}
\label{subsec:inv_cv_def_hyp}
As everywhere else, let us consider a function $V$ in $\ccc^2(\rr^d,\rr)$ satisfying the coercivity hypothesis \cref{hyp_coerc}. Let us consider a point $m$ in $\mmm$, an ordered pair (initial condition) $(u_0,\tilde{u}_0)$ in $X$, and the solution $(x,t)\mapsto u(x,t)$ of system \cref{hyp_syst} corresponding to this initial condition. 
Let us make the following hypothesis, illustrated by \cref{fig:inv_cv}. 
\begin{figure}[!htbp]
\centering
\includegraphics[width=.8\textwidth]{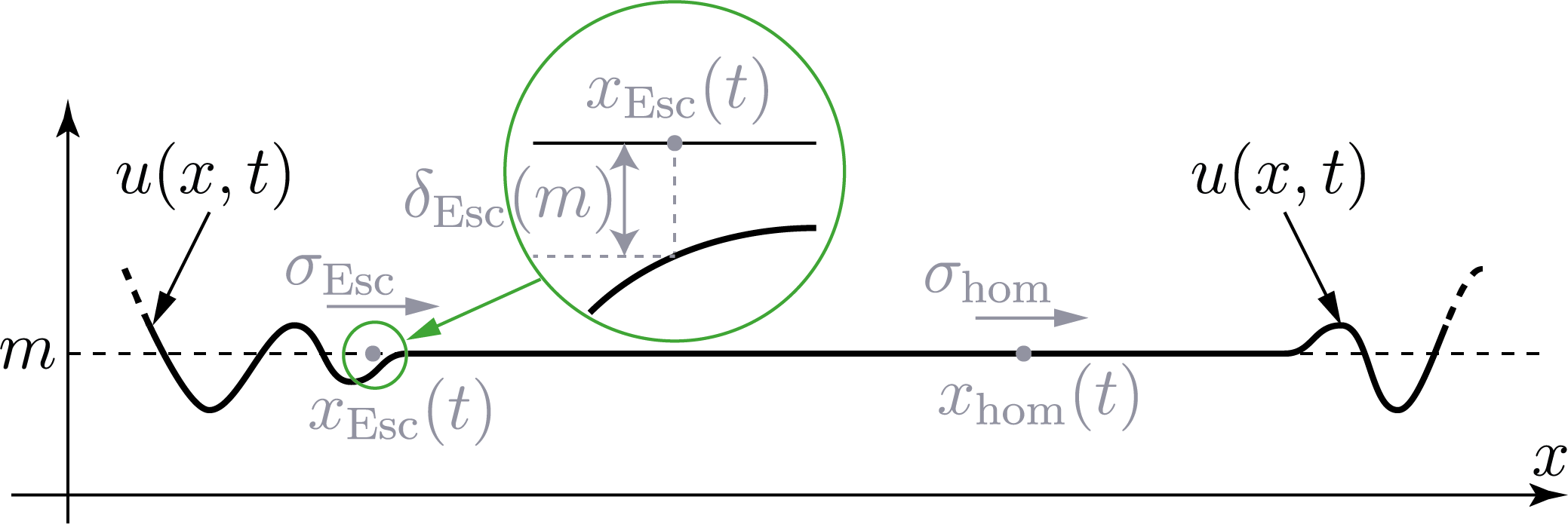}
\caption{Illustration of hypotheses \textup{(\hypHomRightRef)} and \textup{(\hypInvRef)}.}
\label{fig:inv_cv}
\end{figure}
\begin{description}
\item[\hypHomRightLabel]\hypertarget{hypHomRight} There exists a positive quantity $\sHom$ and a $\ccc^1$-function 
\[
\xHom : [0,+\infty)\to \rr\,,
\quad\text{satisfying}\quad
\xHom'(t) \to \sHom
\quad\text{as}\quad
t\to + \infty
\,,
\]
such that, for every positive quantity $L$, the quantity
\[
\norm{y\mapsto \Bigl(u\bigl( \xHom(t) + y, t\bigr) - m, u_t\bigl( \xHom(t) + y, t\bigr)\Bigr) }_{H^1([-L,L],\rr^d)\times L^2([-L,L],\rr^d)} 
\]
goes to $0$ as time goes to $+\infty$. 
\end{description}
For every $t$ in $[0+\infty)$, let us denote by $\xEsc(t)$ the supremum of the set
\[
\Bigl\{ x\in \bigl(-\infty,\xHom(t)\bigr] : \abs{u(x,t) - m} = \dEsc(m) \Bigr\}
\,,
\]
with the convention that $\xEsc(t)$ equals $-\infty$ if this set is empty. In other words, $\xEsc(t)$ is the first point at the left of $\xHom(t)$ where the solution ``Escapes'' at the distance $\dEsc(m)$ from the stable homogeneous equilibrium $m$. This point will be called the ``Escape point'' (with an upper-case ``E'', by contrast with another ``escape point'' that will be introduced later, with a lower-case ``e'' and a slightly different definition). Observe that, if $\xEsc(t)>-\infty$, then
\begin{equation}
\label{dist_to_zero_at_xEsc_of_t}
\abs{u\bigl(\xEsc(t),t\bigr)} = \dEsc(m)
\quad\text{and}\quad
\abs{u(x,t)} < \dEsc(m) \text{ for all } x \text{ in } \bigl( \xEsc(t),\xHom(t) \bigr) 
\,.
\end{equation}
Let us consider the upper limit of the mean speeds between $0$ and $t$ of this Escape point:
\[
\sEsc = \limsup_{t\to +\infty} \frac{\xEsc(t)}{t}
\,,
\]
and let us make the following hypothesis, stating that the area around $\xHom(t)$ where the solution is close to $m$ is ``invaded'' from the left at a nonzero (mean) speed.
\begin{description}
\item[\hypInvLabel]\hypertarget{hypInv} The quantity $\sEsc$ is positive. 
\end{description}
\subsection{Statement}
\label{subsec:inv_cv_stat}
The aim of \cref{sec:inv_impl_cv} is to prove the following proposition (illustrated by \cref{fig:inv_cv_bis}), which is the main step in the proof of \cref{thm:1}. 
\begin{figure}[!htbp]
	\centering
    \includegraphics[width=\textwidth]{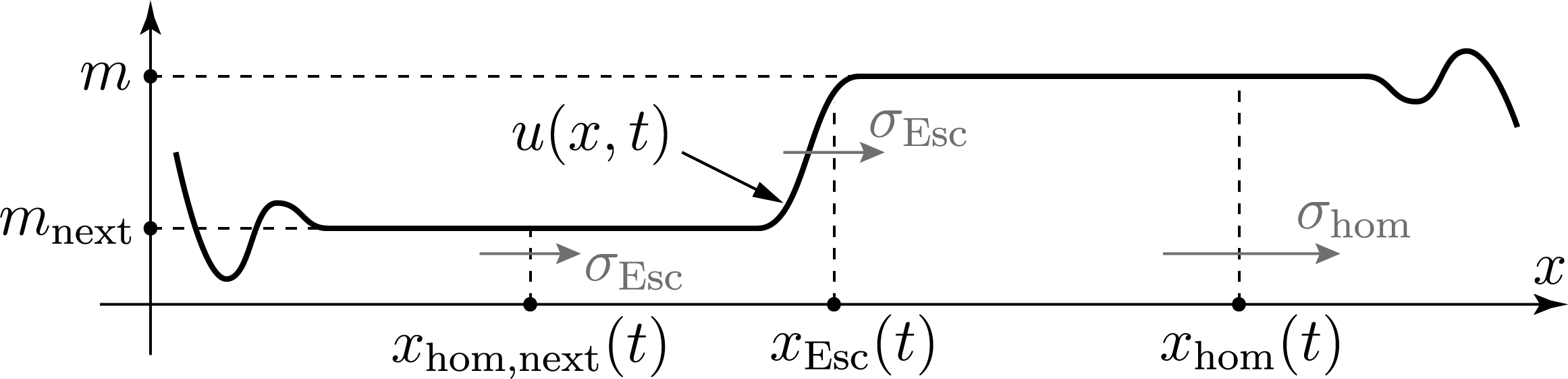}
    \caption{Illustration of \cref{prop:inv_cv}.}
    \label{fig:inv_cv_bis}
\end{figure}
The first assertion of this proposition is that the mean ``physical'' speed $\sEsc$ is smaller than $1/\sqrt{\alpha}$; thus it is legitimate to use the following notation for the ``parabolic'' counterpart of that speed:
\[
\cEsc = \frac{\sEsc}{\sqrt{1-\alpha\sEsc^2}}
\,.
\]
\begin{proposition}[invasion implies convergence]
\label{prop:inv_cv}
Assume that $V$ satisfies the coercivity hypothesis \cref{hyp_coerc} and the generic hypotheses \textup{(\hyperlink{hypOnlyBist}{\hypOnlyBistRef})} and \textup{(\hyperlink{hypDiscVel}{\hypDiscVelRef})} and \textup{(\hyperlink{hypDiscFront}{\hypDiscFrontRef})}, and, keeping the definitions and notation above, let us assume that for the solution under consideration hypotheses \textup{(\hyperlink{hypHomRight}{\hypHomRightRef})} and \textup{(\hyperlink{hypInv}{\hypInvRef})} hold. Then the following conclusions hold. 
\begin{enumerate}
\item The mean speed $\sEsc$ is smaller than $1/\sqrt{\alpha}$.
\label{item:mean_speed_sEsc_smaller_than_sound_speed}
\item There exist:
\begin{itemize}
\item a point $\mNext$ in $\mmm$ satisfying $V(\mNext)<V(m)$,
\item a profile of travelling front $\phi$ in $\PhicNorm{\cEsc}(\mNext,m)$,
\item $\ccc^1$-functions $t\mapsto \xHomNext(t)$ and $t\mapsto \txEsc(t)$ defined on $[0,+\infty)$ and with values in $\rr$,
\end{itemize}
such that, as time goes to $+\infty$, the following limits hold:
\[
\txEsc(t)-\xEsc(t) \to 0
\quad\text{and}\quad
\txEsc'(t) \to \sEsc
\,,
\]
and
\[
\xEsc(t)-\xHomNext(t)\to +\infty
\quad\text{and}\quad
\xHomNext'(t)\to \sEsc
\,,
\]
and
\[
\sup_{x\in[\xHomNext(t) \, ,\ \xHom(t)]} \abs{u(x,t) - \phi\Bigl( \sqrt{1+\alpha\cEsc^2} \bigl(x-\xEsc(t)\bigr) \Bigr)} \to 0 
\,,
\]
and, for every positive quantity $L$, the norm in $H^1([-L,L],\rr^d)\times L^2([-L,L],\rr^d)$ of the function
\[
y\mapsto \Bigl( u\bigl( \xHomNext(t)+y,t\bigr) - \mNext , u_t \bigl( \xHomNext(t)+y,t\bigr) \Bigr) 
\]
goes to $0$.
\label{item:invasion_implies_convergence_main_conclusion}
\end{enumerate}
\end{proposition}
In this statement, the very last conclusion is partly redundant with the previous one. The reason why this last conclusion is stated this way is that it emphasizes the fact that a property similar to \textup{(\hyperlink{hypHomRight}{\hypHomRightRef})} is recovered ``behind'' the travelling front. As can be expected this will be used to prove \cref{thm:1} by re-applying \cref{prop:inv_cv} as many times as required (to the left and to the right), as long as ``invasion of the equilibria behind the last front'' occurs. 
\subsection{Set-up for the proof, 1}
\label{subsec:inv_cv_set_pf}
\subsubsection{Assumptions holding up to changing the origin of time}
Let us keep the notation and assumptions of \cref{subsec:inv_cv_def_hyp}, and let us assume that the hypotheses \cref{hyp_coerc} and \textup{(\hyperlink{hypOnlyBist}{\hypOnlyBistRef})} and \textup{(\hyperlink{hypDiscVel}{\hypDiscVelRef})} and \textup{(\hyperlink{hypDiscFront}{\hypDiscFrontRef})} and \textup{(\hyperlink{hypHomRight}{\hypHomRightRef})} and \textup{(\hyperlink{hypInv}{\hypInvRef})} of \cref{prop:inv_cv} hold. 
\begin{itemize}
\item According to \vref{prop:exist_sol_att_ball}, it may be assumed (without loss of generality, up to changing the origin of time) that, for all $t$ in $[0,+\infty)$, 
\end{itemize}
\begin{align}
\label{hyp_attr_ball_Linfty}
\norm{x\mapsto u(x,t)}_{\Linfty} &\le \Rattinfty \\
\text{and}\qquad
\label{hyp_attr_ball_X}
\norm{x\mapsto \bigl(u(x,t),u_t(x,t)\bigr)}_X &\le \RattX
\,.
\end{align}
\begin{itemize}
\item According to \textup{(\hyperlink{hypHomRight}{\hypHomRightRef})}, it may be assumed (without loss of generality, up to changing the origin of time) that, for all $t$ in $[0,+\infty)$, 
\end{itemize}
\begin{equation}
\label{hyp_xHom_prime_pos}
\xHom'(t)\ge 0
\,.
\end{equation}
\subsubsection{Normalized potential and corresponding solution}
For notational convenience, let us introduce:
\begin{itemize}
\item a new ``normalized'' potential $V^\dag:\rr^d\to\rr$, $v\mapsto V^\dag(v)$,
\item and the corresponding solution $u^\dag:\rr\times[0,+\infty)\to\rr$, $(x,t)\mapsto u^\dag(x,t)$, 
\end{itemize}
defined as
\[
V^\dag(v)=V(m +v)-V(m)
\quad\text{and}\quad
u^\dag(x,t) = u(x,t)-m
\,.
\]
Thus the origin $0_{\rr^d}$ of $\rr^d$ is to $V^\dag$ what $m$ is to $V$, it is a nondegenerate minimum point for $V^\dag$ (with $V^\dag(0_{\rr^d})=0$), and $u^\dag$ is a solution of system \cref{hyp_syst} with potential $V^\dag$ instead of $V$; and, for all $(x,t)$ in $\rr\times[0,+\infty)$, 
\[
V^\dag\bigl(u^\dag(x,t)\bigr) = V\bigl(u(x,t)\bigr)-V(m)
\,.
\]
It follows from inequalities \cref{posit_pot_around_loc_min,v_nablaV_controls_square_around_loc_min,v_nablaV_controls_pot_around_loc_min} that, for all $v$ in $\rr^d$ satisfying $\abs{v}\le\dEsc(m)$, 
\begin{align}
\label{posit_pot_around_loc_min_dag}
V^\dag(v) &\ge \frac{\eigVmin(m)}{4} v^2 \,, \\
\text{and}\qquad
\label{v_nablaV_controls_square_around_loc_min_dag}
v\cdot \nabla V^\dag(v) &\ge \frac{\eigVmin(m)}{2} v^2 \,, \\
\text{and}\qquad
\label{v_nablaV_controls_pot_around_loc_min_dag}
v\cdot \nabla V^\dag(v) &\ge V^\dag(v)
\,,
\end{align}
and it follows from definition \cref{def_Delta_V} of $\Delta_V$ that
\begin{equation}
\label{Delta_V_for_V_dag}
\min_{v\in\rr^d} V^\dag(v) \ge -\Delta_V
\,.
\end{equation}
\subsubsection{Looking for another definition of the escape point}
Unfortunately, the Escape point $\xEsc(t)$ presents a significant drawback: there is no reason why it should display any form of continuity (it may jump back and forth while time increases). This lack of control is problematic with respect to the purpose of writing down a dissipation argument precisely around the position in space where the solution escapes from $m$. 

The answer to this will be to define another ``escape point'' (this one will be denoted by ``$\xesc(t)$'' --- with a lower-case ``e'' --- instead of $\xEsc(t)$). This second definition is a bit more involved than that of  $\xEsc(t)$, but the resulting escape point will have the significant advantage of growing at a finite (and even bounded) rate (\vref{lem:inv}). The material required to define this escape point is introduced in the next \namecref{subsec:firewall_fct_lab_frame}. 
\subsection{Firewall function in the laboratory frame}
\label{subsec:firewall_fct_lab_frame}
\subsubsection{Definition}
\label{subsubsec:firewall_sf_def}
Let 
\begin{equation}
\label{def_kappaZero}
\kappa_0 = \min\Bigl( \frac{1}{4}, \frac{1}{4\alpha}, \frac{\sqrt{\eigVmin(m)}}{4} \Bigr)
\,.
\end{equation}
In this \namecref{subsubsec:firewall_sf_def}, only the following properties of $\kappa_0$ will be used (to derive inequality \cref{dt_fffZero_preliminary} below):
\begin{equation}
\label{conditions_on_kappaZero}
\kappa_0 \le \frac{1}{2} 
\quad\text{and}\quad
\alpha\kappa_0 \le \frac{1}{2} 
\quad\text{and}\quad
\frac{\kappa_0^2}{2} \le \frac{\eigVmin(m)}{8}
\,.
\end{equation}
The slightly more stringent definition \cref{def_kappaZero} of $\kappa_0$ will enable us to reuse this quantity in \cref{sec:no_inv_implies_relax} (see in particular \cref{subsec:relax_sc_stand}). 

Let us introduce the weight function $\psi_0$ defined as
\[
\psi_0(x) = \exp(-\kappa_0\abs{x})
\,.
\]
For $\bar{x}$ in $\rr$, let $T_{\bar{x}}\psi_0$ denote the translate of $\psi_0$ by $\bar{x}$, that is the function defined as
\[
T_{\bar{x}}\psi_0(x) = \psi_0 (x-\bar{x})
\,.
\]
For every real quantity $x$ and nonnegative quantity $t$, following expression \vref{fire_def_sf}, let
\begin{align}
\label{def_EZero}
E_0^\dag(x,t) &= \frac{\alpha}{2}u^\dag_t(x,t)^2 + \frac{1}{2}u^\dag_x(x,t)^2 + V^\dag\bigl(u^\dag(x,t)\bigr)\,,\\
\text{and}\quad
\label{def_fireZero}
F_0^\dag(x,t) &= 2\alpha E_0^\dag(x,t) + \alpha u^\dag(x,t)\cdot u^\dag_t(x,t) + \frac{1}{2}u^\dag(x,t)^2 \\
&= \left(\alpha^2 (u^\dag_t)^2 + \alpha (u^\dag_x)^2 + 2\alpha V^\dag(u^\dag) + \alpha u^\dag\cdot u^\dag_t + \frac{1}{2}(u^\dag)^2\right)(x,t)
\,,
\end{align}
and let us introduce the ``firewall'' function $\fff_0$ defined, for every real quantity $\bar{x}$ and nonnegative quantity $t$, as
\[
\fff_0(\bar{x},t) = \int_\rr T_{\bar{x}}\psi_0(x) F^\dag_0(x,t) \, dx 
\,.
\]
\subsubsection{Upper bound}
\begin{lemma}[firewall upper bound]
\label{lem:upp_bound_fireZero}
For every nonnegative time $t$ and for every real quantity $\bar{x}$,
\begin{equation}
\label{upp_bound_fireZero}
\fff_0(\bar{x},t) \le \int_\rr T_{\bar{x}}\psi_0(x) \Bigl[\frac{3\alpha^2}{2} (u^\dag_t)^2 + \alpha (u^\dag_x)^2 + 2\alpha V^\dag(u^\dag) + (u^\dag)^2 \Bigr]\, dx 
\,.
\end{equation}
\end{lemma}
\begin{proof}
Inequality \cref{upp_bound_fireZero} follows from the definition \cref{def_fireZero} of $F^\dag_0(x,t)$ and from the inequality
\[
\alpha u^\dag\cdot u^\dag_t \le \frac{\alpha^2}{2} (u^\dag_t)^2 + \frac{1}{2}(u^\dag)^2
\,.
\]
\end{proof}
\subsubsection{Linear decrease up to pollution}
For $t$ in $[0,+\infty)$, let us introduce the set 
\[
\SigmaEscZero(t) = \bigl\{x\in\rr: \abs{u^\dag(t)}>\dEsc(m)\bigr\}
\,.
\]
\begin{lemma}[firewall linear decrease up to pollution]
\label{lem:decrease_fire0}
There exist positive quantities $\nuFZero$ and $\KFZero$, both depending only on $\alpha$ and $V$ and $m$, such that for every real quantity $\bar{x}$ and every nonnegative time $t$, 
\begin{equation}
\label{dt_F0_final}
\partial_t \fff_0(\bar{x},t) \le - \nuFZero \, \fff_0(\bar{x},t) + \KFZero \int_{\SigmaEscZero(t)} T_{\bar{x}}\psi_0(x)  \, dx
\,.
\end{equation}
\end{lemma}
\begin{proof}
According to expressions \vref{ddt_loc_en_sf,ddt_loc_L2_sf}, for every real quantity $\bar{x}$ and nonnegative time $t$,
\[
\partial_t \fff_0(\bar{x},t) = \int_\rr \biggl[T_{\bar{x}} \psi_0 \bigl( - \alpha (u^\dag_t)^2 - (u^\dag_x)^2 - u^\dag\cdot\nabla V^\dag(u^\dag)\bigr) - 2\alpha T_{\bar{x}}\psi_0' u^\dag_x\cdot u^\dag_t + \frac{T_{\bar{x}}\psi_0''}{2}(u^\dag)^2\biggr]\, dx 
\,.
\]
Since 
\[
\abs{\psi_0'(\cdot)} = \kappa_0 \psi_0 
\quad\text{and}\quad
\psi_0''(\cdot) \le \kappa_0^2 \psi_0
\]
(indeed $\psi_0''$ equals $\kappa_0^2 \psi_0$ plus a Dirac mass of negative weight), it follows that
\[
\partial_t \fff_0(\bar{x},t)  \le 
\int_\rr T_{\bar{x}}\psi_0 \Bigl[- \alpha (u^\dag_t)^2 - (u^\dag_x)^2 - u^\dag\cdot\nabla V^\dag(u^\dag) + 2\alpha \kappa_0 \abs{u^\dag_x\cdot u^\dag_t} + \frac{\kappa_0^2}{2}(u^\dag)^2\Bigr] \, dx
\,.
\]
Using the inequality
\[
2 \abs{u^\dag_x\cdot u^\dag_t} \le  (u^\dag_x)^2 + (u^\dag_t)^2
\,,
\]
it follows that
\[
\partial_t \fff_0(\bar{x},t)  \le
\int_\rr T_{\bar{x}}\psi_0 \Bigl( \alpha (-1 + \kappa_0) (u^\dag_t)^2 + (-1 + \alpha\kappa_0) (u^\dag_x)^2 - u^\dag\cdot\nabla V^\dag(u^\dag) + \frac{\kappa_0^2}{2} (u^\dag)^2 \Bigr) \, dx 
\,,
\]
and, according to the conditions \cref{conditions_on_kappaZero} on $\kappa_0$, it follows that
\begin{equation}
\label{dt_fffZero_preliminary}
\partial_t \fff_0(\bar{x},t)  \le
\int_\rr T_{\bar{x}}\psi_0 \Bigl( -\frac{\alpha}{2}  (u^\dag_t)^2 -\frac{1}{2} (u^\dag_x)^2 - u^\dag\cdot\nabla V^\dag(u^\dag) + \frac{\eigVmin(m)}{8} (u^\dag)^2\Bigr) \, dx 
\,.
\end{equation}
Let $\nuFZero$ be a positive quantity to be chosen below. It follows from the previous inequality and from the upper bound \cref{upp_bound_fireZero} of \cref{lem:upp_bound_fireZero} that
\begin{equation}
\label{dt_fffZero_preliminary_2}
\begin{aligned}
\partial_t \fff_0(\bar{x},t) + \nuFZero \fff_0(\bar{x},t) &\le \int_\rr T_{\bar{x}}\psi_0 \biggl[\frac{\alpha}{2}(-1+3\alpha\nuFZero)(u^\dag_t)^2 +\Bigl(-\frac{1}{2} + \alpha\nuFZero\Bigr)(u^\dag_x)^2  \\
& - u^\dag\cdot\nabla V^\dag(u^\dag)+ \Bigl(\frac{\eigVmin(m)}{8} + \nuFZero\Bigr)(u^\dag)^2 + 2 \alpha\nuFZero V^\dag(u^\dag) \biggr]\, dx 
\,.
\end{aligned}
\end{equation}
In view of this expression and of inequalities \vref{v_nablaV_controls_square_around_loc_min_dag,v_nablaV_controls_pot_around_loc_min_dag}, let us assume that $\nuFZero$ is small enough so that
\begin{equation}
\label{conditions_on_nuFireZero}
3\alpha\nuFZero\le 1
\quad\text{and}\quad
\alpha\nuFZero\le\frac{1}{2}
\quad\text{and}\quad
\nuFZero\le \frac{\eigVmin(m)}{8}
\quad\text{and}\quad
2\alpha \nuFZero \le \frac{1}{2}
\,;
\end{equation}
the quantity $\nuFZero$ may be chosen as
\begin{equation}
\label{def_nuFireZero}
\nuFZero = \min\Bigl(\frac{1}{4\alpha},\frac{\eigVmin(m)}{8}\Bigr)
\,.
\end{equation}
Then, it follows from \cref{dt_fffZero_preliminary_2,conditions_on_nuFireZero} that
\begin{equation}
\label{dt_fffZero_preliminary_3}
\partial_t \fff_0(\bar{x},t) + \nuFZero \fff_0(\bar{x},t) \le \int_\rr T_{\bar{x}}\psi_0 \Bigl[ - u^\dag\cdot\nabla V^\dag(u^\dag)+ \frac{\eigVmin(m)}{4}(u^\dag)^2 + \frac{1}{2} \abs{V^\dag(u^\dag)} \Bigr]\, dx 
\,.
\end{equation}
According to \cref{v_nablaV_controls_square_around_loc_min_dag,v_nablaV_controls_pot_around_loc_min_dag}, the integrand of the integral at the right-hand side of this inequality is nonpositive as long as $x$ is \emph{not} in $\SigmaEscZero(t)$.
Therefore this inequality still holds if the domain of integration of this integral is changed from $\rr$ to $\SigmaEscZero(t)$. Besides, observe that, in terms of the ``initial'' potential $V$ and solution $u(x,t)$, the factor of $T_{\bar{x}}\psi_0$ under the integral of the right-hand side of this last inequality reads
\[
- (u-m)\cdot \nabla V(u)  + \frac{\eigVmin(m)}{4}(u-m)^2+ \frac{1}{2}\abs{V(u)-V(m)}
\,,
\]
Thus, if $\KFZero$ denotes the maximum of the previous expression over all possible values for $u$, that is, according to the $L^\infty$-bound \vref{hyp_attr_ball_Linfty} on the solution, the (positive) quantity
\begin{equation}
\label{def_KFireZero}
\KFZero = \max_{v\in\rr^d,\ \abs{v}\le\Rattinfty}\Bigl[ - (v-m)\cdot \nabla V(v) + \frac{\eigVmin(m)}{4}(v-m)^2+ \frac{1}{2}\abs{V(v)-V(m)}  \Bigr]
\,,
\end{equation}
then inequality \cref{dt_F0_final} follows from \cref{dt_fffZero_preliminary_3} (with the domain of integration of the integral on the right-hand side restricted to $\SigmaEscZero(t)$). Observe that $\KFZero$ depends only on $\alpha$ and $V$. This finishes the proof of \cref{lem:decrease_fire0}.
\end{proof}
\subsubsection{Coercivity up to pollution}
For every nonnegative time $t$ and for every real quantity $\bar{x}$, let 
\begin{equation}
\label{def_QZero}
\qqq_0(\bar{x},t) = \int_{\rr} T_{\bar{x}}\psi_0(x) \bigl( \alpha u^\dag_t(x,t)^2 + u^\dag_x(x,t)^2 + u^\dag(x,t)^2 \bigr) \, dx
\,.
\end{equation}
The reason for the factor $\alpha$ in front of the term $u^\dag_t(x,t)^2$ in this definition of $\qqq_0(\bar{x},t)$ is that it slightly simplifies the expression of the time derivative of $\qqq_0$ in \vref{lem:bd_dt_Q0}). However dropping this factor $\alpha$ would only induce minor changes. Let
\begin{equation}
\label{def_Sigma_Esc_0}
\SigmaEscZero(t)=\bigl\{x\in\rr: \abs{u^\dag(x,t)} >\dEsc(m)\bigr\} 
\,.
\end{equation}
\begin{lemma}[firewall coercivity up to pollution]
\label{lem:coerc_fire0}
There exist a positive quantity $\epsFZeroCoerc$ and a nonnegative quantity $\KFZeroCoerc$, both depending only on $\alpha$ and $V$, such that for every real quantity $\bar{x}$ and every nonnegative quantity $t$, 
\begin{equation}
\label{coerc_F0}
\fff_0(\bar{x},t) \ge \epsFZeroCoerc \, \qqq_0(\bar{x},t)
- \KFZeroCoerc \int_{\SigmaEscZero(t)} T_{\bar{x}}\psi_0(x)  \, dx
\,. 
\end{equation}
\end{lemma}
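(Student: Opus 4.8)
The plan is to bound the firewall integrand $T_\xi\psi_0(x) F_0(x,t)$ from below pointwise, distinguishing the region where $u(x,t)$ is close to $0_{\rr^n}$ (distance at most $\dEsc$) from its complement $\Sigma_{\textup{\textrm{Esc}},0}(t)$, and then integrating. First I would recall the explicit form of the integrand,
\[
F_0(x,t) = \alpha^2 u_t^2 + \alpha u_x^2 + 2\alpha V(u) + \alpha u\cdot u_t + \frac{u^2}{2}
\,,
\]
and observe that the only term which is not manifestly controlled by the nonnegative quantities $\alpha u_t^2$, $u_x^2$, $u^2$, $V(u)$ is the cross term $\alpha u\cdot u_t$. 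Using the elementary Young inequality $\abs{\alpha u\cdot u_t}\le \frac{\alpha^2}{2}u_t^2 + \frac{1}{2}u^2$ we get
\[
F_0(x,t) \ge \frac{\alpha^2}{2}u_t^2 + \alpha u_x^2 + 2\alpha V(u)
\,,
\]
which already takes care of the $u_t^2$ and $u_x^2$ contributions to $\qqq_0$, but leaves us needing to recover a positive multiple of $u^2$ from the term $2\alpha V(u)$.

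The key point is then the sign of $V(u)$, which is where the partition of space enters. On the set where $\abs{u(x,t)}_{\ddd}\le\dEsc$ (equivalently $x\notin\Sigma_{\textup{\textrm{Esc}},0}(t)$), since $m=0_{\rr^n}$ and $V(0_{\rr^n})=0$, the lower bound in \cref{property_d_Esc} gives $V(u)\ge \frac{\eigVmin}{4}u^2$, so that on this set
\[
F_0(x,t) \ge \frac{\alpha^2}{2}u_t^2 + \alpha u_x^2 + \frac{\alpha\eigVmin}{2}u^2 \ge \epsFireZeroCoerc\,\bigl(\alpha u_t^2 + u_x^2 + u^2\bigr)
\]
for a suitable $\epsFireZeroCoerc>0$ depending only on $\alpha$ and $\eigVmin$ (hence only on $\alpha$ and $V$); concretely one may take $\epsFireZeroCoerc=\min\bigl(\alpha/2,\ \alpha,\ \alpha\eigVmin/2\bigr)$. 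On the complementary set $\Sigma_{\textup{\textrm{Esc}},0}(t)$ we no longer control the sign of $2\alpha V(u)$ nor the smallness needed to compare with $u^2$, but thanks to the a priori $L^\infty$ bound $\abs{u(x,t)}\le\Rattinfty$ from \vref{prop:exist_sol_att_ball} (together with \cref{hyp_attr_ball}), the quantities $V(u)$ and $u^2$ are uniformly bounded there; hence $\epsFireZeroCoerc\,(\alpha u_t^2+u_x^2+u^2)-F_0(x,t)$ is bounded above on $\Sigma_{\textup{\textrm{Esc}},0}(t)$ by a constant $\KFireZeroCoerc$ depending only on $\alpha$, $\Rattinfty$, $\eigVmin$ and $\sup_{\abs{u}\le\Rattinfty}\abs{V(u)}$ — that is, only on $\alpha$ and $V$. (Note that $\alpha u_t^2$ also appears in $\qqq_0$ with no matching term, but the factor $\alpha u_t^2$ in $\qqq_0$ was precisely chosen, as remarked in the text, so that the $u_t^2$ parts match up after the Young estimate; this is why the definition of $\qqq_0$ carries the factor $\alpha$.)

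Putting the two regions together: multiplying by the nonnegative weight $T_\xi\psi_0(x)$ and integrating over $\rr$,
\[
\fff_0(\xi,t) = \int_\rr T_\xi\psi_0\,F_0 \ge \epsFireZeroCoerc\int_\rr T_\xi\psi_0\bigl(\alpha u_t^2+u_x^2+u^2\bigr) - \KFireZeroCoerc\int_{\Sigma_{\textup{\textrm{Esc}},0}(t)} T_\xi\psi_0
= \epsFireZeroCoerc\,\qqq_0(\xi,t) - \KFireZeroCoerc\int_{\Sigma_{\textup{\textrm{Esc}},0}(t)} T_\xi\psi_0\,dx
\,,
\]
which is the claimed inequality \cref{coerc_F0}. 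I do not expect a genuine obstacle here: the statement is essentially a bookkeeping exercise combining the quadratic estimate \cref{property_d_Esc} near the minimum with the global $L^\infty$ bound far from it. The only mild care needed is to make sure all constants are tracked to depend solely on $\alpha$ and $V$ — in particular that the bound on $V$ over the ball of radius $\Rattinfty$ and the choice of $\epsFireZeroCoerc$ use only $\eigVmin$, $\eigVmax$, $\Rattinfty$ and $\alpha$, none of which depends on the solution or on $\xi$ or $t$.
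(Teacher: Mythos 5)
Your proof is correct and follows essentially the same route as the paper's: Young's inequality on the cross term $\alpha u\cdot u_t$, then adding and subtracting $\tfrac{\alpha\eigVmin}{2}u^2$ so that the leftover $2\alpha\bigl(V(u)-\tfrac{\eigVmin}{4}u^2\bigr)$ is nonnegative off $\Sigma_{\textup{\textrm{Esc}},0}(t)$ by \cref{property_d_Esc} and uniformly bounded below on $\Sigma_{\textup{\textrm{Esc}},0}(t)$ by the attracting-ball bound. Your constant $\epsFireZeroCoerc=\min(\alpha/2,\alpha,\alpha\eigVmin/2)$, coming from the retained term $\tfrac{\alpha^2}{2}u_t^2$ measured against the $\alpha u_t^2$ appearing in $\qqq_0$, is in fact the careful choice (the paper's displayed $\tfrac{\alpha}{2}u_t^2$ and resulting $\min(1/2,\dots)$ appear to be a slip for small $\alpha$), but this does not affect the statement.
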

\begin{proof}
By polarization, 
\begin{equation}
\label{polarization_inequality}
\alpha u^\dag\cdot u^\dag_t = \Bigl(\alpha\sqrt{\frac{3}{2}}u^\dag_t\Bigr) \cdot \Bigl( \sqrt{\frac{2}{3}} u^\dag \Bigr) \ge - \frac{3}{4} \alpha^2 (u^\dag_t)^2 - \frac{1}{3} (u^\dag)^2 
\,,
\end{equation}
thus for every real quantity $\bar{x}$ and nonnegative quantity $t$, 
\[
\fff_0(\bar{x},t) \ge \int_\rr T_{\bar{x}}\psi_0 \biggl( \frac{1}{4} \alpha^2 (u^\dag_t)^2 + \alpha (u^\dag_x)^2 + 2\alpha V^\dag(u^\dag) + \frac{1}{6} (u^\dag)^2 \biggr) \, dx
\,.
\]
According to inequality \vref{posit_pot_around_loc_min_dag}, the term $2\alpha V^\dag(u^\dag)$ is nonnegative when $x$ is not in the set $\SigmaEscZero(t)$. As a consequence, the previous inequality still holds if the integration domain of this term is reduced to this set. In other words, 
\begin{align}
\fff_0(\bar{x},t) &\ge \int_\rr T_{\bar{x}}\psi_0 \biggl( \frac{1}{4} \alpha^2 (u^\dag_t)^2 + \alpha (u^\dag_x)^2 + \frac{1}{6} (u^\dag)^2 \biggr) \, dx + 2\alpha \int_{\SigmaEscZero(t)} T_{\bar{x}}\psi_0 \ V^\dag(u^\dag) \, dx  \nonumber \\
&\ge \min\Bigl( \frac{\alpha}{4} , \frac{1}{6}\Bigr)\qqq_0(\bar{x},t) + 2\alpha \Bigl(\min_{v\in\rr^d} V^\dag(v)\Bigr) \int_{\SigmaEscZero(t)} T_{\bar{x}}\psi_0 (x)\, dx
\,.
\label{coerc_F0_proof}
\end{align}

Thus, according to inequality \cref{Delta_V_for_V_dag}, introducing the quantities $\epsFZeroCoerc$ and $\KFZeroCoerc$ as
\[
\epsFZeroCoerc = \min\Bigl( \frac{\alpha}{4} , \frac{1}{6}\Bigr)
\quad\text{and}\quad
\KFZeroCoerc = 2\alpha \Delta_V
\,,
\]
inequality \cref{coerc_F0} follows from inequality \cref{coerc_F0_proof}. \Cref{lem:coerc_fire0} is proved. 
\end{proof}
\subsubsection{Elementary inequalities involving \texorpdfstring{$u(\cdot,\cdot)$}{u(.,.)} and \texorpdfstring{$\qqq_0(\cdot,\cdot)$}{Q0(.,.)} and \texorpdfstring{$\fff_0(\cdot,\cdot)$}{F0(.,.)} and \texorpdfstring{$\partial_t\fff_0(\cdot,\cdot)$}{DtF0(.,.)} and \texorpdfstring{$\partial_t\qqq_0(\cdot,\cdot)$}{DtQ0(.,.)}}
The aim of the following definitions and statements is to prove \cref{lem:inv} below, providing a bound on the speed at which a spatial domain where the solution $u$ (respectively $u^\dag$) is close to $m$ (respectively to $0_{\rr^d}$) can be ``invaded''. This lemma involves the two ``hull functions'' $\hullNoescQZero$ and $\hullNoescFZero$ controlling $\fff_0(\cdot,\cdot)$ and $\qqq_0(\cdot,\cdot)$ respectively. The definition of these two hull functions is based on the three quantities $\descQZero(m)$ and $\descFZero(m)$ and $L$ that will be defined now with \cref{lem:inv} as a purpose.
Let
\begin{equation}
\label{def_descQZero}
\descQZero(m) = \sqrt{\frac{2}{1+\kappa_0}}\dEsc(m) 
\,.
\end{equation}
\begin{lemma}[$\qqq_0$ controls $\abs{u^\dag}$]
\label{lem:esc_Esc_qqq0}
For every real quantity $\bar{x}$ and every nonnegative quantity $t$, the following assertion holds
\[
\qqq_0(\bar{x},t) \le \descQZero(m)^2  
\implies
\abs{u^\dag(\bar{x},t)} \le \dEsc(m)
\,.
\]
\end{lemma}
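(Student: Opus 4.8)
The plan is to deduce the claim from a one-dimensional Sobolev-type estimate adapted to the exponential weight $T_\xi\psi_0$: since $\descQZero = \dEsc$, it is enough to prove that $\abs{u(\xi,t)}^2 \le \qqq_0(\xi,t)$, and I would obtain this inequality by a fundamental-theorem-of-calculus argument centred at the point $\xi$.

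First I would record the facts that make the computation legitimate. The function $x\mapsto u(x,t)$ lies in $\HoneulofR$, hence is locally absolutely continuous and bounded; the weight $T_\xi\psi_0(x)=\exp(-\kappa_0\abs{x-\xi})$ decays exponentially, is smooth away from $\xi$, and satisfies $(T_\xi\psi_0)'=-\kappa_0\,T_\xi\psi_0$ on $(\xi,+\infty)$ and $(T_\xi\psi_0)'=+\kappa_0\,T_\xi\psi_0$ on $(-\infty,\xi)$; in particular $T_\xi\psi_0(x)\,u(x,t)^2\to 0$ as $x\to\pm\infty$ and $\int_\rr T_\xi\psi_0\,\abs{u}\,\abs{u_x}\,dx<+\infty$ by Cauchy--Schwarz. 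I would then integrate $\partial_x\bigl(T_\xi\psi_0\,u^2\bigr)$ separately over $(\xi,+\infty)$ and over $(-\infty,\xi)$ --- the kink of the weight sits at the common endpoint $\xi$, so it plays no role --- and use $T_\xi\psi_0(\xi)=1$ to obtain, on each of the two half-lines,
\[
\abs{u(\xi,t)}^2 = \int T_\xi\psi_0\bigl(\kappa_0\,u^2 \mp 2\,u\cdot u_x\bigr)\,dx
\,,
\]
the integral running over the relevant half-line and the sign of the cross term depending on the side.

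Next I would bound the cross term by Young's inequality, $\mp 2\,u\cdot u_x \le u^2 + u_x^2$, which turns each of these identities into $\abs{u(\xi,t)}^2 \le \int T_\xi\psi_0\bigl((\kappa_0+1)u^2 + u_x^2\bigr)\,dx$ over the corresponding half-line, and add the two inequalities to get
\[
2\,\abs{u(\xi,t)}^2 \le \int_\rr T_\xi\psi_0\bigl((\kappa_0+1)u^2 + u_x^2\bigr)\,dx
\,.
\]
Since $\kappa_0\le 1$ by the first inequality of \vref{property_kappa0}, the integrand on the right is at most $2\,T_\xi\psi_0\,(u^2+u_x^2)$; moreover $\alpha>0$ makes the missing term $\alpha\,T_\xi\psi_0\,u_t^2$ in the definition of $\qqq_0$ nonnegative, so the right-hand side is at most $2\,\qqq_0(\xi,t)$. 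Therefore $\abs{u(\xi,t)}^2 \le \qqq_0(\xi,t) \le \descQZero^2 = \dEsc^2$, which is the desired conclusion.

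I do not expect a genuine obstacle here; the only points deserving a little care are the vanishing of the boundary terms at $\pm\infty$ and the legitimacy of the integration by parts against the merely Lipschitz weight $T_\xi\psi_0$, both of which follow at once from the $\HoneulofR$-regularity of $u(\cdot,t)$ together with the exponential decay of $\psi_0$.
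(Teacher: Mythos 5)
Your argument is correct and is essentially the paper's own proof: the paper likewise bounds $\abs{u(\xi,t)}^2=\psi_0(0)u(\xi,t)^2$ by $\tfrac12\int_\rr\abs{\tfrac{d}{dx}(T_\xi\psi_0\,u^2)}\,dx$, applies Young's inequality to the cross term, and uses $\kappa_0\le 1$ from \cref{property_kappa0} to conclude $\abs{u(\xi,t)}^2\le\qqq_0(\xi,t)\le\descQZero^2=\dEsc^2$. Your splitting into the two half-lines is just a slightly more explicit way of writing the same computation.
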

\begin{proof}
Let $v$ denote a function in $\HulofR{1}$. Then,
\[
\begin{aligned}
v(0)^2 &= \psi_0(0) v(0)^2 \\
&\le \frac{1}{2} \int_{\rr} \abs{\frac{d}{dx} \bigl( \psi_0(x) v(x)^2 \bigr)} \, dx \\
&\le \frac{1}{2} \int_{\rr} \bigl( \abs{\psi_0'(x)} v(x)^2 + 2 \psi_0 (x) v(x) \cdot v'(x) \bigr) \, dx \\
&\le \frac{1}{2} \int_{\rr} \psi_0 (x) \bigl( (1+\kappa_0)v(x)^2 + v'(x)^2 \bigr) \, dx \\
&\le \frac{1+\kappa_0}{2} \int_{\rr} \psi_0 (x) \bigl( v(x)^2 + v'(x)^2 \bigr) \, dx 
\,,
\end{aligned}
\]
and the conclusion follows from the definitions \cref{def_descQZero} of $\descQZero(m)$ and \cref{def_QZero} of $\qqq_0(\cdot,\cdot)$.
\end{proof}
Let 
\[
\descFZero(m) = \sqrt{\frac{\epsFZeroCoerc}{8}}\descQZero(m)
\,,
\]
and let $L$ be a positive quantity satisfying the following properties (that will be used below)
\begin{align}
\frac{\KFZeroCoerc}{\epsFZeroCoerc} \frac{2}{\kappa_0} \exp(-\kappa_0 L) & \le \frac{1}{8} \descQZero(m)^2 
\label{property_L_1} \\
\quad\text{and}\quad
\KFZero \frac{2}{\kappa_0} \exp(-\kappa_0 L) &\le \frac{\nuFZero \ \descFZero(m)^2}{4}
\,.
\label{property_L_2}
\end{align}
namely
\[
L = \frac{1}{\kappa_0}\log\biggl[ \max\Bigl(
\frac{16}{\kappa_0} \frac{\KFZeroCoerc}{\epsFZeroCoerc} \frac{1}{\descQZero(m)^2} \ , \ 
\frac{8}{\kappa_0}\frac{\KFZero}{\nuFZero \ \descFZero(m)^2}
\Bigr) \biggr]
\,.
\]
Those requirements on $L$ are related to the fact that
\[
\int_{\rr\setminus[-L,L]} \psi_0(x) \, dx = \frac{2}{\kappa_0} \exp(-\kappa_0 L)
\,.
\]
\begin{lemma}[$\fff_0$ controls $\qqq_0$]
\label{lem:esc_fff0_qqq0}
For every real quantity $\bar{x}$ and every nonnegative quantity $t$,  
\[
\left.
\begin{aligned}
&\fff_0(\bar{x},t)\le \descFZero(m)^2 \\
\text{and, for all } x \text{ in } [\bar{x}-L,\bar{x}+L] \,, \ 
&\abs{u^\dag(x,t)} \le \dEsc(m)
\end{aligned}
\right\}
\implies
\qqq_0(\bar{x},t) \le \frac{1}{4}\descQZero(m)^2 
\,.
\]
\end{lemma}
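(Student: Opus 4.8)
The plan is to combine the coercivity inequality of \cref{lem:coerc_fire0} with the smallness of the weight $T_\xi\psi_0$ outside the window $[\xi-L,\xi+L]$, exploiting that on this window the solution stays within the escape distance $\dEsc$ of $0_{\rr^n}$. Concretely, under the hypotheses of the lemma the set $\Sigma_{\textrm{Esc},0}(t)$ does not meet $[\xi-L,\xi+L]$, so the pollution integral in \cref{coerc_F0} is supported in $(-\infty,\xi-L]\cup[\xi+L,+\infty)$; there the translated weight $T_\xi\psi_0$ is bounded by $\exp(-\kappa_0 L)$ up to the factor computed from $\int_{(-\infty,-L]\cup[L,+\infty)}\psi_0 = \frac{2}{\kappa_0}\exp(-\kappa_0 L)$.

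First I would write, from \cref{lem:coerc_fire0},
\[
\epsFireZeroCoerc\,\qqq_0(\xi,t) \le \fff_0(\xi,t) + \KFireZeroCoerc\int_{\Sigma_{\textrm{Esc},0}(t)} T_\xi\psi_0(x)\,dx .
\]
Since every $\xi'$ in $[\xi-L,\xi+L]$ satisfies $\abs{u(\xi',t)}\le\dEsc$, the set $\Sigma_{\textrm{Esc},0}(t)\cap[\xi-L,\xi+L]$ is empty, hence
\[
\int_{\Sigma_{\textrm{Esc},0}(t)} T_\xi\psi_0(x)\,dx \le \int_{(-\infty,\xi-L]\cup[\xi+L,+\infty)} T_\xi\psi_0(x)\,dx = \frac{2}{\kappa_0}\exp(-\kappa_0 L).
\]
Then I would invoke property \cref{property_L_1} of $L$, which gives
\[
\frac{\KFireZeroCoerc}{\epsFireZeroCoerc}\cdot\frac{2}{\kappa_0}\exp(-\kappa_0 L) \le \frac{1}{8}\descQZero^2 ,
\]
so that, dividing the displayed inequality by $\epsFireZeroCoerc$,
\[
\qqq_0(\xi,t) \le \frac{\fff_0(\xi,t)}{\epsFireZeroCoerc} + \frac{1}{8}\descQZero^2 .
\]
Finally, using the hypothesis $\fff_0(\xi,t)\le\descFZero^2$ together with the definition $\descFZero = \sqrt{\epsFireZeroCoerc/8}\,\descQZero$, one has $\fff_0(\xi,t)/\epsFireZeroCoerc \le \descFZero^2/\epsFireZeroCoerc = \descQZero^2/8$, whence $\qqq_0(\xi,t)\le \frac{1}{8}\descQZero^2 + \frac{1}{8}\descQZero^2 = \frac{1}{4}\descQZero^2$, which is the claimed conclusion.

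There is no real obstacle here: the lemma is a bookkeeping step whose only subtlety is making sure the constants line up, and they have been tailored for exactly this purpose in the definitions of $\descFZero$ and of $L$ (property \cref{property_L_1}). The one point to state carefully is the implication ``$\abs{u(\xi',t)}\le\dEsc$ on the whole window $\Longrightarrow$ the escape set misses the window'', which is immediate from the definition \cref{def_Sigma_Esc_0} of $\Sigma_{\textrm{Esc},0}(t)$ as $\{x:\abs{u(x,t)}>\dEsc\}$. (Note in passing that the second condition \cref{property_L_2} on $L$, and the decrease Lemma \cref{lem:decrease_fire0}, are not needed for this particular lemma; they will be used later for the dissipation argument leading to \cref{lem:inv}.)
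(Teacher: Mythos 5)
Your proof is correct and follows exactly the route the paper intends: the paper's own proof simply declares the lemma an immediate consequence of the coercivity inequality \cref{coerc_F0}, the definition of $\descFZero$, and property \cref{property_L_1} of $L$, and your write-up fills in precisely those steps with the constants lining up as designed. Nothing to add.
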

\begin{proof}
This assertion is an immediate consequence of the coercivity property \cref{coerc_F0} for $\fff_0(\cdot,\cdot)$, the definition of the quantity $\descFZero(m)$ above, and the first property \cref{property_L_1} satisfied by the quantity $L$. 
\end{proof}
\begin{lemma}[$\fff_0$ remains small far from $\SigmaEscZero(t)$]
\label{lem:fff0_decrease}
For every real quantity $\bar{x}$ and every nonnegative quantity $t$,  
\[
\left.
\begin{aligned}
\fff_0(\bar{x},t) &\ge \frac{1}{2}\descFZero(m)^2 \\
\text{and, for every } x \text{ in } [\bar{x}-L,\bar{x}+L] \,, \quad
\abs{u^\dag(x,t)} &\le \dEsc(m)
\end{aligned}
\right\}
\implies
\partial_t \fff_0(\bar{x},t) < 0
\,.
\]
\end{lemma}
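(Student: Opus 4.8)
The plan is to bound the time derivative of $\fff_0(\xi,t)$ using \cref{lem:decrease_fire0}, and to exploit the fact that under the hypotheses, the ``pollution term'' on the right-hand side of \cref{dt_F0_final} contributes nothing from the near-field interval $[\xi-L,\xi+L]$ and only an exponentially small amount from the far field. Concretely, the hypothesis that $\abs{u(\xi',t)}\le\dEsc$ for all $\xi'$ in $[\xi-L,\xi+L]$ means that the set $\Sigma_{\textup{\textrm{Esc}},0}(t)$ (where $\abs{u}>\dEsc$) is disjoint from that interval, so the domain of integration in $\int_{\Sigma_{\textup{\textrm{Esc}},0}(t)} T_\xi\psi_0(x)\,dx$ is contained in $(-\infty,\xi-L]\cup[\xi+L,+\infty)$. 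On that set $T_\xi\psi_0(x)=\psi_0(x-\xi)$ integrates to $\frac{2}{\kappa_0}\exp(-\kappa_0 L)$, so
\[
\int_{\Sigma_{\textup{\textrm{Esc}},0}(t)} T_\xi\psi_0(x)\,dx \le \frac{2}{\kappa_0}\exp(-\kappa_0 L)
\,.
\]

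Next I would plug this into \cref{lem:decrease_fire0}: using $\fff_0(\xi,t)\ge\frac12\descFZero^2$ for the first term and the bound just obtained for the second term,
\[
\partial_t \fff_0(\xi,t) \le -\epsFireZeroDecr\,\fff_0(\xi,t) + \KFireZeroDecr\frac{2}{\kappa_0}\exp(-\kappa_0 L)
\le -\frac{\epsFireZeroDecr}{2}\descFZero^2 + \KFireZeroDecr\frac{2}{\kappa_0}\exp(-\kappa_0 L)
\,.
\]
Now property \cref{property_L_2} satisfied by $L$ states precisely that $\KFireZeroDecr\frac{2}{\kappa_0}\exp(-\kappa_0 L)\le\frac{\epsFireZeroDecr\descFZero^2}{4}$, so the right-hand side is bounded above by $-\frac{\epsFireZeroDecr}{2}\descFZero^2+\frac{\epsFireZeroDecr}{4}\descFZero^2 = -\frac{\epsFireZeroDecr}{4}\descFZero^2$, which is strictly negative since $\epsFireZeroDecr$ is positive and $\descFZero=\sqrt{\epsFireZeroCoerc/8}\,\descQZero$ with $\descQZero=\dEsc>0$. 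This yields $\partial_t\fff_0(\xi,t)<0$, as claimed.

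This lemma is essentially a bookkeeping step, so I do not expect a genuine obstacle; the only thing to be careful about is making sure the near-field/far-field split is set up correctly — namely that the hypothesis controls $u$ on a symmetric interval of half-width exactly $L$ around $\xi$, which matches the shift in $T_\xi\psi_0$ and the value $\frac{2}{\kappa_0}\exp(-\kappa_0 L)$ of the tail integral used to calibrate $L$ in \cref{property_L_2}. One should also note in passing that the lemma as stated only uses that $\fff_0(\xi,t)\ge\frac12\descFZero^2$ together with the $u$-smallness on $[\xi-L,\xi+L]$; the companion hypothesis $\fff_0(\xi,t)\le\descFZero^2$ appearing in \cref{lem:esc_fff0_qqq0} plays no role here and is not needed. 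The real work — combining this negativity-of-derivative estimate with \cref{lem:esc_Esc_qqq0} and \cref{lem:esc_fff0_qqq0} into an actual bound on the invasion speed — is deferred to \cref{lem:inv}.
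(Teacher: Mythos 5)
Your proof is correct and is exactly the argument the paper intends: the paper's own proof simply declares the lemma an immediate consequence of the decrease inequality \cref{dt_F0_final} and property \cref{property_L_2} of $L$, and your write-up fills in precisely those two steps (disjointness of $\Sigma_{\textup{\textrm{Esc}},0}(t)$ from $[\xi-L,\xi+L]$, the tail integral $\tfrac{2}{\kappa_0}e^{-\kappa_0 L}$, and the resulting bound $-\tfrac{\epsFireZeroDecr}{4}\descFZero^2<0$).
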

\begin{proof}
This assertion is an immediate consequence of the decrease property \cref{dt_F0_final} and the second property \cref{property_L_2} satisfied by the quantity $L$. 
\end{proof}
\begin{lemma}[bound on growth of $\qqq_0$]
\label{lem:bd_dt_Q0}
There exists a positive quantity \\
$\KQZeroGrowth$, depending only on $\alpha$ and $V$, such that, for every real quantity $\bar{x}$ and every nonnegative quantity $t$, 
\[
\partial_t \qqq_0(\bar{x},t) \le \KQZeroGrowth
\,.
\]
\end{lemma}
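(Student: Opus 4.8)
The goal is a uniform-in-$(\xi,t)$ upper bound on $\partial_t\qqq_0(\xi,t)$. The natural route is to compute $\partial_t\qqq_0$ explicitly, exactly as was done for $\fff_0$ in the proof of \cref{lem:decrease_fire0}, using the two master identities \cref{ddt_loc_en_sf,ddt_loc_L2_sf} applied with the weight $\psi=T_\xi\psi_0$. Concretely, $\qqq_0$ is (up to constants) a combination of the localized energy-type functional $\int\psi(\alpha u_t^2/2+u_x^2/2+V(u))$ and of $\int\psi\,u^2/2$; the term $\int\psi u^2$ has the time derivative $\int\psi'(2u\cdot u_t)$ coming simply from $\partial_t(u^2)=2u\cdot u_t$ (no equation needed), while the $\alpha u_t^2$ and $u_x^2$ pieces are governed by \cref{ddt_loc_en_sf}. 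Thus, schematically,
\[
\partial_t\qqq_0(\xi,t)=\int_\rr T_\xi\psi_0\bigl(\text{polynomial in }u,u_x,u_t\text{ of degree }\le 2\bigr)\,dx
+\int_\rr T_\xi\psi_0'\bigl(\text{terms }u_x\cdot u_t,\ u\cdot u_t\bigr)\,dx
\,.
\]
Here I would \emph{not} try to extract any sign or dissipation — unlike in \cref{lem:decrease_fire0} there is no need for a favourable sign, only an absolute bound — so I would simply bound every term by its absolute value.

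**Key steps, in order.** First, write down $\partial_t\qqq_0$ via \cref{ddt_loc_en_sf,ddt_loc_L2_sf} (with $\psi=T_\xi\psi_0$), keeping all terms. Second, use $|T_\xi\psi_0'|\le\kappa_0\, T_\xi\psi_0$ (pointwise, away from the single corner at $x=\xi$, which has measure zero) to absorb the weight-derivative terms into integrals against $T_\xi\psi_0$ itself; likewise $|T_\xi\psi_0''|\le\kappa_0^2\,T_\xi\psi_0$ plus a negative Dirac, so the second-derivative contribution only helps. Third, Cauchy--Schwarz on the cross terms $u_x\cdot u_t$ and $u\cdot u_t$ to reduce everything to $\int T_\xi\psi_0(u^2+u_x^2+u_t^2)\,dx$ plus $\int T_\xi\psi_0|V(u)|\,dx$. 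Fourth, invoke the $L^\infty$-bound $|u(x,t)|\le\Rattinfty$ from \cref{prop:exist_sol_att_ball}/\cref{hyp_attr_ball} (which holds for all $t\ge 0$ after the normalization of \cref{subsec:inv_cv_set_pf}, but a bound depending only on $\alpha$ and $V$ is available for large $t$ in general) to bound $|V(u)|$ and the purely-$u^2$ terms by a constant times $\int T_\xi\psi_0\,dx=2/\kappa_0$; and note $\int T_\xi\psi_0(u^2+u_x^2+u_t^2)\,dx$ is comparable to $\qqq_0(\xi,t)$, which is itself controlled by $\fff_0(\xi,t)$ via \cref{coerc_F0} — but even more simply, $\qqq_0(\xi,t)\le C\,\fff_0(\xi,t)+C'$ is not what is wanted; rather one observes directly that $\qqq_0(\xi,t)\le (\text{const})\,\RattX^2$ uniformly, since $T_\xi\psi_0\le 1$ and the integrand of $\qqq_0$ is dominated by the $\Honeul\times\Ltwoul$-density, whose integral against any unit-mass-per-unit-length weight is $\le(\text{const})\RattX^2$. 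Collecting the resulting constants (all depending only on $\kappa_0$, $\eigVmin$, $\eigVmax$, $\alpha$, $\Rattinfty$, $\RattX$, hence only on $\alpha$ and $V$) yields the claimed $\KQZeroGrowth$. Finally, define $\KQZeroGrowth$ to be the sum of these constants and conclude.

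**Main obstacle.** There is no deep obstacle here; the lemma is a routine a-priori estimate. The only point requiring a little care is the bookkeeping with the weight $T_\xi\psi_0$: one must be sure that bounding $\int T_\xi\psi_0(u^2+u_x^2+u_t^2)\,dx$ uniformly in $\xi$ uses only the \emph{uniformly local} Sobolev bound \cref{hyp_attr_ball} — i.e. that $\sup_\xi\int T_\xi\psi_0\,g\le C\,\|g\|_{L^1_{\mathrm{ul}}}$ for the exponentially decaying weight $\psi_0$ — which is a standard covering estimate ($\sum_{k\in\zz}e^{-\kappa_0|k|}<\infty$), and that the cross terms genuinely reduce to this after Cauchy--Schwarz. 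One should also handle the corner of $\psi_0$ at the origin honestly: the distributional second derivative $T_\xi\psi_0''=\kappa_0^2\,T_\xi\psi_0-2\kappa_0\,\delta_\xi$ contributes $-\kappa_0\,u(\xi,t)^2\le 0$ to $\partial_t\qqq_0$, so it is discarded, exactly as in \cref{lem:decrease_fire0}. With these remarks the proof is a half-page computation parallel to that of \cref{lem:decrease_fire0}.
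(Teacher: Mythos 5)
Your proposal is correct and is essentially the paper's own argument: the paper simply writes out $\partial_t\qqq_0(\xi,t) = 2\int_\rr\bigl(T_\xi\psi_0(u_t\cdot(-u_t-\nabla V(u))+u\cdot u_t)-T_\xi\psi_0'\,u_x\cdot u_t\bigr)dx$ and concludes from the a priori bounds of \cref{prop:exist_sol_att_ball}, which is exactly your computation plus Cauchy--Schwarz and the uniformly local covering estimate. The only stray detail is your discussion of the Dirac mass in $T_\xi\psi_0''$: since $\qqq_0$ contains $u^2$ but not the cross term $\alpha u\cdot u_t$, no second derivative of the weight ever appears in $\partial_t\qqq_0$, so that remark is harmless but unnecessary.
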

\begin{proof}
For every real quantity $\bar{x}$ and every nonnegative quantity $t$, 
\[
\begin{aligned}
\partial_t \qqq_0(\bar{x},t) &= 2 \int_\rr \biggl[ T_{\bar{x}} \psi_0  \Bigl( u^\dag_t \cdot \bigl( - u^\dag_t - \nabla V^\dag(u^\dag) \bigr) + u^\dag \cdot u^\dag_t \Bigr) - T_{\bar{x}} \psi_0' \, u^\dag_x \cdot u^\dag_t \biggr] \, dx \\
&= 2 \int_\rr \biggl[ T_{\bar{x}} \psi_0  \Bigl( u_t \cdot \bigl( - u_t - \nabla V(u) \bigr) + (u-m) \cdot u_t \Bigr) - T_{\bar{x}} \psi_0' \, u_x \cdot u_t \biggr] \, dx
\,,
\end{aligned}
\]
thus the conclusion follows from the bounds \vref{hyp_attr_ball_Linfty,hyp_attr_ball_X} for the solution.
\end{proof}
\subsection{Upper bound on the invasion speed}
Let us introduce the following two ``no-escape hull'' functions
\[
\hullNoescQZero:\rr\to\rr\cup \{+\infty\}
\quad\text{and}\quad
\hullNoescFZero:\rr\to\rr\cup \{+\infty\}
\]
defined as
\begin{figure}[!htbp]
	\centering
    \includegraphics[width=\textwidth]{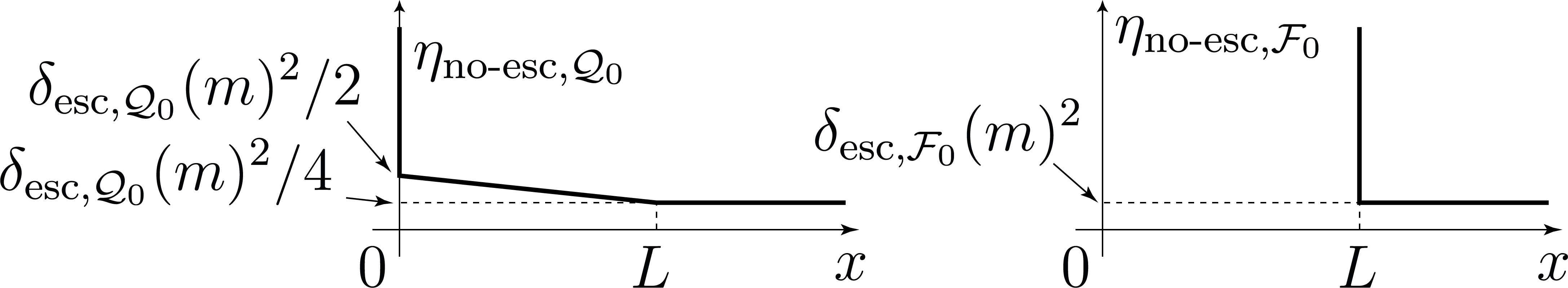}
    \caption{Graphs of the hull functions $\hullNoescQZero$ and $\hullNoescFZero$.}
    \label{fig:graph_hull}
\end{figure}
\[
\hullNoescQZero(x) = 
\left\{
\begin{aligned}
& +\infty & \quad\text{for}\quad & x<0 \,, \\
& \frac{\descQZero(m)^2}{2}\Bigl(1-\frac{x}{2\,L}\Bigr) & \quad\text{for}\quad & 0\le x\le L \,, \\
& \frac{\descQZero(m)^2}{4} & \quad\text{for}\quad & x\ge L \,,
\end{aligned}
\right.
\]
and
\[
\hullNoescFZero(x) = 
\left\{
\begin{aligned}
& +\infty & \quad\text{for}\quad & x<L \,, \\
& \descFZero(m)^2 & \quad\text{for}\quad & x\ge L \,,
\end{aligned}
\right.
\]
see \cref{fig:graph_hull}, and let us introduce the positive quantity $\snoesc$ (``no-escape speed'') defined as
\[
\snoesc = \frac{4 \, L \, \KQZeroGrowth}{\descQZero(m)^2}
\,.
\]
This quantity depends on $\alpha$ and $V$ and $m$ (only). The following lemma is a variant of \cite[\GlobalRelaxationLemBoundInvasionSpeed]{Risler_globalRelaxation_2016}. 
\begin{lemma}[bound on invasion speed]
\label{lem:inv}
For all real quantities $\xLeft$ and $\xRight$ and every nonnegative time $t_0$, if for all $x$ in $\rr$ the following properties holds:
\[
\begin{aligned}
\qqq_0(x,t_0) &\le \max\bigl( \hullNoescQZero(x-\xLeft) , \hullNoescQZero(\xRight - x) \bigr) \\
\text{and}\quad
\fff_0(x,t_0) &\le \max\bigl( \hullNoescFZero(x-\xLeft) , \hullNoescFZero(\xRight - x) \bigr)
\,,
\end{aligned}
\]
then, for every time $t$ greater than or equal to $t_0$ and for all $x$ in $\rr$, the following two inequalities hold
\[
\begin{aligned}
\qqq_0(x,t) &\le \max\Bigl( \hullNoescQZero \bigl(\xLeft-\snoesc\, (t-t_0)\bigr), \hullNoescQZero \bigl(\xRight + \snoesc\, (t-t_0)- x\bigr) \Bigr)\,, \\
\fff_0(x,t) &\le \max\Bigl( \hullNoescFZero \bigl(\xLeft-\snoesc\, (t-t_0)\bigr), \hullNoescFZero \bigl(\xRight + \snoesc\, (t-t_0)- x\bigr) \Bigr)\,.
\end{aligned}
\]
\end{lemma}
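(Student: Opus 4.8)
The plan is to run a barrier (continuity) argument in time. Write $x_\textup{\textrm{left}}(t)$ and $x_\textup{\textrm{right}}(t)$ for the two endpoints obtained by translating $x_\textup{\textrm{left}}$ and $x_\textup{\textrm{right}}$ at speed $\snoesc$ — so that the interval on which the right-hand sides of the two conclusions are finite is $\bigl[x_\textup{\textrm{left}}(t),x_\textup{\textrm{right}}(t)\bigr]$ and can only \emph{shrink} in time — and let $\mathcal{H}_Q(\cdot,t)$ and $\mathcal{H}_F(\cdot,t)$ denote those two right-hand sides (the maxima of the two translated hulls). Let $T_*$ be the supremum of the times $T\ge t_0$ such that both inequalities $\qqq_0(\cdot,t)\le\mathcal{H}_Q(\cdot,t)$ and $\fff_0(\cdot,t)\le\mathcal{H}_F(\cdot,t)$ hold on all of $\rr$ for every $t$ in $[t_0,T]$; by hypothesis $T_*\ge t_0$, and the goal is $T_*=+\infty$. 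Since $u$ is $\ccc^0$ in $\HoneulofR$ and $\ccc^1$ in $\LtwoulofR$, the maps $t\mapsto\qqq_0(\xi,t)$ and $t\mapsto\fff_0(\xi,t)$ are continuous, uniformly in $\xi$ (the $L^1$-mass of the weight $T_\xi\psi_0$ does not depend on $\xi$), and the hulls are continuous; hence both inequalities still hold at $t=T_*$, and it suffices, assuming $T_*<+\infty$, to propagate them slightly beyond $T_*$ to reach a contradiction.

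First I record the elementary fact that drives everything: on $[t_0,T_*]$, wherever $\mathcal{H}_Q(x,t)$ is finite it is at most $\descQZero^2/2<\descQZero^2$ (the largest value taken by $\hullNoescQZero$ is its value $\descQZero^2/2$ at the outer edge of the ramp), so $\qqq_0(x,t)\le\descQZero^2$ there and \cref{lem:esc_Esc_qqq0} gives $\abs{u(x,t)}\le\dEsc$; in particular $\abs{u(\cdot,t)}\le\dEsc$ on all of $\bigl[x_\textup{\textrm{left}}(t),x_\textup{\textrm{right}}(t)\bigr]$. The role of the width $L$ entering the hulls is precisely that this interval contains the $L$-neighbourhood of every point at which $\mathcal{H}_F$ is finite, and of every point of the floor part of $\mathcal{H}_Q$; this is exactly what makes \cref{lem:fff0_decrease,lem:esc_fff0_qqq0} applicable on those regions.

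The extension past $T_*$ then splits into three regions. On the ramp part of $\mathcal{H}_Q$ the only available tool is the crude bound $\partial_t\qqq_0\le\KQZeroGrowth$ of \cref{lem:bd_dt_Q0}; here the point of the definition $\snoesc=4L\KQZeroGrowth/\descQZero^2$ is that $\snoesc$ times the slope $\descQZero^2/(4L)$ of $\hullNoescQZero$ on $[0,L]$ equals exactly $\KQZeroGrowth$, so the hull value seen along a fixed vertical $x$ grows at rate $\KQZeroGrowth$ while $\qqq_0$ grows at rate at most $\KQZeroGrowth$; hence $\qqq_0\le\mathcal{H}_Q$ is preserved there, and a point reaching the inner edge of the ramp does so with $\qqq_0\le\descQZero^2/4$, matching the floor value. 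On the floor part of $\mathcal{H}_F$ (value $\descFZero^2$), since $\abs{u}\le\dEsc$ on the whole $L$-window around such a point, \cref{lem:fff0_decrease} gives $\partial_t\fff_0<0$ as soon as $\fff_0\ge\descFZero^2/2$, so the bound $\fff_0\le\descFZero^2$ is self-maintaining. Finally, on the floor part of $\mathcal{H}_Q$, once $\fff_0\le\descFZero^2$ is known, \cref{lem:esc_fff0_qqq0} — using again $\abs{u}\le\dEsc$ on the $L$-window and the choice of $L$ through \cref{property_L_1} — yields $\qqq_0\le\descQZero^2/4$, i.e. the floor bound. Combining the three regions, both inequalities persist on a right-neighbourhood of $T_*$, contradicting maximality; hence $T_*=+\infty$.

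The main obstacle is the interdependence of the $\qqq_0$- and $\fff_0$-bounds on the floor region — each is used to prove the other — so the step past $T_*$ cannot be a naive touching-point analysis but must be organised as a genuine bootstrap: one first maintains $\fff_0\le\descFZero^2$ using only the \emph{robust} information $\qqq_0\le\descQZero^2$ (a bound with a comfortable safety factor over the floor value $\descQZero^2/4$, hence insensitive to the slow growth allowed by \cref{lem:bd_dt_Q0}), and only afterwards recovers the sharp $\qqq_0\le\descQZero^2/4$ from \cref{lem:esc_fff0_qqq0}. A secondary, purely bookkeeping, difficulty is to track points crossing the moving boundaries between the ramp, the floor, and the unconstrained region $\{\mathcal{H}=+\infty\}$, and to check that $L$-windows never protrude from $\bigl[x_\textup{\textrm{left}}(t),x_\textup{\textrm{right}}(t)\bigr]$ — which is guaranteed by the geometry of the hulls together with \cref{property_L_1,property_L_2}.
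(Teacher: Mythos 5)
Your proof is correct and takes essentially the same route as the paper's, which merely cites \cref{lem:esc_Esc_qqq0,lem:esc_fff0_qqq0,lem:fff0_decrease,lem:bd_dt_Q0} and defers the details to the companion papers: a continuity (barrier) argument in time, with the growth bound of \cref{lem:bd_dt_Q0} matched against the slope of the ramp (exactly what the definition of $\snoesc$ encodes), the firewall differential inequality of \cref{lem:fff0_decrease} on the plateau, and the bootstrap ordered so that the robust bound $\qqq_0\le\descQZero^2$ feeds \cref{lem:esc_Esc_qqq0} before the sharp floor bound is recovered from \cref{lem:esc_fff0_qqq0}. Your reading of the conclusion as a constrained interval shrinking at speed $\snoesc$ from each side is the intended one.
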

\begin{proof}
The proof follows from \cref{lem:esc_Esc_qqq0,lem:esc_fff0_qqq0,lem:fff0_decrease,lem:bd_dt_Q0}. It is almost identical to the proof of \cite[\GlobalRelaxationLemBoundInvasionSpeed]{Risler_globalRelaxation_2016} (see also \cite[\GlobalBehaviourLemBoundInvasionSpeed{} and \GlobalBehaviourFigBoundInvasionSpeed]{Risler_globalBehaviour_2016}). The details are skipped. 
\end{proof}
\subsection{Set-up for the proof, 2: escape point and associated speeds}
\label{subsec:inv_cv_set_pf_cont}
With the notation and results of the previous subsections in hand, let us pursue the set-up for the proof of \cref{prop:inv_cv} ``invasion implies convergence''. According to hypothesis \textup{(\hyperlink{hypHomRight}{\hypHomRightRef})}, it may be assumed, up to changing the origin of time, that, for all $t$ in $[0,+\infty)$ and for all $x$ in $\rr$,
\begin{equation}
\label{hyp_for_def_xesc}
\begin{aligned}
\qqq_0(x,t) & \le \max\biggl( \hullNoescQZero \Bigl( x-\bigl( \xHom(t)-1\bigr) \Bigr) , \hullNoescQZero \bigl( \xHom(t) - x \bigr) \biggr) \\
\text{and}\quad
\fff_0(x,t) & \le \max\biggl( \hullNoescFZero \Bigl( x-\bigl( \xHom(t)-1\bigr) \Bigr) , \hullNoescFZero \bigl( \xHom(t) - x \bigr) \biggr)
\,.
\end{aligned}
\end{equation}
As a consequence, for all $t$ in $[0,+\infty)$, the set
\[
\begin{aligned}
\IHom(t) = \Bigl\{ & x_{\ell} \le \xHom(t) : \text{ for all } x \text{ in } \rr\,, 
\\
& \qqq_0(x,t) \le \max\Bigl( \hullNoescQZero( x-x_{\ell}) , \hullNoescQZero \bigl( \xHom(t) - x \bigr) \Bigr)
\quad\text{and} \\
& \fff_0(x,t) \le \max\Bigl( \hullNoescFZero( x-x_{\ell}) , \hullNoescFZero \bigl( \xHom(t) - x \bigr) \Bigr)
\Bigr\}
\end{aligned}
\]
is a nonempty interval (containing $[\xHom(t)-1,\xHom(t)]$) that must be bounded from below. Indeed, if at a certain time it was not bounded from below --- in other words if it was equal to $(-\infty,\xHom(t)]$ --- then according to \cref{lem:inv} this would remain unchanged in the future, thus according to \cref{lem:esc_Esc_qqq0} the point $\xEsc(t)$ would remain equal to $-\infty$ in the future, a contradiction with hypothesis \textup{(\hyperlink{hypInv}{\hypInvRef})}.

For all $t$ in $[0,+\infty)$, let 
\begin{equation}
\label{def_xesc}
\xesc(t) = \inf \bigl( \IHom(t) \bigr) 
\quad
\text{(thus } \xesc(t)>-\infty \text{).}
\end{equation}
Somehow like $\xEsc(t)$, this point represents the first point at the left of $\xHom(t)$ where the solution $u$ (respectively $u^\dag$) ``escapes'' (in a sense defined by the functions $\qqq_0$ and $\fff_0$ and the no-escape hulls $\hullNoescQZero$ and $\hullNoescFZero$) at a certain distance from $m$ (respectively from $0_{\rr^d}$). In the following, this point $\xesc(t)$ will be called the ``escape point'' (by contrast with the ``Escape point'' $\xEsc(t)$ defined before). According to the first of the ``hull inequalities'' \cref{hyp_for_def_xesc} and \cref{lem:esc_Esc_qqq0} (``$\qqq_0$ controls $u^\dag$''), for all $t$ in $[0,+\infty)$,
\begin{equation}
\label{xEsc_xesc_xHom}
\xEsc(t) \le \xesc(t) \le \xHom(t)-1 
\quad\text{and}\quad
\SigmaEscZero(t) \cap [\xEsc(t),\xHom(t)] = \emptyset
\,,
\end{equation}
and, according to hypothesis \textup{(\hyperlink{hypHomRight}{\hypHomRightRef})}, 
\begin{equation}
\label{xHom_minus_xesc}
\xHom(t) - \xesc(t) \to +\infty
\quad\text{as}\quad
t\to +\infty
\,.
\end{equation}
The big advantage of $\xesc(\cdot)$ with respect to $\xEsc(\cdot)$ is that, according to \cref{lem:inv}, the growth of $\xesc(\cdot)$ is more under control. More precisely, according to this lemma, for all nonnegative quantities $t$ and $s$, 
\begin{equation}
\label{control_escape}
\xesc(t+s)\le \xesc(t) + \snoesc \, s
\,.
\end{equation}
For every $s$ in $[0,+\infty)$, let us consider the ``upper and lower bounds of the variations of $\xesc(\cdot)$ over all time intervals of length $s$'':
\begin{figure}[!htbp]
	\centering
    \includegraphics[width=\textwidth]{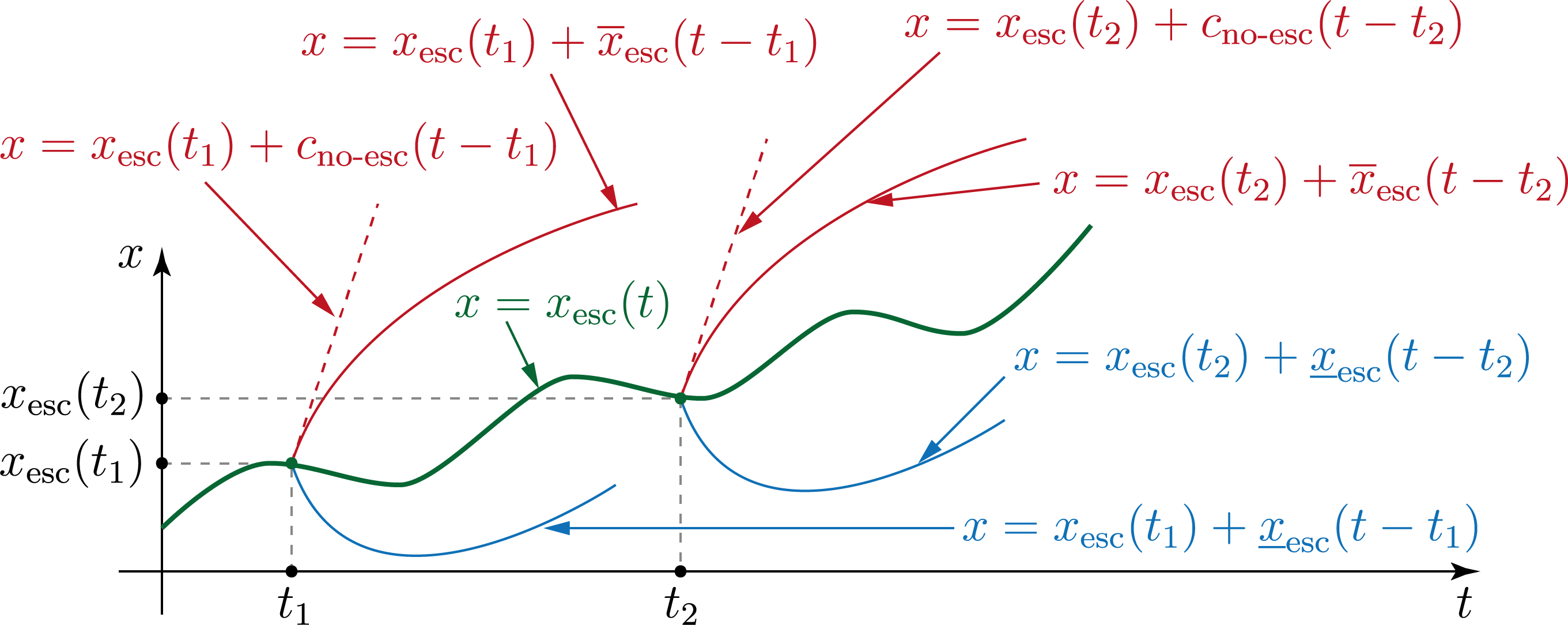}
    \caption{Illustration of the bounds \cref{various_bounds_x_esc}.}
    \label{fig:def_bar_underbar_xesc}
\end{figure}
\[
\barxesc(s) = \sup_{t\in[0,+\infty)} \xesc(t+s) - \xesc(t)
\quad\text{and}\quad
\underbarxesc(s) = \inf_{t\in[0,+\infty)} \xesc(t+s) - \xesc(t)
\,,
\]
see \cref{fig:def_bar_underbar_xesc}.
According to these definitions and to inequality \cref{control_escape} above, for all $t$ and $s$ in $[0,+\infty)$,
\begin{equation}
\label{various_bounds_x_esc}
-\infty\le\underbarxesc(s)\le \xesc(t+s)-\xesc(t)\le \barxesc(s) \le \snoesc\, s
\,.
\end{equation}
Let us consider the four limit mean speeds:
\[
\sescinf = \liminf_{t\to+\infty}\frac{\xesc(t)}{t}
\quad\text{and}\quad
\sescsup = \limsup_{t\to+\infty}\frac{\xesc(t)}{t}
\]
and
\[
\underbarsescinf = \liminf_{s\to+\infty}\frac{\underbarxesc(s)}{s}
\quad\text{and}\quad
\barsescsup = \limsup_{s\to+\infty}\frac{\barxesc(s)}{s}
\,.
\]
The following inequalities follow from these definitions and from hypothesis \textup{(\hyperlink{hypInv}{\hypInvRef})}:
\[
-\infty \le \underbarsescinf \le \sescinf \le \sescsup \le \barsescsup \le \snoesc
\quad\text{and}\quad
0 < \sEsc \le \sescsup
\,.
\]
The four limit mean speeds defined just above will turn out to be equal. The proof of this equality is based on the ``relaxation scheme'' that will be set up in \cref{subsec:relax_sch_tr_fr} below. To this end, an additional estimate on these speeds (namely, the fact that they are smaller than the maximum speed of propagation $1/\sqrt{\alpha}$) is required. This is the purpose of the next \namecref{subsec:further_bd_finite_speed}.
\subsection{Further (subsonic) bound on invasion speed, preparation}
\label{subsec:further_bd_finite_speed}
The next \namecref{subsec:further_bd_finite_speed} will be devoted to the relaxation scheme in a travelling frame that is the core of the proof of \cref{thm:1}. This relaxation scheme will require an upper bound on the parabolic speed of the travelling frame, in other words it will require that the physical speed of the travelling frame be (strictly) subsonic (without this requirement all estimates would literally blow up). The aim of this \namecref{subsec:further_bd_finite_speed} is to define the value of this upper bound (namely the quantity $\cUpp$ defined below). Using the relaxation scheme set up in the next \namecref{subsec:further_bd_finite_speed}, it will be proved later (\cref{lem:further_bd_finite_speed} in \cref{subsubsec:proof_bd_cmax}) that the (upper) limit mean speed $\barsescsup$ is not larger than this (subsonic) bound $\cUpp$.

These observations and statements are very similar to (and much inspired by) those made by Gallay and Joly in \cite{GallayJoly_globStabDampedWaveBistable_2009}. To define the subsonic bound on invasion speed, these authors used a Poincaré inequality in the weighted Sobolev spaces $H^1_c(\rr,\rr^d)$ (see \cite[subsection~4.2]{GallayJoly_globStabDampedWaveBistable_2009}). Although based on the same idea, the definition of $\cUpp$ below is slightly different and suits better the purpose pursued here (that is, the convergence towards a stacked family of travelling fronts). 

Let us recall the quantity $\Delta_V$ defined in \vref{subsubsec:max_distance_values_V} and let us introduce the (positive) quantities
\begin{equation}
\label{def_cmax}
\cUpp = \frac{4 \Delta_V}{\dEsc(m)^2\min\Bigl(\frac{1}{2}, \frac{\eigVmin(m)}{4}\Bigr) } + 1
\quad\text{and}\quad
\Eesc = \frac{1}{4}\dEsc(m)^2\min\Bigl(\frac{1}{2}, \frac{\eigVmin(m)}{4}\Bigr)
\,.
\end{equation}
These two quantities depend on $\alpha$ and $V$ and $m$ (only).
The following lemma provides a justification for this value of $\cUpp$ and will be used in \cref{subsubsec:proof_bd_cmax} to prove \cref{lem:further_bd_finite_speed} stating that the (upper) limit mean speed $\barsescsup$ is not larger than $\cUpp$. Note that the ``$+1$'' in the definition of $\cUpp$ is only to ensure that $\cUpp$ is nonzero (and actually not smaller than $1$), since the quantity $\Delta_V$ may be equal to $0$ (if the set $\mmm$ is reduced to a single point). 
\begin{lemma}[positive energy at Escape point when travelling frame speed is large positive]
\label{lem:posit_en_Esc}
For every function $w$ in $\HulofR{1}$ and every quantities $\xi_0$ and $c$ satisfying the conditions
\[
\abs{w(\xi_0)} = \dEsc(m)
\quad\text{and}\quad
\abs{w(\xi)} \le \dEsc(m) \text{ for all } \xi \text{ in } [\xi_0,\xi_0+1]
\quad\text{and}\quad
c \ge \cUpp
\,,
\]
the following estimate holds:
\begin{equation}
\label{posit_en_Esc}
\int_{-\infty}^{\xi_0+1} e^{c\xi} \Bigl( \frac{1}{2}w'(\xi)^2 + V^\dag\bigl(w(\xi)\bigr) \Bigr) \, d\xi \ge  \Eesc e^{c \xi_0}
\,.
\end{equation}
\end{lemma}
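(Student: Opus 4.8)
The plan is to establish \cref{posit_en_Esc} by showing that on the interval $[y_0,y_0+1]$ — where $\abs{w}$ reaches $\dEsc$ at $y_0$ — the quantity $\frac{w'^2}{2}+V(w)$ carries a definite amount of energy, enough so that the exponential weight $e^{cy}$ concentrated near $y_0$ makes the whole integral at least $e^{cy_0}\Eesc$, \emph{provided} $c$ is large; and to control the (possibly negative) contribution of the tail $(-\infty,y_0]$ using coercivity of $V$ together with the fact that $c\ge\cmax$ is large enough to dominate it. Since $V$ is bounded below by $-\Delta_V$ relative to its values at minimum points (shifting so that $\min_{\mmm} V$ is the reference), the tail contributes at worst $-\Delta_V\int_{-\infty}^{y_0}e^{cy}\,dy = -\frac{\Delta_V}{c}e^{cy_0}$, and this is the term the $4\Delta_V/(\cdots)$ factor in $\cmax$ is designed to absorb.

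Concretely, first I would reduce to a lower bound on $\int_{y_0}^{y_0+1}e^{cy}\bigl(\frac{w'^2}{2}+V(w)\bigr)\,dy$. There are two regimes. If $w$ stays away from every minimum point on $[y_0,y_0+1]$, then $V(w)$ is bounded below by a positive constant there (by compactness and nondegeneracy, since on the attracting ball $V$ minus its minimum-point values is positive away from neighbourhoods of $\mmm$); integrating $e^{cy}$ over a unit interval then already gives something comparable to $e^{cy_0}$ times a positive constant. If instead $w$ comes close to some $m\in\mmm$ somewhere on $[y_0,y_0+1]$, then since $\abs{w(y_0)-m_{\text{relevant}}}$ is of order $\dEsc$ (it equals $\dEsc$ in the normalized case $m=0$) while $w$ gets within a small distance of $m$, the gradient term $\int w'^2$ must be of order $\dEsc^2$; more carefully, using $\abs{w(y_0)}=\dEsc$ and the property \cref{property_d_Esc} of the escape distance, one argues as in the elementary escape lemma: either $V(w)\ge \frac{\eigVmin}{4}\dEsc^2$-ish on a subinterval, or $\int_{y_0}^{y_0+1}\frac{w'^2}{2}\ge$ a constant multiple of $\dEsc^2$. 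In either case one obtains
\[
\int_{y_0}^{y_0+1} e^{cy}\Bigl(\frac{w'^2}{2}+V(w)\Bigr)\,dy \ge c_1 \,\min\!\bigl(\tfrac12,\tfrac{\eigVmin}{4}\bigr)\dEsc^2 \, e^{cy_0}
\]
for a suitable absolute $c_1$ (the value $e^{cy_0}\cdot\tfrac14\min(\tfrac12,\eigVmin/4)\dEsc^2$ being exactly $e^{cy_0}\Eesc$ after a clean optimization of the split point).

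Then I would assemble: the full integral is at least $e^{cy_0}\cdot 2\Eesc \;-\; \frac{\Delta_V}{c}e^{cy_0}$ (the factor $2$ left as slack), and since $c\ge\cmax = 1+\frac{4\Delta_V}{\min(1/2,\eigVmin/4)\dEsc^2}$ we have $\frac{\Delta_V}{c}\le \frac{\Delta_V}{\cmax}\le \Eesc$, so the right-hand side is $\ge e^{cy_0}\Eesc$, which is the claim. The main obstacle I anticipate is the sharp bookkeeping in the ``close to a minimum point'' regime — getting a clean unit-interval lower bound on $\int \frac{w'^2}{2}+V(w)$ purely from $\abs{w(y_0)}=\dEsc$ and the constraint $\abs{w}\le\dEsc$ on $[y_0,y_0+1]$, without any ODE structure on $w$ (here $w$ is just an $\HoneulofR$ function, not a front profile). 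This is a Poincaré/Cauchy–Schwarz type argument: if $w$ dips well below $\dEsc$ in magnitude the derivative integral is forced up, and if it stays near $\dEsc$ in magnitude then — while it need not stay near a single minimum point — the potential term $V(w)-\min_\mmm V$ stays bounded below because $\abs{w}$ is pinned at a fixed distance that exceeds no escape threshold collapse; one has to be a little careful that ``$\abs{w}=\dEsc$'' pins $w$ near the \emph{nearest} minimum point only if $\dEsc$ is small enough, which is exactly condition \cref{def_r_Esc}. I expect this to go through with the constants as written, the ``$1+$'' and the factor $4$ in $\cmax$ providing exactly the room needed.
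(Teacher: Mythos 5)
Your proposal is correct and follows essentially the same route as the paper: bound the tail $(-\infty,y_0]$ below by $-\frac{\Delta_V}{c}e^{cy_0}$, extract $e^{cy_0}\cdot\frac{1}{2}\min\bigl(\frac12,\frac{\eigVmin}{4}\bigr)\dEsc^2=2e^{cy_0}\Eesc$ from $[y_0,y_0+1]$, and absorb the tail using $c\ge\cmax$. The only simplification worth noting is that your two-regime discussion is unnecessary: since $\abs{w}\le\dEsc$ on all of $[y_0,y_0+1]$, property \cref{property_d_Esc} gives $V(w)\ge\frac{\eigVmin}{4}w^2$ pointwise there, and the single trace inequality $\dEsc^2=w(y_0)^2\le 2\int_{y_0}^{y_0+1}(w^2+w'^2)\,dy$ (obtained by differentiating $\theta(y)w(y)^2$ with $\theta(y)=y_0+1-y$) then yields the required lower bound in one stroke.
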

\begin{proof}
Let us introduce a function $w$ in $\HulofR{1}$ and quantities $\xi_0$ and $c$ satisfying the hypotheses above. Then, according to inequality \vref{posit_pot_around_loc_min_dag}, 
\[
\begin{aligned}
\int_{-\infty}^{\xi_0+1} & e^{c\xi} \Bigl( \frac{1}{2}w'(\xi)^2 + V^\dag\bigl(w(\xi)\bigr) \Bigr) \, d\xi \\
& \ge \int_{-\infty}^{\xi_0} e^{c\xi} (-\Delta_V) \, d\xi + \int_{\xi_0}^{\xi_0+ 1} e^{c\xi}\Bigl( \frac{1}{2}w'(\xi)^2 + \frac{\eigVmin(m)}{4} w(\xi)^2 \Bigr) \, d\xi \\
& \ge e^{c \xi_0} \biggl( -\frac{\Delta_V }{c} + \min\Bigl(\frac{1}{2},\frac{\eigVmin(m)}{4}\Bigr) \int_{\xi_0}^{\xi_0+1} \bigl( w'(\xi)^2 + w(\xi)^2 \, d\xi \biggr)
\,.
\end{aligned}
\] 
Let us denote by $\theta$ the affine function taking the value $1$ at $\xi_0$ and $0$ at $\xi_0+1$, namely defined as $\theta (\xi) = \xi_0+1 - \xi$. Then, 
\[
\begin{aligned}
\dEsc(m)^2 & = w(\xi_0)^2 = \theta(\xi_0) w(\xi_0)^2 \\
& = - \int_{\xi_0}^{\xi_0+1} \frac{d}{d\xi}\bigl( \theta(\xi) w(\xi)^2 \bigr) \, d\xi \\
& = - \int_{\xi_0}^{\xi_0+1} \bigl( \theta'(\xi) w(\xi)^2 + 2 \theta(\xi) w(\xi) w'(\xi) \bigr) \, d\xi \\
& \le 2 \int_{\xi_0}^{\xi_0+1} \bigl( w(\xi)^2 + w'(\xi)^2\bigr) \, d\xi
\,.
\end{aligned}
\]
It follows from these two inequalities that
\[
\int_{-\infty}^{\xi_0+1} e^{c\xi} \Bigl( \frac{1}{2}w'(\xi)^2 + V^\dag\bigl(w(\xi)\bigr) \Bigr) \, d\xi \ge e^{c \xi_0} \biggl( -\frac{\Delta_V }{c} + \frac{1}{2} \min\Bigl(\frac{1}{2},\frac{\eigVmin(m)}{4}\Bigr) \dEsc(m)^2 \biggr)
\,,
\]
and in view of the definitions \cref{def_cmax} of $\cUpp$ and $\Eesc$, inequality \cref{posit_en_Esc} follows. \Cref{lem:posit_en_Esc} is proved. 
\end{proof}
\subsection{Relaxation scheme in a travelling frame}
\label{subsec:relax_sch_tr_fr}
The aim of this \namecref{subsec:relax_sch_tr_fr} is to set up an appropriate relaxation scheme in a travelling frame. This means defining an appropriate localized energy and controlling the ``flux'' terms occurring in the time derivative of this localized energy. The considerations made in \vref{subsec:1rst_ord} will be put in practice. 
\subsubsection{Notation for the travelling frame}
\label{subsubsec:def_trav_f}
Let us keep the notation and hypotheses introduced above (since the beginning of \cref{subsec:inv_cv_set_pf}), and let us introduce the following real quantities that will play the role of ``parameters'' for the relaxation scheme below:
\begin{itemize}
\item the ``initial time'' $\tInit$ of the time interval of the relaxation;
\item the initial position $\xInit$ of the origin of the travelling frame;
\item the ``parabolic'' speed $c$ of the travelling frame and its ``physical'' speed $\sigma$, related by
\[
\sigma=\frac{c}{\sqrt{1+\alpha c^2}}
\iff 
c = \frac{\sigma}{\sqrt{1-\alpha \sigma^2}}
\,;
\]
\item a quantity $\initCut$ that will be the the position of the maximum point of the weight function $y\mapsto\chi(y,\tInit)$ localizing energy at initial time $t=\tInit$ (this weight function is defined below). 
\end{itemize}
Let us recall the (positive) quantity $\cUpp$ defined in the previous \namecref{subsubsec:def_trav_f} and let us make on these parameters the following hypotheses:
\begin{equation}
\label{hyp_param_relax_sch}
0\le \tInit
\quad\text{and}\quad
0 < c \le \cUpp
\quad\text{and}\quad
0 \le \initCut
\,.
\end{equation}
The relaxation scheme will be applied several time in the next pages, for various choices of this set of parameters. 

For every real quantity $\xi$ and every nonnegative quantity $s$, let
\[
v(\xi,s) = u^\dag(x,t)
\]
where $(\xi,s)$ and $(x,t)$ are related by
\[
t = \tInit + s 
\quad\text{and}\quad
x = \xInit + \sigma s + \frac{\xi}{\sqrt{1+\alpha c^2}} 
\iff
\xi = \sqrt{1+\alpha c^2} (x - \xInit) - cs
\,,
\]
see \cref{fig:trav_fr}. 
\begin{figure}[!htbp]
	\centering
    \includegraphics[width=\textwidth]{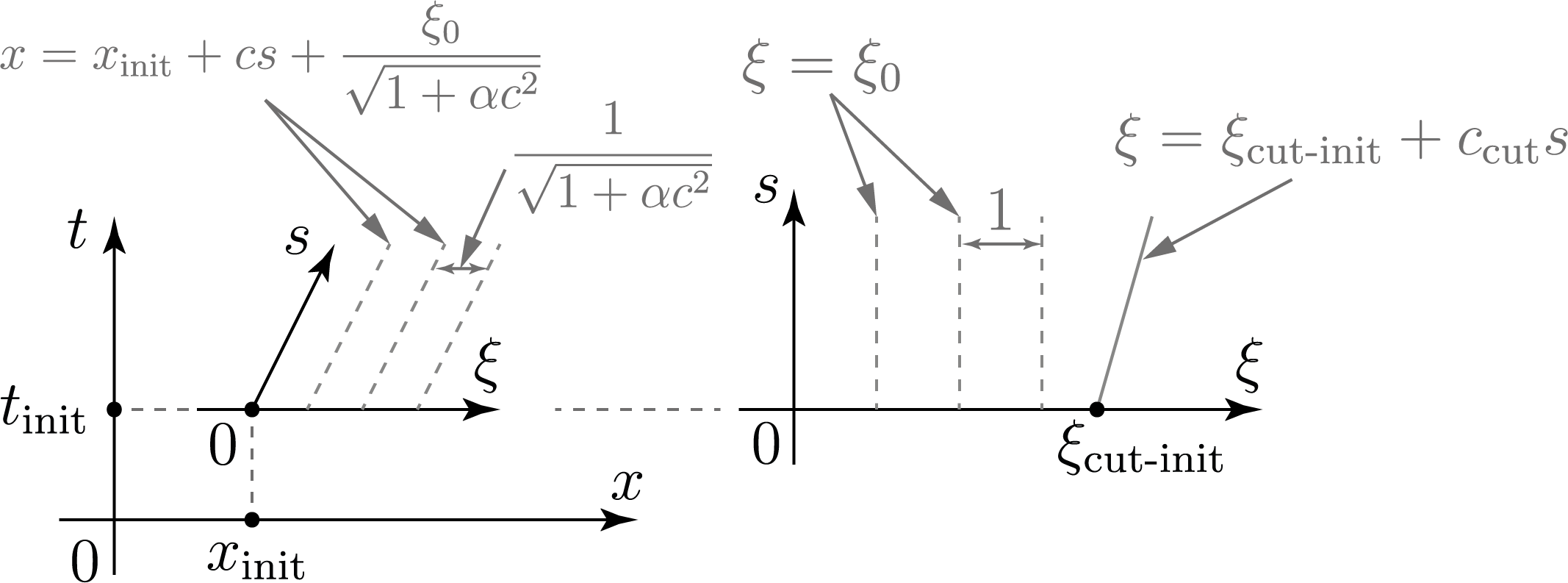}
    \caption{Space coordinate $\xi$ and time coordinate $s$ in the travelling frame, and parameters $\tInit$ and $\xInit$ and $c$ and $\initCut$.}
    \label{fig:trav_fr}
\end{figure}
The system satisfied by $v(\cdot,\cdot)$ reads
\[
\alpha v_{ss} + v_s - 2 \alpha c v_{\xi s} = - \nabla V^\dag(v) + c v_\xi + v_{\xi\xi}
\,.
\]
Let $\kappa$ (rate of decrease of the weight functions) and $\cCut$ (speed of the cutoff point in the travelling frame) be two positive quantities, small enough so that the following conditions be satisfied:
\begin{equation}
\label{condition_kappa_lower_bound_F}
\alpha\kappa c\le\frac{1}{6}
\end{equation}
(this condition will be used in \vref{lem:lower_bound_F}, lower bound on the firewall function) and 
\begin{equation}
\label{condition_kappa_cCut_upper_bound_dsF}
\begin{aligned}
(c+\kappa)\bigl( 2 \alpha\kappa + \cCut ( \alpha + 1/2)  \bigr) \le \frac{1}{2}
\quad&\text{and}\quad
\alpha \cCut(c+\kappa)(c+1)\le \frac{1}{4} \\
\text{and}\quad
\frac{c+\kappa}{2} \Bigl( \kappa + \cCut \bigl( 1 + \alpha(2c+1) \bigr) \Bigr) \le \frac{\eigVmin(m)}{8}
\quad&\text{and}\quad
2\alpha \cCut(c+\kappa)\le \frac{1}{4}
\end{aligned}
\end{equation}
(these conditions will be used to derive the upper bound \vref{condition_kappa_cCut_upper_bound_dsF} on the time derivative of the firewall). These two quantities may be chosen as
\[
\begin{aligned}
\kappa &= \min\Bigl( \frac{1}{16\alpha\cUpp},\frac{1}{\sqrt{\alpha}},\frac{\eigVmin(m)}{16\cUpp},\frac{\sqrt{\eigVmin(m)}}{4}\Bigr) \\
\text{and}\qquad
\cCut &= \frac{1}{\cUpp+\kappa}\min\Bigl(\frac{1}{2(2\alpha+1)},\frac{1}{4\alpha(\cUpp+1)},\frac{1}{8\alpha},\frac{\eigVmin(m)}{8\bigl(1+\alpha(2\cUpp+1)\bigr)}  \Bigr)
\,.
\end{aligned}
\]
\subsubsection{Localized energy}
\label{subsubsec:def_loc_en}
For every real quantity $s$, let us introduce the two intervals
\[
\iMain(s) = ( - \infty , \initCut + \cCut s] 
\quad\text{and}\quad
\iRight(s) = [ \initCut + \cCut s , +\infty) \,,
\]
and let us introduce the function $\chi(\xi,s)$ (weight function for the localized energy) defined as
\[
\chi(\xi,s) =
\left\{
\begin{aligned}
&\exp(c\xi) 
& &\text{if}\quad
\xi \in \iMain(s) \,, \\
&\exp\bigl( (c+ \kappa) (\initCut + \cCut s) -\kappa \xi \bigr) 
& &\text{if}\quad
\xi \in \iRight(s) 
\,,
\end{aligned}
\right.
\]
see \cref{fig:chi_psi}, 
\begin{figure}[!htbp]
	\centering
    \includegraphics[width=0.8\textwidth]{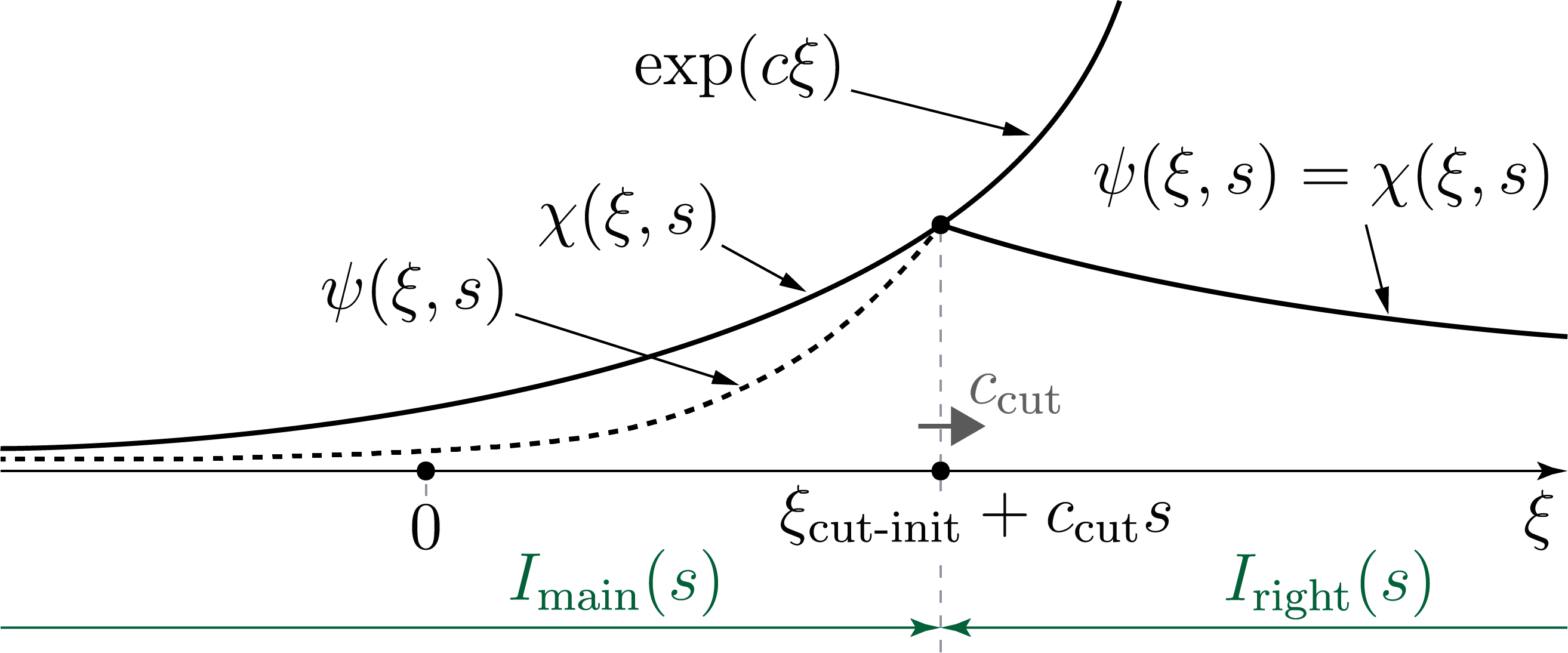}
    \caption{Graphs of the weight functions $\chi(\xi,s)$ and $\psi(\xi,s)$.}
    \label{fig:chi_psi}
\end{figure}
and, for all $s$ in $[0,+\infty)$, let us define the ``energy'' $\eee(s)$ by
\[
\eee(s) = \int_{\rr} \chi(\xi,s)E^\dag(\xi,s) \, d\xi\,,
\quad\text{where}\quad
E^\dag(\xi,s) = \frac{\alpha}{2}v_s(\xi,s)^2 + \frac{1}{2}v_\xi(\xi,s)^2 + V^\dag\bigl(v(\xi,s)\bigr)
\,.
\]
\subsubsection{Time derivative of the localized energy}
\label{subsubsec:der_loc_en}
For every nonnegative quantity $s$, let us define the ``dissipation'' $\ddd(s)$ by
\begin{equation}
\label{def_dissip_tr_fr}
\ddd(s) = \int_{\rr} \chi(\xi,s)\, v_s(\xi,s)^2 \, d\xi
\,.
\end{equation}
\begin{lemma}[time derivative of the localized energy]
\label{lem:upp_bd_dE_tf}
For every nonnegative quantity $s$, 
\begin{equation}
\label{upp_bd_dE_tf}
\begin{aligned}
\eee'(s) \le & -(1+\alpha c^2) \ddd(s) \\
& + (c+\kappa)\int_{\iRight(s)}\chi \biggl[ \frac{\alpha(2c+\cCut)+1}{2} v_s^2 + \frac{\cCut+1}{2} v_\xi^2 + \cCut V^\dag(v) \biggr] \, d\xi
\,.
\end{aligned}
\end{equation}
\end{lemma}
\begin{proof}
According to expression \vref{ddt_loc_en_tf_second} for the derivative of a localized energy and from the definition \cref{def_dissip_tr_fr} of $\ddd(s)$,
\begin{equation}
\label{upp_bd_dE_tf_proof}
\eee'(s) = -(1+\alpha c^2) \ddd(s) + \int_{\rr}\biggl[ \chi_s \Bigl( \frac{\alpha}{2}v_s^2 +  \frac{1}{2}v_\xi^2 + V^\dag(v) \Bigr) + (c\chi-\chi_\xi)(\alpha c v_s^2 + v_\xi\cdot v_s) \biggr] \, d\xi 
\,.
\end{equation}
It follows from the definition of $\chi$ that
\[
\chi_s(\xi,s) = 
\left\{
\begin{aligned}
&0  
& &\text{if}\quad \xi \in \iMain(s) \,, \\ 
&\cCut(c+\kappa)\, \chi(\xi,s) 
& &\text{if}\quad \xi \in \iRight(s) \,,
\end{aligned}
\right.
\]
and
\[
(c\chi-\chi_\xi)(\xi,s) = 
\left\{
\begin{aligned}
&0 
& &\text{if}\quad \xi \in \iMain(s) \,, \\ 
&(c+\kappa)\, \chi(\xi,s) 
& &\text{if}\quad \xi \in\iRight(s) \,.
\end{aligned}
\right.
\]
Thus it follows from \cref{upp_bd_dE_tf_proof}  that
\[
\begin{aligned}
\eee'(s) & = -(1+\alpha c^2) \ddd(s) \\
& + (c+\kappa)\int_{\iRight(s)}\chi \biggl[ \cCut \Bigl( \frac{\alpha}{2}v_s^2 +  \frac{1}{2}v_\xi^2 + V^\dag(v) \Bigr) +  (\alpha c v_s^2 + v_\xi\cdot v_s) \biggr] \, d\xi 
\,,
\end{aligned}
\]
and using the inequality
\[
v_\xi \cdot v_s \le \frac{1}{2}v_\xi^2 + \frac{1}{2}v_s^2
\,,
\]
it follows that
\[
\begin{aligned}
\eee'(s) \le & -(1+\alpha c^2) \ddd(s) \\
& + (c+\kappa)\int_{\iRight(s)}\chi \biggl[ \Bigl(\frac{\alpha\cCut}{2}+\alpha c + \frac{1}{2}\Bigr) v_s^2 + \Bigl( \frac{\cCut}{2} + \frac{1}{2}\Bigr) v_\xi^2 + \cCut V^\dag(v) \biggr] \, d\xi
\,.
\end{aligned}
\]
and inequality \cref{upp_bd_dE_tf} follows. \Cref{lem:upp_bd_dE_tf} is proved.
\end{proof}
\subsubsection{Firewall function}
\label{subsubsec:def_firewall}
A second function (the ``firewall'') will now be defined, to get some control over the second term of the right-hand side of inequality \cref{upp_bd_dE_tf}.
Let us introduce the function $\psi(\xi,s)$ (weight function for the firewall function) defined as
\[
\psi(\xi,s) = 
\left\{
\begin{aligned}
&\exp\Bigl( \kappa \bigl(\xi - ( \initCut + \cCut s)\bigr) \Bigr) \chi(\xi,s) 
& &\text{if}\quad \xi \in \iMain(s) \,, \\ 
&\chi(\xi,s) 
& &\text{if}\quad \xi \in \iRight(s)
\,,
\end{aligned}
\right.
\]
see \cref{fig:chi_psi}. For every real quantity $\xi$ and every nonnegative quantity $s$, following expression \vref{fire_def_tf}, let
\begin{equation}
\label{def_fire}
\begin{aligned}
F^\dag(\xi,s) &= 2\alpha E^\dag(\xi,s) + \alpha v(\xi,s)\cdot v_s(\xi,s) + \Bigl( \frac{1}{2} + \alpha c \frac{\psi_\xi(\xi,s)}{\psi(\xi,s)} \Bigr) v(\xi,s)^2 \\
&= \left(\alpha^2 v_s^2 + \alpha v_\xi^2 + 2\alpha V^\dag(v) + \alpha v\cdot v_s + \Bigl( \frac{1}{2} + \alpha c \frac{\psi_\xi}{\psi} \Bigr) v^2\right)(\xi,s)
\,, 
\end{aligned}
\end{equation}
and let
\[
\fff(s) = \int_{\rr} \psi(\xi,s) F^\dag(\xi,s) \, d\xi 
\,.
\]
\subsubsection{Lower bound on the firewall function}
\begin{lemma}[lower bound on the firewall function]
\label{lem:lower_bound_F}
For every nonnegative quantity $s$,
\begin{equation}
\label{lower_bound_F}
\fff(s) \ge \int_{\rr}\psi(\xi,s)\Bigl[ \frac{\alpha^2}{4} v_s(\xi,s)^2 + \alpha v_\xi(\xi,s)^2 + 2 \alpha V^\dag\bigl(v(\xi,s)\bigr) \Bigr] \, d\xi 
\,.
\end{equation}
\end{lemma}
\begin{proof}
According to the polarization inequality \vref{polarization_inequality} and since the ratio $\psi_\xi/\psi$ is greater than or equal to $-\kappa$, the following inequality holds for every real quantity $\xi$ and every nonnegative quantity $s$:
\[
F^\dag(\xi,s) \ge \frac{\alpha^2}{4} v_s^2 + \alpha v_\xi^2 + 2 \alpha V^\dag(v) + \Bigl( \frac{1}{6} - \alpha c \kappa \Bigr) v^2 
\,.
\]	
Thus inequality \cref{lower_bound_F} follows from condition \vref{condition_kappa_lower_bound_F} satisfied by $\kappa$. 
\end{proof}
\subsubsection{Energy decrease up to firewall and pollution}
For every nonnegative quantity $s$, let
\[
\SigmaEsc(s) =\bigl\{\xi\in\rr : \abs{v(\xi,s)} > \dEsc(m) \bigr\}
\,.
\]
\begin{lemma}[energy decrease up to firewall and pollution]
\label{lem:approx_decr_energy}
There exist nonnegative quantities $\KEF$ and $\KEEsc$, depending on $\alpha$ and $V$ and $m$ (only), such that for every nonnegative quantity $s$, 
\begin{equation}
\label{dt_E_final}
\eee'(s) \le -(1+\alpha c^2) \ddd(s) + \KEF \fff(s) + \KEEsc \int_{\SigmaEsc(s)} \psi(\xi,s) \, d\xi
\,.
\end{equation}
\end{lemma}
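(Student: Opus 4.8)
The plan is to obtain inequality \cref{dt_E_final} by feeding the firewall coercivity estimate \cref{lem:coerc_fire} into the first approximate-decrease estimate \cref{lem:upp_bd_dE_tf}, using crucially the fact that the two weight functions $\chi(\cdot,s)$ and $\psi(\cdot,s)$ coincide on the right interval $\iRight(s)$ --- which is precisely the region where the ``pollution'' terms of \cref{lem:upp_bd_dE_tf} are supported. No new analytic estimate is required; the step amounts to rearranging nonnegative quantities.

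Concretely, I would start from the conclusion of \cref{lem:upp_bd_dE_tf},
\[
\eee'(s)\le-(1+\alpha c^2)\ddd(s)+\KEQ\int_{\iRight(s)}\chi\,(v_s^2+v_y^2+v^2)\,dy+\KEEscOne\int_{\iRight(s)\cap\Sigma_\textup{\textrm{Esc}}(s)}\chi\,dy
\,.
\]
Since $\chi(y,s)=\psi(y,s)$ for $y$ in $\iRight(s)$ and the integrand $v_s^2+v_y^2+v^2$ is nonnegative, the first pollution term is bounded above by $\KEQ\int_{\rr}\psi\,(v_s^2+v_y^2+v^2)\,dy=\KEQ\,\qqq(s)$; likewise, since $\iRight(s)\cap\Sigma_\textup{\textrm{Esc}}(s)\subseteq\Sigma_\textup{\textrm{Esc}}(s)$ and $\psi\ge0$, the last term is bounded above by $\KEEscOne\int_{\Sigma_\textup{\textrm{Esc}}(s)}\psi\,dy$.

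Next I would invoke \cref{lem:coerc_fire} in the form $\qqq(s)\le\epsFireCoerc^{-1}\bigl(\fff(s)+\KFireCoerc\int_{\Sigma_\textup{\textrm{Esc}}(s)}\psi\,dy\bigr)$, which turns $\KEQ\,\qqq(s)$ into $\tfrac{\KEQ}{\epsFireCoerc}\fff(s)$ plus an additional Escape-set pollution term $\tfrac{\KEQ\KFireCoerc}{\epsFireCoerc}\int_{\Sigma_\textup{\textrm{Esc}}(s)}\psi\,dy$. Collecting the Escape-set contributions and setting $\KEF=\KEQ/\epsFireCoerc$ and $\KEEsc=\KEEscOne+\KEQ\KFireCoerc/\epsFireCoerc$ (both nonnegative, and inheriting from the constants of \cref{lem:upp_bd_dE_tf,lem:coerc_fire} and the a priori bound $\Rattinfty$ a dependence only on $\alpha$ and $V$, since $c\le\cmax$) yields exactly \cref{dt_E_final}.

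There is no genuine obstacle in this argument; the only point to watch is that the identity $\chi=\psi$ holds precisely on $\iRight(s)$, which is the support of the pollution terms delivered by \cref{lem:upp_bd_dE_tf}, so that replacing $\chi$ by $\psi$ there and then extending the integral to all of $\rr$ is legitimate and only adds nonnegative mass. One should also record that $\KEQ$, $\KEEscOne$, $\epsFireCoerc$, $\KFireCoerc$ are uniform with respect to the relaxation parameters $\tInit$, $\xInit$, $c$, $\initCut$ (they depend on $c$ only through the bound $c\le\cmax$), so that $\KEF$ and $\KEEsc$ are uniform as well --- a property that will be needed when the relaxation scheme is applied repeatedly with varying parameters.
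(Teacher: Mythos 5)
Your proof is correct and follows exactly the same route as the paper's: replace $\chi$ by $\psi$ on $\iRight(s)$ where they coincide, extend the nonnegative integrands to all of $\rr$ to obtain $\KEQ\,\qqq(s)$ and the $\Sigma_\textup{\textrm{Esc}}(s)$ term, then absorb $\qqq(s)$ via the coercivity inequality \cref{coerc_F}, arriving at the identical constants $\KEF=\KEQ/\epsFireCoerc$ and $\KEEsc=\KEEscOne+\KEQ\KFireCoerc/\epsFireCoerc$. Nothing to add.
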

\begin{proof}
For every nonnegative quantity $s$, since $\chi(\xi,s)=\psi(\xi,s)$ for all $\xi$ in $\iRight(s)$, it follows from inequality \cref{upp_bd_dE_tf} of \cref{lem:upp_bd_dE_tf} that (substituting $\chi$ with $\psi$ and replacing $V^\dag(v)$ by its absolute value),
\[
\begin{aligned}
\eee'(s)& +(1+\alpha c^2) \ddd(s)\le   \\
& (c+\kappa)\int_{\iRight(s)}\psi \biggl[ \frac{\alpha(2c+\cCut)+1}{2} v_s^2 + \frac{\cCut+1}{2} v_\xi^2 + \cCut \abs{V^\dag(v)} \biggr] \, d\xi
\,,
\end{aligned}
\]
and since the integrand of the integral on the right-hand side of this inequality is nonnegative, this inequality still holds if the domain of integration is changed from $\iRight(s)$ to $\rr$. 

Let $\KEF$ be a positive quantity to be chosen below. According to \cref{lower_bound_F}, it follows that, for every nonnegative quantity $s$,
\[
\begin{aligned}
\eee'(s) +& (1+\alpha c^2)\ddd(s) - \KEF \fff(s) \le  \int_{\rr}\psi \biggl[ \Bigl(\frac{(c+\kappa)\bigl(\alpha(2c+\cCut)+1\bigr)}{2} - \frac{\alpha^2\KEF}{4}\Bigr) v_s^2  \\
&  + \Bigl(\frac{(c+\kappa)(\cCut+1)}{2} - \alpha\KEF\Bigr) v_\xi^2 + (c+\kappa) \cCut \abs{V^\dag(v)} - 2\alpha \KEF V^\dag(v)  \biggr] \, d\xi
\,.
\end{aligned}
\]
Thus, introducing the quantity $\KEF$ as
\[
\KEF = \max\biggl[\frac{2(\cUpp+\kappa)\bigl(\alpha(2\cUpp+\cCut)+1\bigr)}{\alpha^2},\frac{(\cUpp+\kappa)(\cCut+1)}{2\alpha},\frac{(\cUpp+\kappa)\cCut}{2\alpha}\biggr]
\]
(this quantity depends only on $\alpha$ and $V$), it follows that
\[
\eee'(s) +(1+\alpha c^2)\ddd(s)- \KEF \fff(s) \le \int_{\rr}\psi \Bigl[(c+\kappa) \cCut \abs{V^\dag(v)} - 2\alpha \KEF V^\dag(v)\Bigr]\, d\xi
\,.
\]
As long as $\xi$ is \emph{not} in $\SigmaEsc(s)$, it follows from \cref{posit_pot_around_loc_min_dag} that $V^\dag(v)$ is nonnegative and it follows from the last condition defining $\KEF$ that the integrand of the integral at the right-hand side of this last inequality is nonpositive. As a consequence, this inequality still holds if the integration domain of this integral is changed from $\rr$ to $\SigmaEsc(s)$. Namely, 
\begin{equation}
\label{dt_E_final_proof}
\begin{aligned}
\eee'(s) +(1+\alpha c^2)\ddd(s)- \KEF \fff(s) &\le \int_{\SigmaEsc(s)}\psi \Bigl[(c+\kappa) \cCut \abs{V^\dag(v)} - 2\alpha \KEF V^\dag(v)\Bigr]\, d\xi \\
&\le \bigl[(c+\kappa) \cCut + 2\alpha\KEF \bigr]\int_{\SigmaEsc(s)}\psi\abs{V^\dag(v)} \, d\xi 
\,.
\end{aligned}
\end{equation}
Thus, introducing the quantity $\KEEsc$ as
\[
\KEEsc = \Bigl((\cUpp+\kappa) \cCut + 2\alpha\KEF \Bigr)\max_{u\in\rr^d, \ \abs{u}\le\Rattinfty}\abs{V(u)-V(m)}
\,,
\]
 inequality \cref{dt_E_final} follows from \cref{dt_E_final_proof}. \Cref{lem:approx_decr_energy} is proved.
\end{proof}
\subsubsection{Relaxation scheme inequality, 1}
For every nonnegative quantity $s$, let 
\[
\mathcal{G} (s) = \int_{\SigmaEsc(s)} \psi(\xi,s) \, d\xi 
\,.
\]
Let $\sFin$ be a nonnegative quantity (denoting the length of the time interval on which the relaxation scheme will be applied). 
It follows from \cref{lem:approx_decr_energy} that
\begin{equation}
\label{relax_sch_1}
(1+\alpha c^2) \int_0^{\sFin} \ddd(s) \, ds \le \eee(0) - \eee(\sFin) + \KEF \int_0^{\sFin} \fff(s) \, ds + \KEEsc \int_0^{\sFin} \mathcal{G}(s) \, ds
\,.
\end{equation}
This is the first version of the relaxation scheme inequality that is the key argument to prove \cref{prop:inv_cv} (invasion implies convergence).
The aim of the two next \namecref{subsubsec:der_fire} is to gain some control over the quantities $\fff(s)$ and $\mathcal{G}(s)$. 
\subsubsection{Firewall upper bound}
%
The following lemma is the ``travelling frame'' analogue of \cref{lem:upp_bound_fireZero}.
\begin{lemma}[firewall upper bound]
\label{lem:fire_upp_bd}
For every nonnegative quantity $s$, 
\begin{equation}
\label{fire_upp_bd}
\fff(s) \le \int_{\rr} \psi \Bigl[\frac{3\alpha^2}{2} v_s^2 + \alpha v_\xi^2 + 2\alpha V^\dag(v) + \bigl(1+\alpha c(c+\kappa)\bigr)v^2 \Bigr]\, d\xi
\,.
\end{equation}
\end{lemma}
\begin{proof}
Inequality \cref{fire_upp_bd} follows from the definition \vref{def_fire} of $F\dag(\xi,s)$, from the fact that $\psi_\xi/\psi$ is bounded from above by $c+\kappa$, and from the inequality
\[
\alpha v\cdot v_s \le \frac{\alpha^2}{2} v_s^2 + \frac{1}{2} v^2
\,.
\]
\end{proof}
\subsubsection{Firewall linear decrease up to pollution}
\label{subsubsec:der_fire}
The following lemma is the ``travelling frame'' analogue of \cref{lem:decrease_fire0}.
\begin{lemma}[firewall linear decrease up to pollution]
\label{lem:fire_decr}
There exist positive quantities $\nuF$ and $\KFire$, depending on $\alpha$ and $V$ and $m$ (only), such that for every nonnegative quantity $s$, 
\begin{equation}
\label{dt_F_final}
\fff'(s) \le - \nuF \fff (s)+ \KFire \mathcal{G}(s)
\,.
\end{equation}
\end{lemma}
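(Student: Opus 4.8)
The plan is to differentiate $\fff$ and reproduce, in the travelling frame, the mechanism of \cref{lem:decrease_fire0}, using the smallness of the parameters $\kappa$ and $\cCut$ (conditions \cref{property_kappa,property_cCut}) to absorb every term generated by the space-time dependence of the weight $\psi$. The starting point is the observation, already recorded in \cref{fire_def_tf}, that $\fff(s)=\int_\rr \psi(y,s) F(y,s)\,dy$ coincides with $2\alpha$ times the localized energy functional $\int_\rr\psi(\alpha\tfrac{v_s^2}{2}+\tfrac{v_y^2}{2}+V(v))\,dy$ plus one copy of the localized $L^2$-variant $\int_\rr\psi(\alpha v\cdot v_s+\tfrac{v^2}{2}-2\alpha c v\cdot v_y)\,dy$. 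Hence $\fff'(s)$ is obtained by adding $2\alpha$ times the right-hand side of \cref{ddt_loc_en_tf} and one copy of the right-hand side of \cref{ddt_loc_L2_tf}, both taken with the weight $\psi(y,s)$.

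Collecting the terms carrying the factor $\psi$, the (heuristic) computation carried out in \vref{subsubsec:energy_L2_trav} becomes rigorous: the integrand of $\fff'(s)$ equals $\psi$ times an expression of the form
\[
\Bigl(-\alpha-2\alpha^2 c\tfrac{\psi_y}{\psi}\Bigr)v_s^2-2\alpha\tfrac{\psi_y}{\psi}\,v_y\cdot v_s-v\cdot\nabla V(v)-v_y^2+\tfrac{\psi_s}{\psi}\,F+\tfrac12\Bigl(\tfrac{\psi_{yy}}{\psi}-c\tfrac{\psi_y}{\psi}\Bigr)v^2
\]
plus, at the junction $y=\initCut+\cCut s$, a nonpositive Dirac contribution coming from the downward corner of $\psi$ (the derivative of $\psi/\psi$ jumps from $c+\kappa$ down to $-\kappa$), which is simply discarded. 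I would then treat the two regions separately: on $\iMain(s)$ one has $\psi_y/\psi=c+\kappa$, $\psi_{yy}/\psi=(c+\kappa)^2$, $\psi_s/\psi=-\kappa\cCut$, and on $\iRight(s)$ one has $\psi_y/\psi=-\kappa$, $\psi_{yy}/\psi=\kappa^2$, $\psi_s/\psi=(c+\kappa)\cCut$. On each region I would (i) apply Young's inequality to the cross term, whose coefficient is $O(\alpha c)$, splitting it between $\tfrac12 v_y^2$ and a multiple of $v_s^2$, so that — thanks to the second and third conditions of \cref{property_kappa} — the remaining $v_s^2$-coefficient is at most $-\alpha/2$ and the $v_y^2$-coefficient at most $-1/2$; (ii) bound $|\psi_s/\psi|=O(\cCut(\cmax+\kappa))$ and $\tfrac12|\psi_{yy}/\psi-c\psi_y/\psi|=O(\kappa(\cmax+\kappa))$, which are small, and estimate $\tfrac{\psi_s}{\psi}F$ by $|\psi_s/\psi|$ times $C(v_s^2+v_y^2+v^2)$ (with $C$ depending only on $\alpha$, $\Rattinfty$, $\qHullV$, using $V(v)\le\qHullV v^2$, $\alpha|v\cdot v_s|\le\tfrac\alpha2(v^2+v_s^2)$ and $\alpha c|\psi_y/\psi|\le\alpha(\cmax+\kappa)$), absorbing these contributions and the $v^2$-terms into the negative quadratic form by means of \cref{property_cCut} and \cref{property_kappa}; (iii) for $y\notin\Sigma_\textup{\textrm{Esc}}(s)$ use the lower bounds \cref{property_d_Esc} to replace $-v\cdot\nabla V(v)$ by a negative multiple of $v^2$ providing the missing coercivity in $v^2$, while for $y\in\Sigma_\textup{\textrm{Esc}}(s)$ bound $|v\cdot\nabla V(v)|$ and the $v^2$-terms by a constant depending only on $\Rattinfty$ and $V$. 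This yields, exactly as in \cref{dt_F0_interm}, an inequality of the form $\fff'(s)\le -c_1\,\qqq(s)+c_2\,\mathcal{G}(s)$ with $c_1>0$ and $c_2\ge0$ depending only on $\alpha$ and $V$. Finally, since $|v|\le\Rattinfty$ gives the pointwise bound $F(y,s)\le C(v_s^2+v_y^2+v^2)$ and hence $\fff(s)\le C\qqq(s)$, one concludes $\fff'(s)\le-\epsFireDecr\fff(s)+\KFireDecr\mathcal{G}(s)$ with $\epsFireDecr=c_1/C$ and $\KFireDecr=c_2$.

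The main obstacle — as in the standing-frame case, but markedly more delicate here — is the bookkeeping of the numerous coefficients depending on $c$ (which may be as large as $\cmax$), $\kappa$ and $\cCut$, and the verification that the conditions \cref{property_kappa,property_cCut}, designed precisely for this purpose, are exactly what is required. In particular the cross term $-2\alpha\tfrac{\psi_y}{\psi}v_y\cdot v_s$ has, on $\iMain(s)$, a coefficient of order $\alpha\cmax$, so that the slack left in the $v_s^2$- and $v_y^2$-coefficients after completing the square is tight and hinges on the inequality $2\alpha\kappa(\cmax+\kappa)\le1/4$; similarly the fact that $\beta=\alpha$ was chosen so as to cancel the $c(\beta-\alpha)$ term in \vref{subsubsec:energy_L2_trav} is what prevents an $O(c)$ coefficient from surviving in front of $v_y\cdot v_s$ on $\iMain(s)$. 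The coercivity estimate \cref{coerc_F} of \cref{lem:coerc_fire} is not needed in this proof itself (it will only intervene when this differential inequality is exploited), so the argument is self-contained modulo the routine, if lengthy, constant-chasing just outlined.
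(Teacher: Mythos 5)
Your proposal follows essentially the same route as the paper's proof: differentiate $\fff$ via \cref{ddt_loc_en_tf,ddt_loc_L2_tf}, exploit the cancellation of the $O(c)\,\psi\,v_y\cdot v_s$ term produced by the choice $\beta=\alpha$, polarize the remaining cross terms (where the paper uses the identity $-2\alpha^2 c\,\psi_y/\psi+2\alpha^2\psi_y^2/\psi^2=2\alpha^2\kappa(c+\kappa)$ to realize exactly the cancellation you describe), split the potential terms according to $\Sigma_{\textup{\textrm{Esc}}}(s)$ using \cref{property_d_Esc}, and pass from the intermediate bound $\fff'\le-\varepsilon_\fff\qqq+K_\fff\mathcal{G}$ to \cref{dt_F_final} via $\fff\le C_\fff\qqq$. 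The only (immaterial) imprecision is writing the $\psi_s$-contribution as $(\psi_s/\psi)F$ rather than as $\psi_s$ times the pre-integration-by-parts integrand containing $-2\alpha c\,v\cdot v_y$; either form is absorbed by the same $O\bigl(\cCut(\cmax+\kappa)\bigr)$ estimate, so the argument is correct.
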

\begin{proof}
According to expressions \vref{ddt_loc_en_tf_first,ddt_loc_L2_tf} for the time derivatives of the functionals in a travelling frame, for every nonnegative quantity $s$, 
\[
\begin{aligned}
\fff'(s) = & \int_{\rr} \Biggl[ 
\alpha\psi_s \Bigl( \alpha v_s^2 + v_\xi^2 + 2 V^\dag(v) \Bigr) 
- 2\alpha \bigl( \psi + \alpha c \psi_\xi \bigr) v_s^2
+ 2\alpha (c \psi - \psi_\xi) v_\xi \cdot v_s \\
& + \psi_s \Bigl( \alpha v \cdot v_s + \frac{1}{2}v^2 - 2 \alpha c v \cdot v_\xi \Bigr)
+ \psi \Bigl( - v \cdot \nabla V^\dag (v) - v_\xi^2 + \alpha v_s^2 - 2 \alpha c v_\xi \cdot v_s \Bigr) \\
& + \frac{\psi_{\xi\xi} - c \psi_\xi}{2} v^2 
\Biggr] \, d\xi
\,.
\end{aligned}
\]
Simplifying the terms involving $\psi \ v_s^2$ and those involving $\psi \ v_\xi \cdot v_s$, and rearranging terms, it follows that
\[
\begin{aligned}
\fff'(s) = & \int_{\rr} \Biggl[ 
\alpha \bigl( - \psi - 2\alpha c \psi_\xi + \alpha \psi_s \bigr)v_s^2 + \bigl( - \psi + \alpha \psi_s \bigr)v_\xi^2  - \psi v \cdot \nabla V^\dag (v) \\
& - 2 \alpha \psi_\xi v_\xi \cdot v_s + \frac{\psi_s + \psi_{\xi\xi} - c \psi_\xi}{2} v^2  + \alpha\psi_s \bigl( 2 V^\dag(v) + v \cdot v_s - 2  c v \cdot v_\xi \bigr)
\Biggr] \, d\xi
\,.
\end{aligned}
\]
According to the definition of $\psi$, 
\[
\psi_s(\xi,s) = 
\left\{
\begin{aligned}
&- \kappa\cCut\psi(\xi,s)
& &\text{if}\quad \xi \in \iMain(s) \,, \\ 
&(c+\kappa)\cCut\psi(\xi,s) 
& &\text{if}\quad \xi \in \iRight(s) \,,
\end{aligned}
\right.
\]
and 
\[
c\psi(\xi,s)-\psi_\xi(\xi,s) = 
\left\{
\begin{aligned}
&-\kappa\psi(\xi,s)
& &\text{if}\quad \xi \in \iMain(s) \,, \\ 
&(c+\kappa)\psi(\xi,s) 
& &\text{if}\quad \xi \in \iRight(s) \,,
\end{aligned}
\right.
\]
and, for all $\xi$ in $\rr$, if $\delta_{\initCut + \cCut s}(\cdot)$ denotes the Dirac mass at $\xi=\initCut + \cCut s$, then
\[
\psi_{\xi\xi}(\xi,s)- c\psi_\xi(\xi,s)= \kappa(c+\kappa) \psi(\xi,s)-(c+2\kappa)\exp\bigl[c(\initCut + \cCut s)\bigr]\delta_{\initCut + \cCut s}(\xi)
\,.
\]
As a consequence, the following inequalities hold for all values of the arguments:
\begin{equation}
\label{bounds_psi_psi_s_psi_xi_etc}
\abs{\psi_s}\le \cCut(c+\kappa) \, \psi
\quad\text{and}\quad
\psi_{\xi\xi}-c\psi_\xi \le \kappa(c+\kappa) \, \psi
\end{equation}
Thus, for every nonnegative quantity $s$, it follows from the previous expression of $\fff'(s)$ that
\[
\begin{aligned}
&\fff'(s)  \le \int_{\rr} \psi \Biggl[ 
 \alpha\Bigl( -1 - 2\alpha c \frac{\psi_\xi}{\psi} + \alpha \cCut(c+\kappa) \Bigr) v_s^2+ \bigl(-1 + \alpha \cCut(c+\kappa)\bigr)v_\xi^2 - v \cdot \nabla V^\dag (v) \\
& - 2 \alpha \frac{\psi_\xi}{\psi} v_\xi \cdot v_s + \frac{(\cCut+\kappa) (c+\kappa)}{2}v^2 + \alpha\cCut(c+\kappa) \Bigl( 2 \abs{V^\dag(v)} +  \abs{v \cdot v_s} + 2  c \abs{v \cdot v_\xi} \Bigr)
\Biggr] \, d\xi
\,.
\end{aligned}
\]
Using the inequalities
\[
- 2 \alpha \frac{\psi_\xi}{\psi} v_\xi \cdot v_s \le \frac{1}{2}v_\xi^2 + 2 \alpha^2 \frac{\psi_\xi^2}{\psi^2} v_s^2
\quad\text{and}\quad
\abs{v \cdot v_s} \le \frac{1}{2}v^2+ \frac{1}{2}v_s^2
\quad\text{and}\quad
2\abs{v \cdot v_\xi}\le v^2+ v_\xi^2
\,,
\]
it follows that
\[
\begin{aligned}
\fff'(s) &  \le \int_{\rr} \psi \Biggl[ 
\alpha\Bigl( -1 - 2\alpha c \frac{\psi_\xi}{\psi} + \alpha \cCut(c+\kappa) + 2\alpha \frac{\psi_\xi^2}{\psi^2} + \frac{ \cCut(c+\kappa)}{2}\Bigr) v_s^2  \\
& +  \Bigl(-1 + \frac{1}{2} + \alpha \cCut(c+\kappa) (c+1)\Bigr)v_\xi^2 - v \cdot \nabla V^\dag (v) \\
& +  (c+\kappa)\Bigl( \frac{(\cCut+\kappa) }{2} + \frac{\alpha \cCut}{2} + \alpha c \cCut  \Bigr) v^2 + 2\alpha \cCut(c+\kappa) \abs{V^\dag(v)} 
\Biggr] \, d\xi
\,.
\end{aligned}
\]
Observe that the following equality holds, be the argument $\xi$ in $\iMain(s)$ or in $\iRight(s)$:
\[
- 2\alpha c \frac{\psi_\xi}{\psi}+ 2\alpha \frac{\psi_\xi^2}{\psi^2} 
= - 2\alpha \frac{\psi_\xi}{\psi}\cdot\frac{c\psi-\psi_\xi}{\psi}
= 2\alpha \kappa (c+\kappa)
\,.
\]
Thus, the previous inequality becomes
\[
\begin{aligned}
\fff'(s) &  \le \int_{\rr} \psi \Biggl[ 
\alpha  \Bigl( - 1 + (c+\kappa)\bigl( 2 \alpha\kappa + \cCut ( \alpha + 1/2)  \bigr) \Bigr) v_s^2  +  \Bigl(-\frac{1}{2} + \alpha \cCut(c+\kappa)(c+1) \Bigr) v_\xi^2 \\
& - v \cdot \nabla V^\dag (v) + \frac{c+\kappa}{2} \Bigl( \kappa + \cCut \bigl( 1 + \alpha(2c+1) \bigr) \Bigr) v^2 + 2\alpha \cCut(c+\kappa) \abs{V^\dag(v)} 
\Biggr] \, d\xi 
\,.
\end{aligned}
\]
According to the conditions \vref{condition_kappa_cCut_upper_bound_dsF} on $\kappa$ and $\cCut$,  it follows that
\begin{equation}
\label{dt_F_final_proof_use_of_conditions_on_kappa_cCut}
\fff'(s) \le \int_{\rr} \psi \Bigl[ 
-\frac{\alpha}{2} v_s^2 -\frac{1}{4} v_\xi^2 + \frac{\eigVmin(m)}{8} v^2 - v \cdot \nabla V^\dag (v) + \frac{1}{4} \abs{V^\dag(v)}
\Bigr] \, d\xi 
\,.
\end{equation}
Let $\nuF$ be a positive quantity to be chosen below. It follows from the previous inequality and from the upper bound \cref{fire_upp_bd} on $\fff(s)$ that
\begin{equation}
\label{dt_F_final_proof}
\begin{aligned}
\fff'(s) + \nuF \fff(s) \le \int_{\rr} \psi&\Biggl[ \frac{\alpha}{2}(-1 + 3\alpha\nuF)v_s^2 + \Bigl(-\frac{1}{4}+\alpha\nuF\Bigr) v_\xi^2 - v \cdot \nabla V^\dag (v) \\
&+ \Bigl(\frac{\eigVmin(m)}{8} + \nuF\bigl(1+\alpha c(c+\kappa)\bigr)\Bigr)v^2 + \Bigl(\frac{1}{4}+ 2\alpha\nuF\Bigr)\abs{V^\dag(v)} \Biggr] \, d\xi 
\,.
\end{aligned}
\end{equation}
In view of this inequality and of inequalities \vref{v_nablaV_controls_square_around_loc_min_dag,v_nablaV_controls_pot_around_loc_min_dag}, let us assume that $\nuF$ is small enough so that
\begin{equation}
\label{conditions_on_nuFire}
3\alpha\nuF \le 1
\quad\text{and}\quad
\alpha\nuF \le \frac{1}{4}
\quad\text{and}\quad
\nuF\bigl(1+\alpha c(c+\kappa)\bigr) \le \frac{\eigVmin(m)}{8}
\quad\text{and}\quad
2\alpha\nuF \le \frac{1}{4}
\,;
\end{equation}
The quantity $\nuF$ may be chosen as
\[
\nuF = \min\Bigl( \frac{1}{8\alpha}, \frac{\eigVmin(m)}{8\bigl(1+\alpha\cUpp(\cUpp+\kappa)\bigr)} \Bigr)
\,.
\]
Then, it follows from \cref{dt_F_final_proof,conditions_on_nuFire} that
\begin{equation}
\label{dt_F_final_proof_2}
\fff'(s) + \nuF \fff(s) \le \int_{\rr}\Bigl[- v \cdot \nabla V^\dag (v) + \frac{\eigVmin(m)}{4} v^2 + \frac{1}{2} \abs{V^\dag(v)} \Bigr]\, d\xi 
\,.
\end{equation}
According to \cref{v_nablaV_controls_square_around_loc_min_dag,v_nablaV_controls_pot_around_loc_min_dag}, the integrand of the integral at the right-hand side of this inequality is nonpositive as long as $\xi$ is \emph{not} in $\SigmaEsc(s)$.
Therefore this inequality still holds if the domain of integration of this integral is changed from $\rr$ to $\SigmaEsc(s)$. Besides, observe that, in terms of the ``initial'' potential $V$ and solution $u(x,t)$, the factor of $\psi$ under the integral of the right-hand side of this last inequality reads
\[
- (u-m)\cdot \nabla V(u)  + \frac{\eigVmin(m)}{4}(u-m)^2 + \frac{1}{2}\abs{V(u)-V(m)}
\,.
\]
Thus, if $\KFire$ denotes the quantity $\KFZero$ defined in \vref{def_KFireZero}, then, according to the $L^\infty$-bound \vref{hyp_attr_ball_Linfty} on the solution, inequality \cref{dt_F_final} follows from \cref{dt_F_final_proof_2} (with the domain of integration of the integral on the right-hand side restricted to $\SigmaEsc(s)$). This finishes the proof of \cref{lem:fire_decr}.
\end{proof}
\subsubsection{Firewall nonnegativity up to pollution}
For every nonnegative quantity $s$, let
\[
\SigmaEsc(s)=\{\xi\in\rr: \abs{v(\xi,s)} >\dEsc(m)\} 
\,.
\]
\begin{lemma}[firewall nonnegativity up to pollution]
\label{lem:coerc_fire}
For every nonnegative quantity $s$,
\begin{equation}
\label{coerc_F}
\fff(s) \ge -2 \alpha \Delta_V \int_{\SigmaEsc(s)} \psi (\xi,s) \, d\xi
\,.
\end{equation}
\end{lemma}
\begin{proof}
According to inequality \vref{posit_pot_around_loc_min_dag} the quantity $V^\dag(v)$ is nonnegative for $\xi$ in $\rr\setminus\SigmaEsc(s)$. Thus, inequality \cref{coerc_F} follows from the lower bound \cref{lower_bound_F} of \vref{lem:lower_bound_F} and from inequality \vref{Delta_V_for_V_dag}. \Cref{lem:coerc_fire} is proved.
\end{proof}
\subsubsection{Relaxation scheme inequality, 2}
For every nonnegative quantity $\sFin$, inequality \cref{dt_F_final} yields
\[
\int_0^{\sFin} \fff(s) \, ds \le \frac{1}{\nuF} \Bigl( \fff(0) - \fff(\sFin) + \KFire \int_0^{\sFin} \mathcal{G}(s) \, ds \Bigr) 
\,,
\]
and in view of inequality \cref{coerc_F} of \cref{lem:coerc_fire} (firewall coercivity up to pollution term), 
\[
-\fff(\sFin) \le 2 \alpha \Delta_V \mathcal{G}(\sFin)
\,.
\]
Thus the ``relaxation scheme'' inequality \cref{relax_sch_1} becomes
\begin{equation}
\label{relax_sch_2}
\begin{aligned}
(1+\alpha c^2) \int_0^{\sFin} \ddd(s) \, ds \le & \eee(0) - \eee(\sFin) + \frac{\KEF}{\nuF} \fff(0) + \frac{2 \alpha \Delta_V \KEF }{\nuF} \mathcal{G}(\sFin) \\
& + \Bigl( \frac{\KEF \, \KFire}{\nuF} + \KEEsc \Bigr) \int_0^{\sFin} \mathcal{G}(s) \, ds
\,.
\end{aligned}
\end{equation}
This is the second version of the relaxation scheme inequality. The aim of the next \namecref{subsubsec:control_add_flux} is to gain some control over the quantity $\mathcal{G}(s)$.
\subsubsection{Control over the pollution in the time derivative of the firewall function}
\label{subsubsec:control_add_flux}
For every nonnegative quantity $s$, let
\begin{equation}
\label{def_xiHom_xiesc_xiEsc}
\begin{aligned}
\xiHom(s) &= \sqrt{1+\alpha c^2} \bigl( \xHom(\tInit + s) - \xInit - \sigma s \bigr) \,,\\
\text{and}\quad 
\xiesc(s) &= \sqrt{1+\alpha c^2} \bigl( \xesc(\tInit + s) - \xInit - \sigma s \bigr) \,,\\
\text{and}\quad 
\xiEsc(s) &= \sqrt{1+\alpha c^2} \bigl( \xEsc(\tInit + s) - \xInit - \sigma s \bigr) \,, 
\end{aligned}
\end{equation}
see \vref{fig:inv_cv,fig:inv_cv_bis}.
According to properties \vref{xEsc_xesc_xHom} for the set $\SigmaEscZero(t)$, for all $s$ in $[0,+\infty)$, 
\[
\SigmaEsc(s) \subset (-\infty, \xiesc(s)] \cup [\xiHom(s),+\infty)
\,,
\]
thus, introducing the quantities 
\[
\Gback(s) = \int_{-\infty}^{\xiesc(s)} \psi (\xi,s)\, d\xi 
\quad\text{and}\quad
\Gfront(s) = \int_{\xiHom(s)}^{+\infty} \psi (\xi,s)\, d\xi 
\,,
\]
it follows that, for all $s$ in $[0,+\infty)$,
\[
\mathcal{G}(s) \le \Gback(s)+\Gfront(s) 
\,.
\]
The aim of this \namecref{subsubsec:control_add_flux} is to prove the bounds on $\Gback(s)$ and $\Gfront(s)$ provided by the next lemma. 
\begin{lemma}[upper bounds on $\Gback(s)$ and $\Gfront(s)$]
\label{lem:first_bound_Gback_Gfront}
For every nonnegative quantity $s$, the following estimates hold:
\begin{align}
\label{first_bd_Gback}
\Gback(s) &\le \frac{1}{\kappa} \exp\bigl( (c+\kappa) \, \xiesc(s) - \kappa \, \initCut - \kappa\, \cCut s\bigr)
\,, \\
\label{first_bd_Gfront}
\Gfront(s) &\le \frac{1}{\kappa}\exp\bigl[ (c+\kappa)\,  \initCut + (c + \kappa) (\cCut + \kappa)  s  -\kappa \, \xiHom(0)\bigr]
\,.
\end{align}
\end{lemma}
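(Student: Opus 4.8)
The plan is to replace the piecewise weight $\psi(\cdot,s)$ by the two ``pure exponentials'' from which it is assembled, and then integrate explicitly. Writing $Y(s) = \initCut + \cCut s$ for the position of the peak of $y\mapsto\psi(y,s)$, I would first record the two global pointwise bounds
\[
\psi(y,s) \le \exp\bigl( (c+\kappa)\, y - \kappa\, Y(s) \bigr)
\quad\mbox{and}\quad
\psi(y,s) \le \exp\bigl( (c+\kappa)\, Y(s) - \kappa\, y \bigr)
\,,
\]
valid for every nonnegative $s$ and every $y$ in $\rr$. Each of these is an equality on one of the two intervals $\iMain(s)$, $\iRight(s)$ on which $\psi$ is defined; on the other interval it reduces, after taking logarithms and dividing by $c+2\kappa>0$, to the inequality characterising that interval (namely $y\le Y(s)$ on $\iMain(s)$, resp. $y\ge Y(s)$ on $\iRight(s)$).

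Granting these two bounds, \cref{first_bd_Gback} is immediate: integrating the first bound over $(-\infty,\yesc(s)]$ (the integral converges because the slope $c+\kappa$ is positive) yields
\[
\Gback(s) \le \frac{1}{c+\kappa}\exp\bigl( (c+\kappa)\,\yesc(s) - \kappa\, Y(s)\bigr) \le \frac{1}{\kappa}\exp\bigl( (c+\kappa)\,\yesc(s) - \kappa\,\initCut - \kappa\,\cCut s\bigr)
\,,
\]
the last step using $c>0$. Integrating the second bound over $[\yHom(s),+\infty)$ gives in the same way $\Gfront(s) \le \frac{1}{\kappa}\exp\bigl( (c+\kappa)\, Y(s) - \kappa\,\yHom(s)\bigr)$. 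To reach \cref{first_bd_Gfront} it then remains to trade $\yHom(s)$ for $\yHom(0)$, at the price of a term linear in $s$. From the definition of $\yHom(\cdot)$ in \cref{def_yHom_yesc_yEsc},
\[
\yHom(0) - \yHom(s) = \sqrt{1+\alpha c^2}\,\bigl(\xHom(\tInit) - \xHom(\tInit+s)\bigr) + \sqrt{1+\alpha c^2}\,\sigma\, s
\,;
\]
since $\xHom$ is nondecreasing on $[0,+\infty)$ by \cref{hyp_xHom_prime_pos} and $\sqrt{1+\alpha c^2}\,\sigma = c$, this gives $\yHom(0)-\yHom(s)\le c\,s$, hence $-\kappa\,\yHom(s)\le -\kappa\,\yHom(0) + \kappa c\, s$. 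Substituting into the bound for $\Gfront(s)$ and absorbing the surplus through $(c+\kappa)\cCut s + \kappa c\, s \le (c+\kappa)(\cCut+\kappa)s$ produces \cref{first_bd_Gfront}.

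I do not expect a genuine obstacle here; the argument is essentially bookkeeping with exponentials. The one point that deserves a little care is the comparison $\yHom(0)-\yHom(s)\le c\,s$, which is precisely where the normalisation $\xHom'\ge 0$ arranged in \cref{subsec:inv_cv_set_pf} and the relation $\sigma\sqrt{1+\alpha c^2}=c$ between the physical and parabolic speeds of the travelling frame enter. The estimate on $\Gback$ does not even need this: it follows directly from the pointwise control of $\psi$.
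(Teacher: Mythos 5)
Your proposal is correct and follows essentially the same route as the paper: bound $\psi(\cdot,s)$ by the two pure exponentials (exact on one side of the cut-off, dominating on the other), integrate explicitly, and for $\Gfront$ use $\xHom'\ge 0$ together with $\sigma\sqrt{1+\alpha c^2}=c$ to get $\yHom(s)\ge\yHom(0)-cs$. The only difference is cosmetic: you spell out the verification of the pointwise bounds and the final absorption $\kappa c\le(c+\kappa)\kappa$, which the paper leaves implicit.
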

\begin{proof}
The integrand $\psi(\xi,s)$ in the expression of $\Gback(s)$ and $\Gfront(s)$ is less than or equal to
\[
\begin{aligned}
\exp\bigl[ (c+\kappa) \, \xi - \kappa(\initCut + \cCut\,  s) \bigr]
& \quad\text{for}\quad
\Gback(s) \,, \\
\text{and}\quad
\exp\bigl[ (c+\kappa) (\initCut + \cCut \, s) - \kappa \, \xi\bigr]
& \quad\text{for}\quad
\Gfront(s)
\,.
\end{aligned}
\]
Thus, by explicit calculation, 
\[
\Gback(s) \le \frac{1}{c+\kappa}\exp\bigl[ (c+\kappa) \xiesc(s) - \kappa \initCut - \kappa\, \cCut s\bigr] 
\,,
\]
and inequality \cref{first_bd_Gback} follows. 

Concerning $\Gfront(s)$, since $\xHom'(\cdot)$ is nonnegative (inequality \vref{hyp_xHom_prime_pos}), for all $s$ in $[0,+\infty)$, 
\[
\xiHom'(s) \ge -c 
\quad\text{thus}\quad
\xiHom(s) \ge \xiHom(0) - cs
\,.
\]
By explicit calculation, it follows that
\[
\Gfront(s) \le \frac{1}{\kappa} \exp\Bigl[ (c+\kappa)\,  \initCut + \bigl((c+\kappa) \, \cCut + \kappa \,  c\bigr) s -\kappa \, \xiHom(0)\Bigr]
\]
and inequality \cref{first_bd_Gfront} follows. \Cref{lem:first_bound_Gback_Gfront} is proved. 
\end{proof}
\subsubsection{Further (subsonic) bound on invasion speed}
\label{subsubsec:proof_bd_cmax}
\paragraph{Statement.}
Up to now, the quantity $\cUpp$ has only been used to state hypothesis \cref{hyp_param_relax_sch}, which assumes that the parabolic speed of the travelling frame under consideration does not exceed this quantity. Now, the relaxation scheme set up above will be applied in order to prove that this quantity $\cUpp$ is indeed an upper bound for the speed of invasion. The aim of this \namecref{subsubsec:fin_relax} is to prove the following lemma.
\begin{lemma}[invasion speed is subsonic]
\label{lem:further_bd_finite_speed}
The following inequality holds
\[
\barsescsup \le \frac{\cUpp}{\sqrt{1+\alpha\cUpp^2}}
\,.
\]
\end{lemma}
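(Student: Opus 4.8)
The statement asserts that the long-time mean speed $\barsescsup$ of the escape point cannot reach the sonic barrier; more precisely it is bounded by the physical speed corresponding to $\cmax$. The strategy is by contradiction: suppose $\barsescsup > \cmax/\sqrt{1+\alpha\cmax^2}$, so that along a sequence of time intervals of length $s_k\to+\infty$ the escape point moves at a physical speed exceeding $\cmax/\sqrt{1+\alpha\cmax^2}$, hence at a parabolic speed exceeding $\cmax$. We then set up the relaxation scheme of \cref{subsec:relax_sch_tr_fr} in a travelling frame whose parabolic speed $c$ is chosen equal to $\cmax$ (the largest speed allowed by hypothesis \cref{hyp_param_relax_sch}), with $\tInit$ and $\xInit$ and $\initCut$ chosen so that at the initial time the escape point sits near the maximum of the weight $\chi(\cdot,\tInit)$, and $\sFin=s_k$. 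The point is that because the escape point outruns the frame (which travels at the physical speed associated to $\cmax$), by the end of the time interval the region where $v$ has escaped from $0_{\rr^n}$ has been swept far to the \emph{left} of the origin of the travelling frame, into the exponentially heavy part of the weight $\chi(\cdot,s)=e^{cy}$. This is where \cref{lem:posit_en_Esc} bites: applying it at the Escape point (or rather at $\yEsc(s_k)$) gives a lower bound $\eee(s_k)\gtrsim e^{c\,\yEsc(s_k)}\Eesc$ on the energy, a quantity that grows without bound as the escape point recedes to the left.

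\textbf{Key steps, in order.} (1) Translate the contradiction hypothesis into the statement that there are sequences $t_k$ and $s_k\to+\infty$ with $\xesc(t_k+s_k)-\xesc(t_k) \ge \sigma_{\max}\, s_k + o(s_k)$ where $\sigma_{\max}=\cmax/\sqrt{1+\alpha\cmax^2}$ is strictly exceeded; correspondingly $\yesc(s_k)\to+\infty$ or at least stays bounded below by a positive multiple of $s_k$ (care is needed about signs: in the frame moving at $\sigma_{\max}$, the escape point, moving faster, drifts to $+\infty$ — one should pick the frame speed slightly \emph{below} the actual mean escape speed so that $\yesc(s_k)\to+\infty$ while keeping $c\le\cmax$; since $\sigma_{\max}$ is the physical speed of $c=\cmax$ and the escape speed is assumed larger, this is consistent). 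Actually the cleaner route: choose $c=\cmax$ exactly and note $\yesc(s_k)-\yesc(0)\to+\infty$. (2) Run the relaxation scheme \cref{relax_sch_2} on $[0,s_k]$ with these parameters. Bound the right-hand side: $\eee(0)$ and $\fff(0)$ are controlled by the a priori bounds \cref{hyp_attr_ball} and the fact that the weights are normalized at the origin; $\fff(0)$ likewise; the terms $\mathcal{G}(s)$ and $\mathcal{G}(\sFin)$ are controlled via \cref{lem:first_bound_Gback_Gfront} — but here one must check the exponents do not blow up, which is exactly why $\cCut$ was chosen small in \cref{property_cCut} and why $\yHom(0)$ can be taken large. (3) Since $\ddd(s)\ge 0$, the left side is nonnegative, so $\eee(0)+\tfrac{\KEF}{\epsFireDecr}\fff(0)+(\text{pollution terms}) \ge \eee(\sFin)$. (4) Lower-bound $\eee(\sFin)$: restrict the integral defining $\eee(s_k)$ to $(-\infty,\yEsc(s_k)+1]$, drop the kinetic term $\alpha v_s^2/2\ge 0$ and the part of the gradient/potential integral over $(\yEsc(s_k)+1,\initCut+\cCut s_k]$ which is bounded below by $-\Delta_V$ times the weight there (bounded), and apply \cref{lem:posit_en_Esc} with $w=v(\cdot,s_k)$, $y_0=\yEsc(s_k)$, $c=\cmax$ — whose hypotheses ($\abs{v(\yEsc(s_k),s_k)}=\dEsc$ and $\abs{v(y,s_k)}\le\dEsc$ for $y\in[\yEsc(s_k),\yEsc(s_k)+1]$) follow from the definition of $\xEsc$ together with \cref{xEsc_xesc_xHom}, after checking $\yEsc(s_k)+1 < \yHom(s_k)$. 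This yields $\eee(s_k) \ge e^{\cmax\,\yEsc(s_k)}\Eesc - C$. (5) Combine: the left side of step (3) is bounded by a constant (independent of $k$) plus pollution terms with controlled exponents, while the right side grows like $e^{\cmax\,\yEsc(s_k)}$ with $\yEsc(s_k)\to+\infty$ — contradiction. This proves $\barsescsup\le\sigma_{\max}=\cmax/\sqrt{1+\alpha\cmax^2}$.

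\textbf{Main obstacle.} The delicate point is the bookkeeping of exponential factors. The energy lower bound grows like $e^{\cmax\,\yEsc(s_k)}$, and $\yEsc(s_k)$ grows linearly in $s_k$ with rate $\sqrt{1+\alpha\cmax^2}\,(\sigma_{\text{esc}}-\sigma_{\max})>0$; meanwhile the pollution terms $\int_0^{s_k}\mathcal{G}(s)\,ds$ and $\mathcal{G}(s_k)$ carry their own exponentials, from \cref{first_bd_Gback} roughly $e^{(c+\kappa)\yesc(s)-\kappa\cCut s}$ and from \cref{first_bd_Gfront} roughly $e^{(c+\kappa)(\cCut+\kappa)s}$. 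One must verify that, with $c=\cmax$ and the smallness of $\kappa$ and $\cCut$ enforced by \cref{property_kappa,property_cCut}, together with the freedom to push $\yHom(0)$ far to the right and to locate $\initCut$ appropriately, these pollution exponentials are strictly dominated by $e^{\cmax\,\yEsc(s_k)}$ — i.e. that the growth rate of $\eee(s_k)$ strictly beats everything on the right-hand side. This is a purely arithmetic comparison of linear growth rates, but it is the crux; it is the reason the scheme uses the exponential weight $e^{cy}$ on the main interval (so escaped mass on the left is heavily penalized) and a \emph{slowly} decaying tail (small $\kappa$, small $\cCut$) on the right (so escaped mass on the right, near $\yHom$, is cheap). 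I would expect the proof in the paper to mirror closely the corresponding argument of Gallay--Joly and the companion paper \cite{Risler_globalBehaviour_2016}, invoking \cref{relax_sch_2}, \cref{lem:first_bound_Gback_Gfront}, and \cref{lem:posit_en_Esc}, and leaving the routine exponent comparison to the reader.
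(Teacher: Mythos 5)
Your overall architecture matches the paper's: argue by contradiction, run the relaxation scheme \cref{relax_sch_2} at parabolic speed $c=\cmax$ on long time intervals where the escape point moves at nearly its maximal mean speed, bound the right-hand side uniformly in the length of the interval, and bring in \cref{lem:posit_en_Esc} to convert the resulting energy bound into information about the Escape point. But your final step inverts the logic and does not close. You derive the contradiction from ``$\eee(s_k)\ge e^{\cmax\,\yEsc(s_k)}\Eesc-C$ with $\yEsc(s_k)\to+\infty$''. Nothing gives $\yEsc(s_k)\to+\infty$: what diverges in the travelling frame is the lowercase escape point $\yesc(s_k)$ (defined through the hull conditions on $\qqq_0$ and $\fff_0$), and the only available comparison is $\xEsc(t)\le\xesc(t)$, i.e.\ $\yEsc\le\yesc$ --- the wrong direction. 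The Escape point is only known to satisfy $\limsup_{t}\xEsc(t)/t=\sEsc>0$ with $\sEsc\le\barsescsup$; at the particular final times of the relaxation intervals it may lag arbitrarily far behind $\xesc$ (it need not even be finite), so its claimed linear growth rate is unjustified. The correct use of \cref{lem:posit_en_Esc} together with the uniform upper bound on the final energy yields the \emph{opposite} conclusion: $\yEsc^{(p)}(p)$ is bounded \emph{above} (\cref{lem:boundedness_yEsc_of_p}). The contradiction must then be extracted from the gap between the bounded $\yEsc^{(p)}(p)$ and the divergent $\yesc^{(p)}(p)$: on the window between them the solution stays within $\dEsc$ of $0_{\rr^n}$, so the bounded weighted energy is coercive there and forces the $H^1\times L^2$ norm of the solution on a fixed window around $\xesc(t'_p)$ to vanish as $p\to\infty$ (\cref{lem:approach_zero_around_esc_point}); this makes $\qqq_0$ and $\fff_0$ small near $\xesc(t'_p)$, contradicting the definition \cref{def_xesc} of $\xesc$ as the infimum of $I_\textrm{Hom}$. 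That extra step is the substance of the proof and is missing from your argument.

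A second gap concerns the choice of the frame origin. Placing the escape point ``near the maximum of the weight at the initial time'' does not control $\Gback$: since the escape point outruns the frame, $\yesc(s)$ drifts to $+\infty$, and the bound \cref{first_bd_Gback} carries the factor $e^{(c+\kappa)\yesc(s)}$, which beats the decay $e^{-\kappa\cCut s}$ for any admissible $\kappa$ and $\cCut$. The paper's fix is not in $\initCut$ (which is set to $0$) but in the definition \cref{def_xinit_p} of $\xInit^{(p)}$ as a supremum over the whole interval $[0,p]$, enforcing $(\cmax+\kappa)\,\yesc^{(p)}(s)\le\tfrac{\kappa\cCut}{2}s$ throughout and making $\Gback^{(p)}$ integrable; the price is that $\yesc^{(p)}(0)$ may be very negative, and one then needs \cref{lem:growth_yesc_of_p} (relying on the technical hypothesis \cref{hyp_vicinty_max_vel}) to recover $\yesc^{(p)}(p)\to+\infty$. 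You correctly identify the exponent bookkeeping as the crux, but the mechanism you propose for resolving it is not the one that works.
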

It follows from this lemma that the mean speed $\barsescsup$ is smaller than $1/\sqrt{\alpha}$ (which proves conclusion \cref{item:mean_speed_sEsc_smaller_than_sound_speed} of \cref{prop:inv_cv}). If $\sUpp$ denotes the ``physical'' counterpart of $\cUpp$ and $\barcescsup$ denotes the ``parabolic'' counterpart of $\barsescsup$, that is
\[
\sUpp = \frac{\cUpp}{\sqrt{1+\alpha\cUpp^2}}
\quad\text{and}\quad
\barcescsup = \frac{\barsescsup}{\sqrt{1-\alpha\barsescsup^2}}
\,,
\]
then the conclusion of \cref{lem:further_bd_finite_speed} may be stated under the form of the following two equivalent inequalities:
\[
\barsescsup \le \sUpp
\iff
\barcescsup \le \cUpp
\,.
\]
\paragraph{Idea of the proof.}
The idea of the proof of \cref{lem:further_bd_finite_speed} is due to Gallay and Joly, see \cite[Lemma~5.2]{GallayJoly_globStabDampedWaveBistable_2009}). The principle is that, if the previous relaxation scheme is applied in a travelling frame with a parabolic speed $c$ greater than or equal to $\cUpp$, then, according to \vref{lem:posit_en_Esc}, the following lower bound holds (for the quantity $\Eesc$ defined in \vref{def_cmax}):
\[
\int_{-\infty}^{\xiEsc(s)+1} e^{c\xi} \Bigl( \frac{\alpha}{2}v_s(\xi,s)^2 + \frac{1}{2}v_\xi(\xi,s)^2 + V^\dag\bigl(v(\xi,s)\bigr) \Bigr) \, d\xi \ge \Eesc \exp\bigl( \xiEsc(s) \bigr)
\,,
\]
and as a consequence the same kind of lower bound holds for the localized energy $\eee(s)$ defined in \cref{subsubsec:def_loc_en}. On the other hand, the relaxation scheme inequality \cref{relax_sch_2} provides an upper bound for this localized energy, and under appropriate conditions this will enable us to prove that this localized energy remains bounded from above. Finally, it will follow from these bounds that the Escape point $\xiEsc(s)$ must itself be bounded from above. It will turn out that this is contradictory with arbitrarily large positive values of the escape point $\xiesc(s)$, and in turn contradictory with a mean speed $\barcescsup$ exceeding $\cUpp$. 
\paragraph{Set-up.}
Let us proceed by contradiction and assume that the converse assertion holds: 
\[
\sUpp < \barsescsup\,,
\quad\text{or equivalently,}\quad
\cUpp < \barcescsup
\,.
\]
Let $\varepsilon$ denote a positive quantity, small enough so that
\[
\sUpp < \barsescsup - \varepsilon
\,,
\]
and let us make in addition the following technical hypothesis (see the comment below after the statement of \cref{lem:choice_t_np}):
\begin{equation}
\label{hyp_vicinty_max_vel}
\varepsilon < \frac{1}{\sqrt{1+\alpha\cUpp^2}} \frac{\kappa\cCut}{2(\cUpp+\kappa)}
\,.
\end{equation}
\paragraph{Origin of time intervals.}
The following lemma provides appropriate time intervals where the relaxation scheme will be applied. Here are the features of these time intervals:
\begin{itemize}
\item the mean speed of the escape point is almost maximal on them;
\item their length is arbitrarily large;
\item for a given length they occur at arbitrarily large positive times. 
\end{itemize}
\begin{lemma}[time intervals with controlled length and large positive left endpoints where mean speed of escape point is almost maximal]
\label{lem:choice_t_np}
For every positive integer $n$, there exists a sequence $(t_{n,p})_{p\in\nn}$ of positive quantities going to $+\infty$ as $p$ goes to $+\infty$, and such that, for every nonnegative integer $p$,
\begin{equation}
\label{choice_t_np}
\xesc(t_{n,p} + n) - \xesc(t_{n,p}) \ge (\barsescsup - \varepsilon) n
\,.
\end{equation}
\end{lemma}
The technical hypothesis \cref{hyp_vicinty_max_vel} above will be used in the proof of \vref{lem:growth_xiesc_of_n}, stating that the escape point ends ``far to the right'' at the end of the relaxation scheme that is going to be considered. 
\begin{proof}[Proof of \cref{lem:choice_t_np}]
If the converse was true, then there would exist a positive integer $n$ and a positive time $t_0$ such that, for every time $t$ greater than or equal to $t_0$,
\[
\frac{\xesc(t + n) - \xesc(t)}{n} \le \barsescsup - \varepsilon
\]
and this would imply that
\[
\limsup_{s\to+\infty}\sup_{t\in[0,+\infty)}\frac{\xesc(t+s)-\xesc(t_1)}{s}\le \barsescsup - \varepsilon
\,,
\]
a contradiction with the definition of $\barsescsup$.
\end{proof}
For every positive integer $n$, let us introduce a sequence $(t_{n,p})_{p\in\nn}$ satisfying the conclusions of \cref{lem:choice_t_np} above, and let $p(n)$ and $\xInit^{(n)}$ denote a nonnegative integer and a real quantity to be chosen below. Finally, let us take the following notation:
\[
\tInit^{(n)} = t_{n,p(n)}
\,.
\]
The relaxation scheme set up in the previous \namecref{subsubsec:der_fire} will be applied with the following set of parameters:
\[
\tInit = \tInit^{(n)}
\quad\text{and}\quad
\xInit=\xInit^{(n)}
\quad\text{and}\quad
c = \cUpp
\quad\text{and}\quad
\initCut = 0
\,.
\]
Let us denote by
\[
\begin{aligned}
& \xiesc^{(n)}(\cdot) 
\quad\text{and}\quad
\xiEsc^{(n)}(\cdot)
\quad\text{and}\quad
\chi^{(n)}(\cdot,\cdot) 
\quad\text{and}\quad
\eee^{(n)}(\cdot) 
\quad\text{and}\quad
\fff^{(n)}(\cdot)  \\
\text{and}\quad
& \xiHom^{(n)}(\cdot)
\quad\text{and}\quad 
\Gback^{(n)}(\cdot)
\quad\text{and}\quad
\Gfront^{(n)}(\cdot)
\end{aligned}
\]
the objects defined in the previous \namecref{subsubsec:proof_bd_cmax}s (with the same notation except the ``$(n)$'' superscripts to emphasize the fact that these objects depend on $n$). The relaxation scheme will be considered on a time interval of length $\sFin = n$, that is between the times $\tInit^{(n)}$ and $\tInit^{(n)} + n$. Observe that, according to the conclusion \cref{choice_t_np} of \cref{lem:choice_t_np}, whatever the choice of $p(n)$ and $\xInit^{(n)}$, 
\begin{equation}
\label{growth_esc_point_trav_frame}
\frac{\xiesc^{(n)}(n)-\xiesc^{(n)}(0)}{n}\ge \sqrt{1+\cUpp^2}(\barsescsup - \varepsilon - \sUpp) >0
\,,
\end{equation}
\begin{figure}[!htbp]
	\centering
    \includegraphics[width=.8\textwidth]{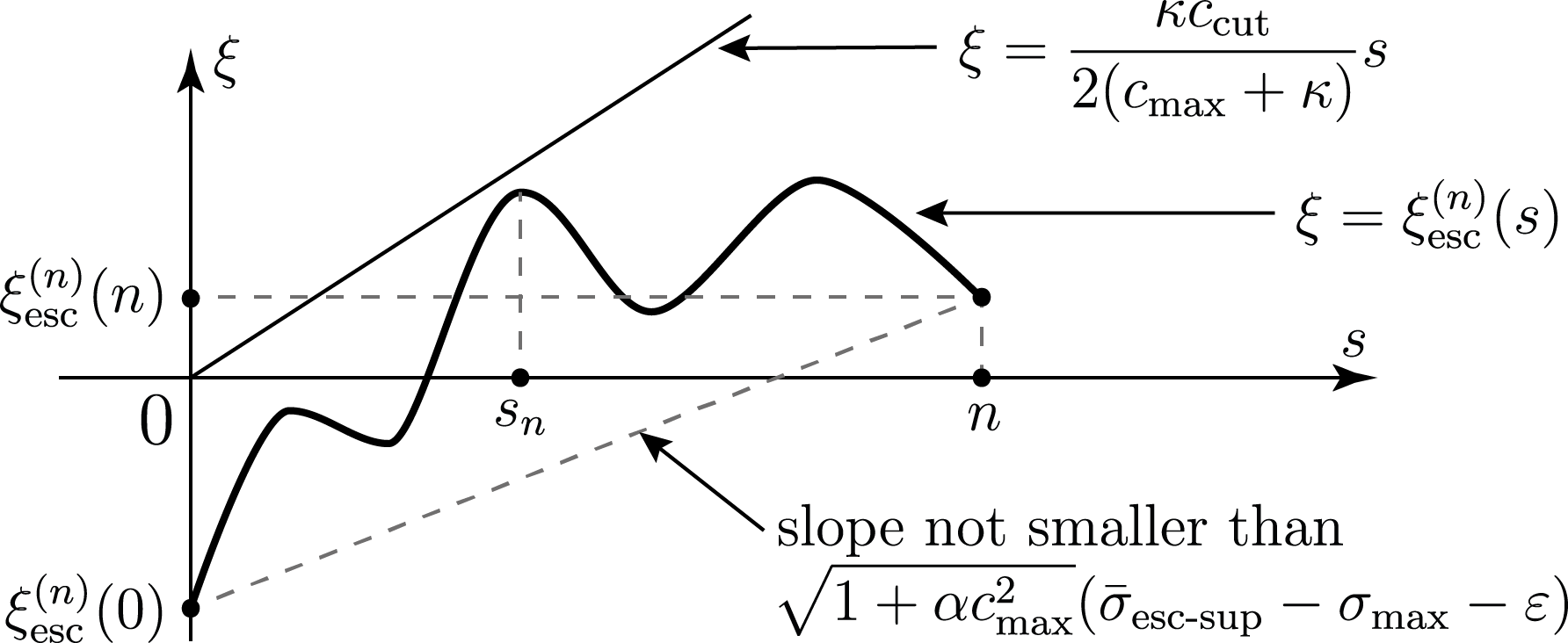}
    \caption{Definition of the quantity $\xInit(n)$. An increase of $\xInit^{(n)}$ translates the graph of $x\mapsto\xiesc^{(n)}(s)$ downwards. The value chosen for $\xInit^{(n)}$ is the least one so that this graph remains below the slope starting from the origin on the interval $[0,n]$. The figure aims at displaying the assertion of \cref{lem:growth_xiesc_of_n}, that is the fact that $\xiesc^{(n)}(n)$ goes to $+\infty$ as $n$ goes to $+\infty$.}
    \label{fig:proof_upper_bound_speed}
\end{figure}
see \cref{fig:proof_upper_bound_speed}.

To set up this relaxation scheme there still remains to define the two quantities $p(n)$ and $\xInit^{(n)}$. The purpose is to make this choice in such a way that the following two conditions be fulfilled: 
\begin{itemize}
\item the quantity $\eee^{(n)}(n)$ (the localized energy in travelling frame at the end of the relaxation time interval) remains bounded as $n$ goes to $+\infty$;
\item the quantity $\xi^{(n)}(n)$ (the escape point in travelling frame at the end of the relaxation time interval) goes to $+\infty$ as $n$ goes to $+\infty$.
\end{itemize}
\paragraph{Origin of space.}
Guided by expression inequality \cref{first_bd_Gback} on $\Gback(\cdot)$, let us choose the quantity $\xInit^{(n)}$ as the least real quantity such that, for every $s$ in the interval $[0,n]$, the following condition be fulfilled:
\begin{equation}
\label{hyp_no_excursion_right_cutoff}
(\cUpp+\kappa) \xiesc^{(n)}(s) \le \frac{\kappa\cCut}{2} s
\,,
\end{equation}
see \cref{fig:proof_upper_bound_speed}. 

According to definition \cref{def_xiHom_xiesc_xiEsc}
\[
\xiesc^{(n)}(s) = \sqrt{1 + \alpha\cUpp^2}\bigl( \xesc^{(n)}(\tInit^{(n)} + s ) - \xInit^{(n)} - \sUpp s \bigr)
\,,
\]
thus in other words, let us choose the quantity $\xInit^{(n)}$ as
\begin{equation}
\label{def_xinit_n}
\xInit^{(n)} = \sup_{s\in[0,n]} \xesc(\tInit^{(n)} + s) - \Bigl( \sUpp + \frac{\kappa\cCut}{2\sqrt{1 + \alpha\cUpp^2}(\cUpp + \kappa)} \Bigr) s
\end{equation}
(according to inequality \vref{control_escape} controlling the increase of $\xesc(\cdot)$, this supremum is finite). Condition \cref{hyp_no_excursion_right_cutoff} will ensure that the terms involving $\Gback^{(n)}(\cdot)$ in the relaxation scheme inequality \cref{relax_sch_2} remain bounded. 

The relevance of this definition for the quantity $\xInit^{(n)}$ is justified by the following two lemmas. 
\paragraph{Origin of time intervals: upper bound on the final energy.}
\begin{lemma}[upper bound on the energy at the end of the time intervals]
\label{lem:upper_bound_final_energy}
For every positive integer $n$, if the integer $p(n)$ is chosen large enough, then the ``final'' energy $\eee^{(n)}(n)$ is bounded from above by a quantity that does not depend on $n$. 
\end{lemma}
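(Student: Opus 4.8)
The plan is to read off the bound from the second version of the relaxation scheme inequality \cref{relax_sch_2}, applied on the time interval $[0,p]$ (that is, with $\sFin=p$) and with the parameters $\tInit=\tInit^{(p)}$, $\xInit=\xInit^{(p)}$, $c=\cmax$, $\initCut=0$ fixed above. Since the dissipation term $\int_0^p\ddd(s)\,ds$ is nonnegative, inequality \cref{relax_sch_2} yields
\[
\eee^{(p)}(p)\le\eee^{(p)}(0)+\frac{\KEF}{\epsFireDecr}\fff^{(p)}(0)+\frac{\KEF\,\KFireCoerc}{\epsFireDecr}\,\mathcal{G}^{(p)}(p)+\Bigl(\frac{\KEF\,\KFireDecr}{\epsFireDecr}+\KEEsc\Bigr)\int_0^p\mathcal{G}^{(p)}(s)\,ds\,,
\]
so that it is enough to bound, by quantities not depending on $p$, the four terms $\eee^{(p)}(0)$, $\fff^{(p)}(0)$, $\mathcal{G}^{(p)}(p)$ and $\int_0^p\mathcal{G}^{(p)}(s)\,ds$ (the constants $\KEF,\epsFireDecr,\KFireCoerc,\KFireDecr,\KEEsc$ depend only on $\alpha$ and $V$).

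First, since $\initCut=0$ and $c=\cmax$, the weight functions $y\mapsto\chi^{(p)}(y,0)$ and $y\mapsto\psi^{(p)}(y,0)$ are, for every $p$, one and the same exponentially decaying $W^{2,1}$ function (explicitly $e^{\cmax y}$, resp.\ $e^{(\cmax+\kappa)y}$, for $y\le0$, and $e^{-\kappa y}$ for $y\ge0$), while the integrands $F^{(p)}(\cdot,0)$ and $\alpha v_s^2/2+v_y^2/2+V(v)$ evaluated at $s=0$ are, up to a factor controlled by $\alpha$ and $\cmax$, bounded above by $1+v_s^2+v_y^2+v^2$ (using that $\psi_y/\psi$ takes only the values $\cmax+\kappa$ and $-\kappa$, that $V$ is bounded on the ball of radius $\Rattinfty$, and that $\abs{v\cdot v_s}\le(v^2+v_s^2)/2$). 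Changing variables back to the laboratory frame and invoking the uniform a priori bounds \cref{hyp_attr_ball} of \cref{prop:exist_sol_att_ball} (in uniformly local norm), together with the exponential decay of the weight, one obtains $\eee^{(p)}(0)\le C_0$ and $\fff^{(p)}(0)\le C_1$ with $C_0$ and $C_1$ depending only on $\alpha$ and $V$.

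It remains to control $\mathcal{G}^{(p)}(s)\le\Gback^{(p)}(s)+\Gfront^{(p)}(s)$. For the ``back'' part, the definition \cref{def_xinit_p} of $\xInit^{(p)}$ was tailored precisely so that condition \cref{hyp_no_excursion_right_cutoff} holds for every $s$ in $[0,p]$; combining it with the bound \cref{first_bd_Gback} of \cref{lem:first_bound_Gback_Gfront} gives $\Gback^{(p)}(s)\le\kappa^{-1}\exp(-\kappa\cCut s/2)$, whence $\Gback^{(p)}(p)\le\kappa^{-1}$ and $\int_0^p\Gback^{(p)}(s)\,ds\le2/(\kappa^2\cCut)$, both independent of $p$. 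For the ``front'' part, the bound \cref{first_bd_Gfront} reads here $\Gfront^{(p)}(s)\le\kappa^{-1}\exp\bigl((\cmax+\kappa)(\cCut+\kappa)s-\kappa\,\yHom^{(p)}(0)\bigr)$, so that both $\Gfront^{(p)}(p)$ and $\int_0^p\Gfront^{(p)}(s)\,ds$ are bounded by a fixed constant as soon as $\kappa\,\yHom^{(p)}(0)\ge(\cmax+\kappa)(\cCut+\kappa)\,p$. This is where the freedom in the choice of $q(p)$ enters: from \cref{control_escape} (and $\lambda\ge\smax\ge0$ in \cref{def_xinit_p}) one has $\xInit^{(p)}\le\xesc(\tInit^{(p)})+\snoesc\,p$, hence
\[
\yHom^{(p)}(0)=\sqrt{1+\alpha\cmax^2}\,\bigl(\xHom(\tInit^{(p)})-\xInit^{(p)}\bigr)\ge\sqrt{1+\alpha\cmax^2}\,\bigl(\xHom(\tInit^{(p)})-\xesc(\tInit^{(p)})-\snoesc\,p\bigr)\,,
\]
and since $\tInit^{(p)}=t_{p,q(p)}\to+\infty$ as $q(p)\to+\infty$ (\cref{lem:choice_t_pq}) while $\xHom(t)-\xesc(t)\to+\infty$ by \cref{xHom_minus_xesc}, the quantity $\yHom^{(p)}(0)$ can be made larger than $(\cmax+\kappa)(\cCut+\kappa)\,p/\kappa$ by choosing $q(p)$ large enough for the given (fixed) value of $p$. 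Feeding all these bounds into the displayed inequality above yields $\eee^{(p)}(p)\le C$ with $C$ depending only on $\alpha$ and $V$, as claimed.

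I expect the control of $\Gfront^{(p)}$ to be the only genuine obstacle, and it is the reason the statement is phrased with ``$q(p)$ chosen sufficiently large'': the weight $\psi^{(p)}(\cdot,s)$ grows at rate $\cmax+\kappa$ to the right of the cut-off point, so the tail integral $\Gfront^{(p)}(s)=\int_{\yHom^{(p)}(s)}^{+\infty}\psi^{(p)}(y,s)\,dy$ is under control only because the homogeneity front $\yHom^{(p)}(s)$ stays far to the right, which in turn holds because $\xHom-\xesc\to+\infty$ and $\xInit^{(p)}$ is pinned near $\xesc(\tInit^{(p)})$; the bound on $\xInit^{(p)}$ via the no-escape speed $\snoesc$ is exactly what makes this quantitative and uniform in $p$. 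Everything else is the uniform a priori estimates of \cref{prop:exist_sol_att_ball} plus bookkeeping of the fixed constants.
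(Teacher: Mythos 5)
Your proof is correct and follows essentially the same route as the paper's: read the bound off the relaxation scheme inequality \cref{relax_sch_2}, bound $\eee^{(p)}(0)$ and $\fff^{(p)}(0)$ by the a priori estimates since $\initCut=0$, control $\Gback^{(p)}$ via \cref{hyp_no_excursion_right_cutoff} combined with \cref{first_bd_Gback}, and control $\Gfront^{(p)}$ by making $\yHom^{(p)}(0)$ large through the choice of $q(p)$, using $\xInit^{(p)}\le\xesc(\tInit^{(p)})+\snoesc\,p$ and the divergence \cref{xHom_minus_xesc}. Your version merely makes the constants and the tail integrals explicit where the paper leaves them implicit.
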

\begin{proof}
The proof is based of the relaxation scheme inequality \cref{relax_sch_2}. Thus, let us consider the various terms involved in this inequality.

First, let us observe that since the quantity $\initCut$ is equal to $0$, the quantities $\eee^{(n)}(0)$ and $\fff^{(n)}(0)$ are bounded from above by quantities depending only on $\alpha$ and $V$ (this follows from the bound \vref{hyp_attr_ball_X} for the solution). 

Now, according to inequalities \cref{first_bd_Gback,hyp_no_excursion_right_cutoff}, for every $s$ in $[0,n])$, 
\[
\Gback^{(n)}(s) \le \frac{1}{\kappa} \exp(-\kappa\cCut s/2)
\,,
\]
and this ensures that the terms involving $\Gback^{(n)}(\cdot)$ in inequality \cref{relax_sch_2} are bounded from above by quantities that do not depend on $n$. 

Finally, let us deal with the function $\Gfront^{(n)}(\cdot)$. According to inequality \cref{first_bd_Gfront}, for every nonnegative quantity $s$, 
\[
\Gfront^{(n)}(s) \le \frac{1}{\kappa} \exp \bigl( (\cUpp+\kappa)(\cCut + \kappa) s - \kappa \xiHom^{(n)}(0)\bigr) 
\]
and according to definition \cref{def_xiHom_xiesc_xiEsc},
\[
\xiHom^{(n)}(0) = \sqrt{1+\alpha \cUpp^2} \Bigl( \xHom(\tInit^{(n)})- \xInit^{(n)} \Bigr) 
\,.
\]
On the other hand, according to the definition of $\xInit^{(n)}$ and to inequality \vref{control_escape} controlling the increase of $\xesc(\cdot)$, 
\begin{equation}
\label{upp_bd_xInit_n}
\xInit^{(n)} \le \xesc(\tInit^{(n)}) + \snoesc n
\,,
\end{equation}
thus
\[
\xiHom^{(n)}(0) \ge \sqrt{1+\alpha \cUpp^2} \Bigl( \xHom(\tInit^{(n)})- \xesc(\tInit^{(n)}) - \snoesc n \Bigr) 
\]
and this shows that the quantity $\xiHom^{(n)}(0)$ is arbitrarily large positive provided that the integer $p(n)$ is chosen large enough (depending on $n$). As a consequence, if the integer $p(n)$ is chosen large enough (depending on $n$), then the terms involving $\Gfront^{(n)}(\cdot)$ in inequality \cref{relax_sch_2} are bounded from above by quantities that do not depend on $n$. \Cref{lem:upper_bound_final_energy} is proved. 
\end{proof}
\paragraph{Length of time intervals: final position of escape point.}
\begin{lemma}[escape point ends up far to the right in travelling frame]
\label{lem:growth_xiesc_of_n}
The following convergence holds:
\[
\xiesc^{(n)}(n)\to +\infty
\quad\text{as}\quad
n\to +\infty
\,.
\]
\end{lemma}
\begin{proof}
According to inequality \cref{growth_esc_point_trav_frame} and to definition \vref{def_xiHom_xiesc_xiEsc},
\[
\begin{aligned}
\xiesc^{(n)}(n) & \ge \sqrt{1+\alpha\cUpp^2}(\barsescsup - \varepsilon - \sUpp) n + \xiesc^{(n)}(0) \\
& \ge \sqrt{1+\alpha\cUpp^2} \bigl( (\barsescsup - \varepsilon - \sUpp) n + \xesc ( \tInit^{(n)}) - \xInit^{(n)} \bigr) 
\,.
\end{aligned}
\]
Now, according to the definition \cref{def_xinit_n} of $\xInit^{(n)}$, there exists a quantity $s_n$ in $[0,n]$ such that 
\[
\xInit^{(n)} \le 1 + \xesc(\tInit^{(n)} + s_n) - \Bigl( \sUpp + \frac{\kappa\cCut}{2\sqrt{1 + \alpha\cUpp^2}(\cUpp + \kappa)} \Bigr) s_n
\,.
\]
It follows from the two previous inequalities that
\[
\begin{aligned}
\xesc & (\tInit^{(n)} + s_n) - \xesc( \tInit^{(n)}) \ge \\ 
& (\barsescsup - \varepsilon - \sUpp) n + \Bigl( \sUpp + \frac{\kappa\cCut}{2\sqrt{1 + \alpha\cUpp^2}(\cUpp + \kappa)} \Bigr) s_n - 1 - \frac{\xiesc^{(n)}(n)}{\sqrt{1+\alpha\cUpp^2}}
\,,
\end{aligned}
\]
thus, provided that $s_n$ is nonzero,
\[
\begin{aligned}
& \frac{\xesc (\tInit^{(n)} + s_n) - \xesc( \tInit^{(n)})}{s_n}  \ge \\
& \barsescsup - \varepsilon + \frac{\kappa\cCut}{2\sqrt{1 + \alpha\cUpp^2}(\cUpp + \kappa)} - \frac{1}{s_n} - \frac{\xiesc^{(n)}(n)}{s_n\sqrt{1+\alpha\cUpp^2}}
\,.
\end{aligned}
\]
Let us proceed by contradiction and assume that there exists a quantity $C$ such that, for arbitrarily large positive values of $n$, the quantity $\xiesc^{(n)}(n)$ is not larger than $C$. Then, according to inequality \cref{growth_esc_point_trav_frame}, for such values of $n$ the quantity $\xiesc^{(n)}(0)$ is large negative, and according to inequality \cref{control_escape} controlling the growth of $\xesc(\cdot)$, the quantity $s_n$ must be large positive. According to the technical hypothesis \cref{hyp_vicinty_max_vel}, it follows that, for such large enough positive values of $n$, 
\[
\frac{\xesc (\tInit^{(n)} + s_n) - \xesc( \tInit^{(n)})}{s_n} > \barsescsup
\,,
\]
a contradiction with the definition of $\barsescsup$. \Cref{lem:growth_xiesc_of_n} is proved. 
\end{proof}
\paragraph{Origin of time intervals: upper bound on the final energy, variant.}
The following lemma is a slight variant of \cref{lem:upper_bound_final_energy} above. 
\begin{lemma}[boundedness of energy at the end of the time intervals, variant]
\label{lem:upper_bound_final_energy_2}
For every positive integer $n$, if the integer $p(n)$ is chosen large enough, then the quantity
\[
\int_{-\infty}^{\cCut n} e^{c\xi} \Bigl( \frac{\alpha}{2}v^{(n)}_s(\xi,n)^2 + \frac{1}{2}v^{(n)}_\xi(\xi,n)^2 + V^\dag\bigl(v^{(n)}(\xi,n)\bigr)\Bigr) \, d\xi 
\]
is bounded from above by a quantity that does not depend on $n$. 
\end{lemma}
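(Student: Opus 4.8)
The argument is a variant of the proof of \cref{lem:upper_bound_final_energy}. The plan is to recognise the displayed quantity as the part of the localized energy $\eee^{(p)}(p)$ carried by the interval $\iMain(p)$, and to estimate it using \cref{lem:upper_bound_final_energy} together with a lower bound on the part carried by $\iRight(p)$. Indeed, since $\initCut=0$ and $c=\cmax$ in this relaxation scheme, the weight $\chi^{(p)}(\cdot,p)$ coincides with $y\mapsto e^{cy}$ on $\iMain(p)=(-\infty,\cCut p]$, so the quantity in the statement equals $\eee^{(p)}(p)-I^{(p)}$, where
\[
I^{(p)}=\int_{\iRight(p)}\chi^{(p)}(y,p)\Bigl(\alpha\frac{v^{(p)}_s(y,p)^2}{2}+\frac{v^{(p)}_y(y,p)^2}{2}+V\bigl(v^{(p)}(y,p)\bigr)\Bigr)\,dy
\]
and $\iRight(p)=[\cCut p,+\infty)$. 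By \cref{lem:upper_bound_final_energy}, for $q(p)$ large enough $\eee^{(p)}(p)$ is bounded from above by a quantity not depending on $p$; it therefore suffices to bound $I^{(p)}$ from below by such a quantity.

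First I would split $\iRight(p)$ according to the escape set $\Sigma_{\textup{\textrm{Esc}}}(p)$. Outside $\Sigma_{\textup{\textrm{Esc}}}(p)$ one has $\abs{v^{(p)}(\cdot,p)}\le\dEsc$, so by \cref{property_d_Esc} and the normalization $V(0_{\rr^n})=0$ the integrand is nonnegative there; inside $\Sigma_{\textup{\textrm{Esc}}}(p)$ it is bounded from below by $-C_V$ for some nonnegative quantity $C_V$ depending only on $V$. It remains to locate $\iRight(p)\cap\Sigma_{\textup{\textrm{Esc}}}(p)$: recall from \cref{subsubsec:control_add_flux} that $\Sigma_{\textup{\textrm{Esc}}}(s)\subset(-\infty,\yesc^{(p)}(s)]\cup[\yHom^{(p)}(s),+\infty)$, whereas the very choice \cref{hyp_no_excursion_right_cutoff} of $\xInit^{(p)}$ gives $\yesc^{(p)}(p)\le\frac{\kappa\cCut}{2(\cmax+\kappa)}p<\cCut p$; hence $\iRight(p)\cap\Sigma_{\textup{\textrm{Esc}}}(p)\subset[\cCut p,+\infty)\cap[\yHom^{(p)}(p),+\infty)\subset[\yHom^{(p)}(p),+\infty)$. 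Since this set lies in $\iRight(p)$, where $\chi^{(p)}=\psi^{(p)}$, it follows that
\[
I^{(p)}\ge -C_V\int_{\yHom^{(p)}(p)}^{+\infty}\psi^{(p)}(y,p)\,dy=-C_V\,\Gfront^{(p)}(p)\,.
\]

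Finally, exactly as in the proof of \cref{lem:upper_bound_final_energy} --- using the bound \cref{first_bd_Gfront} and the fact that $\yHom^{(p)}(0)$ can be made arbitrarily large by enlarging $q(p)$ --- the quantity $\Gfront^{(p)}(p)$ is bounded from above by a quantity not depending on $p$, provided $q(p)$ is large enough. Combining the three estimates, the displayed quantity is at most $\eee^{(p)}(p)+C_V\,\Gfront^{(p)}(p)$, which is bounded from above independently of $p$. I expect the only genuinely delicate point to be the control of the negative part of $I^{(p)}$: on $\iRight(p)$ the weight $\chi^{(p)}(\cdot,p)$ reaches values of order $e^{c\cCut p}$, so a region of length $O(p)$ on which $V$ is negative would spoil the estimate; what saves the argument is precisely that \cref{hyp_no_excursion_right_cutoff} together with the largeness of $\yHom^{(p)}(0)$ confine the only such region --- namely $\iRight(p)\cap\Sigma_{\textup{\textrm{Esc}}}(p)$ --- to $[\yHom^{(p)}(p),+\infty)$, where the weight is as small as we please.
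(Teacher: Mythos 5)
Your proposal is correct and follows essentially the same route as the paper: both write the displayed quantity as the localized energy $\eee^{(p)}(p)$ minus its contribution over $[\cCut p,+\infty)$, use the nonnegativity of the integrand between $\cCut p$ and $\yHom^{(p)}(p)$ (the solution has not escaped there), control the possibly negative tail beyond $\yHom^{(p)}(p)$ by the smallness of the weight for $q(p)$ large, and conclude via \cref{lem:upper_bound_final_energy}. Your version merely makes explicit, via the inclusion $\iRight(p)\cap\Sigma_{\textup{\textrm{Esc}}}(p)\subset[\yHom^{(p)}(p),+\infty)$ and the bound \cref{first_bd_Gfront}, two steps the paper leaves implicit.
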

\begin{proof}
According to the definition (\cref{def_xiHom_xiesc_xiEsc}) of $\xiHom(\cdot)$, 
\[
\xiHom^{(n)}(n)= \sqrt{1+ \alpha\cUpp^2} \bigl( \xHom(\tInit^{(n)} + n) - \xInit^{(n)} - \sUpp n\bigr) 
\,,
\]
thus, according to inequality (\cref{upp_bd_xInit_n}), 
\[
\xiHom^{(n)}(n) \ge \sqrt{1+ \alpha\cUpp^2} \bigl( \xHom(\tInit^{(n)} + n) - \xesc(\tInit^{(n)}) - ( \snoesc + \sUpp) n\bigr)
\,.
\]
Thus, for every positive quantity $n$, if the integer $p(n)$ is chosen large enough, then the quantity $\xiHom^{(n)}(n)$ is arbitrarily large positive, and in particular greater than the point $\cCut n$.

In this case, according to the definition of the localized energy $\eee(\cdot)$ and of the weight function $\chi(\cdot,\cdot)$, since $\chi^{(n)}(\xi,n)$ equals $e^{c\xi}$ for every $\xi$ in the interval $(-\infty,\cCut n]$, the following inequality holds:
\[
\begin{aligned}
\eee^{(n)}(n) \ge & \int_{-\infty}^{\cCut n} e^{c\xi} \Bigl( \frac{\alpha}{2}v^{(n)}_s(\xi,n)^2 + \frac{1}{2}v^{(n)}_\xi(\xi,n)^2 + V^\dag\bigl(v^{(n)}(\xi,n)\bigr)\Bigr) \, d\xi \\
& + \int_{\xiHom(n)}^{+\infty} \chi^{(n)}(\xi,n) V^\dag\bigl(v^{(n)}(\xi,n)\bigr) \, d\xi
\,.
\end{aligned}
\]
According to the definition of the weight function $\chi(\cdot,\cdot)$, the second integral of the right-hand side of this inequality is arbitrarily close to $0$ if the quantity $\xiHom^{(n)}(n)$ is large enough positive, or in other words if the integer $p(n)$ is chosen large enough. In view of \cref{lem:upper_bound_final_energy}, this finishes the proof of \cref{lem:upper_bound_final_energy_2}.
\end{proof}
Let us assume from now on that for every positive integer $n$, the integer $p(n)$ is chosen large enough so that the conclusions of \cref{lem:upper_bound_final_energy,lem:growth_xiesc_of_n,lem:upper_bound_final_energy_2} be satisfied, and so that (as assumed in the proof of \cref{lem:upper_bound_final_energy_2}), 
\begin{equation}
\label{cCut_n_not_larger_than_xiHom}
\cCut n \le \xiHom^{(n)}(n)
\,.
\end{equation}
\paragraph{Upper bound for Escape point in travelling frame.}
Last not least, the definition of the quantity $\cUpp$ in \vref{subsec:further_bd_finite_speed} (and the fact that the speed of the travelling frame under consideration is as large as $\cUpp$) will now finally be used to prove the following lemma. 
\begin{lemma}[upper bound for Escape point in travelling frame]
\label{lem:boundedness_xiEsc_of_n}
The quantity $\xiEsc^{(n)}(n)$ remains bounded from above as $n$ goes to $+\infty$. 
\end{lemma}
\begin{proof}
According to inequalities \vref{xEsc_xesc_xHom,hyp_no_excursion_right_cutoff}, for every positive integer $n$, 
\begin{equation}
\label{upp_bound_xiEsc_cCut}
\xiEsc^{(n)}(n) + 1 \le \xiesc^{(n)}(n) + 1 \le \frac{\cCut}{2} n + 1
\,,
\end{equation}
thus as soon as $n$ is large enough,
\[
\xiEsc^{(n)}(n) + 1 \le \cCut n
\,,
\]
and it follows from \cref{lem:upper_bound_final_energy_2} and from inequality \cref{cCut_n_not_larger_than_xiHom} that the quantity 
\[
\int_{-\infty}^{\xiEsc^{(n)}(n) + 1} e^{c\xi} \Bigl(\frac{1}{2}v^{(n)}_\xi(\xi,n)^2 + V^\dag\bigl(v^{(n)}(\xi,n)\bigr) \Bigr) \, d\xi 
\]
is bounded from above by a quantity that does not depend on $n$. On the other hand, according to \vref{lem:posit_en_Esc} (involving the positive quantity $\Eesc$), 
\[
\int_{-\infty}^{\xiEsc^{(n)}(n) + 1} e^{c\xi} \Bigl(\frac{1}{2}v^{(n)}_\xi(\xi,n)^2 + V^\dag\bigl(v^{(n)}(\xi,n)\bigr) \Bigr) \, d\xi \ge \exp \bigl( c \xiEsc^{(n)}(n) \bigr) \Eesc
\,,
\]
and the conclusion follows. 
\end{proof}
\paragraph{Convergence towards zero around escape point.}
The final step is provided by the following lemma that will turn out to be contradictory to the definition of the escape point $\xesc(\cdot)$. 
\begin{lemma}[convergence towards zero around escape point]
\label{lem:approach_zero_around_esc_point}
For every positive quantity $L$, the integral
\[
\int_{\xiesc^{(n)}(n)-L}^{\xiesc^{(n)}(n)+L} \bigl( v^{(n)}_s(\xi,n)^2 + v^{(n)}_\xi(\xi,n)^2 + v^{(n)}(\xi,n)^2 \bigr) \, d\xi 
\]
goes to $0$ as $n$ goes to $+\infty$. 
\end{lemma}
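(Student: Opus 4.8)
The plan is to obtain the convergence directly from three facts already established above: the uniform energy estimate of \cref{lem:upper_bound_final_energy_2}, the boundedness of the \emph{Escape} point in the travelling frame (\cref{lem:boundedness_yEsc_of_p}), and the divergence to $+\infty$ of the \emph{escape} point in the travelling frame (\cref{lem:growth_yesc_of_p}). No compactness argument is needed. The tension exploited is the following: at time $s=p$ the weight $\chi^{(p)}(\cdot,p)$ coincides with $y\mapsto e^{\cmax y}$ on $\iMain(p)$, so its value at the escape point $\yesc^{(p)}(p)$ — which tends to $+\infty$ — blows up like $e^{\cmax\,\yesc^{(p)}(p)}$; meanwhile the only region where the potential contributes negatively to the energy is $\Sigma_\textup{\textrm{Esc}}(p)$, which near $-\infty$ is confined to $\{y\le\yEsc^{(p)}(p)\}$, and this Escape point stays bounded. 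Hence the negative part of the energy stays $O(1)$ while the weight at the escape point diverges, which forces the local mass of $v^{(p)}_s,v^{(p)}_y,v^{(p)}$ near the escape point to vanish.

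Concretely, with the parameters fixed above ($c=\cmax$, $\initCut=0$), \cref{lem:upper_bound_final_energy_2} provides a constant $C_1$, independent of $p$, such that
\[
\int_{-\infty}^{\cCut p} e^{\cmax y}\Bigl( \alpha\frac{v^{(p)}_s(y,p)^2}{2} + \frac{v^{(p)}_y(y,p)^2}{2} + V\bigl(v^{(p)}(y,p)\bigr)\Bigr)\,dy \le C_1 \,.
\]
Next I would control the potential term. Recall that $V\ge -\Delta_V$ on $\rr^n$ (the bound already used in the proof of \cref{lem:posit_en_Esc}); since \cref{lem:boundedness_yEsc_of_p} furnishes a constant $\bar Y$ with $\yEsc^{(p)}(p)\le\bar Y$ for all $p$ (if $\yEsc^{(p)}(p)=-\infty$ the integral below is empty and one takes $C_2=0$),
\[
\int_{-\infty}^{\yEsc^{(p)}(p)} e^{\cmax y}\, V\bigl(v^{(p)}(y,p)\bigr)\,dy \ge -\frac{\Delta_V}{\cmax}\,e^{\cmax\bar Y} =: -C_2 \,.
\]
On the other hand, by \cref{cCut_p_not_larger_than_yHom} one has $\cCut p\le \yHom^{(p)}(p)$, while the travelling-frame form of \cref{xEsc_xesc_xHom} gives $\Sigma_\textup{\textrm{Esc}}(p)\cap[\yEsc^{(p)}(p),\yHom^{(p)}(p)]=\emptyset$; hence $\abs{v^{(p)}(y,p)}\le\dEsc$ for all $y$ in $[\yEsc^{(p)}(p),\cCut p]$, and \cref{property_d_Esc} yields $V(v^{(p)}(y,p))\ge \frac{\eigVmin}{4}\,v^{(p)}(y,p)^2$ there. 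Splitting $\int_{-\infty}^{\cCut p}=\int_{-\infty}^{\yEsc^{(p)}(p)}+\int_{\yEsc^{(p)}(p)}^{\cCut p}$ in the displayed estimate, discarding the nonnegative $v_s^2$ and $v_y^2$ contributions on $(-\infty,\yEsc^{(p)}(p)]$, and using the two bounds above together with $\alpha\frac{v_s^2}{2}+\frac{v_y^2}{2}+V(v)\ge \mu\,(v_s^2+v_y^2+v^2)$ on $[\yEsc^{(p)}(p),\cCut p]$ where $\mu:=\min(\alpha/2,1/2,\eigVmin/4)>0$, I obtain
\[
\mu \int_{\yEsc^{(p)}(p)}^{\cCut p} e^{\cmax y}\bigl( v^{(p)}_s(y,p)^2 + v^{(p)}_y(y,p)^2 + v^{(p)}(y,p)^2 \bigr)\,dy \le C_1 + C_2 \,.
\]

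It remains to localize this weighted estimate near the escape point. By \cref{upp_bound_yEsc_cCut}, $\yEsc^{(p)}(p)\le \yesc^{(p)}(p)\le \tfrac{\cCut}{2}p$; combined with \cref{lem:growth_yesc_of_p} (so $\yesc^{(p)}(p)\to+\infty$ while $\yEsc^{(p)}(p)\le\bar Y$), for every $p$ large enough the whole interval $[\yesc^{(p)}(p)-L,\yesc^{(p)}(p)+L]$ is contained in $[\yEsc^{(p)}(p),\cCut p]$, and on it $e^{\cmax y}\ge e^{\cmax(\yesc^{(p)}(p)-L)}$. Plugging this into the last inequality gives
\[
\int_{\yesc^{(p)}(p)-L}^{\yesc^{(p)}(p)+L} \bigl( v^{(p)}_s(y,p)^2 + v^{(p)}_y(y,p)^2 + v^{(p)}(y,p)^2 \bigr)\,dy \le \frac{C_1+C_2}{\mu}\, e^{\cmax L}\, e^{-\cmax\, \yesc^{(p)}(p)} \,,
\]
and the right-hand side tends to $0$ as $p\to+\infty$ by \cref{lem:growth_yesc_of_p}, which is the claim.

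The point one has to get right is the \emph{lower} bound on the energy: one must notice that the potentially negative contribution of $V$ is confined to $\{y\le\yEsc^{(p)}(p)\}$, and — crucially — that this Escape point remains bounded whereas the escape point $\yesc^{(p)}(p)$ runs off to $+\infty$. This is exactly why \cref{lem:boundedness_yEsc_of_p} and \cref{lem:growth_yesc_of_p} were proved beforehand, and it is their conjunction that turns the $O(1)$ energy estimate into genuine decay at the escape point. The only remaining care is routine bookkeeping of the nested inclusions $[\yesc^{(p)}(p)-L,\yesc^{(p)}(p)+L]\subset[\yEsc^{(p)}(p),\cCut p]\subset(-\infty,\yHom^{(p)}(p)]$ and of the sign of $V$ on each piece.
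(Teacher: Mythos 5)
Your proposal is correct and follows essentially the same route as the paper: split the weighted energy of \cref{lem:upper_bound_final_energy_2} at $\yEsc^{(p)}(p)$, bound the potential below by $-\Delta_V$ on the far left (where \cref{lem:boundedness_yEsc_of_p} keeps the weight $O(1)$) and by $\tfrac{\eigVmin}{4}\,v^2$ on $[\yEsc^{(p)}(p),\cCut p]$, then use the diverging weight $e^{c(\yesc^{(p)}(p)-L)}$ together with \cref{lem:growth_yesc_of_p} to force the localized integral to zero. The bookkeeping of the nested intervals matches the paper's chain of inequalities exactly.
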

\begin{proof}
Let $L$ denote a positive quantity. According to \cref{lem:boundedness_xiEsc_of_n,lem:growth_xiesc_of_n} and to inequalities \cref{upp_bound_xiEsc_cCut,cCut_n_not_larger_than_xiHom}, for every large enough positive integer $n$, the following inequalities hold:
\[
\xiEsc^{(n)}(n) \le \xiesc^{(n)}(n) - L \le \xiesc^{(n)}(n) \le \xiesc^{(n)}(n) + L \le \cCut n \le \xiHom^{(n)}(n)
\,.
\]
Then, it follows from these inequalities that
\[
\begin{aligned}
\int_{-\infty}^{\cCut n} &e^{c\xi} \Bigl(  \frac{\alpha}{2}v^{(n)}_s(\xi,n)^2 + \frac{1}{2}v^{(n)}_\xi(\xi,n)^2 + V^\dag\bigl(v^{(n)}(\xi,n)\bigr)\Bigr) \, d\xi \\
 \ge &
\int_{-\infty}^{\xiEsc^{(n)}(n)} e^{c\xi} V^\dag\bigl(v^{(n)}(\xi,n)\bigr) \, d\xi \ + \\
& \int_{\xiEsc^{(n)}(n)}^{\cCut n} e^{c\xi} \Bigl(  \frac{\alpha}{2}v^{(n)}_s(\xi,n)^2 + \frac{1}{2}v^{(n)}_\xi(\xi,n)^2 + \frac{\eigVmin(m)}{4} v^{(n)}(\xi,n)^2 \Bigr) \, d\xi \\
\ge & -\frac{\Delta_v}{c} e^{\xiEsc^{(n)}(n)}   +  \min\Bigl( \frac{\alpha}{2}, \frac{1}{2}, \frac{\eigVmin(m)}{4}\Bigr) e^{\xiEsc^{(n)}(n)-L}
\int_{\xiesc^{(n)}(n)-L}^{\xiesc^{(n)}(n)+L} \Bigl( v^{(n)}_s(\xi,n)^2\\
&  + v^{(n)}_\xi(\xi,n)^2 + v^{(n)}(\xi,n)^2 \Bigr) \, d\xi 
\,.
\end{aligned}
\]
In view of \cref{lem:upper_bound_final_energy_2,lem:boundedness_xiEsc_of_n,lem:growth_xiesc_of_n}, the conclusion follows. \Cref{lem:approach_zero_around_esc_point} is proved. 
\end{proof}
\paragraph{End of the proof.}
\begin{proof}[End of the proof of \cref{lem:further_bd_finite_speed}]
For every positive integer $n$, let us denote by $t'_n$ the time $\tInit^{(n)}+n$.
It follows from \cref{lem:approach_zero_around_esc_point} that, for every positive quantity $L$, the quantity
\[
\int_{\xesc(t'_n)-L}^{\xesc(t'_n)+L}\bigl( u_t(x,t'_n)^2 + u_x(x,t'_n)^2 + u(x,t'_n)^2 \bigr) \, dx 
\]
goes to $0$ as $n$ goes to $+\infty$. In view of the definitions of the functions $\fff_0(\cdot,\cdot)$ and $\qqq_0(\cdot,\cdot)$ in \vref{subsubsec:firewall_sf_def}, and according to the bound \vref{hyp_attr_ball_X} for the solution, it follows that, for every positive quantity $L$, both quantities
\[
\begin{aligned}
&\sup \left\{ \abs{\fff_0\bigl(\bar{x}, t'_n\bigr)}  : \bar{x}\in [\xesc(t'_n)-L, \xesc(t'_n)+L] \right\} \\
\text{and}\qquad
&\sup \left\{ \qqq_0\bigl(\bar{x}, t'_n\bigr)  : \bar{x}\in [\xesc(t'_n)-L, \xesc(t'_n)+L] \right\}
\end{aligned}
\]
go to $0$ as $n$ goes to $+\infty$, a contradiction with the definition of the ``escape'' point $\xesc(\cdot)$ in \vref{subsec:inv_cv_set_pf_cont}. \Vref{lem:further_bd_finite_speed} is proved.
\end{proof}
\subsubsection{Relaxation scheme inequality, final}
\label{subsubsec:fin_relax}
From now on the relaxation scheme will always be applied with the following choice for $\xInit$:
\[
\xInit = \xesc(\tInit)
\,.
\]
The aim of this \namecref{subsubsec:fin_relax} is to take advantage of this additional hypothesis and of the estimates of \cref{subsubsec:control_add_flux} and of \vref{lem:further_bd_finite_speed} to provide a more explicit version of the relaxation scheme inequality \vref{relax_sch_2}. 

The following additional technical hypothesis will be required to prove the next lemma providing another expression for the upper bound on $\Gback(s)$
\begin{equation}
\label{hyp_param_relax_sch_2}
\barsescsup - \frac{\kappa\cCut}{4(\cUpp + \kappa)\sqrt{1+\alpha\cUpp^2}} \le \sigma
\,.
\end{equation}
This hypothesis is satisfied as soon as the physical speed $\sigma$ is close enough to $\barsescsup$ (or equivalently as soon as the parabolic speed $c$ is close enough to $\barcescsup$). It ensures that the escape point $\xiesc(s)$ remains ``more and more far away to the left'' with respect to the position $\initCut + \cCut\, s$ of the cut-off, as $s$ increases. 
\begin{lemma}[new upper bound on $\Gback(s)$]
\label{lem:new_bound_Gback}
There exists a positive quantity $K[(u_0,\tilde{u}_0)]$, depending on $\alpha$ and $V$ and $m$ and the initial condition $(u_0,\tilde{u}_0)$, such that for every nonnegative quantity $s$ the following estimates hold: 
\begin{equation}
\label{up_bd_G_back}
\Gback(s) \le K[(u_0,\tilde{u}_0)] \exp(-\kappa \, \initCut) \exp\Big( -\frac{\kappa\, \cCut}{2}s\Bigl)
\,.
\end{equation}
\end{lemma}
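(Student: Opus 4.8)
The plan is to bootstrap from the elementary bound \cref{first_bd_Gback} of \cref{lem:first_bound_Gback_Gfront}, namely
\[
\Gback(s)\le\frac{1}{\kappa}\exp\bigl((c+\kappa)\,\yesc(s)-\kappa\,\initCut-\kappa\,\cCut s\bigr),
\]
and to control the growth of $\yesc(s)$. Since the relaxation scheme is now applied with $\xInit=\xesc(\tInit)$, definition \cref{def_yHom_yesc_yEsc} gives $\yesc(s)=\sqrt{1+\alpha c^2}\bigl(\xesc(\tInit+s)-\xesc(\tInit)-\sigma s\bigr)$, hence, by the very definition of $\barxesc(\cdot)$, $\yesc(s)\le\sqrt{1+\alpha c^2}\bigl(\barxesc(s)-\sigma s\bigr)$. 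Everything then reduces to establishing an affine-in-$s$ bound $\barxesc(s)\le C_1+\bigl(\sigma+\tfrac{\kappa\cCut}{2(c+\kappa)\sqrt{1+\alpha c^2}}\bigr)s$, valid for \emph{all} $s\ge 0$, with a constant $C_1$ depending only on $V$, $\alpha$, and $(u_0,u_1)$.

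First I would extract such an affine bound from the asymptotic information on $\barxesc$. Writing $\mu=\kappa\cCut/\bigl((\cmax+\kappa)\sqrt{1+\alpha\cmax^2}\bigr)$ and using the constraint $0<c\le\cmax$ from \cref{hyp_param_relax_sch} together with the monotonicity of $c\mapsto(c+\kappa)\sqrt{1+\alpha c^2}$ on $(0,+\infty)$, one gets $\tfrac{\kappa\cCut}{2(c+\kappa)\sqrt{1+\alpha c^2}}\ge\mu/2$, whereas the technical hypothesis \cref{hyp_param_relax_sch_2} reads $\barsescsup\le\sigma+\mu/4$. Hence the target slope $\sigma+\tfrac{\kappa\cCut}{2(c+\kappa)\sqrt{1+\alpha c^2}}$ is at least $\barsescsup+\mu/4$, strictly above $\barsescsup=\limsup_{s\to+\infty}\barxesc(s)/s$. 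Consequently there is $s_0>0$, depending only on the solution, with $\barxesc(s)\le(\barsescsup+\mu/4)s$ for $s\ge s_0$, while for $s\in[0,s_0]$ the crude bound $\barxesc(s)\le\snoesc s\le\snoesc s_0$ from \cref{various_bounds_x_esc} applies; combining the two (and using $\barsescsup>0$, which holds by \hypInv) yields $\barxesc(s)\le\snoesc s_0+(\barsescsup+\mu/4)s$ for every $s\ge 0$. Since $\barsescsup+\mu/4-\sigma\le\mu/2\le\tfrac{\kappa\cCut}{2(c+\kappa)\sqrt{1+\alpha c^2}}$, this is the desired affine bound with $C_1=\snoesc s_0$.

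Then I would plug back: $\yesc(s)\le\sqrt{1+\alpha c^2}\,C_1+\tfrac{\kappa\cCut}{2(c+\kappa)}s$, so that $(c+\kappa)\yesc(s)-\kappa\cCut s\le(c+\kappa)\sqrt{1+\alpha c^2}\,C_1-\tfrac{\kappa\cCut}{2}s$; bounding $(c+\kappa)\sqrt{1+\alpha c^2}\le(\cmax+\kappa)\sqrt{1+\alpha\cmax^2}$ and inserting into \cref{first_bd_Gback} gives exactly \cref{up_bd_G_back} with $K[(u_0,u_1)]=\kappa^{-1}\exp\bigl((\cmax+\kappa)\sqrt{1+\alpha\cmax^2}\,\snoesc\,s_0\bigr)$, which depends only on $V$, on $\alpha$ (fixed throughout), and on the initial datum through $s_0$, and is uniform over the remaining relaxation parameters $\tInit$, $\xInit=\xesc(\tInit)$, $c$, $\sigma$, $\initCut$.

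The only genuinely delicate point is the passage from the asymptotic statement $\limsup_{s}\barxesc(s)/s=\barsescsup$ to an affine bound valid for every $s\ge 0$: this is precisely where the constant $C_1$ — hence the dependence of $K$ on $(u_0,u_1)$ rather than on $V$ and $\alpha$ alone — appears, and where the margin $\mu/4$ furnished by \cref{hyp_param_relax_sch_2} (equivalently, the requirement that $\sigma$ be close enough to $\barsescsup$) is used in an essential way. Everything else is bookkeeping with the explicit weight $\psi$ and the monotonicity of $c\mapsto(c+\kappa)\sqrt{1+\alpha c^2}$.
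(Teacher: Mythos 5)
Your argument is correct and follows essentially the same route as the paper: both start from the elementary bound \cref{first_bd_Gback}, use $\xInit=\xesc(\tInit)$ to replace $\yesc(s)$ by $\sqrt{1+\alpha c^2}\bigl(\barxesc(s)-\sigma s\bigr)$, and exploit the margin $\kappa\cCut/4$ supplied by \cref{hyp_param_relax_sch_2} so that the exponent is at most $-\kappa\cCut s/2$ plus a quantity bounded uniformly in $s$ and in the remaining parameters, the bound depending on $(u_0,u_1)$ only through the sublinear deviation of $\barxesc(s)$ from $\barsescsup\, s$. The only cosmetic difference is that the paper packages this deviation as a finite supremum $\bar\beta[(u_0,u_1)]=\sup_{s\ge0}\bigl[(\cmax+\kappa)\sqrt{1+\alpha c^2}\bigl(\barxesc(s)-\barsescsup s\bigr)-\tfrac{\kappa\cCut}{4}s\bigr]$, whereas you make it explicit via the threshold $s_0$ and the crude bound $\barxesc(s)\le\snoesc s$ on $[0,s_0]$.
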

\begin{proof}
According to inequality \vref{first_bd_Gback},
\begin{equation}
\label{upp_bd_Gback_prel}
\Gback(s) \le \frac{1}{\kappa} \exp(- \kappa \, \initCut) 
\exp\Bigl( (c+\kappa) \, \xiesc(s)  - \frac{\kappa\, \cCut}{2} s \Bigr) 
\exp \Bigl( - \frac{\kappa\, \cCut}{2} s \Bigr) 
\,.
\end{equation}
Let us us denote by $\beta(s)$ the argument of the second exponential of the right-hand side of this last inequality:
\[
\begin{aligned}
\beta(s) = & (c+\kappa) \, \xiesc(s)  - \frac{\kappa\, \cCut}{2} s \\
= & (c+\kappa) \Bigl( \sqrt{1+\alpha c^2} \bigl( \xesc(\tInit + s) - \xesc(\tInit) \bigr) - cs \Bigr) - \frac{\kappa\, \cCut}{2} s \\
\le & (c+\kappa) \Bigl( \sqrt{1+\alpha c^2} \barxesc(s) - cs \Bigr) - \frac{\kappa\, \cCut}{2} s \\
\le & (c+\kappa) \sqrt{1+\alpha c^2} \bigl( \barxesc(s) - \barsescsup s \bigr) \\
& + \Bigl( (c+\kappa) \bigl( \sqrt{1+\alpha c^2} \barsescsup -c \bigr)- \frac{\kappa\, \cCut}{2} \Bigr) s
\,.
\end{aligned}
\]
Besides, according to the condition \cref{hyp_param_relax_sch_2} on the ``physical'' speed $\sigma$, the following inequality holds:
\[
(c+\kappa) \bigl( \sqrt{1+\alpha c^2} \barsescsup -c \bigr) \le \frac{\kappa\, \cCut}{4}
\,,
\]
thus, for every nonnegative quantity $s$, 
\[
\beta(s) \le (c+\kappa) \sqrt{1+\alpha c^2} \bigl( \barxesc(s) - \barsescsup s \bigr) - \frac{\kappa\, \cCut}{4} s
\,,
\]
and according to the definition of $\barsescsup$ this quantity goes to $-\infty$ as $s$ goes to $+\infty$. The following (nonnegative) quantity
\[
\bar\beta[(u_0,\tilde{u}_0)] = \sup_{s\ge0} (\cUpp+\kappa)\sqrt{1+\alpha c^2} \bigl( \barxesc(s) - \barsescsup s \bigr) - \frac{\kappa\, \cCut}{4} s
\]
is an upper bound for all the values of $\beta(s)$, for all $s$ in $[0,+\infty)$. This quantity depends on $V$ and on the function $x\mapsto \barxesc(s)$, in other words on the initial condition $(u_0,\tilde{u}_0)$, but not on the parameters $\tInit$ and $c$ and $\initCut$ of the relaxation scheme. Let
\[
K[(u_0,\tilde{u}_0)] = \frac{1}{\kappa} \exp\bigl( \bar{\beta}[(u_0,\tilde{u}_0)] \bigr)
\,;
\]
with this notation, the upper bound \cref{up_bd_G_back} on $\Gback(s)$ follows from inequality \cref{upp_bd_Gback_prel}.
\end{proof}
Let us introduce the quantities
\[
K_1 = \frac{2 \alpha \Delta_V \KEF }{\nuF}
\quad\text{and}\quad
K_2 = \frac{\KEF \, \KFire}{\nuF} + \KEEsc
\]
and 
\[
\KGback [(u_0,\tilde{u}_0)] = K[(u_0,\tilde{u}_0)] \biggl( K_1 + \frac{2}{\kappa\cCut} K_2 \biggr)
\,,
\]
and, for every nonnegative quantity $s$, the quantity
\[
\KGfront(s) = \Bigl( K_1 + \frac{K_2}{(\cUpp+\kappa) (\cCut+\kappa)} \Bigr) 
\frac{1}{\kappa} \exp\bigl( (\cUpp+\kappa) (\cCut+\kappa) s \bigr)
\,.
\]
Then, for every nonnegative quantity $\sFin$, according to inequalities \cref{first_bd_Gfront} on $\Gfront(s)$ and \cref{up_bd_G_back} on $\Gback(s)$, the relaxation scheme inequality \vref{relax_sch_2} can be rewritten as
\begin{equation}
\label{relax_sch_3}
\begin{aligned}
(1+\alpha c^2) \int_0^{\sFin} \ddd(s) \, ds & \le \ \eee(0) - \eee(\sFin) \\
& + \frac{\KEF}{\nuF} \fff(0) + \KGback [(u_0,\tilde{u}_0)] \exp(-\kappa \, \initCut) \\
& + \KGfront(\sFin) \exp\bigl( (\cUpp+\kappa)\,  \initCut \bigr) \exp \bigl( -\kappa \, \xiHom(0)\bigr)
\,.
\end{aligned}
\end{equation}
This is the last version of the relaxation scheme inequality. The nice feature is that it has \emph{exactly} the same form as in the parabolic case treated in \cite{Risler_globalBehaviour_2016} (actually, the sole difference is the value of the factor in front of the integral of the left-hand side, but this detail plays absolutely no role in the arguments carried out in \cite{Risler_globalBehaviour_2016}).
\subsection{Convergence of the mean invasion speed}
\label{subsec:cv_mean_inv_vel}
The aim of this \namecref{subsec:cv_mean_inv_vel} is to prove the following proposition. 
\begin{proposition}[mean invasion speed]
\label{prop:cv_mean_inv_vel}
The following equalities hold:
\[
\sescinf = \sescsup = \barsescsup
\,.
\]
\end{proposition}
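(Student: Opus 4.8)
The plan is to show that all four limit mean speeds introduced in \cref{subsec:inv_cv_set_pf_cont}, namely $\underbarsescinf \le \sescinf \le \sescsup \le \barsescsup$, actually coincide, so that it suffices to prove the single inequality $\sescinf \ge \barsescsup$ (combined with the already-known chain of inequalities). The key tool is the relaxation scheme inequality in its final form \cref{relax_sch_3}, applied along the time intervals and travelling frame constructed exactly as in the proof of \cref{lem:further_bd_finite_speed}: fix a parabolic speed $c$ slightly below $\barcescsup$ (so that the subsonic bound \cref{lem:further_bd_finite_speed} and the technical hypothesis \cref{hyp_param_relax_sch_2} are met), take $\xInit = \xesc(\tInit)$, $\initCut = 0$, and exploit that on such intervals the mean speed of the escape point is almost maximal (this is the content of \cref{lem:choice_t_pq}).

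The heart of the argument is a dichotomy/contradiction: suppose $\sescinf < \barsescsup$. Then there are arbitrarily large times $t$ at which $\xesc(t)/t$ is appreciably below $\barsescsup$, while by definition of $\barsescsup$ there are time intervals of arbitrarily large length on which the escape point advances at mean speed arbitrarily close to $\barsescsup$. Choosing the travelling frame to move at physical speed $\sigma$ just below $\barsescsup$, one sets up the relaxation scheme over such a long interval $[\tInit, \tInit + \sFin]$. On the one hand, inequality \cref{relax_sch_3} forces $\int_0^{\sFin} \ddd(s)\,ds$ to stay bounded (the terms $\eee(0)$, $\fff(0)$, the $\Gback$ contribution via \cref{lem:new_bound_Gback}, and the $\Gfront$ contribution via \cref{first_bd_Gfront} are all controlled once $q(p)$, i.e. the initial time, is taken large enough, exactly as in \cref{lem:upper_bound_final_energy,lem:upper_bound_final_energy_2}); in particular the dissipation $\ddd$ must become small along a subsequence of times. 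On the other hand, the energy $\eee(\sFin)$ at the end of the interval is bounded below, by \cref{lem:posit_en_Esc}, by a term of order $\exp(c\,\yEsc(\sFin))$ whenever the Escape point has advanced; combining the upper bound on $\eee(\sFin)$ from \cref{relax_sch_3} with this lower bound forces $\yEsc^{(p)}(p)$, hence also $\yesc^{(p)}(p)$, to be bounded — which contradicts \cref{lem:growth_yesc_of_p} (or, dually, contradicts the almost-maximal advance of the escape point guaranteed by \cref{lem:choice_t_pq}). This contradiction yields $\sescinf \ge \barsescsup$, and since $\underbarsescinf \le \sescinf \le \sescsup \le \barsescsup$ always holds, all four speeds are equal; in particular $\sescinf = \sescsup = \barsescsup$.

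Concretely, I would organize it as: (1) recall the standing inequalities $\underbarsescinf \le \sescinf \le \sescsup \le \barsescsup \le \smax$ and reduce the statement to $\sescinf \ge \barsescsup$; (2) argue by contradiction, fixing $\varepsilon > 0$ with $\sescinf < \barsescsup - 2\varepsilon$ and a parabolic speed $c$ with $\barcescsup - \delta < c < \barcescsup$ for $\delta$ small enough that \cref{hyp_param_relax_sch_2} holds; (3) invoke \cref{lem:choice_t_pq} to get, for each length $p$, arbitrarily large initial times $\tInit^{(p)}$ with the almost-maximal-advance property, and simultaneously (using $\sescinf < \barsescsup - 2\varepsilon$) pick these times so that $\xesc(\tInit^{(p)})/\tInit^{(p)}$ is small; (4) run the relaxation scheme \cref{relax_sch_3} on $[\tInit^{(p)}, \tInit^{(p)}+p]$, obtaining a uniform-in-$p$ bound on $\eee^{(p)}(p)$ exactly as in \cref{lem:upper_bound_final_energy,lem:upper_bound_final_energy_2}; (5) apply \cref{lem:posit_en_Esc} to bound $\eee^{(p)}(p)$ from below in terms of $\exp(c\,\yEsc^{(p)}(p))$ and deduce that $\yEsc^{(p)}(p)$, and then $\yesc^{(p)}(p)$, stay bounded; (6) contradict \cref{lem:growth_yesc_of_p}. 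The main obstacle I anticipate is bookkeeping in step (3)–(4): one must choose the initial times $\tInit^{(p)}$ to serve two purposes at once — ensuring the almost-maximal advance of the escape point on $[\tInit^{(p)}, \tInit^{(p)}+p]$ (from \cref{lem:choice_t_pq}) and ensuring $\yHom^{(p)}(0)$ is large enough for the $\Gfront$ terms to be negligible — and one must verify that these requirements are compatible, i.e. that the sequence $(t_{p,q})_{q}$ from \cref{lem:choice_t_pq} can be taken with $q(p)$ large without destroying the smallness of $\xesc(\tInit^{(p)})/\tInit^{(p)}$; this is exactly the kind of argument already carried out in \cite{Risler_globalBehaviour_2016} and the identical form of \cref{relax_sch_3} noted at the end of \cref{subsubsec:fin_relax} means the remaining steps transcribe verbatim.
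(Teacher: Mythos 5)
There is a genuine gap. Your plan transplants the mechanism of the proof of \cref{lem:further_bd_finite_speed} (bound the final energy $\eee^{(p)}(p)$ from above via \cref{relax_sch_3}, bound it from below via \cref{lem:posit_en_Esc} in terms of $\exp\bigl(c\,\yEsc^{(p)}(p)\bigr)$, conclude that $\yEsc^{(p)}(p)$ and hence $\yesc^{(p)}(p)$ stay bounded, and contradict the growth of the escape point). But \cref{lem:posit_en_Esc} is only valid for $c\ge\cmax$: its proof absorbs the negative contribution $-\Delta_V/c$ of the region behind the Escape point only because $c$ is at least as large as the explicitly engineered quantity $\cmax$ of \cref{def_cmax}. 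Once \cref{lem:further_bd_finite_speed} is established one knows $\barcescsup\le\cmax$, so a parabolic speed $c$ chosen just below $\barcescsup$ will in general be strictly smaller than $\cmax$, and the lower bound $\eee^{(p)}(p)\gtrsim e^{c\,\yEsc^{(p)}(p)}\,\Eesc$ is simply unavailable. Likewise \cref{lem:growth_yesc_of_p} is not a free-standing fact: it was proved under the contradiction hypothesis $\cmax<\barcescsup$ and with the specific choice \cref{def_xinit_p} of $\xInit^{(p)}$, neither of which holds in your setting. A further symptom that the route cannot close is that your argument never actually uses the contradiction hypothesis $\sescinf<\barsescsup$ (the almost-maximal-advance intervals of \cref{lem:choice_t_pq} exist unconditionally), nor the hypothesis \hypDiscVel --- yet the paper explicitly flags the removal of \hypDiscVel as an open problem, so any proof of this proposition that does not invoke it should be suspect.

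The paper's actual argument is different in its key ingredient. Using \hypDiscVel one picks $\sigma$ with $\sescinf<\sigma<\barsescsup$ \emph{and} $\Phi_c(0_{\rr^n})=\emptyset$ for the corresponding parabolic speed $c$. The emptiness of $\Phi_c(0_{\rr^n})$ yields, via compactness and the classification of bounded solutions of the travelling-wave equation, a \emph{permanent nonzero dissipation} around the escape point in the frame travelling at speed $\sigma$ (\cref{lem:dissip_no_tf_vel}). The two strict inequalities $\sescinf<\sigma<\barsescsup$ are then both used to produce time intervals of length between $\tau$ and $2\tau$ on which the escape point stays to the right of its initial position in the travelling frame and ends up back near it (\cref{lem:first_attempt_time_int,lem:sec_attempt_time_int}); on such intervals the dissipation integral $\int_0^{s'_p}\ddd^{(p)}(s)\,ds$ grows at least linearly in $\tau$ (\cref{lem:claim_dissip}), while the right-hand side of \cref{relax_sch_3} stays bounded uniformly in $\tau$ (\cref{lem:claim_energy}). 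That is the contradiction; your proposal contains no substitute for the dissipation lower bound, which is the step where the inequality $\sescinf<\barsescsup$ is genuinely refuted.
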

\begin{proof}
\renewcommand{\qedsymbol}{}
Let us proceed by contradiction and assume that
\[
\sescinf < \barsescsup
\,.
\]
Let us take and fix a positive quantity $\sigma$ (``physical speed'') if $c$ denotes the corresponding ``parabolic speed'' defined as
\[
c = \frac{\sigma}{\sqrt{1-\alpha\sigma^2}} 
\iff
\sigma = \frac{c}{\sqrt{1+\alpha c^2}}
\,,
\]
then the following conditions are satisfied:
\[
\sescinf < \sigma < \barsescsup \le \sigma + \frac{\kappa\cCut}{4(\cUpp + \kappa)\sqrt{1+\alpha\cUpp^2}}
\quad\text{and}\quad
\Phi_c(m) = \emptyset
\,.
\]
The first condition is satisfied as soon as $c$ is less than and close enough to $\barcescsup$, thus existence of a quantity $c$ satisfying the two conditions follows from hypothesis \textup{(\hyperlink{hypDiscVel}{\hypDiscVelRef})}.

The contradiction will follow from the relaxation scheme set up in \cref{subsec:relax_sch_tr_fr}. The main ingredient is: since the set $\Phi_c(m)$ is empty, some dissipation must occur permanently around the escape point in a referential travelling at physical speed $\sigma$. This is stated by the following lemma. 
\end{proof} 
\begin{lemma}[nonzero dissipation in the absence of travelling front]
\label{lem:dissip_no_tf_vel}
There exist positive quantities $L$ and $\epsDissip$ such that
\[
\liminf_{t\to+\infty} \int_{-1}^1 \left( \int_{-L}^L \Bigl( u_t\bigl( \xesc (t) + y,t+ s\bigr) + \sigma u_x \bigl( \xesc (t) + y, t+s\bigr) \Bigr)^2 \, dy \right)\, ds \ge \epsDissip
\,.
\]
\end{lemma}
\begin{proof}[Proof of \cref{lem:dissip_no_tf_vel}]
Let us proceed by contradiction and assume that the converse is true. Then, there exists a sequence $(t_n)_{n\in\nn^*}$ in $[1,+\infty)$ going to $+\infty$ as $n$ goes to $+\infty$ such that, for every positive integer $n$, 
\begin{equation}
\label{proof_lem_dissip}
\int_{-1}^{1} \left( \int_{-n}^n \Bigl( u_t\bigl( \xesc (t_n) + y, t_n+s\bigr) + \sigma u_x \bigl( \xesc (t_n) + y, t_n+s\bigr) \Bigr)^2 \, dy \right)\, ds \le \frac{1}{n}
\,.
\end{equation}
By compactness (\vref{prop:asympt_comp}), up to replacing the sequence $(t_n)_{n\in\nn}$ by a subsequence, it may be assumed that there exists an entire solution
\[
\bar{u}\in\ccc^0\bigl(\rr,\HulofR{1}\bigr) \cap \ccc^1(\rr,\LtwoulofR\bigr)
\]
of system \cref{hyp_syst} such that, for every positive quantity $L$, both quantities
\[
\begin{aligned}
& \sup_{s\in[-1,1]} \norm{ y\mapsto u\bigl(\xesc (t_n)+y, t_n + s\bigr) - \bar{u}(y, s)}_{H^1([-L,L],\rr^d)} \\
\text{and}\quad
& \sup_{s\in[-1,1]} \norm{ y\mapsto u_t\bigl(\xesc (t_n)+y, t_n + s\bigr) - \bar{u}_t(y, s)}_{L^2([-L,L],\rr^d)}
\end{aligned}
\]
go to $0$ as $n$ goes to $+\infty$. Let us consider the entire solution
\[
\bar{v}\in\ccc^0\bigl(\rr,\HulofR{1}\bigr) \cap \ccc^1(\rr,\LtwoulofR\bigr)
\]
of system \cref{hyp_syst_tf} defined as
\[
\bar{v}(\xi,s) = \bar{u} \left( \frac{\xi}{\sqrt{1+\alpha c^2}} + \sigma s, s\right) 
\,.
\]
It follows from inequality \cref{proof_lem_dissip} that the function $s\mapsto \bar{v}_s(\cdot,s)$ vanishes in 
\[
\ccc^0\bigl([-1,1],L^2(\rr,\rr^d)\bigr)
\]
and as a consequence the function $\phi$ defined as $\phi(\xi) = \bar{v}(\xi,0)$ is a solution of the differential system \cref{syst_trav_front} governing the profiles of waves travelling at the parabolic speed $c$ for system \cref{hyp_syst}. According to the properties of the escape point \vref{xEsc_xesc_xHom,xHom_minus_xesc}, 
\[
\sup_{\xi\in[0,+\infty)} \abs{ \phi(\xi)-m} \le \dEsc(m)
\,,
\]
thus it follows from assertion \cref{item:cv_spatial_asymptotics_tw} of \vref{lem:asympt_behav_tw_2} that $\phi(\xi)$ goes to $m$ as $\xi$ goes to $+\infty$. On the other hand, according to the bound \cref{hyp_attr_ball_Linfty} on the solution, $\abs{\phi(\cdot)}$ is bounded (by $\Rattinfty$), and since $\Phi_c(m)$ is empty, it follows from hypothesis \textup{(\hyperlink{hypOnlyBist}{\hypOnlyBistRef})} that $\phi(\cdot)$ is identically equal to $m$, a contradiction with the definition of $\xesc(\cdot)$. 
\end{proof}
The remaining of the proof of \cref{prop:cv_mean_inv_vel} is almost identical to the parabolic case treated in \cite{Risler_globalBehaviour_2016}, where more explanations and details can be found. 
The next step is the choice of the time interval and the travelling frame (at physical speed $\sigma$) where the relaxation scheme will be applied. Here is a first attempt. 
\begin{lemma}[large excursions to the right and returns for escape point in travelling frame]
\label{lem:first_attempt_time_int}
There exist sequences $(t_n)_{n\in\nn}$ and $(s_n)_{n\in\nn}$ and $(\bar{s}_n)_{n\in\nn}$ of real quantities such that the following properties hold.
\begin{enumerate}
\item For every $n$ in $\nn$, the following inequalities hold: $0\le t_n$ and $0 \le s_n \le \bar{s}_n$\,;
\item $\xesc(t_n+s_n) - \xesc(t_n) - \sigma s_n$ goes to $+\infty$ as $n$ goes to $+\infty$\,;
\item For every $n$ in $\nn$, the following inequality holds: $\xesc(t_n+\bar{s}_n) - \xesc(t_n) - \sigma \bar{s}_n \le 0$\,.
\end{enumerate}
\end{lemma}
\begin{proof}[Proof of \cref{lem:first_attempt_time_int}]
The proof is identical to that of \cite[\GlobalBehaviourLemLargeExcursionsRight]{Risler_globalBehaviour_2016}.
\end{proof}
Let $\tau$ denote a (large) positive quantity, to be chosen below. The following lemma provides appropriate time intervals to apply the relaxation scheme. 
\begin{lemma}[escape point remains to the right and ends up to the left in travelling frame, controlled duration]
\label{lem:sec_attempt_time_int}
There exist sequences $(t'_n)_{n\in\nn}$ and $(s'_n)_{n\in\nn}$ such that, for every $n$ in $\nn$, the following properties hold:
\begin{enumerate}
\item $0\le t'_n$ and $\tau \le s'_n \le 2\tau$\,,
\item for all $s$ in $[0,\tau]$, the following inequality holds: $\xesc(t'_n + s) - \xesc(t'_n) - \sigma s \ge 0$\,,
\item $\xesc(t'_n + s'_n) - \xesc(t'_n) - \sigma s'_n \le 1$\,,
\end{enumerate}
and such that
\[
t'_n\to+\infty
\quad\text{as}\quad
n\to +\infty
\,.
\]
\end{lemma}
\begin{proof}[Proof of \cref{lem:sec_attempt_time_int}]
The proof is identical to that of \cite[\GlobalBehaviourLemExcursionsRightControlledDuration]{Risler_globalBehaviour_2016}.
\end{proof}
\begin{proof}[Continuation of the proof of \cref{prop:cv_mean_inv_vel}]
\renewcommand{\qedsymbol}{}
For every $n$ in $\nn$, the relaxation scheme will be applied with the following parameters: 
\[
\tInit = t'_n
\quad\text{and}\quad
\xInit = \xesc(\tInit)
\quad\text{and}\quad
\sigma \quad\text{as chosen above, and}\quad
\initCut = 0 
\]
(the relaxation scheme thus depends on $n$). Let us denote by 
\[
v^{(n)}(\cdot,\cdot)
\quad\text{and}\quad
\eee^{(n)}(\cdot)
\quad\text{and}\quad
\ddd^{(n)}(\cdot)
\quad\text{and}\quad
\fff^{(n)}(\cdot)
\quad\text{and}\quad
{\xiesc}^{(n)}(\cdot)
\quad\text{and}\quad
{\xiHom}^{(n)}(\cdot)
\]
the objects defined in \cref{subsec:relax_sch_tr_fr} (with the same notation except the ``$(n)$'' superscript that is here to remind that all these objects depend on the integer $n$). By definition the quantity ${\xiesc}^{(n)}(0)$ equals zero, and according to the conclusions of \cref{lem:sec_attempt_time_int}, 
\[
{\xiesc}^{(n)}(s)\ge 0 \text{ for all }s\text{ in }[0,\tau]
\quad\text{and}\quad
{\xiesc}^{(n)}(s'_n) \le \sqrt{1+\alpha c^2}
\,.
\]
The following two lemmas will be shown to be in contradiction with the relaxation scheme final inequality \vref{relax_sch_3}.
\end{proof} 
\begin{lemma}[bounds on energy and firewall at the ends of relaxation scheme]
\label{lem:claim_energy}
The quantities $\eee^{(n)}(0)$ and $\fff^{(n)}(0)$ are bounded from above and the quantity $\eee^{(n)}(s'_n)$ is bounded from below, and these bounds are uniform with respect to $\tau$ and $n$. 
\end{lemma}
\begin{proof}[Proof of \cref{lem:claim_energy}]
The proof is identical to that of \cite[\GlobalBehaviourLemBoundsEnergyFirewall]{Risler_globalBehaviour_2016}.
\end{proof}
\begin{lemma}[large dissipation integral]
\label{lem:claim_dissip}
The quantity 
\[
\int_0^{s'_n} \ddd^{(n)}(s) \, ds
\]
goes to $+\infty$ as $\tau$ goes to $+\infty$, uniformly with respect to $n$. 
\end{lemma}
\begin{proof}[Proof of \cref{lem:claim_dissip}]
The proof is identical to that of \cite[\GlobalBehaviourLemLargeDissipation]{Risler_globalBehaviour_2016}.
\end{proof}
\begin{proof}[End of the proof of \cref{prop:cv_mean_inv_vel}]
\renewcommand{\qedsymbol}{}
According to \cref{lem:claim_energy}, and since ${\xiHom}^{(n)}(0)$ goes to $+\infty$ as $n$ goes to $+\infty$, the right-hand side of inequality \vref{relax_sch_3} is bounded, uniformly with respect to $\tau$, provided that $n$ (depending on $\tau$) is large enough. This is contradictory to \cref{lem:claim_dissip}, and completes the proof of \vref{prop:cv_mean_inv_vel}. 
\end{proof}
According to \cref{prop:cv_mean_inv_vel}, the three quantities $\sescinf$ and $\sescsup$ and $\barsescsup$ are equal; let
\[
\sesc
\]
denote their common value, and let us consider the corresponding ``parabolic speed'' $\cesc$ defined as
\[
\cesc = \frac{\sesc}{\sqrt{1-\alpha\sesc^2}} 
\iff
\sesc = \frac{\cesc}{\sqrt{1+\alpha\cesc^2}}
\,.
\]
\subsection{Further control on the escape point}
\label{subsec:further_control}
\begin{proposition}[mean invasion speed, further control]
\label{prop:further_control}
The following equality holds:
\[
\underbarsescinf = \sesc
\,.
\]
\end{proposition}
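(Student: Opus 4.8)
The plan is to argue by contradiction. Recall from the previous sections that $\sescinf = \sescsup = \barsescsup = \sesc$, and that by definition $\underbarsescinf \le \sescinf = \sesc$, so the only thing to rule out is $\underbarsescinf < \sesc$. Suppose then that there exists $\delta > 0$ with $\underbarsescinf < \sesc - \delta$. This means there are arbitrarily large lengths $s$ and times $t$ for which $\xesc(t+s) - \xesc(t) \le (\sesc - \delta) s$, i.e.\ the escape point makes a ``long slow stretch''. On the other hand $\sescinf = \sesc$ forces $\xesc(t)/t \to \sesc$, so such a slow stretch must be both preceded and followed by faster-than-average motion; combining these, one extracts sequences $(t_p)$, lengths $s_p \to +\infty$, and a slope that is strictly subsonic (using $\sesc = \barsescsup \le \smax$ from \cref{lem:further_bd_finite_speed}) along which the escape point barely moves relative to a travelling frame of physical speed $\sigma$ slightly below $\sesc$.

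The second step is to feed this into the relaxation scheme of \cref{subsec:relax_sch_tr_fr}, exactly as in the proof of \cref{prop:cv_mean_inv_vel}. Choose $\sigma$ (with parabolic counterpart $c$) close to $\sesc$ from below, satisfying $\Phi_c(0_{\rr^n}) = \emptyset$ (possible by \hypDiscVel), close enough that hypothesis \cref{hyp_param_relax_sch_2} holds. Apply the scheme with $\tInit = t_p$, $\xInit = \xesc(\tInit)$, $\initCut = 0$, over a time interval of controlled length $\tau$ (then $p\to\infty$). The slow-stretch property guarantees $\yesc^{(p)}(s)$ stays bounded (indeed stays near $0$ or moves leftward) throughout $[0,\tau]$, so \cref{lem:new_bound_Gback} applies and the $\Gback$ contribution in \cref{relax_sch_3} is uniformly small; meanwhile $\yHom^{(p)}(0)\to+\infty$ as $p\to\infty$ kills the $\Gfront$ term. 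Then, just as in \cref{lem:claim_energy}, the energies $\eee^{(p)}(0)$ and $\fff^{(p)}(0)$ are bounded above and $\eee^{(p)}(\text{end})$ is bounded below uniformly in $\tau$ and $p$, so the right-hand side of \cref{relax_sch_3} is bounded uniformly in $\tau$. But since $\Phi_c(0_{\rr^n})=\emptyset$, \cref{lem:dissip_no_tf_vel} gives nonzero dissipation per unit time around the escape point, so $\int_0^{\text{end}} \ddd^{(p)}(s)\,ds \to +\infty$ as $\tau\to+\infty$ (this is the analogue of \cref{lem:claim_dissip}, using that the escape point remains inside the domain of space covered by $\chi^{(p)}$ throughout the relaxation interval). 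This contradicts the uniform bound on the left-hand side of \cref{relax_sch_3}.

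The main obstacle is the combinatorial/book-keeping step that turns ``$\underbarsescinf < \sesc$'' into usable time intervals: one must produce intervals of arbitrarily large length $\tau$, occurring at arbitrarily large initial times, on which $\yesc^{(p)}(s)$ does \emph{not} escape to the right of the cutoff $\cCut s$ — otherwise the $\Gback$ estimate of \cref{lem:new_bound_Gback} fails. Here the slow-stretch of $\xesc(\cdot)$ must be exploited together with the choice $\sigma < \sesc$: since $\xesc$ moves at average speed below $\sigma$ on the stretch while the frame moves at speed $\sigma$, the point $\yesc^{(p)}(s)$ drifts leftward on average, but one still needs to control its maximum over the interval, which is where the precise bound \cref{hyp_param_relax_sch_2} and a choice of $\xInit$ analogous to \cref{def_xinit_p} come in. This is routine in the sense that the parabolic analogue is carried out in \cite{Risler_globalBehaviour_2016} (Lemmas \GlobalBehaviourLemLargeExcursionsRight{} and \GlobalBehaviourLemExcursionsRightControlledDuration{}), and the relaxation-scheme inequality \cref{relax_sch_3} has been arranged to be formally identical to the parabolic one; so I would largely cite those proofs and only indicate the (minor) modifications needed for the hyperbolic dissipation term.
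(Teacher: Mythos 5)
Your overall strategy --- contradiction, a frame speed $\sigma$ close to $\sesc$ with $\Phi_c(0_{\rr^n})=\emptyset$ supplied by \hypDiscVel{} so that \cref{lem:dissip_no_tf_vel} applies, and the relaxation-scheme inequality \cref{relax_sch_3} played against a divergent dissipation integral --- is the right one, and it is the mechanism behind \cref{prop:cv_mean_inv_vel} and behind the companion paper's proof that this paper invokes. The genuine gap is in the choice of time intervals and, consequently, in the claim that $\int_0^{\sFin}\ddd^{(p)}(s)\,ds\to+\infty$. You run the scheme on the ``slow stretches'', along which, as you yourself note, $\yesc^{(p)}(s)$ drifts \emph{leftward}. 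But $\ddd(s)=\int_\rr\chi(y,s)\,v_s(y,s)^2\,dy$ carries the weight $\chi(y,s)=e^{cy}$ to the left of the cut-off, while \cref{lem:dissip_no_tf_vel} only bounds from below the \emph{unweighted} dissipation in a window of fixed width around $\xesc(t)$. The contribution of that window to $\ddd(s)$ is therefore of order $e^{c\,\yesc^{(p)}(s)}\,\epsDissip$; on a slow stretch $\yesc^{(p)}(s)$ goes to $-\infty$ linearly, this contribution is exponentially small, and the dissipation integral may well stay bounded. Your parenthetical ``the escape point remains inside the domain of space covered by $\chi^{(p)}$'' does not meet this objection: what the argument actually requires is $\yesc^{(p)}(s)\ge 0$ on a time set of measure at least $\tau$ (so that the weight at the escape point is bounded below, feeding \cref{lem:claim_dissip}) \emph{together with} $\yesc^{(p)}$ back near the origin at the final time (so that $\eee^{(p)}(\sFin)$ is bounded below uniformly in $\tau$, feeding \cref{lem:claim_energy}). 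These are precisely the conclusions of \cref{lem:sec_attempt_time_int}, and they describe an excursion of the escape point \emph{to the right} followed by a return --- the opposite of a slow stretch.

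The slow stretches should enter only indirectly. Under the contradiction hypothesis, choosing $\underbarsescinf<\sigma<\sesc$ (still compatible with \cref{hyp_param_relax_sch_2} and with $\Phi_c(0_{\rr^n})=\emptyset$), the function $g(t)=\xesc(t)-\sigma t$ tends to $+\infty$ because $\sigma<\sescinf=\sesc$, yet has downward swings of unbounded depth because $\underbarsescinf<\sigma$; since $g$ can only increase at the bounded rate $\snoesc-\sigma$ by \cref{control_escape}, each deep swing forces a long preceding climb, and from this one recovers exactly the ``large excursions to the right and returns'' of \cref{lem:first_attempt_time_int,lem:sec_attempt_time_int}. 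With those intervals, the rest of your plan (bounds on $\eee^{(p)}(0)$ and $\fff^{(p)}(0)$, control of $\Gback$ and $\Gfront$, contradiction between \cref{lem:claim_energy} and \cref{lem:claim_dissip}) goes through as in the proof of \cref{prop:cv_mean_inv_vel}. As written, however, the divergence of the dissipation integral --- the heart of your argument --- does not follow from the intervals you propose.
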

\begin{proof}
The proof is identical to that of \cite[\GlobalBehaviourPropInvasionSpeedFurtherControl]{Risler_globalBehaviour_2016}.
\end{proof}
\subsection{Dissipation approaches zero at regularly spaced times}
\label{subsec:dissip_zero_some_times}
For every $t$ in $[1,+\infty)$, the following set
\[
\biggl\{\varepsilon \text{ in } (0,+\infty) : \int_{-1}^1 \biggl(\int_{-1/\varepsilon}^{1/\varepsilon} \Bigl( u_t\bigl(\xesc(t)+y,t+s\bigr)+\sesc u_x\bigl(\xesc(t)+y,t+s\bigr) \Bigr)^2\, dy \biggr) \, ds \le \varepsilon \biggr\}
\]
is (according to the bound \vref{hyp_attr_ball_X} for the solution) a nonempty interval (which by the way is unbounded from above). Let 
\[
\deltaDissip(t)
\] 
denote the infimum of this interval. This quantity measures to what extent the solution is, at time $t$ and around the escape point $\xesc(t)$, close to be stationary in a frame travelling at physical speed $\sesc$. The goal is to to prove that
\[
\deltaDissip(t) \to 0
\quad\text{as}\quad
t\to +\infty
\,.
\]
\Cref{prop:dissp_zero_some_times} below can be viewed as a first step towards this goal. 
\begin{proposition}[regular occurrence of small dissipation]
\label{prop:dissp_zero_some_times}
For every positive quantity $\varepsilon$, there exists a positive quantity $T(\varepsilon)$ such that, for every $t$ in $[0,+\infty)$, 
\[
\inf_{t'\in[t,t+T(\varepsilon)]} \deltaDissip(t') \le \varepsilon
\,.
\]
\end{proposition}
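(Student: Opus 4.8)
The plan is to argue by contradiction, exploiting the relaxation scheme of \cref{subsec:relax_sch_tr_fr} together with the convergence of mean invasion speeds established in \cref{prop:cv_mean_inv_vel,prop:further_control}. Suppose the conclusion fails for some $\varepsilon_0>0$: then for every $T>0$ there is a time $t_T$ such that $\deltaDissip(t')>\varepsilon_0$ for all $t'$ in $[t_T,t_T+T]$. By the definition of $\deltaDissip$, this means that on a whole interval of length $T$ there is a uniform lower bound on the ``co-moving dissipation'' $\int_{-1}^1\int_{-1/\varepsilon_0}^{1/\varepsilon_0}\bigl(u_t+\sesc u_x\bigr)^2\,dx\,dt$ measured around the escape point $\xesc(t')$. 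First I would translate this statement into the travelling frame at physical speed $\sesc$ (parabolic speed $\cesc$): in the notation of \cref{subsec:relax_sch_tr_fr}, the quantity $u_t+\sesc u_x$ is (up to a factor coming from the change of variables) precisely $v_s$, so the failure of the proposition furnishes, on a long $s$-interval, a uniform positive lower bound on $\int \chi\, v_s^2\,dy = \ddd(s)$ localized near the escape point — provided the weight $\chi$ does not decay too fast there, which is exactly what \cref{prop:further_control} (giving $\underbarsescinf = \sesc$) guarantees, since it forces $\yesc(s)$ to stay controlled relative to the cutoff position.

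The next step is to feed this into the final relaxation-scheme inequality \cref{relax_sch_3}. Choosing the parameters $\tInit = t_T$, $\xInit = \xesc(\tInit)$, $c$ slightly below $\cesc$ (close enough that hypothesis \cref{hyp_param_relax_sch_2} holds), $\initCut = 0$, and $\sFin$ of order $T$, the inequality reads
\[
(1+\alpha c^2)\int_0^{\sFin}\ddd(s)\,ds \le \eee(0)-\eee(\sFin) + \frac{\KEF}{\epsFireDecr}\fff(0) + \KGback[(u_0,u_1)] + \KGfront(\sFin)\,\exp\bigl(-\kappa\,\yHom(0)\bigr)\,.
\]
By \cref{lem:claim_energy} (or the a priori bounds of \cref{hyp_attr_ball}) the terms $\eee(0)$ and $\fff(0)$ are bounded uniformly, and $\eee(\sFin)$ is bounded below, so $-\eee(\sFin)$ is bounded above; the $\Gback$ term is bounded by a constant depending only on $(u_0,u_1)$; and since $\yHom(0)\to+\infty$ as $\tInit\to+\infty$, the $\Gfront$ contribution can be made as small as we like by taking $t_T$ large (which we may, since the negation of the proposition allows $t_T$ arbitrarily large for each fixed $T$). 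Hence the right-hand side is bounded above by a constant $C$ independent of $T$. On the other hand, the left-hand side is at least a positive constant times the number of unit $s$-intervals inside $[0,\sFin]$ on which the localized dissipation exceeds $\varepsilon_0$; by the contradiction hypothesis and the control on $\yesc$ this is bounded below by $c_1 T$ for a positive $c_1$. Taking $T$ large enough that $c_1 T > C$ yields the contradiction, which proves the proposition.

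The main obstacle, and the step requiring the most care, is the passage from the failure hypothesis phrased in the laboratory frame around $\xesc(t')$ to a genuine lower bound on $\ddd(s)$ in the travelling frame: one must check that the window $[-1/\varepsilon_0,1/\varepsilon_0]$ in which $\deltaDissip$ is defined lies, after the change of coordinates, in the region $\iMain(s)$ where $\chi(y,s)=e^{cy}$ (so that the weight is bounded below by $e^{c\,\yesc(s)}$ times a fixed constant), and that $\yesc(s)$ does not drift so far to the left of the cutoff that this exponential weight becomes negligible relative to $\eee(\sFin)$. This is where \cref{prop:further_control} is essential — it controls $\underbarxesc(s)$ from below and thus keeps $\yesc^{(p)}(s)$ from escaping too far left during the relaxation window — and where one must carefully coordinate the choice of $c$ (close to $\cesc$), the cutoff speed $\cCut$, and the length $\sFin$. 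Everything else is a routine repetition of estimates already carried out in \cref{subsubsec:proof_bd_cmax} and in the parabolic companion paper \cite{Risler_globalBehaviour_2016}.
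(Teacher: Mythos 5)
Your overall strategy --- negate the statement, extract arbitrarily long time intervals on which the co-moving dissipation around $\xesc(\cdot)$ exceeds $\varepsilon_0$, and play the growth of $\int_0^{\sFin}\ddd(s)\,ds$ against the bounded right-hand side of the relaxation inequality \cref{relax_sch_3} --- is the one the paper uses (its proof is imported verbatim from the parabolic companion paper, and \cref{relax_sch_3} was put in that exact form precisely so that this transfer works). The gap lies in the two quantitative claims needed to close the contradiction; you have correctly located the difficulty but not resolved it. For the left-hand side to grow like $c_1T$ you need the weight $e^{c\,\yesc(s)}$ at the escape point to be bounded below by a positive constant \emph{uniformly} on $[0,\sFin]$. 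But \cref{prop:further_control} only gives $\underbarxesc(s)-\sesc s=o(s)$, so $\yesc(s)\ge -o(s)$; a drift such as $\yesc(s)\sim-\sqrt{s}$ is perfectly compatible with $\underbarsescinf=\sesc$ and makes $\sum_j e^{c\,\yesc(s_j)}$ converge, so the left-hand side stays bounded and no contradiction arises. Symmetrically, bounding the right-hand side requires $-\eee(\sFin)$ bounded above, hence $\yesc(\sFin)$ bounded \emph{above}: the only negative contribution to $\eee(\sFin)$ comes from $\int_{-\infty}^{\yesc(\sFin)}e^{cy}V(v)\,dy$, which is of order $e^{c\,\yesc(\sFin)}$ times the minimum of $V$, and $\barxesc(s)-\sesc s=o(s)$ is not $O(1)$, so this can blow up with $T$. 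You cannot invoke \cref{lem:claim_energy} off the shelf: it is proved for the intervals manufactured by \cref{lem:sec_attempt_time_int}, which carry exactly the two properties you are missing ($\yesc(s)\ge 0$ on a long initial portion, and $\yesc(s'_p)\le\sqrt{1+\alpha c^2}$ at the end), and nothing guarantees these on the intervals $[t_T,t_T+T]$ supplied by the negation. The substance of the proof is precisely the combination of the negation with a selection of sub-intervals and of the origin $\xInit$ (in the spirit of \cref{lem:first_attempt_time_int,lem:sec_attempt_time_int} and of the choice \cref{def_xinit_p}) making both uniform bounds available before \cref{relax_sch_3} is applied; this is not a routine repetition.

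Two smaller points. The natural choice of speed is $c=\cesc$ exactly, so that $v_s=u_t+\sesc u_x$ without any mismatch and \cref{hyp_param_relax_sch_2} holds since $\sesc=\barsescsup$; taking $c$ strictly below $\cesc$ only complicates the identification of the dissipation. And the claim that $t_T$ may be taken arbitrarily large for each fixed $T$ is true but needs the one-line subdivision argument: an interval of length $kT$ on which $\deltaDissip>\varepsilon_0$ contains $k$ disjoint such intervals of length $T$, the last of which starts at a time at least $(k-1)T$.
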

\begin{proof}
The proof is identical to that of \cite[\GlobalBehaviourPropRegularSmallDissipation]{Risler_globalBehaviour_2016}.
\end{proof}
\subsection{Relaxation}
\label{subsec:relax}
\begin{proposition}[relaxation]
\label{prop:dissip_app_zero}   
The following assertion holds:
\[
\deltaDissip(t)\to0\quad\text{as}\quad
 t\to+\infty \,. 
\]
\end{proposition}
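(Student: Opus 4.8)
The plan is to argue by contradiction, adapting the proof of the analogous statement in the parabolic companion paper \cite{Risler_globalBehaviour_2016}; the ingredients are the final form \cref{relax_sch_3} of the relaxation-scheme inequality, the regular occurrence of small dissipation (\cref{prop:dissp_zero_some_times}), the fact that the mean invasion speed equals $\sesc$ with $\sesc=\barsescsup<1/\sqrt{\alpha}$ (\cref{prop:cv_mean_inv_vel,prop:further_control} and \cref{lem:further_bd_finite_speed}), and asymptotic compactness (\cref{prop:asympt_comp}).

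Suppose the conclusion fails: there are a positive quantity $\varepsilon_0$ and a sequence of times $t_p\rightarrow+\infty$ with $\deltaDissip(t_p)\ge\varepsilon_0$ for every $p$. Fixing a small positive quantity $\eta$ once and for all and invoking \cref{prop:dissp_zero_some_times}, I pick for each $p$ a time $\tau_p$ in $[t_p-T(\eta),t_p)$ with $\deltaDissip(\tau_p)\le\eta$; on the bounded interval $[\tau_p,t_p]$ the solution thus passes, around the escape point and in the frame travelling at physical speed $\sesc$, from ``almost stationary'' to ``non-stationary by a definite amount''. Up to extraction I may assume $t_p-\tau_p$ converges and --- since \cref{lem:inv} bounds the growth rate of $\xesc(\cdot)$ by $\snoesc$ --- that $\xesc(t_p)-\xesc(\tau_p)$ converges as well.

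I then run the relaxation scheme of \cref{subsec:relax_sch_tr_fr} with $\tInit=\tau_p$, $\xInit=\xesc(\tau_p)$, $\initCut=0$, and parabolic speed $c=\sesc/\sqrt{1-\alpha\sesc^2}$ --- admissible since $\sesc\le\smax$ gives $c\le\cmax$, and compatible with hypothesis \cref{hyp_param_relax_sch_2} since $\sigma=\sesc=\barsescsup$ --- on a time interval $[0,\sFin(p)]$ with $\sFin(p)\rightarrow+\infty$, letting $\sFin(p)$ grow slowly enough that the ``front'' pollution $\KGfront(\sFin(p))\exp\bigl(-\kappa\,\yHom^{(p)}(0)\bigr)$ in \cref{relax_sch_3} stays bounded --- possible because $\yHom^{(p)}(0)=\sqrt{1+\alpha c^2}\bigl(\xHom(\tau_p)-\xesc(\tau_p)\bigr)\rightarrow+\infty$ by \cref{xHom_minus_xesc} --- yet keeping $\tau_p+\sFin(p)>t_p$ for large $p$. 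In \cref{relax_sch_3} the quantities $\eee^{(p)}(0)$, $\fff^{(p)}(0)$ and $\KGback[(u_0,u_1)]$ are bounded above independently of $p$ by the a priori bounds \cref{hyp_attr_ball} (as in \cref{lem:upper_bound_final_energy}), and $-\eee^{(p)}(\sFin(p))$ is kept under control using that $V(v)\ge0$ wherever $v$ lies within $\dEsc$ of $0_{\rr^n}$ (\cref{property_d_Esc}), the exponential decay of the weight on the far right, and the sublinear growth of $\yesc(\cdot)$ (a consequence of $\barsescsup=\sesc$); this bounds $\int_0^{\sFin(p)}\ddd^{(p)}(s)\,ds$ uniformly in $p$.

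Finally I invoke asymptotic compactness (\cref{prop:asympt_comp}) around $(\xesc(\tau_p),\tau_p)$ to extract a limiting entire solution $\bar u$ of \cref{hyp_syst}, pass to the frame moving at physical speed $\sesc$ by setting $\bar v(y,s)=\bar u\bigl(y/\sqrt{1+\alpha c^2}+\sesc s,\,s\bigr)$, and deduce $\bar v_s\equiv0$: in the limit all pollution terms vanish (the limit solution stays within $\dEsc$ of $0_{\rr^n}$ on $\{y>0\}$ by \cref{xEsc_xesc_xHom,xHom_minus_xesc}, and the escape point does not drift away from the moving origin, by \cref{prop:further_control}), so the limiting localized energy is genuinely non-increasing and bounded by the above, hence its total variation --- which controls the whole limiting dissipation --- is finite, and together with the recurrence of small dissipation this forces $\bar v_s\equiv0$. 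Then $\bar v(\cdot,0)=\bar w(\cdot)$ solves $\bar w''+c\,\bar w'-\nabla V(\bar w)=0$ and, by \cref{lem:asympt_behav_tw_2}, tends to $0_{\rr^n}$ at $+\infty$, whereas $\deltaDissip(t_p)\ge\varepsilon_0$ passes to the limit and forces $\bar v_s\not\equiv0$ near time $\lim(t_p-\tau_p)$ --- the desired contradiction. The main obstacle is exactly this last step: keeping every pollution term and the position of the escape point relative to the weight cut-off under uniform control while the relaxation interval grows, which is the delicate part of the parabolic argument; the hyperbolic-specific features (the factor $1+\alpha c^2$, the $v_s\!\cdot\!v_y$ cross terms) have already been absorbed into \cref{relax_sch_3}, which has the same form as in the parabolic case, so they cause no additional difficulty.
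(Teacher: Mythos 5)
The paper does not actually write out a proof of \cref{prop:dissip_app_zero}: it states that the argument is identical to that of the corresponding proposition in the parabolic companion paper. Your skeleton (contradiction, \cref{prop:dissp_zero_some_times} to place a small-dissipation time $\tau_p$ at bounded distance before each bad time $t_p$, the relaxation scheme \cref{relax_sch_3} launched at $\tau_p$ with $\sigma=\sesc$, compactness to pass to an entire limit solution) is indeed the right toolbox, and your handling of the pollution terms (choice \cref{hyp_param_relax_sch_2}, $\yHom^{(p)}(0)\to+\infty$, slowly growing $\sFin(p)$) is consistent with how \cref{relax_sch_3} is exploited elsewhere in the paper (e.g.\ in the proof of \cref{prop:cv_mean_inv_vel}).

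However, the decisive step is missing. What your relaxation scheme delivers is a bound $\int_0^{\sFin(p)}\ddd^{(p)}(s)\,ds\le C$ \emph{uniform in $p$}, and a uniform upper bound on a dissipation integral is in no way contradictory with a uniform positive lower bound $\ge\varepsilon_1$ on the piece of that integral coming from the window around $t_p$: both statements are perfectly compatible. Your attempt to close the gap by deducing $\bar v_s\equiv 0$ for the limit solution is a non sequitur: finiteness of the total weighted dissipation of an entire solution, even combined with the recurrence of small dissipation, only yields integrability of $s\mapsto\int\chi\,\bar v_s^2\,dy$, not its identical vanishing --- and indeed the other half of your own argument shows that the limit solution \emph{does} satisfy $\bar v_s\not\equiv 0$ near $s_*=\lim(t_p-\tau_p)$, so no contradiction can be extracted this way. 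The actual content of the proposition is precisely the upgrade from ``small dissipation recurs'' to ``dissipation tends to zero'', and this requires showing that the suitably normalized energy in the frame travelling at speed $\sesc$ \emph{converges} (so that $\eee(0)-\eee(\sFin)$, and hence the dissipation accumulated between two nearby good times, becomes arbitrarily small for large $p$, which then contradicts the $\varepsilon_1$ lower bound coming from the bad time in between). Establishing that convergence --- bounding the energy from below uniformly and controlling its possible increases --- is the substantive argument of the companion paper's proof, and your proposal does not contain it; acknowledging the obstacle, as you do in your last sentence, does not fill it.
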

\begin{proof}
The proof is identical to that of \cite[\GlobalBehaviourPropRelaxation]{Risler_globalBehaviour_2016}.
\end{proof}
\subsection{Convergence}
\label{subsec:convergence}
The end of the proof of \vref{prop:inv_cv} (``invasion implies convergence'') is a straightforward consequence of \cref{prop:dissip_app_zero}. Let us call upon the notation $\xEsc(t)$ and $\xesc(t)$ and $\xHom(t)$ introduced in \cref{subsec:inv_cv_def_hyp,subsec:inv_cv_set_pf_cont}. Recall that, according to properties \vref{xEsc_xesc_xHom} and to the hypotheses of \cref{prop:inv_cv}, for every nonnegative time $t$, 
\[
-\infty \le \xEsc(t) \le \xesc(t) \le \xHom(t) < +\infty
\,.
\]
However, by contrast with the parabolic case treated in \cite{Risler_globalBehaviour_2016}, the point $\xEsc(t)$ cannot be used to ``track'' the position of the travelling front approached by the solution around this point, since the solution lacks the required regularity in order the function$t\mapsto\xEsc(t)$ to be of class $\ccc^1$. A convenient way to get around this difficulty is to use the decomposition of the solution into two parts, one regular, and one going to zero as time goes to $+\infty$, as stated by the following lemma (reproduced from \cite{GallayJoly_globStabDampedWaveBistable_2009}). 

Recall the notation $X$ of \vref{subsec:glob_exist_att_ball} and let
\[
Y = \HulofR{2} \times \HulofR{1}
\,,
\]
and, for every nonnegative time $t$, let $U(t) = \bigl(u(\cdot,t),u_t(\cdot,t)\bigr)$ denote the ``position / impulsion'' form of the solution. According to \vref{prop:exist_sol_att_ball},
\[
U\in\ccc^0\bigl([0,+\infty),X\bigr)
\,.
\]
\begin{lemma}[``smooth plus small'' decomposition, \cite{GallayJoly_globStabDampedWaveBistable_2009}]
\label{lem:smooth_plus_small}
There exists
\[
\Usmall \in \ccc^0\bigl([0,+\infty),X\bigr)
\quad\text{and}\quad
\Usmooth \in \ccc^1\bigl([0,+\infty),X\bigr)\cap\ccc^0\bigl([0,+\infty),Y\bigr)
\]
such that: $U$ equals $\Usmooth + \Usmall$ and
\begin{equation}
\label{asympt_Usmall}
\norm{\Usmall(t)}_X \to 0 
\quad\text{at an exponential rate as}\quad
t\to +\infty
\,,
\end{equation}
and
\begin{equation}
\label{a_priori_bound_Usmooth}
\sup_{t\ge 0} \ \norm{\Usmooth}_Y < +\infty
\,.
\end{equation}
\end{lemma}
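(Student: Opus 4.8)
The plan is to reproduce the argument of \cite{GallayJoly_globStabDampedWaveBistable_2009}; the structure is as follows. First I would recast system \cref{hyp_syst} as a first-order evolution equation $\partial_t U = \mathcal{A} U + \mathcal{F}(U)$ on $X$: writing $U=(u,u_t)$, the linear part is $\mathcal{A}(u,u_t) = \bigl(u_t,\ \alpha^{-1}(u_{xx}-u_t)\bigr)$ and the nonlinearity is $\mathcal{F}(U) = \bigl(0,\ -\alpha^{-1}\nabla V(u)\bigr)$; the operator $\mathcal{A}$ generates a strongly continuous semigroup $\bigl(e^{t\mathcal{A}}\bigr)_{t\ge 0}$ on $X$, as well as on $Y$. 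According to \cref{prop:exist_sol_att_ball}, for $t$ large the solution lies in a fixed ball of $X$ and satisfies $\norm{u(\cdot,t)}_{L^\infty(\rr,\rr^n)}\le\Rattinfty$; since $V$ is of class at least $\ccc^3$, this forces $\nabla V\bigl(u(\cdot,t)\bigr)$, together with its spatial derivative $D^2V\bigl(u(\cdot,t)\bigr)\,u_x(\cdot,t)$, to be bounded in $\LtwoulofR$, uniformly in time. In other words $\mathcal{F}\bigl(U(t)\bigr)$ stays bounded, uniformly in time, in the subspace $\{0\}\times\HoneulofR$ --- \emph{one derivative smoother} than the ambient space $X$ --- and this extra regularity of the nonlinearity is the crucial structural input.

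Next I would exploit the frequency structure of the damped-wave semigroup, encoded in the roots of $\alpha\lambda^2+\lambda+\xi^2=0$: for frequencies $\xi$ bounded away from the origin both roots lie in a half-plane $\{\mathrm{Re}\,\lambda\le -\beta\}$ with $\beta>0$, whereas as $\xi\to 0$ one root behaves like $-\xi^2$ (a parabolic, heat-like mode). Accordingly one splits $e^{t\mathcal{A}} = \mathcal{S}_{\mathrm{d}}(t) + \mathcal{S}_{\mathrm{r}}(t)$, where $\mathcal{S}_{\mathrm{d}}(t)$ collects the exponentially damped modes, so that $\norm{\mathcal{S}_{\mathrm{d}}(t)}_{\mathcal{L}(X)}\le C e^{-\beta t}$, and $\mathcal{S}_{\mathrm{r}}(t)$ is the remaining low-frequency (hence smoothing) Fourier multiplier, so that $\sup_{t\ge 0}\norm{\mathcal{S}_{\mathrm{r}}(t)}_{\mathcal{L}(X,Y)}<+\infty$. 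A further observation, which uses precisely that $\mathcal{F}$ takes values in the ``velocity'' slot, is that $\mathcal{S}_{\mathrm{d}}(t)$ gains one derivative on that subspace: $\norm{\mathcal{S}_{\mathrm{d}}(t)}_{\mathcal{L}(\{0\}\times\HoneulofR,\,Y)}\le C e^{-\beta t}$.

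Finally I would read off the decomposition from the Duhamel formula
\[
U(t) = e^{t\mathcal{A}}U(0) + \int_0^t e^{(t-s)\mathcal{A}}\,\mathcal{F}\bigl(U(s)\bigr)\,ds
\,.
\]
Splitting each semigroup factor into its damped and regular parts, I would set $\Usmall(t) := \mathcal{S}_{\mathrm{d}}(t)U(0)$, which yields \cref{asympt_Usmall} immediately, and $\Usmooth := U-\Usmall$, which is the sum of three terms: $\mathcal{S}_{\mathrm{r}}(t)U(0)$, which is bounded in $Y$ uniformly in $t$; $\int_0^t \mathcal{S}_{\mathrm{d}}(t-s)\mathcal{F}\bigl(U(s)\bigr)\,ds$, which is bounded in $Y$ by the exponential $\{0\}\times\HoneulofR\to Y$ estimate above combined with the uniform bound on $\mathcal{F}(U(s))$; and $\int_0^t \mathcal{S}_{\mathrm{r}}(t-s)\mathcal{F}\bigl(U(s)\bigr)\,ds$. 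The regularity $\Usmooth\in\ccc^1\bigl([0,+\infty),X\bigr)\cap\ccc^0\bigl([0,+\infty),Y\bigr)$ then follows from the smoothing properties of the semigroup and the smoothness of $\mathcal{F}$. The main obstacle, and the one step where the argument of \cite{GallayJoly_globStabDampedWaveBistable_2009} must be reproduced with care, is the uniform-in-time bound \cref{a_priori_bound_Usmooth} for the last term $\int_0^t \mathcal{S}_{\mathrm{r}}(t-s)\mathcal{F}\bigl(U(s)\bigr)\,ds$, the low-frequency part of the Duhamel integral: there the semigroup provides no time decay, so one must use the precise algebraic form of $\mathcal{F}$ together with the a priori bounds --- and the dissipation identities of \cref{subsec:1rst_ord} --- to rule out secular growth. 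I would carry out that step exactly as in \cite{GallayJoly_globStabDampedWaveBistable_2009}.
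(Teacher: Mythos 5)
Your overall architecture (first-order reformulation plus Duhamel) is the same as the paper's, but the paper's actual proof --- following \cite{GallayJoly_globStabDampedWaveBistable_2009} --- avoids your frequency splitting entirely by a different choice of linear/nonlinear splitting: it takes
\[
A = \frac{1}{\alpha}\begin{pmatrix} 0 & \alpha \\ \partial_x^2 - 1 & -1 \end{pmatrix}
\qquad\mbox{and}\qquad
F(u,u_t) = \frac{1}{\alpha}\begin{pmatrix} 0 \\ u - \nabla V(u) \end{pmatrix}
\,,
\]
that is, it moves the zeroth-order term $u$ into the linear part. The dispersion relation of that linearization is $\alpha\lambda^2+\lambda+1+\xi^2=0$, whose roots have real part bounded away from $0$ \emph{uniformly in} $\xi$; hence the whole semigroup $e^{tA}$ decays exponentially on $X$ and gains one derivative on the slot $\{0\}\times\HoneulofR$. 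One then simply sets $\Usmall(t)=e^{tA}U(0)$ and $\Usmooth(t)=\int_0^t e^{(t-s)A}F\bigl(U(s)\bigr)\,ds$, and both \cref{asympt_Usmall} and \cref{a_priori_bound_Usmooth} follow at once from the a priori bounds of \cref{prop:exist_sol_att_ball}. No damped/regular decomposition of the semigroup is needed.

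The gap in your version sits exactly at the step you flag and defer. With the unshifted operator, the low-frequency part $\mathcal{S}_{\mathrm{r}}(t)$ has no time decay (the heat-like root $\lambda\approx-\xi^2$), while $\mathcal{F}\bigl(U(s)\bigr)=\bigl(0,-\alpha^{-1}\nabla V(u(\cdot,s))\bigr)$ does \emph{not} tend to zero in $\{0\}\times\LtwoulofR$ (along the front interfaces $\nabla V(u)$ stays of order one), so the naive estimate of $\int_0^t\mathcal{S}_{\mathrm{r}}(t-s)\mathcal{F}\bigl(U(s)\bigr)\,ds$ grows linearly in $t$. You propose to rule out this secular growth ``exactly as in \cite{GallayJoly_globStabDampedWaveBistable_2009}'', but that reference never performs such a step: it uses the shifted operator above precisely so that the question does not arise; and the dissipation identities of \cref{subsec:1rst_ord} control $\int u_t^2$, not the low-frequency content of $\nabla V(u)$, so they are not the right tool either. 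Your route could in principle be closed by noting that the low-frequency Duhamel term equals $U(t)-\Usmall(t)-\mathcal{S}_{\mathrm{r}}(t)U(0)-\int_0^t\mathcal{S}_{\mathrm{d}}(t-s)\mathcal{F}\bigl(U(s)\bigr)\,ds$, hence is bounded in $X$ by the attracting ball, and then upgrading $X$-bounds to $Y$-bounds for low-frequency functions via the smoothing of the compactly supported multiplier --- but the simplest repair, and the one the paper uses, is the shift.
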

\begin{proof}
Let 
\[
A = \frac{1}{\alpha} \begin{pmatrix}
0 & \alpha \\
\partial_x^2 - 1 & -1
\end{pmatrix}
\quad\text{and}\quad
F(u,u_t) = \frac{1}{\alpha} \begin{pmatrix}
0 \\ u - \nabla V(u)
\end{pmatrix}
\,,
\]
and let $U_0=U(0)=(u_0,\tilde{u}_0)$ denote the initial condition for the solution under consideration. Then, for every nonnegative time $t$, the following representation holds for the solution at time $t$:
\begin{equation}
\label{def_Usmooth_Usmall}
U(t) = e^{tA} U_0 + \int_0^t e^{(t-s)A}F\bigl(U(s)\bigr) \, ds 
\end{equation}
thus $\Usmall(t)$ and $\Usmooth(t)$ may be chosen as the first and the second term of the right-hand side of this equality, respectively. For more details see \cite[113]{GallayJoly_globStabDampedWaveBistable_2009}. Observe by the way that this decomposition is not unique.
\end{proof}
For every $t$ in $[0,+\infty)$, let us write
\begin{equation}
\label{def_usmooth_usmall}
\Usmooth(t) = \bigl(\usmooth(t), \partial_t \usmooth(t)\bigr)
\quad\text{and}\quad
\Usmall(t) = \bigl(\usmall(t), \partial_t \usmall(t)\bigr)
\,,
\end{equation}
and let us denote by $\xEscSmooth(t)$ the supremum of the set
\[
\bigl\{ x\in(-\infty,\xHom(t)] : \abs{\usmooth(t)} = \dEsc(m) \bigr\}
\,,
\]
with the convention that $\xEscSmooth(t)$ equals $-\infty$ if this set is empty. 
\begin{lemma}[distance between $\xEscSmooth(t)$ and $\xesc(t)$ remains bounded]
\label{lem:finiteness_xEscSmooth}
The following limit holds: 
\[
\limsup_{t\to+\infty} \xesc(t) - \xEscSmooth(t) < +\infty
\,.
\]
\end{lemma}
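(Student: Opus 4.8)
The plan is to combine three ingredients: the smallness of $\Usmall(\cdot)$ in $X$ (inequality \cref{asympt_Usmall}), the definition of the escape point $\xesc(\cdot)$ through the hull functions $\hullNoescQZero$ and $\hullNoescFZero$, and \cref{lem:esc_Esc_qqq0} (``$\qqq_0$ controls $u$''). The key observation is that since $\Usmall(t)\to 0$ in $X=\HoneulofR\times\LtwoulofR$, the corresponding value of a uniformly-local Sobolev norm of $u(\cdot,t)-\uSmooth(\cdot,t)$ (together with the analogous term for the time derivative) goes to zero uniformly in space; in particular, for every positive quantity $\delta$ there exists a time $t_\delta$ such that, for all $t\ge t_\delta$ and all $x$ in $\rr$,
\[
\abs*{u(x,t)-\uSmooth(x,t)} \le \delta
\quad\mbox{and}\quad
\int_{x}^{x+1}\bigl(u(z,t)-\uSmooth(z,t)\bigr)^2\,dz \le \delta^2
\,.
\]
Here one uses the continuous embedding of $H^1$ into $L^\infty$ on unit intervals to pass from the $\HoneulofR$-bound on $\Usmall$ to the pointwise bound.

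Next I would argue as follows. Fix $\delta$ small, say $\delta = \dEsc/2$, and let $t\ge t_\delta$. I claim that $\xEscSmooth(t)\ge \xesc(t) - L$, where $L$ is the length scale fixed in \cref{subsubsec:blabla}. Indeed, by the very definition of $\xesc(t)$ as $\inf I_\textrm{Hom}(t)$ and by \cref{lem:inv} applied just to the left of $\xesc(t)$, at the point $x=\xesc(t)$ (or at a point arbitrarily close to it from the left) the hull inequalities $\qqq_0(x,t)\le \hullNoescQZero(0^-)=+\infty$ degenerate, which is precisely what forces, through \cref{lem:esc_Esc_qqq0} and \cref{lem:esc_fff0_qqq0}, the existence of some point $x^\sharp(t)$ in $[\xesc(t)-L,\xesc(t)]$ at which the solution $u$ has escaped the $\dEsc$-neighbourhood of $0_{\rr^n}$, i.e. $\abs{u(x^\sharp(t),t)}\ge c_0\,\dEsc$ for a fixed constant $c_0>0$ depending only on $\epsFireZeroCoerc$ and $\descQZero$. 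Combining this with the pointwise bound above gives $\abs{\uSmooth(x^\sharp(t),t)}\ge c_0\,\dEsc - \delta = c_0\,\dEsc-\dEsc/2$, which, after shrinking $\delta$ if necessary and using the continuity of $\uSmooth(\cdot,t)$, produces a point in $[\xesc(t)-L,\xHom(t)]$ at which $\abs{\uSmooth(\cdot,t)}=\dEsc$. Hence $\xEscSmooth(t)\ge \xesc(t)-L$, which yields $\limsup_{t\to+\infty}\bigl(\xesc(t)-\xEscSmooth(t)\bigr)\le L<+\infty$.

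The step I expect to be the main obstacle is the careful bookkeeping in the middle paragraph: extracting from the hull definition of $\xesc(t)$ a genuine pointwise escape of $u$ at distance at most $L$ to the \emph{left} of $\xesc(t)$, with a quantitative lower bound on $\abs{u}$ there. One must be slightly careful because the hull functions allow $\qqq_0$ to be as large as $\descQZero^2/4$ even far inside $I_\textrm{Hom}(t)$, so ``escape'' here is in the averaged sense of \cref{lem:esc_fff0_qqq0,lem:fff0_decrease} rather than a clean pointwise statement; translating it into a pointwise lower bound on $\abs{u}$ at some nearby point is where the constants $\epsFireZeroCoerc$, $\descFZero$ and $L$ must be tracked. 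Alternatively — and perhaps more cleanly — one can simply invoke that, by \cref{xEsc_xesc_xHom}, one has $\xEsc(t)\le\xesc(t)$, so it suffices to bound $\xesc(t)-\xEscSmooth(t)$ by bounding $\xesc(t)-\xEsc(t)$ and $\xEsc(t)-\xEscSmooth(t)$ separately; the first difference is controlled because both points lie in the interval $I_\textrm{Hom}(t)$-related construction and the second because, by the uniform smallness of $\Usmall$, the graphs of $\abs{u(\cdot,t)}$ and $\abs{\uSmooth(\cdot,t)}$ are uniformly close, so their last crossings of the level $\dEsc$ to the left of $\xHom(t)$ cannot be more than a bounded distance apart (using that $u$ is uniformly away from the level $\dEsc$ on a definite interval around $\xEsc(t)$, by the a priori $\Honeul$-bounds and the escape-distance estimates of \cref{sec:prop_stand_trav}). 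Either route closes the lemma; I would present the second, as it isolates the dependence on $\Usmall$ most transparently.
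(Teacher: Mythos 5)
There is a genuine gap, and it sits exactly where you flagged the ``main obstacle''. Your argument needs to convert the fact that the hull inequalities fail just to the left of $\xesc(t)$ into a \emph{pointwise} lower bound on $\abs{u(\cdot,t)}$ at some point within a bounded distance of $\xesc(t)$. But the quantity that the definition of $\xesc(t)$ controls is $\qqq_0(\xi,t)=\int T_\xi\psi_0\,(\alpha u_t^2+u_x^2+u^2)\,dx$ (and $\fff_0$), and \cref{lem:esc_Esc_qqq0} only gives the implication ``$\qqq_0$ small $\Rightarrow$ $\abs{u}$ small''; the converse is false. The failure of the hull inequality near $\xesc(t)$ tells you that $\qqq_0>\descQZero^2/4$ somewhere nearby, but all of that mass could be carried by $u_t^2$ or $u_x^2$ while $\abs{u}$ stays well below $\dEsc$ everywhere. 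So no constant $c_0>0$ with $\abs{u(x^\sharp(t),t)}\ge c_0\dEsc$ can be extracted from the hull construction alone, and the subsequent step (getting $\abs{\uSmooth}=\dEsc$ nearby) collapses with it. Your alternative route has the same problem in a different place: bounding $\xesc(t)-\xEsc(t)$ is precisely the statement that $\abs{u}$ genuinely reaches the level $\dEsc$ within a bounded distance of $\xesc(t)$, which is what cannot be read off from $\qqq_0$ and $\fff_0$; and the closeness of the level-$\dEsc$ crossings of $\abs{u}$ and $\abs{\uSmooth}$ is only established later in the paper (\cref{lem:vic_transv_xEscSmooth}) using transversality of the limiting front profile, so invoking it here would be circular.

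The paper's proof is dynamical rather than quantitative: assume $\xesc(t_p)-\xEscSmooth(t_p)\to+\infty$, use asymptotic compactness (\cref{prop:asympt_comp}) centred at $\xesc(t_p)$, and use the relaxation result \cref{prop:dissip_app_zero} (as in the proof of \cref{lem:dissip_no_tf_vel}) to conclude that the limit is a stationary solution in the frame travelling at speed $\sesc$, i.e.\ a profile $\phi$ solving $\phi''+\cesc\phi'-\nabla V(\phi)=0$. The definition of $\xesc$ guarantees $\phi\not\equiv 0_{\rr^n}$, and the escape-point properties give $\abs{\phi(y)}\le\dEsc$ for $y\ge 0$; item 2 of \cref{lem:asympt_behav_tw_2} then says that such a nonconstant profile must exceed $\dEsc$ somewhere on $\rr$. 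But the contradiction hypothesis, together with $\norm{\Usmall(t)}_X\to 0$ and the definition of $\xEscSmooth$, forces $\abs{\phi}\le\dEsc$ on all of $\rr$ --- contradiction. Note that this only yields $\limsup(\xesc(t)-\xEscSmooth(t))<+\infty$, not the explicit bound $L$ you aimed for; the point where a front profile actually escapes at distance $\dEsc$ has no reason to lie within the hull length scale $L$ of $\xesc(t)$. If you want to salvage your approach, the missing ingredient is exactly the relaxation statement that kills the $u_t^2$ (and, via the ODE, controls the $u_x^2$) contribution to $\qqq_0$ in the limit.
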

\begin{proof}
Let us proceed by contradiction and assume that the converse holds. Then there exists a sequence $(t_n)_{n\in\nn}$ of nonnegative times going to $+\infty$ such that
\begin{equation}
\label{finiteness_xEscSmooth_hyp_contr}
\xesc(t_n) - \xEscSmooth(t_n) \to +\infty
\quad\text{as}\quad
n\to +\infty
\,.
\end{equation}
Let us proceed as in the proof of \vref{lem:dissip_no_tf_vel}. By compactness (\vref{prop:asympt_comp}), up to replacing the sequence $(t_n)_{n\in\nn}$ by a subsequence, it may be assumed that there exists an entire solution
\[
\bar{u}\in\ccc^0\bigl(\rr,\HulofR{1}\bigr) \cap \ccc^1(\rr,\LtwoulofR\bigr)
\]
of system \cref{hyp_syst} such that, for every positive quantity $L$, both quantities
\[
\begin{aligned}
& \sup_{t\in[-1,1]} \norm{ y\mapsto u\bigl(\xesc (t_n)+y, t_n + t\bigr) - \bar{u}(y, t)}_{H^1([-L,L],\rr^d)} \,, \\
\text{and}\quad
& \sup_{t\in[-1,1]} \norm{ y\mapsto u_t\bigl(\xesc (t_n)+y, t_n + t\bigr) - \bar{u}_t(y, t)}_{L^2([-L,L],\rr^d)}
\end{aligned}
\]
go to $0$ as $n$ goes to $+\infty$. Let us consider the entire solution
\[
\bar{v}\in\ccc^0\bigl(\rr,\HulofR{1}\bigr) \cap \ccc^1(\rr,\LtwoulofR\bigr)
\]
of system \cref{hyp_syst_tf} defined as
\[
\bar{v}(\xi,s) = \bar{u} \left(\frac{\xi}{\sqrt{1+\alpha \cesc^2}} + \sesc s, s\right) 
\,.
\]
It follows from \vref{prop:dissip_app_zero} that the function $s\mapsto \bar{v}_s(\cdot,s)$ vanishes in $\ccc^0\bigl([-1,1],L^2(\rr,\rr^d)\bigr)$, and as a consequence the function $\phi$ defined as $\phi(\xi) = \bar{v}(\xi,0)$ is a solution of system \cref{syst_trav_front} for the physical speed $\cesc$, or equivalently is the profile of a wave travelling at the speed $\cesc$ for system \cref{hyp_syst}. According to the properties of the escape point \vref{xEsc_xesc_xHom,xHom_minus_xesc}, 
\[
\sup_{\xi\in[0,+\infty)} \abs{ \phi(\xi)-m} \le \dEsc(m)
\,,
\]
thus it follows from assertion \cref{item:cv_spatial_asymptotics_tw} of \vref{lem:asympt_behav_tw_2} that $\phi(\xi)$ goes to $m$ as $\xi$ goes to $+\infty$. In addition, according to the bound \cref{hyp_attr_ball_Linfty} on the solution, $\abs{\phi(\cdot)}$ is bounded (by $\Rattinfty$). In addition again, according to the definition of $\xesc(\cdot)$, the function $\phi$ cannot be identically equal to $m$. In short, the function $\phi$ belongs to the set $\Phi_{\cesc}(m)$. 

On the other hand, it follows from hypothesis \cref{finiteness_xEscSmooth_hyp_contr}, from the definition of $\xEscSmooth(\cdot)$, and from the asymptotics \cref{asympt_Usmall} for $\Usmall(\cdot)$, that 
\[
\sup_{\xi\in\rr} \abs{ \phi(\xi)-m} \le \dEsc(m)
\,,
\]
a contradiction with assertion \cref{item:transv_spatial_asymptotics_tw} of \vref{lem:asympt_behav_tw_2}. \Cref{lem:finiteness_xEscSmooth} is proved.
\end{proof}
\begin{lemma}[vicinity of Escape points and transversality]
\label{lem:vic_transv_xEscSmooth}
The following conclusions hold:
\begin{align}
\label{xEscSmooth_minus_xEsc_goes_to_zero_at_infinity}
&\xEscSmooth(t) - \xEsc(t) \to 0
\quad\text{as}\quad
t\to+\infty\,,\\
\text{and}\quad 
&\limsup_{t\to +\infty} \Bigl(\usmooth \bigl( \xEscSmooth(t) , t \bigr)-m\Bigr) \cdot \partial_x \usmooth \bigl( \xEscSmooth(t) , t \bigr) < 0
\,.
\label{transversality_at_xEscSmooth}
\end{align}
\end{lemma}
\begin{proof}
Let us proceed by contradiction and assume that it is not true that both conclusions \cref{xEscSmooth_minus_xEsc_goes_to_zero_at_infinity,transversality_at_xEscSmooth} hold. Then there exists a sequence $(t_n)_{n\in\nn}$ of nonnegative times going to $+\infty$ such that:
\begin{enumerate}
\item either $\displaystyle\limsup_{n\to+\infty} \ \abs{\xEscSmooth(t_n) - \xEsc(t_n)} > 0$,
\label{item:positive_asympt_dist_between_xEscSmooth_and_xEsc}
\item or for every positive integer $n$  
\[
\usmooth \Bigl(\bigl( \xEscSmooth(t_n) , t_n \bigr)-m\Bigr) \cdot \partial_x \usmooth \bigl( \xEscSmooth(t_n) , t_n \bigr) \ge -\frac{1}{n}
\,.
\]
\label{item:no_transversality_at_xEscSmooth}
\end{enumerate}
Proceeding as in the proof of \cref{lem:finiteness_xEscSmooth} above, and according to this lemma, it may be assumed, up to replacing the sequence $(t_n)_{n\in\nn}$ by a subsequence, that there exists a function $\phi$ in the set $\Phi_{\cesc}(m)$, such that, for every positive quantity $L$, 
\begin{equation}
\label{pf_lem_vic_transv_xEscSmooth}
\norm{x \mapsto u\bigl( \xEscSmooth(t_n) + x, t_n \bigr) - \phi\bigl( \sqrt{1+\alpha \cesc^2} x \bigr) }_{H^1([-L,L],\rr^d)}\to 0
\end{equation}
as $n$ goes to $+\infty$.
It follows from this assertion, from the definition of the quantity $\xEscSmooth(\cdot)$, and from the asymptotics \cref{asympt_Usmall} for $\Usmall(\cdot)$, that 
\[
\abs{\phi(0)-m} = \dEsc(m)
\quad\text{and}\quad
\abs{\phi(\xi)-m} \le \dEsc(m) 
\quad\text{for every positive quantity } \xi 
\,.
\]
Thus, it follows from assertion \cref{item:closer_spatial_asymptotics_tw} of \vref{lem:asympt_behav_tw_2} that
\[
\abs{\phi(\xi)-m} < \dEsc(m) 
\quad\text{for every positive quantity } \xi 
\,.
\]
In other words $\phi$ actually belongs to the set $\PhicNorm{\cesc}(m)$. Thus it follows from assertion \cref{item:transv_spatial_asymptotics_tw} of \vref{lem:asympt_behav_tw_2} that
\[
\bigl(\phi(\xi)-m\bigr)\cdot\phi'(\xi) < 0 
\quad\text{for every } \xi \text{ in } [0,+\infty)
\,,
\]
and this shows that 
\[
\lim_{n\to+\infty} \ \abs{\xEscSmooth(t_n) - \xEsc(t_n)} = 0
\,.
\]
Thus case \cref{item:positive_asympt_dist_between_xEscSmooth_and_xEsc} above cannot hold. 

On the other hand, since both $\phi(\cdot)$ and $\usmooth(\dot,\cdot)$ are of class $\ccc^1$, it follows from the limit \cref{pf_lem_vic_transv_xEscSmooth} and from the asymptotics \cref{asympt_Usmall} for $\Usmall(\cdot)$ that
\[
\Bigl(\usmooth \bigl( \xEscSmooth(t_n) , t_n \bigr)-m\Bigr) \cdot \partial_x \usmooth \bigl( \xEscSmooth(t_n) , t_n \bigr) \to \bigl(\phi(0)-m\bigr)\cdot\phi'(0)
\]
as $n$ goes to $+\infty$,
and since this limit is a negative quantity, this shows that case \cref{item:no_transversality_at_xEscSmooth} above cannot hold either, a contradiction. \Cref{lem:vic_transv_xEscSmooth} is proved.
\end{proof}
\begin{lemma}[smoothness and asymptotic speed of $\xEscSmooth(\cdot)$]
\label{lem:smooth_asympt_vel_xEscSmooth}
The function\\
$t\mapsto \xEscSmooth(t)$ is of class $\ccc^1$ on a neighbourhood of $+\infty$ and
\begin{equation}
\label{xEscSmooth_prime_goes_to_sesc}
\xEscSmooth'(t) \to \sesc
\quad\text{as}\quad
t\to+\infty
\,.
\end{equation}
\end{lemma}
\begin{proof}
Let us introduce the function
\[
f:\rr^d\times[0,+\infty) \to \rr, \quad (x,t)\mapsto \frac{1}{2}\Bigl(\bigl(\usmooth(x,t)-m\bigr)^2-\dEsc(m)^2\Bigr)
\,.
\]
According to the regularity of $\usmooth(\cdot,\cdot)$ (\vref{lem:smooth_plus_small}), this function is of class at least $\ccc^1$, and, for every large enough time $t$, the quantity $f\bigl(\xEscSmooth(t),t\bigr)$ is equal to zero, and it follows from inequality \cref{transversality_at_xEscSmooth} that
\[
\partial_x f\bigl(\xEscSmooth(t), t \bigr) = \Bigl(\usmooth \bigl(\xEscSmooth(t), t\bigr)-m\Bigr) \cdot \partial_x \usmooth \bigl(\xEscSmooth(t), t\bigr) <0
\,.
\]
Thus it follows from the Implicit Function Theorem that the function $x\mapsto \xEscSmooth(t)$ is of class (at least) a neighbourhood of $+\infty$, and that, for every large enough time $t$,
\begin{align}
\nonumber
\xEscSmooth'(t) 
&= -\frac{\partial_t f \bigl(\xEscSmooth(t),t\bigr)}{\partial_x f \bigl(\xEscSmooth(t),t\bigr)} \\
\label{xEsc_prime_of_t}
&= -\frac{\usmooth \Bigl(\bigl(\xEscSmooth(t), t\bigr)-m\Bigr) \cdot \partial_t \usmooth \bigl(\xEscSmooth(t), t\bigr)}{\Bigl(\usmooth \bigl(\xEscSmooth(t), t\bigr)-m\Bigr) \cdot \partial_x \usmooth \bigl(\xEscSmooth(t), t\bigr)}
\,.
\end{align}
According to inequality \cref{transversality_at_xEscSmooth}, the denominator of this expression remains bounded away from zero as time goes to $+\infty$. On the other hand, according to \cref{lem:finiteness_xEscSmooth} and to \vref{prop:dissip_app_zero} and to the asymptotics \cref{asympt_Usmall} for $\Usmall(\cdot)$ and to the the bounds \cref{a_priori_bound_Usmooth} on $\Usmooth(\cdot)$, 
\[
\partial_t \usmooth \bigl(\xEscSmooth(t)+\xi,t\bigr)+\sesc \partial_x  \bigl(\xEscSmooth(t)+\xi,t\bigr) \to 0
\quad\text{as}\quad t\to +\infty
\,.
\]
Thus the limit \cref{xEscSmooth_prime_goes_to_sesc} follows from expression \cref{xEsc_prime_of_t} above. \Cref{lem:smooth_asympt_vel_xEscSmooth} is proved.
\end{proof}
The next lemma is the only place throughout the proof of \cref{prop:inv_cv} where hypothesis \textup{(\hyperlink{hypDiscFront}{\hypDiscFrontRef})} --- which is part of the generic hypotheses \cref{hyp_gen} --- is required. 
\begin{lemma}[convergence around Escape point]
\label{lem:cv_around_Esc_point}
There exists a function $\phi$ in the set $\PhicNorm{\cesc}(m)$ such that, for every positive quantity $L$, both quantities 
\begin{equation}
\label{quantities_going_to_zero_convergence_towards_trav_front}
\begin{aligned}
&\norm{x\mapsto u(x,t)-\phi\left(\sqrt{1+\alpha\cesc^2}\bigl(x-\xEscSmooth(t)\bigr)\right)}_{H^1\bigl([\xEsc(t)-L,\xEsc(t)+L],\rr^d\bigr)} \,,\quad \text{and}\\
&\norm{x\mapsto u_t(x,t)+\cesc\phi'\left(\sqrt{1+\alpha\cesc^2}\bigl(x-\xEscSmooth(t)\bigr)\right)}_{L^2\bigl([\xEsc(t)-L,\xEsc(t)+L],\rr^d\bigr)}
\end{aligned}
\end{equation}
go to $0$ as time goes to $+\infty$. 
In particular, the set $\PhicNorm{\cesc}(m)$ is nonempty.
\end{lemma}
\begin{proof}
Take a sequence $(t_n)_{n\in\nn}$ of positive times going to $+\infty$ as $n$ goes to $+\infty$. Proceeding as in the proof of \cref{lem:finiteness_xEscSmooth} above, and according to this lemma, it may be assumed, up to replacing the sequence $(t_p)_{n\in\nn}$ by a subsequence, that there exists a function $\phi$ in the set $\Phi_{\cesc}(m)$ such that, for every positive quantity $L$, both quantities
\[
\begin{aligned}
&\norm{y\mapsto u\bigl( \xEscSmooth(t_n) + y, t_n \bigr) - \phi\left( \sqrt{1+\alpha \cesc^2} y \right) }_{H^1\bigl([-L,L],\rr^d\bigr)} \text{ and} \\
&\norm{y\mapsto u_t\bigl(\xEscSmooth(t_n) + y,t\bigr)+\cesc\phi'\left(\sqrt{1+\alpha\cesc^2}y\right)}_{L^2\bigl([-L,L],\rr^d\bigr)}
\end{aligned}
\]
go to $0$ as $n$ goes to $+\infty$. 
According to the definition of $\xEscSmooth(\cdot)$ and to the asymptotics \cref{asympt_Usmall} for $\Usmall(\cdot)$, it follows that
\[
\abs{\phi(0)-m} = \dEsc(m)
\quad\text{and}\quad
\abs{\phi(\xi)-m}\le \dEsc(m)
\quad\text{for all}\quad
\xi \text{ in } [0,+\infty)
\,,
\]
thus, according to assertion \cref{item:transv_spatial_asymptotics_tw} of \vref{lem:asympt_behav_tw_2}, it follows that $\phi$ actually belongs to the set $\PhicNorm{\cesc}(m)$. 

Let $\mathcal{L}$ denote the set of all possible limits (in the sense of uniform convergence on compact subsets of $\rr$) of sequences of maps
\[
y\mapsto u \bigl( \xEscSmooth(t'_n) + y, t'_n\bigr)
\]
for all possible sequences $(t'_n)_{n\in\nn}$ such that $t'_n$ goes to $+\infty$ as $n$ goes to $+\infty$. This set $\mathcal{L}$ is included in the set $\PhicNorm{\cesc}(m)$, and, because the semi-flow of system \cref{hyp_syst} is continuous on $X$, this set $\mathcal{L}$ is a continuum (a compact connected subset) of $\HulofR{1}$. 

Since on the other hand --- according to hypothesis \textup{(\hyperlink{hypDiscFront}{\hypDiscFrontRef})} --- the set $\PhicNorm{\cesc}(m)$ is totally disconnected in $\HulofR{1}$, this set $\mathcal{L}$ must actually be reduced to the singleton $\{\phi\}$. \Cref{lem:cv_around_Esc_point} is proved. 
\end{proof}
\begin{lemma}[convergence up to $\xHom(t)$]
\label{lem:cv_right_of_front}
For every positive quantity $L$, 
\[
\sup_{x\in[\xEsc(t)-L,\xHom(t)]} \abs{u(x,t)-\phi\left(\sqrt{1+\alpha\cesc^2}\bigl(x-\xEscSmooth(t)\bigr)\right)} \to 0 
\quad\text{as}\quad 
t\to+\infty 
\,.
\]
\end{lemma}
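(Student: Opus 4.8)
The statement already holds on $[\xEsc(t)-L,\xEsc(t)+L]$ for every positive $L$ by \cref{lem:cv_around_Esc_point}; the task is to propagate the convergence ``behind the front'', from $\xEsc(t)+L$ up to $\xHom(t)$. The structural input is that, $\phi$ belonging to $\Phi_{\cesc,\textrm{norm}}(0_{\rr^n})$, \cref{lem:asympt_behav_tw_2} gives $\phi(\xi)\to 0_{\rr^n}$ as $\xi\to+\infty$: far behind the front both $\phi$ and the expected limit of $u$ are close to $0_{\rr^n}$, so what must really be controlled is $u$ itself on the whole region lying between the front and $\xHom(t)$. I would argue by contradiction.

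Suppose the conclusion fails for some $L>0$: there exist $\delta_0>0$, times $t_p\to+\infty$ and positions $x_p\in[\xEsc(t_p)-L,\xHom(t_p)]$ with $\abs{u(x_p,t_p)-\phi\bigl((1+\alpha\cesc^2)(x_p-\xEscSmooth(t_p))\bigr)}\ge\delta_0$ for all $p$. If $x_p-\xEsc(t_p)$ stayed bounded along a subsequence, then $x_p$ would remain in a fixed window around $\xEsc(t_p)$ on which \cref{lem:cv_around_Esc_point} forces that difference to tend to $0$, a contradiction; hence $x_p-\xEsc(t_p)\to+\infty$. Since $\xEscSmooth(t)-\xEsc(t)\to0$ (\cref{lem:vic_transv_xEscSmooth}), the argument $(1+\alpha\cesc^2)(x_p-\xEscSmooth(t_p))$ then tends to $+\infty$, so $\phi$ evaluated there tends to $0_{\rr^n}$, whence $\abs{u(x_p,t_p)}\ge\delta_0/2$ for $p$ large. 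If moreover $\xHom(t_p)-x_p$ stayed bounded along a subsequence, then \hypHomRight would give $u(x_p,t_p)\to 0_{\rr^n}$, again a contradiction; hence $\xHom(t_p)-x_p\to+\infty$ as well. Thus, for $p$ large, $x_p$ sits deep inside the region $\{(x,t):\xEsc(t)<x<\xHom(t)\}$, on which $\abs{u}<\dEsc$ by \cref{dist_to_zero_at_xEsc_of_t}.

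Next I would apply asymptotic compactness (\cref{prop:asympt_comp}) along $(x_p,t_p)$: up to a subsequence there is a global solution $\bar u$ of system~\cref{hyp_syst} with $u(x_p+\cdot,t_p+\cdot)\to\bar u$ in the sense of that proposition. Since $x_p-\xEsc(t_p)\to+\infty$ and $\xHom(t_p)-x_p\to+\infty$, while $\xesc(t)-\xEsc(t)$ is bounded (\cref{lem:finiteness_xEscSmooth,lem:vic_transv_xEscSmooth}), $\xesc(\cdot)$ varies at most linearly and, over any bounded time interval, by a bounded amount (\cref{control_escape,various_bounds_x_esc}), and $\xHom(\cdot)$ has an asymptotically constant derivative, every fixed space--time window around $(x_p,t_p)$ eventually lies in $\{\xEsc(t)\le x\le\xHom(t)\}$; therefore $\abs{\bar u(y,s)}\le\dEsc$ for all $(y,s)\in\rr^2$. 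The crucial --- and only missing --- point is that $\bar u$ is stationary in the frame travelling at the physical speed $\sesc$, that is $\bar u_t+\sesc\,\bar u_x\equiv0$. Granting this, the profile $\bar w$ obtained by reading $\bar u$ in that frame solves $\bar w''+\cesc\,\bar w'=\nabla V(\bar w)$ and satisfies $\sup_{\rr}\abs{\bar w}\le\dEsc$; it cannot be nonconstant (this would contradict \cref{lem:asympt_behav_tw_2}, exactly as in the proof of \cref{lem:finiteness_xEscSmooth}), and the only critical point of $V$ in the ball of radius $\dEsc$ around $0_{\rr^n}$ being $0_{\rr^n}$ itself, we get $\bar w\equiv0_{\rr^n}$, hence $\bar u\equiv0_{\rr^n}$ and $u(x_p,t_p)\to0_{\rr^n}$ --- contradicting $\abs{u(x_p,t_p)}\ge\delta_0/2$.

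The main obstacle is precisely the claim that $\bar u$ carries no dissipation in the frame travelling at speed $\sesc$. \cref{prop:dissip_app_zero} gives this only in a bounded window around $\xesc(t)$, whereas here $x_p$ runs off arbitrarily far to its right. I expect it to be obtained by re-running the relaxation scheme of \cref{subsec:relax_sch_tr_fr} in a travelling frame at physical speed $\sesc$, with the cut-off point $\initCut$ chosen downstream of the trajectory of $x_p$: the weight $\chi$ then equals $e^{cy}$, hence is bounded below, on the window that follows $x_p$, so that the weighted dissipation $\ddd(\cdot)$ controls $\int\bar u_s^2$ there, while the pollution terms $\mathcal{G}(\cdot)$ stay exponentially small because $\yHom(0)$ can be made arbitrarily large by choosing $t_p$ large (as in the proofs of \cref{lem:upper_bound_final_energy} and \cref{lem:further_bd_finite_speed}). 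Passing to the limit $p\to+\infty$ in the final form \cref{relax_sch_3} of the relaxation inequality, and then letting the window exhaust $\rr^2$, should yield $\bar u_s\equiv0$, i.e. $\bar u_t+\sesc\,\bar u_x\equiv0$. The delicate part is the uniformity in $p$ --- in particular, showing that the relevant portion of $\int_0^{\sFin}\ddd(s)\,ds$ genuinely tends to $0$ rather than merely staying bounded --- which is handled as in the parabolic companion paper~\cite{Risler_globalBehaviour_2016}.
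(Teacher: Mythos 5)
Your reduction is sound up to and including the compactness step: arguing by contradiction, the bad points $x_p$ must satisfy $x_p-\xEsc(t_p)\to+\infty$ and $\xHom(t_p)-x_p\to+\infty$ (the two boundary cases being excluded by \cref{lem:cv_around_Esc_point} and by \hypHomRight respectively), and the limit solution $\bar u$ extracted along $(x_p,t_p)$ via \cref{prop:asympt_comp} satisfies $\abs{\bar u}\le\dEsc$ on every compact space--time window, hence on $\rr\times(-\infty,0]$. The gap is the one you flag yourself: you then want $\bar u$ to be stationary in the frame travelling at speed $\sesc$ so as to reduce to the ODE and invoke \cref{lem:asympt_behav_tw_2}, and nothing established in the paper gives you that. \cref{prop:dissip_app_zero} controls the dissipation only in windows of fixed size around $\xesc(t)$, whereas $x_p-\xesc(t_p)\to+\infty$; and your proposed repair --- re-running the relaxation scheme with the cut-off placed downstream of $x_p$ --- would at best yield finiteness of a weighted dissipation integral, from which the vanishing of the dissipation density at the particular points $(x_p,t_p)$ does not follow without redoing the whole two-step machinery of \cref{prop:dissp_zero_some_times,prop:dissip_app_zero} in that moving window. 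As written, the proof is therefore incomplete at its decisive step.

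The detour through the travelling frame is in fact unnecessary. Once you know $\abs{\bar u(x,t)}\le\dEsc$ for all $x$ and all $t\le 0$, the set $\Sigma_{\textup{\textrm{Esc}},0}(t)$ associated with $\bar u$ is empty, so \cref{lem:decrease_fire0} applied to $\bar u$ gives $\partial_t\fff_0(\xi,t)\le-\epsFireZeroDecr\,\fff_0(\xi,t)$ with no pollution term, whence $\fff_0(\xi,0)\le e^{-\epsFireZeroDecr T}\fff_0(\xi,-T)$ for every positive $T$; the right-hand side tends to $0$ as $T\to+\infty$ because $\fff_0(\xi,\cdot)$ is uniformly bounded by the a priori bounds \cref{hyp_attr_ball}, which $\bar u$ inherits. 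Combined with the coercivity \cref{coerc_F0} of \cref{lem:coerc_fire0} (again with empty escape set), this forces $\qqq_0(\xi,0)=0$ for every $\xi$, i.e. $\bar u(\cdot,0)\equiv 0_{\rr^n}$, contradicting $\abs{\bar u(0,0)}\ge\delta_0/2$. This standing-frame Liouville argument uses only the tools of \cref{subsubsec:firewall_sf} and requires no information whatsoever about the dissipation far behind the front; I recommend replacing your final two paragraphs by it.
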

\begin{proof}
The proof is identical to the proof of \cite[\GlobalBehaviourLemConvergenceUpToxHom]{Risler_globalBehaviour_2016}.
\end{proof}
\subsection{Homogeneous point behind the travelling front}
\label{subsec:hom_behind_front}
According to hypothesis \textup{(\hyperlink{hypOnlyBist}{\hypOnlyBistRef})}, the limit
\[
\lim_{\xi\to-\infty}\phi(\xi)
\]
exists and belongs to $\mmm$; let us denote by $\mNext$ this limit. The following lemma completes the proof of \cref{prop:inv_cv} (``invasion implies convergence''). 
\begin{lemma}[``next'' homogeneous point behind the front]
\label{lem:next_hom_point}
There exists a $\rr$-valued function $\xHomNext$, defined and of class $\ccc^1$ on a neighbourhood of $+\infty$, such that the following limits hold as time goes to $+\infty$:
\[
\begin{aligned}
& \xEsc(t) - \xHomNext(t) \to +\infty
\quad\text{and}\quad
\xHomNext'(t) \to \sesc \\
\text{and}\quad
& \sup_{x\in[\xHomNext(t),\xHom(t)]} \abs{u(x,t)-\phi\left(\sqrt{1+\alpha\cesc^2}\bigl(x-\xEscSmooth(t)\bigr)\right)} \to 0 
\,,
\end{aligned}
\]
and, for every positive quantity $L$, 
\begin{equation}
\label{convergence_towards_mNext_in_Hone_times_Ltwo}
\begin{aligned}
&\norm{y\mapsto u \bigl(\xHomNext(t) + y, t\bigr) - \mNext}_{H^1\left([-L,L],\rr^d\right)} \to 0 \,, \\
\text{and}\quad
&\norm{y\mapsto u_t \bigl( \xHomNext(t) + y, t \bigr)}_{L^2\left([-L,L],\rr^d\right)} \to 0
\,.
\end{aligned}
\end{equation}
\end{lemma}
\begin{proof}
The proof is identical to the proof of \cite[\GlobalBehaviourLemNextHomogeneousPoint]{Risler_globalBehaviour_2016}. The convergence toward $0$ of the quantities \cref{quantities_going_to_zero_convergence_towards_trav_front} yields the limits \cref{convergence_towards_mNext_in_Hone_times_Ltwo}. 
\end{proof}
This completes the proof of conclusion \cref{item:invasion_implies_convergence_main_conclusion} of \cref{prop:inv_cv}. 
\Cref{prop:inv_cv} is proved.
\section{No invasion implies relaxation}
\label{sec:no_inv_implies_relax}
As everywhere else, let us consider a function $V$ in $\ccc^2(\rr^d,\rr)$ satisfying the coercivity hypothesis \cref{hyp_coerc}. The aim of this \namecref{sec:no_inv_implies_relax} is to prove \cref{prop:relax} below. The arguments are similar to those of \cite[\GlobalBehaviourSecNonInvasionImpliesRelaxation]{Risler_globalBehaviour_2016}, where more details and comments can be found. 
\subsection{Definitions and hypotheses}
\label{subsec:def_hyp_dichot}
Let us consider two points $m_-$ and $m_+$ in $\mmm$ and a solution $(x,t)\mapsto u(x,t)$ of system \cref{hyp_syst} defined on $\rr\times[0,+\infty)$. Without assuming that this solution is bistable, let us make the following hypothesis \textup{(\hyperlink{hypHom}{\hypHomRef})}, which is similar to hypothesis \textup{(\hyperlink{hypHomRight}{\hypHomRightRef})} made in \cref{sec:inv_impl_cv} (``invasion implies convergence''), but this time both to the right and to the left in space (see \cref{fig:inv_relax_dichot}).
\begin{figure}[!htbp]
	\centering
    \includegraphics[width=\textwidth]{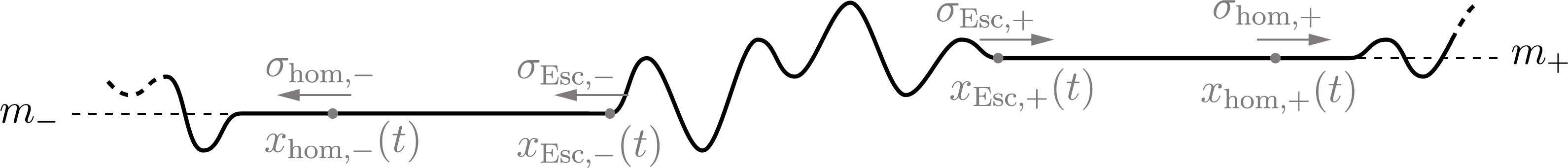}
    \caption{Illustration of hypothesis \textup{(\hypHomRef)} and of \cref{prop:relax}.}
    \label{fig:inv_relax_dichot}
\end{figure}
\begin{description}
\item[\hypHomLabel]\hypertarget{hypHom} There exist a positive quantity $\sHomPlus$ and a negative quantity $\sHomMinus$ and $\ccc^1$-functions
\[
\begin{aligned}
& \xHomPlus:[0,+\infty)\to\rr
\quad\text{satisfying}\quad
\xHomPlus'(t)\to \sHomPlus 
\quad\text{as}\quad
t\to +\infty \\
\text{and} \quad
& \xHomMinus:[0,+\infty)\to\rr
\quad\text{satisfying}\quad
\xHomMinus'(t)\to \sHomMinus 
\quad\text{as}\quad
t\to +\infty 
\end{aligned}
\]
such that, for every positive quantity $L$, both quantities
\[
\begin{aligned}
&\norm{y\mapsto \Bigl( u \bigl( \xHomPlus(t) + y, t \bigr) - m_+ , u_t \bigl( \xHomPlus(t) + y, t \bigr) \Bigr) }_{H^1([-L,L])\times L^2([-L,L])}  \\
\text{and}\quad
&\norm{y\mapsto \Bigl( u \bigl( \xHomMinus(t) + y, t \bigr) - m_- , u_t \bigl( \xHomMinus(t) + y, t \bigr) \Bigr)}_{H^1([-L,L])\times L^2([-L,L])} 
\end{aligned}
\]
go to $0$ as time goes to $+\infty$. 
\end{description}
For every $t$ in $[0,+\infty)$, let us denote by $\xEscPlus(t)$ the supremum of the set
\[
\bigl\{ x\in\rr : \xHomMinus(t) \le x \le \xHomPlus(t) \quad\text{and}\quad \abs{u(x,t)-m_+}=\dEsc(m_+) \bigr\}
\]
(with the convention that $\xEscPlus(t)$ equals $-\infty$ if this set is empty), and let us denote by $\xEscMinus(t)$ the infimum of the set
\[
\bigl\{ x\in\rr : \xHomMinus(t) \le x \le \xHomPlus(t) \quad\text{and}\quad \abs{u(x,t)-m_-}=\dEsc(m_-) \bigr\}
\]
(with the convention that $\xEscMinus(t)$ equals $+\infty$ if this set is empty). Let 
\[
\sEscPlus = \limsup_{t\to+\infty} \frac{\xEscPlus(t)}{t}
\quad\text{and}\quad
\sEscMinus = \liminf_{t\to+\infty} \frac{\xEscPlus(t)}{t}
\,,
\]
see \cref{fig:inv_relax_dichot}. It follows from the definitions of $\xEscMinus$ and $\xEscPlus(t)$ that, for all $t$ in $[0,+\infty)$, 
\[
\xHomMinus(t) \le \xEscMinus(t)
\quad\text{and}\quad
\xEscPlus(t) \le \xHomPlus(t)
\]
thus
\[
\sHomMinus \le \sEscMinus
\quad\text{and}\quad
\sEscPlus \le \sHomPlus
\,.
\]
If the quantity $\sEscPlus$ was positive or if the quantity $\sEscMinus$ was negative, this would mean that the corresponding equilibrium is ``invaded'' at a nonzero mean speed, a situation already studied in \cref{sec:inv_impl_cv}. Let us introduce the following (converse) ``no invasion'' hypothesis.
\begin{description}
\item[\hypNoInvLabel]\hypertarget{hypNoInv} The following inequalities hold:
\[
\sEscMinus \ge 0 
\quad\text{and}\quad
\sEscPlus \le 0 
\,.
\]
\end{description}
\subsection{Statement}
\label{subsec:statement_dichot}
The aim of \cref{sec:no_inv_implies_relax} is to prove the following proposition. 
\begin{proposition}[no invasion implies relaxation]
\label{prop:relax}
Assume that $V$ satisfies hypothesis \\ \cref{hyp_coerc} and that the solution $(x,t)\mapsto u(x,t)$ under consideration satisfies hypotheses \textup{(\hyperlink{hypHom}{\hypHomRef})} and \textup{(\hyperlink{hypNoInv}{\hypNoInvRef})}. Then the following conclusions hold. 
\begin{enumerate}
\item The quantities $V(m_-)$ and $V(m_+)$ are equal.
\item There exists a nonnegative quantity $\eeeResAsympt[u]$ (``residual asymptotic energy'') such that, for all quantities $\sigma_-$ in $(\sHomMinus,0)$ and $\sigma_+$ in $(0,\sHomPlus)$,
\begin{equation}
\label{asympt_energy_prop_relax}
\int_{\sigma_- t}^{\sigma_+ t} \Bigl[ \frac{\alpha}{2}u_t(x,t)^2 +\frac{1}{2}u_x(x,t)^2 + V\bigl( u(x,t)\bigl) - V(m_\pm) \Bigr] \, dx \to \eeeResAsympt[u]
\end{equation}
as time goes to $+\infty$. 
\item The following limits hold as time goes to $+\infty$:
\begin{equation}
\label{dissip_goes_to_zero_prop_relax}
\sup_{x\in [\xHomMinus(t)\, ,\, \xHomPlus(t)]} \int_{x-1}^{x+1} u_t(z,t)^2 \, dz \to 0 
\,,
\end{equation}
and, for every quantity $\varepsilon$ which is positive and smaller than $\abs{\sHomMinus}$ and than $\sHomPlus$,
\begin{equation}
\label{sol_goes_to_m_plus_minus_prop_relax}
\sup_{x\in[\xHomMinus(t),-\varepsilon t]}\abs{u(x,t)-m_-} \to 0
\quad\text{and}\quad
\sup_{x\in[\varepsilon t,\xHomPlus(t)]}\abs{u(x,t)-m_+} \to 0
\,.
\end{equation}
\end{enumerate}
\end{proposition}
\subsection{Relaxation scheme in a standing or almost standing frame}
\label{subsec:relax_sc_stand}
\subsubsection{Principle}
The aim of this \namecref{subsec:relax_sc_stand} is to set up an appropriate relaxation scheme in a standing or almost standing frame. This means defining an appropriate localized energy and controlling the ``flux'' terms occurring in the time derivative of that localized energy. The argument will be quite similar to that of \vref{subsec:relax_sch_tr_fr} (the relaxation scheme in the travelling frame), the main difference being that the speed of the travelling frame will now be either equal or close to zero, and as a consequence the weight function for the localized energy will be defined with a cut-off on the right and another on the left, instead of a single one; accordingly firewall functions will be introduced to control the fluxes along each of these cuts-off.

Let us keep the notation and hypotheses of \cref{subsec:def_hyp_dichot}, and let us assume that hypotheses \cref{hyp_coerc} and \textup{(\hyperlink{hypHom}{\hypHomRef})} and \textup{(\hyperlink{hypNoInv}{\hypNoInvRef})} of \cref{prop:relax} hold. According to \vref{prop:exist_sol_att_ball}, it may be assumed (without loss of generality, up to changing the origin of times) that, for all $t$ in $[0,+\infty)$, 
\begin{align}
\label{hyp_attr_ball_Linfty_relax}
\norm{x\mapsto u(x,t)}_{\Linfty} &\le \Rattinfty \\
\text{and}\qquad
\label{hyp_attr_ball_X_relax}
\norm{x\mapsto \bigl(u(x,t),u_t(x,t)\bigr)}_X &\le \RattX
\,.
\end{align} 
\subsubsection{Notation for the travelling frame}
As in \vref{subsec:relax_sch_tr_fr}, let us introduce as parameters the ``parabolic'' speed $c$ of the travelling frame and its ``physical'' speed $\sigma$ related by
\[
\sigma=\frac{c}{\sqrt{1+\alpha c^2}}
\iff 
c = \frac{\sigma}{\sqrt{1-\alpha \sigma^2}}
\,.
\]
To simplify the notation (that is, to avoid writing absolute values), let us assume that these speeds are nonnegative, namely:
\[
c\ge 0 \,,
\quad\text{or equivalently}\quad
\sigma\ge 0
\,.
\]
By contrast with \cref{subsec:relax_sch_tr_fr}, the other parameters --- namely $\tInit$ and $\xInit$ and $\initCut$ --- are not be required here. 
The relaxation scheme will be applied in the next \cref{subsec:low_bd_en} for a speed $c$ very close or equal to zero. 

Let us introduce the function $(\xi,t)\mapsto v(\xi,t)$, defined for every real quantity $\xi$ and every nonnegative time $t$ by
\[
v(\xi,t) = u(x,t)
\]
where $x$ and $\xi$ are related by
\[
x = \sigma t + \frac{\xi}{\sqrt{1+\alpha c^2}}
\iff
\xi = \sqrt{1+\alpha c^2} x - ct 
\,.
\]
The evolution system for the function $(\xi,t)\mapsto v(\xi,t)$ reads
\[
\alpha v_{tt} + v_t - 2 \alpha c v_{\xi t} = - \nabla V(v) + c v_\xi + v_{\xi\xi}
\,.
\]
\subsubsection{Choice of the parameters and conditions on the speed \texorpdfstring{$c$}{c}}
A localized energy and two firewall functions associated with this solution will now be introduced. Let us denote by $\kappa_0(m_-)$ and by $\kappa_0(m_+)$ the quantities defined in \vref{def_kappaZero} for the two points $m_-$ and $m_+$, and let 
\[
\kappa_0 = \min\bigl(\kappa_0(m_-),\kappa_0(m_+)\bigr)
\quad\text{and}\quad
\eigVmin = \min\bigl(\eigVmin(m_-),\eigVmin(m_+)\bigr)
\,.
\]
Let
\begin{equation}
\label{def_cCutZero}
\cCutZero = \min\biggl( \frac{\sHomPlus}{2}, \frac{\abs{\sHomMinus}}{2}, \frac{1}{4\alpha+2},\frac{\eigVmin}{8\kappa_0\bigl(1+\alpha(\kappa_0+1)\bigr)}\biggr)
\,,
\end{equation}
and let us assume that the (nonnegative) quantity $c$ is small enough so that the following inequalities be satisfied:
\begin{equation}
\label{hyp_param_relax}
c\le\frac{\kappa_0}{6}
\quad\text{and}\quad
c\le\frac{1}{\sqrt{\alpha}}
\quad\text{and}\quad
c\le\frac{\cCutZero}{6}
\,,
\end{equation}
and
\begin{equation}
\label{hyp_c_for_lower_bd_fireZero}
\alpha c(\kappa_0 + c) \le \frac{1}{6}
\,.
\end{equation}
According to \textup{(\hyperlink{hypHom}{\hypHomRef})} and \textup{(\hyperlink{hypNoInv}{\hypNoInvRef})} and to the choice of $\cCutZero$ above, there exists a nonnegative time $T$ such that, for every time $t$ greater than or equal to $T$,  
\begin{equation}
\label{hyp_pos_relax}
\begin{aligned}
\xHomMinus(t)&\le -\frac{11}{6}\cCutZero t
&\quad&\text{and}\quad&
-\frac{1}{6\sqrt{2}}\cCutZero t &\le \xEscMinus(t) \\
\text{and}\quad
\xEscPlus(t)&\le \frac{1}{6\sqrt{2}}\cCutZero t 
&\quad&\text{and}\quad&
\frac{11}{6}\cCutZero t &\le \xHomPlus(t)
\,.
\end{aligned}
\end{equation}
\subsubsection{Notation ``\texorpdfstring{$\pm$}{plus or minus}''}
Let us adopt, for the remaining of this \cref{sec:no_inv_implies_relax} and in the next \cref{sec:convergence}, the following convention: the symbol ``$\pm$'' denotes one the the signs ``$+$'' and ``$-$'', this sign remaining the same along a whole expression, an equality/inequality between two expressions, or a definition. 
\subsubsection{Normalized potential}
Let us introduce the ``normalized'' potential $V^\ddag: \rr^d\to\rr$, $v\mapsto V^\ddag(v)$ defined as
\begin{equation}
\label{def_V_ddag}
V^\ddag (v) = V(v) - \max\bigl(V(m_-),V(m_+)\bigr)
\,.
\end{equation}
As a consequence $\max\bigl(V^\ddag(m_-),V^\ddag(m_+)\bigr) = 0$, and $\nabla V$ and $\nabla V^\ddag$ are equal. With the convention above, it follows from inequalities \cref{posit_pot_around_loc_min,v_nablaV_controls_square_around_loc_min,v_nablaV_controls_pot_around_loc_min} that, for all $v$ in $\rr^d$ satisfying $\abs{v-m_\pm}\le\dEsc(m_\pm)$, 
\begin{align}
\label{v_nablaV_controls_square_around_loc_min_ddag}
(v-m_\pm)\cdot \nabla V^\ddag(v) &\ge \frac{\eigVmin}{2} (v-m_\pm)^2 \,, \\
\text{and}\qquad
\label{v_nablaV_controls_pot_around_loc_min_ddag}
(v-m_\pm)\cdot \nabla V^\ddag(v) &\ge V^\ddag(v) - V^\ddag(m_\pm)
\,.
\end{align}
\subsubsection{Localized energy}
\label{subsubsec:en_dichot}
For every time $t$, let us introduce the three intervals
\[
\begin{aligned}
\iLeft(t) &= ( - \infty , -\cCutZero t ] \,, \\
\text{and}\qquad
\iMain(t) &= [-\cCutZero t, \cCutZero t] \,, \\
\text{and}\qquad
\iRight(t) &= [ \cCutZero t , +\infty) \,,
\end{aligned}
\]
and let us introduce the functions $\chi_0(\xi,t)$ and $\chi(\xi,t)$ (weight function for the localized energy) defined on $\rr\times[0,+\infty)$ as
\[
\chi_0(\xi,t)=
\left\{
\begin{aligned}
&1
&& \quad\text{if}\quad \xi\in \iMain(t) \,, \\
& \exp\bigl(-\kappa_0 (\abs{\xi}-\cCutZero t)\bigr) 
&& \quad\text{if}\quad \xi\not\in \iMain(t) \,,
\end{aligned}
\right.
\quad\text{and}\quad
\chi(\xi,t) = e^{c\xi} \chi_0(\xi,t)
\,,
\]
see \cref{fig:graph_weight_zero_dichot,fig:graph_weight_dichot}.
\begin{figure}[!htbp]
	\centering
    \includegraphics[width=.8\textwidth]{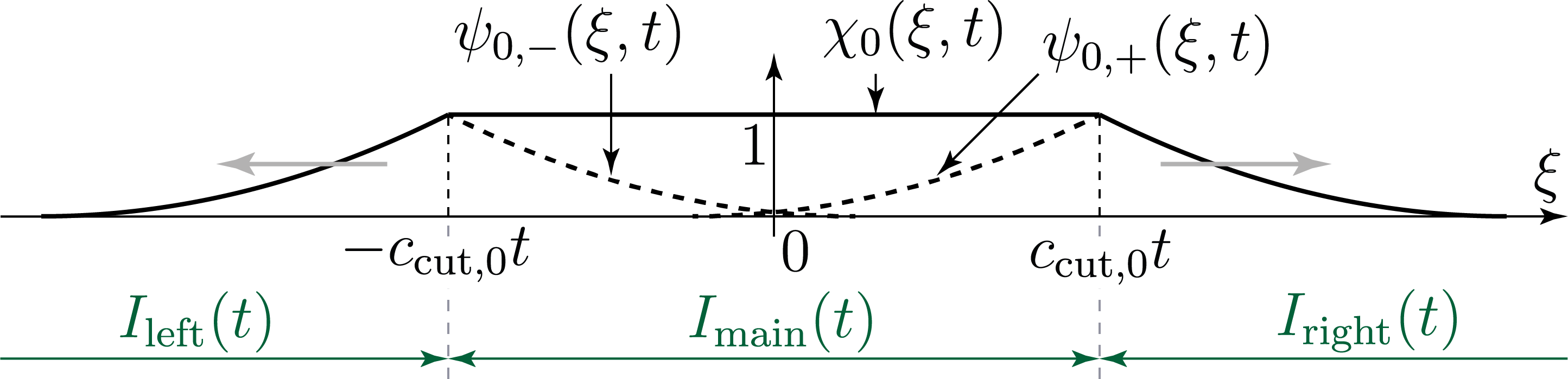}
    \caption{Graphs of functions $\xi\mapsto\chi_0(\xi,t)$ and $\xi\mapsto\psi_{0,+}(\xi,t)$ and $\xi\mapsto\psi_{0,-}(\xi,t)$.}
    \label{fig:graph_weight_zero_dichot}
\end{figure}
\begin{figure}[!htbp]
	\centering
    \includegraphics[width=\textwidth]{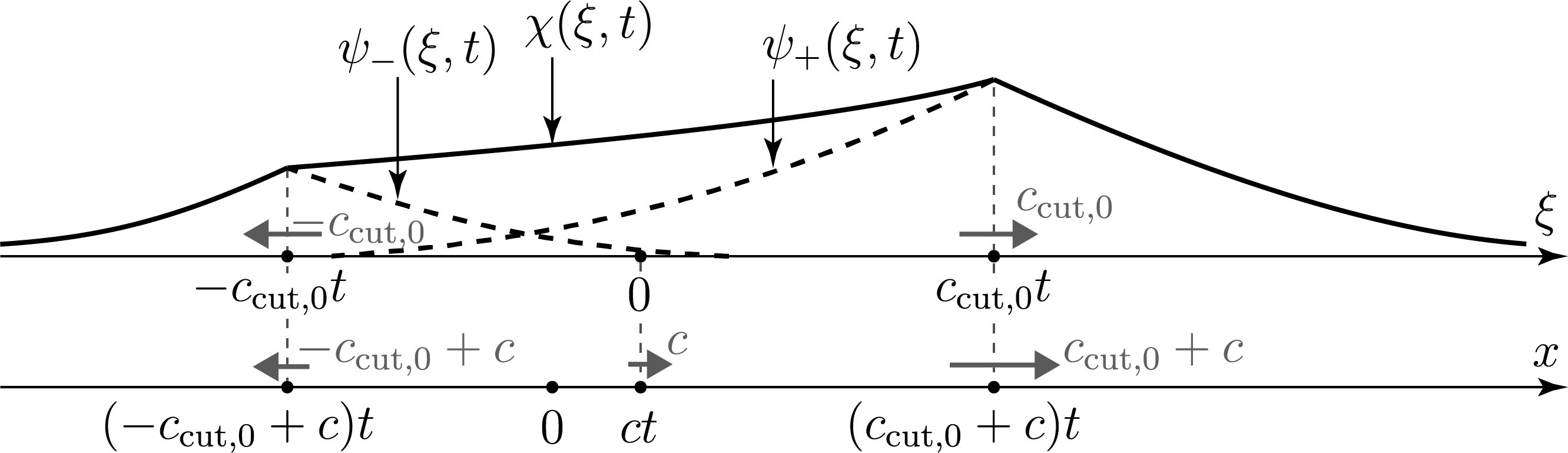}
    \caption{Graphs of the weight functions $\xi\mapsto\chi(\xi,t)$ and $\xi\mapsto\psi_+(\xi,t)$ and $\xi\mapsto\psi_-(\xi,t)$.}
    \label{fig:graph_weight_dichot}
\end{figure}
For all $t$ in $[0,+\infty)$, let us define the ``energy'' $\eee(t)$ by
\[
\eee(t) = \int_{\rr} \chi(\xi,t) E^\ddag(\xi,t) \, d\xi \,,
\quad\text{where}\quad
E^\ddag(\xi,t) = \frac{\alpha}{2}v_t(\xi,t)^2 + \frac{1}{2}v_\xi(\xi,t)^2 + V^\ddag\bigl( v(\xi,t)\bigr)
\,.
\]
The notation $\chi$ and $\eee$ is the same as in \cref{subsubsec:def_loc_en} but the definitions above are slightly different from those introduced in \cref{subsubsec:def_loc_en}. 
\subsubsection{Time derivative of the localized energy}
\label{subsubsec:time_der_loc_en_dichot}
For every nonnegative quantity $t$, let
\begin{equation}
\label{def_dissip_dichot}
\ddd(t) = \int_{\rr} \chi(\xi,t)\, v_t(\xi,t)^2 \, d\xi
\,.
\end{equation}
\begin{lemma}[time derivative of the localized energy]
\label{lem:approx_decr_en_one_dichot}
For every nonnegative time $t$, 
\begin{equation}
\label{upp_bd_dE_tf_dichot}
\begin{aligned}
\eee'(t) \le &-(1+\alpha c^2) \ddd(t) \\
+ \kappa_0&\int_{\rr\setminus\iMain(t)}\chi \biggl[ \frac{\alpha(\cCutZero+2c)+1}{2} v_t^2 + \frac{\cCutZero+1}{2} v_\xi^2 + \cCutZero V^\ddag(v) \biggr] \, d\xi
\,.
\end{aligned}
\end{equation}
\end{lemma}
\begin{proof}
It follows from from expression \vref{ddt_loc_en_tf_second} (time derivative of a localized energy) that for all $t$ in $[0,+\infty)$,
\begin{equation}
\label{dE_preliminary_relax}
\eee'(t)  = - (1+\alpha c^2) \ddd(t) + \int_{\rr}\biggl( \chi_t\Bigl(\frac{\alpha}{2}v_t^2 + \frac{1}{2}v_\xi^2 + V^\ddag(v)\Bigr) + (c\chi-\chi_\xi)(\alpha c v_t^2 + v_\xi\cdot v_t) \biggr) \, d\xi
\,.
\end{equation}
It follows from the definition of $\chi$ that
\[
\chi_t(\xi,t)=e^{c\xi}\partial_t \chi_0(\xi,t) = 
\left\{
\begin{aligned}
& 0 
&& \quad\text{if}\quad \xi\in\iMain(t) \,, \\
&\kappa_0\cCutZero \chi(\xi,t) 
&& \quad\text{if}\quad \xi\not\in\iMain(t) \,, 
\end{aligned}
\right.
\]
and 
\[
(c\chi - \chi_\xi) (\xi,t)= - e^{c\xi} \partial_\xi \chi_0 (\xi,t) = 
\left\{
\begin{aligned}
&0 
&& \quad\text{if}\quad \xi\in\iMain(t) \,, \\
&\sgn(\xi)\, \kappa_0\, \chi(\xi,t) 
&& \quad\text{if}\quad \xi\not\in\iMain(t) \,.
\end{aligned}
\right.
\]
Thus it follows from \cref{dE_preliminary_relax} that, for all $t$ in $[0,+\infty)$,
\[
\begin{aligned}
\eee'(t)  = &- (1+\alpha c^2) \ddd(t) \\
&+ \kappa_0\int_{\rr\setminus\iMain(t)}\chi \biggl(\cCutZero \Bigl(\frac{\alpha}{2}v_t^2 + \frac{1}{2}v_\xi^2 + V^\ddag(v)\Bigr) + \sgn(\xi) (\alpha c v_t^2 + v_\xi\cdot v_t) \biggr) \, d\xi 
\,,
\end{aligned}
\]
and using the inequality
\[
\sgn(\xi) v_\xi\cdot v_t \le \frac{1}{2}v_\xi^2 + \frac{1}{2}v_t^2
\,,
\]
inequality \cref{upp_bd_dE_tf_dichot} follows. \Cref{lem:approx_decr_en_one_dichot} is proved. 
\end{proof}
\subsubsection{Firewall functions}
\label{subsubsec:def_firewall_dichot}
Proceeding as in \vref{subsubsec:def_firewall} two firewall functions will be introduced in order to control the right-hand side of this inequality. 
Let us introduce the functions $\psi_{0,+}(\xi,t)$ and $\psi_{0,-}(\xi,t)$ and $\psi_+(\xi,t)$ and $\psi_-(\xi,t)$ (weight functions for those firewall functions) defined as
\[
\begin{aligned}
\psi_{0,-}(\xi,t) &= \exp\bigl(-\kappa_0\abs{\xi+\cCutZero t}\bigr)
\,,
\\
\text{and}\quad
\psi_{0,+}(\xi,t) &= \exp\bigl(-\kappa_0\abs{\xi-\cCutZero t}\bigr)
\,,
\end{aligned}
\]
and 
\[
\psi_{-}(\xi,t) = e^{c\xi}\psi_{0,-}(\xi,t)
\quad\text{and}\quad
\psi_{+}(\xi,t) = e^{c\xi}\psi_{0,+}(\xi,t)
\,,
\]
see \cref{fig:graph_weight_zero_dichot,fig:graph_weight_dichot}. Observe that 
\[
\chi(\xi,t) = \psi_{-}(\xi,t)
\quad\text{for}\quad
\xi\in\iLeft(t)
\quad\text{and}\quad
\chi(\xi,t) = \psi_{+}(\xi,t)
\quad\text{for}\quad
\xi\in\iRight(t)
\,.
\]
For every nonnegative time $t$, let 
\[
\fff_\pm(t) = \int_{\rr} \psi_\pm(\xi,t) \, F^\ddag_\pm(\xi,t) \, d\xi 
\,,
\]
where
\begin{equation}
\label{def_F_plus_minus}
\begin{aligned}
F^\ddag_\pm(\xi,t) &= 2\alpha \bigl(E^\ddag(\xi,t) - V^\ddag(m_\pm)\bigr) \\
&\qquad + \left(\alpha \bigl(v-m_\pm\bigr)\cdot v_t + \Bigl( \frac{1}{2} + \alpha c \frac{\partial_\xi\psi_\pm}{\psi_\pm} \Bigr) (v-m_\pm)^2\right)(\xi,t) \\
&= \biggl(\alpha^2 v_t^2 + \alpha v_\xi^2 + 2\alpha\bigl( V^\ddag(v)- V^\ddag(m_\pm) \bigr) + \alpha (v-m_\pm)\cdot v_t \\
&\qquad + \Bigl( \frac{1}{2} + \alpha c \frac{\partial_\xi\psi_\pm}{\psi_\pm} \Bigr) (v-m_\pm)^2 \biggr) (\xi,t)
\,.
\end{aligned}
\end{equation}
\subsubsection{Lower bounds on the firewall functions}
\begin{lemma}[lower bounds on the firewall functions]
\label{lem:lower_bound_F_relax}
For every nonnegative quantity $t$,
\begin{equation}
\label{lower_bound_F_relax}
\fff_\pm(t) \ge \int_{\rr}\psi_\pm(\xi,t)\Bigl[ \frac{\alpha^2}{4} v_t(\xi,t)^2 + \alpha v_\xi(\xi,t)^2 + 2 \alpha \Bigl(V^\ddag\bigl(v(\xi,t)\bigr)-V^\ddag(m_\pm)\Bigr) \Bigr] \, d\xi 
\,.
\end{equation}
\end{lemma}
\begin{proof}
Observe that
\[
\partial_\xi \psi_\pm = c \psi_\pm + e^{c\xi}\partial_\xi \psi_{0,\pm}
\quad\text{thus}\quad
\abs{\partial_\xi \psi_\pm} \le (\kappa_0 + c) \psi_\pm
\,.
\]
As a consequence, it follows from the polarization inequality \vref{polarization_inequality} that, for every real quantity $\xi$ and every nonnegative quantity $t$, 
\[
F^\ddag_\pm(\xi,t) \ge \frac{\alpha^2}{4} v_t^2 + \alpha v_\xi^2 + 2 \alpha \bigl(V^\dag(v)-V^\dag(m_\pm)+ \Bigl( \frac{1}{6} - \alpha c(\kappa_0 + c)\Bigr) (v-m_\pm)^2 
\,,
\]
thus inequality \cref{lower_bound_F_relax} follows from condition \vref{hyp_c_for_lower_bd_fireZero} satisfied by $c$. 
\end{proof}
\subsubsection{Energy decrease up to firewalls and pollution}
For every nonnegative time $t$, let 
\[
\SigmaEscPlusMinus(t) = \{\xi\in\rr : \abs{v(\xi,t)-m_\pm} > \dEsc(m_\pm) \} 
\,,
\]
and let
\begin{equation}
\label{def_gggg_pm}
\mathcal{G}_\pm(t) = \int_{\SigmaEscPlusMinus(t)} \psi_\pm (\xi,t) \, d\xi 
\,.
\end{equation}
\begin{lemma}[energy decrease up to firewalls and pollution]
\label{lem:approx_decr_en_dichot}
There exist nonnegative quantities $\KEFZero$ and $\KEEscZero$, depending on $\alpha$ and $V$ and $m_+$ and $m_-$ (only), such that for  every nonnegative time $t$, 
\begin{equation}
\label{approx_decr_en_dichot}
\begin{aligned}
\eee'(t) \le & - (1+\alpha c^2) \ddd(t) + \KEFZero \bigl( \fff_+(t) + \fff_-(t) \bigr) \\
& + \KEEscZero \bigl( \mathcal{G}_-(t) + \mathcal{G}_+(t)\bigr) 
\,.
\end{aligned}
\end{equation}
\end{lemma}
\begin{proof}
For every nonnegative time $t$, since $\chi(\xi,t)=\psi_-(\xi,t)$ for all $\xi$ in $\iLeft(t)$ and $\chi(\xi,t)=\psi_+(\xi,t)$ for all $\xi$ in $\iRight(t)$, it follows from inequality \cref{upp_bd_dE_tf_dichot} that (substituting $\chi$ with $\psi_-$ or $\psi_+$ and adding the nonnegative quantities $-V^\ddag(m_-)$ and $-V^\ddag(m_+)$)
\[
\begin{aligned}
\eee'(t) &+ (1+\alpha c^2) \ddd(t)\le  \\
\kappa_0&\int_{\iLeft(t)}\psi_- \biggl[ \frac{\alpha(\cCutZero+2c)+1}{2} v_t^2 + \frac{\cCutZero+1}{2} v_\xi^2 + \cCutZero \bigl(V^\ddag(v)-V^\ddag(m_-)\bigr) \biggr] \, d\xi \\
+ \kappa_0&\int_{\iRight(t)}\psi_+ \biggl[ \frac{\alpha(\cCutZero+2c)+1}{2} v_t^2 + \frac{\cCutZero+1}{2} v_\xi^2 + \cCutZero \bigl(V^\ddag(v)-V^\ddag(m_+)\bigr) \biggr] \, d\xi
\,.
\end{aligned}
\]
After replacing the quantities $V^\ddag(v)-V^\ddag(m_\pm)$ by their absolute values and extending to $\rr$ the integration domains of these two integrals, the inequality still holds and reads
\[
\begin{aligned}
\eee'(t) &+ (1+\alpha c^2) \ddd(t)\le  \\
\kappa_0&\int_{\rr}\psi_- \biggl[ \frac{\alpha(\cCutZero+2c)+1}{2} v_t^2 + \frac{\cCutZero+1}{2} v_\xi^2 + \cCutZero \abs{V^\ddag(v)-V^\ddag(m_-)} \biggr] \, d\xi \\
+ \kappa_0&\int_{\rr}\psi_+ \biggl[ \frac{\alpha(\cCutZero+2c)+1}{2} v_t^2 + \frac{\cCutZero+1}{2} v_\xi^2 + \cCutZero \abs{V^\ddag(v)-V^\ddag(m_+)} \biggr] \, d\xi
\,.
\end{aligned}
\]
Let $\KEFZero$ be a positive quantity to be chosen below. According to \cref{lower_bound_F_relax}, it follows that, for every nonnegative time $t$,
\[
\begin{aligned}
\eee'(t) &+ (1+\alpha c^2) \ddd(t) - \KEFZero \bigl(\fff_-(t)+\fff_+(t)\bigr) \le \\
&\int_{\rr}\psi_- \biggl[ \Bigl(\frac{\kappa_0\bigl(\alpha(\cCutZero+2c)+1\bigr)}{2} - \frac{\alpha^2 \KEFZero}{4}\Bigr)v_t^2 + \Bigl(\frac{\kappa_0(\cCutZero+1)}{2}-\alpha \KEFZero\Bigr) v_\xi^2 \\
&+\kappa_0\cCutZero \abs{V^\ddag(v)-V^\ddag(m_-)} -2\alpha\KEFZero\bigl(V^\ddag(v)-V^\ddag(m_-)\bigr) \biggr] \, d\xi \\
+&\int_{\rr}\psi_+ \biggl[ \Bigl(\frac{\kappa_0\bigl(\alpha(\cCutZero+2c)+1\bigr)}{2} - \frac{\alpha^2 \KEFZero}{4}\Bigr)v_t^2 + \Bigl(\frac{\kappa_0(\cCutZero+1)}{2}-\alpha \KEFZero\Bigr) v_\xi^2 \\
&+\kappa_0\cCutZero \abs{V^\ddag(v)-V^\ddag(m_+)} -2\alpha\KEFZero\bigl(V^\ddag(v)-V^\ddag(m_+)\bigr) \biggr] \, d\xi
\,.
\end{aligned}
\]
Thus, introducing the quantity $\KEFZero$ as
\[
\KEFZero = \max\biggl[\frac{2\kappa_0\bigl(\alpha(\cCutZero+2)+1\bigr)}{\alpha^2},\frac{\kappa_0(\cCutZero+1)}{2\alpha},\frac{\kappa_0\cCutZero}{2\alpha}\biggr]
\,,
\]
(this quantity depends only on $\alpha$ and $V$), it follows that
\begin{equation}
\label{approx_decr_en_dichot_proof}
\begin{aligned}
\eee'(t) + &(1+\alpha c^2) \ddd(t) - \KEFZero \bigl(\fff_-(t)+\fff_+(t)\bigr) \le \\
&\int_{\rr}\psi_-\Bigl[\kappa_0\cCutZero \abs{V^\ddag(v)-V^\ddag(m_-)} -2\alpha\KEFZero\bigl(V^\ddag(v)-V^\ddag(m_-)\bigr)\Bigr]\, d\xi \\
+&\int_{\rr}\psi_+\Bigl[\kappa_0\cCutZero \abs{V^\ddag(v)-V^\ddag(m_+)} -2\alpha\KEFZero\bigl(V^\ddag(v)-V^\ddag(m_+)\bigr)\Bigr]\, d\xi
\,.
\end{aligned}
\end{equation} 
According to the choice of $\KEFZero$, the integrand of the first (resp. the second) integral of the right-hand side of this inequality is nonpositive as long as $\xi$ is \emph{not} in $\SigmaEscMinus(t)$ (resp. $\SigmaEscPlus(t)$). As a consequence this inequality still holds if the integration domains of these integrals are restricted to $\SigmaEscMinus(t)$ and $\SigmaEscPlus(t)$, respectively. Thus, introducing the quantity $\KEEscZero$ as
\[
\KEEscZero = \bigl(\kappa_0\cCutZero+2\alpha\KEFZero\bigr) \max_{v\in\rr^d,\ \abs{v}\le \Rattinfty,\ m\in\{m_-,m_+\}}\abs{V(v)-V(m)}
\,,
\]
inequality \cref{approx_decr_en_dichot} follows from \cref{approx_decr_en_dichot_proof}. \Cref{lem:approx_decr_en_dichot} is proved.
\end{proof}
\subsubsection{Firewalls upper bounds}
\begin{lemma}[firewalls upper bounds]
\label{lem:fire_upp_bd_relax}
For every nonnegative time $t$, 
\begin{equation}
\label{fire_upp_bd_relax}
\fff_\pm(t) \le \int_{\rr} \psi_\pm \Bigl[\frac{3\alpha^2}{2} v_t^2 + \alpha v_\xi^2 + 2\alpha \bigl(V^\dag(v)-V(m_\pm)\bigr) + \bigl(1+\alpha c(\kappa_0+c)\bigr)(v-m_\pm)^2 \Bigr]\, d\xi
\,.
\end{equation}
\end{lemma}
\begin{proof}
Inequality \cref{fire_upp_bd_relax} follows from the definition \vref{def_F_plus_minus} of $F^\ddag_\pm(\xi,s)$, from the fact that $\partial_\xi\psi_\pm/\psi$ is bounded from above by $\kappa_0+c$, and from the inequality
\[
\alpha (v-m_\pm)\cdot v_t \le \frac{\alpha^2}{2} v_t^2 + \frac{1}{2} (v-m_\pm)^2
\,.
\]
\end{proof}
\subsubsection{Firewalls linear decrease up to pollution}
\label{subsubsec:time_der_fire_dichot}
Let us denote by $\nuFZero(m_-)$ and $\KFZero(m_-)$ ($\nuFZero(m_+)$ and $\KFZero(m_+)$) the quantities denoted by $\nuFZero$ and $\KFZero$ in the proof of \vref{lem:decrease_fire0}, when the minimum point $m$ of \cref{lem:decrease_fire0} is replaced with $m_-$ (with $m_+$).
\begin{lemma}[firewalls linear decrease up to pollution]
\label{lem:fire_decr_dichot}
For every nonnegative quantity $t$, 
\begin{equation}
\label{dt_F_final_dichot}
\fff_\pm'(t) \le - \nuFZero(m_\pm) \fff_\pm(t)+ \KFZero(m_\pm) \mathcal{G}_\pm(t)
\,.
\end{equation}
\end{lemma}
\begin{proof}
The proof is very similar to that of \vref{lem:fire_decr}; however, since the definitions of the various parameters and functions are slightly different, the details of the calculations are provided. Proceeding as in the beginning of the proof of \cref{lem:fire_decr}, it follows that, for all nonnegative time $t$, 
\[
\begin{aligned}
\fff_\pm'(t) = &\int_{\rr} \Biggl[ 
\alpha \bigl( - \psi_\pm - 2\alpha c \partial_\xi\psi_\pm + \alpha \partial_t\psi_\pm \bigr)v_t^2 + \bigl( - \psi_\pm + \alpha \partial_t\psi_\pm \bigr)v_\xi^2 \\
& - \psi_\pm (v-m_\pm) \cdot \nabla V^\ddag (v) - 2 \alpha \partial_\xi\psi_\pm v_\xi \cdot v_t + \frac{\partial_t\psi_\pm + \partial_\xi^2\psi_\pm - c \partial_\xi\psi_\pm}{2} (v-m_\pm)^2 \\
& + \alpha\partial_t\psi_\pm \Bigl( 2 \bigl(V^\ddag(v)-V^\ddag(m_\pm)\bigr) + (v-m_\pm) \cdot v_t - 2  c (v-m_\pm) \cdot v_\xi \Bigr)
\Biggr] \, d\xi
\,.
\end{aligned}
\]
According to the definition of $\psi_\pm$, for all $(\xi,t)$ in $\rr\times[0,+\infty)$ (omitting the arguments $(\xi,t)$ of $\psi_\pm$ and of their derivatives),
\[
\begin{aligned}
\partial_t \psi_\pm = e^{c\xi}\partial_t \psi_{0,\pm}
\quad&\text{thus}\quad
\abs{\partial_t \psi_\pm } = \cCutZero \kappa_0 \psi_\pm \,, \\
c\psi_\pm -\partial_\xi \psi_\pm = -e^{c\xi} \partial_\xi \psi_{0,\pm}
\quad&\text{thus}\quad
\abs{c\psi_\pm -\partial_\xi \psi_\pm} = \kappa_0 \psi_\pm \,, \\
\partial_\xi^2 \psi_\pm - c \partial_\xi \psi_\pm 
= \partial_\xi(e^{c\xi} \partial_\xi \psi_{0,\pm})\qquad\qquad & \\
= e^{c\xi}(c\partial_\xi \psi_{0,\pm} + \partial_\xi^2\psi_{0,\pm})
\quad&\text{thus}\quad
\partial_\xi^2 \psi_\pm - c \partial_\xi \psi_\pm \le \kappa_0(\kappa_0+c)\psi_\pm
\end{aligned}
\]
(compare with the bounds \vref{bounds_psi_psi_s_psi_xi_etc}). Thus, for every nonnegative time $t$, it follows from the previous expression of $\fff_\pm'(t)$ that
\[
\begin{aligned}
&\fff_\pm'(t)  \le \int_{\rr} \psi_\pm \Biggl[ 
\alpha\Bigl( -1 - 2\alpha c \frac{\partial_\xi\psi_\pm}{\psi_\pm} + \alpha \cCutZero\kappa_0 \Bigr) v_t^2+ (-1 + \alpha \cCutZero\kappa_0)v_\xi^2  \\
& - (v-m_\pm) \cdot \nabla V^\ddag (v) - 2 \alpha \frac{\partial_\xi\psi_\pm}{\psi_\pm} v_\xi \cdot v_t + \frac{\kappa_0(\cCutZero+\kappa_0+c)}{2}(v-m_\pm)^2 \\
&+ \alpha\cCutZero\kappa_0 \Bigl( 2 \abs{V^\ddag(v)-V^\ddag(m_\pm)} +  \abs{(v-m_\pm) \cdot v_t} + 2  c \abs{(v-m_\pm) \cdot v_\xi} \Bigr)
\Biggr] \, d\xi
\,.
\end{aligned}
\]
Using the inequalities
\[
\begin{aligned}
- 2 \alpha \frac{\partial_\xi\psi_\pm}{\psi_\pm} v_\xi \cdot v_t &\le \frac{1}{2}v_\xi^2 + 2\alpha^2 \frac{(\partial_\xi\psi_\pm)^2}{\psi_\pm^2} v_t^2 \\
\text{and}\qquad
\abs{(v-m_\pm) \cdot v_t} &\le \frac{1}{2} (v-m_\pm)^2 + \frac{1}{2} v_t^2 \\
\text{and}\qquad
2\abs{(v-m_\pm) \cdot v_\xi} &\le (v-m_\pm)^2 + v_\xi^2
\,,
\end{aligned}
\]
it follows that
\[
\begin{aligned}
\fff_\pm'(t)  \le \int_{\rr} \psi_\pm \Biggl[&
\alpha\Bigl( -1 - 2\alpha c \frac{\partial_\xi\psi_\pm}{\psi_\pm} + \alpha \cCutZero\kappa_0 + 2\alpha\frac{(\partial_\xi\psi_\pm)^2}{\psi_\pm^2} + \frac{\cCutZero\kappa_0}{2} \Bigr) v_t^2 \\
&+\Bigl(-1+\frac{1}{2}+\alpha\cCutZero\kappa_0(c+1)\Bigr)v_\xi^2 - (v-m_\pm) \cdot \nabla V^\ddag (v)  \\
& + \kappa_0 \Bigl(\frac{\cCutZero+\kappa_0+c}{2}+\frac{\alpha \cCutZero}{2}+\alpha c\cCutZero\Bigr)(v-m_\pm)^2 \\
&+ 2 \alpha\cCutZero\kappa_0 \abs{V^\ddag(v)-V^\ddag(m_\pm)}
\Biggr] \, d\xi
\,.
\end{aligned}
\]
Observe that the following equality holds, for all values of argument $\xi$:
\[
- 2\alpha c \frac{\partial_\xi\psi_\pm}{\psi_\pm} + 2\alpha\frac{(\partial_\xi\psi_\pm)^2}{\psi_\pm^2} = - 2\alpha  \frac{\partial_\xi\psi_\pm}{\psi_\pm}\cdot\frac{c\psi_\pm -\partial_\xi \psi_\pm }{\psi_\pm} \le 2\alpha\kappa_0(\kappa_0+c)
\,.
\]
Thus, the previous inequality becomes
\[
\begin{aligned}
&\fff_\pm'(t)  \le \int_{\rr} \psi_\pm \Biggl[
\alpha\Bigl( -1 + \kappa_0\bigl(2\alpha(\kappa_0+c) + \cCutZero(\alpha+1/2)\bigr)\Bigr) v_t^2 \\
&+ \Bigl(-\frac{1}{2}+\alpha\cCutZero\kappa_0(c+1)\Bigr)v_\xi^2 - (v-m_\pm) \cdot \nabla V^\ddag (v) \\
&+ \frac{\kappa_0}{2} \Bigl(\kappa_0+c+\cCutZero\bigl(1+\alpha (2c+1)\bigr)\Bigr)(v-m_\pm)^2 + 2 \alpha\cCutZero\kappa_0 \abs{V^\ddag(v)-V^\ddag(m_\pm)}
\Biggr] \, d\xi
\,.
\end{aligned}
\]
It follows from the definitions \cref{def_kappaZero} of $\kappa_0$ and \cref{def_cCutZero} of $\cCutZero$ and the conditions \cref{hyp_param_relax} on $c$ that
\[
\begin{aligned}
\kappa_0\bigl(2\alpha(\kappa_0+c) + \cCutZero(\alpha+1/2)\bigr) \le \frac{1}{2}
\quad&\text{and}\quad
\alpha\cCutZero\kappa_0(c+1) \le \frac{1}{4} \\
\text{and}\quad
\frac{\kappa_0}{2} \Bigl(\kappa_0+c+\cCutZero\bigl(1+\alpha (2c+1)\bigr)\Bigr) \le\frac{\eigVmin}{8}
\quad&\text{and}\quad
2 \alpha\cCutZero\kappa_0 \le \frac{1}{4}
\,;
\end{aligned}
\]
thus it follows from the previous inequality that
\[
\begin{aligned}
\fff_\pm'(t)  \le \int_{\rr} \psi_\pm \Biggl[&
-\frac{\alpha}{2} v_t^2 -\frac{1}{4} v_\xi^2 - (v-m_\pm) \cdot \nabla V^\ddag (v) + \frac{\eigVmin}{8} (v-m_\pm)^2  \\
&+ \frac{1}{4} \abs{V^\ddag(v)-V^\ddag(m_\pm)}
\Biggr] \, d\xi
\,,
\end{aligned}
\]
and it follows from the upper bound \cref{fire_upp_bd_relax} of \cref{lem:fire_upp_bd_relax} on $\fff(t)$ that
\[
\begin{aligned}
\fff_\pm'(t) +& \nuFZero(m_\pm) \fff_\pm(t) \le \int_{\rr} \psi_\pm \Biggl[
\frac{\alpha}{2}(-1+3\alpha \nuFZero(m_\pm)) v_t^2 +\Bigl(-\frac{1}{4} + \alpha \nuFZero(m_\pm) \Bigr) v_\xi^2  \\
&- (v-m_\pm) \cdot \nabla V^\ddag (v) + \Bigl(\frac{\eigVmin}{8} + \nuFZero(m_\pm)\bigl(1+\alpha c(\kappa_0+c)\bigr)\Bigr) (v-m_\pm)^2 \\
& + \Bigl(\frac{1}{4}+2\alpha\nuFZero(m_\pm)\Bigr) \abs{V^\ddag(v)-V^\ddag(m_\pm)} 
\Biggr] \, d\xi
\,.
\end{aligned}
\]
Thus it follows from the definition \cref{def_nuFireZero} of $\nuFZero(m_\pm)$ that
\[
\begin{aligned}
\fff_\pm'(t) + \nuFZero(m_\pm) \fff_\pm(t) \le \int_{\rr} \psi_\pm \Biggl[ & - (v-m_\pm) \cdot \nabla V^\ddag (v) + \frac{\eigVmin}{4} (v-m_\pm)^2 \\
&+ \frac{1}{2} \abs{V^\ddag(v)-V^\ddag(m_\pm)}
\Biggr] \, d\xi
\,.
\end{aligned}
\]
In view of the $L^\infty$-bound \vref{hyp_attr_ball_Linfty_relax} for the solution, the end of the proof is identical to that of \cref{lem:fire_decr}.
\end{proof}
\subsubsection{Control over pollution}
\label{subsubsec:control_over_pollution}
The following lemma calls upon the notation $T$ introduced for inequalities \cref{hyp_pos_relax}. 
\begin{lemma}[control over pollution]
\label{lem:control_over_pollution}
For every time $t$ greater than or equal to $T$, 
\begin{equation}
\label{control_over_pollution}
\mathcal{G}_\pm(t) \le \frac{5}{2\kappa_0}\exp\Bigl(-\frac{\kappa_0\cCutZero}{2}t\Bigr)
\,.
\end{equation}
\end{lemma}
\begin{proof}
For every nonnegative time $t$, let 
\[
\begin{aligned}
& \xiHomMinus(t) = \sqrt{1+\alpha c^2}\xHomMinus(t) - ct
\quad\text{and}\quad
\xiEscMinus(t) = \sqrt{1+\alpha c^2}\xEscMinus(t) - ct \\
\text{and}\quad
& \xiEscPlus(t) = \sqrt{1+\alpha c^2}\xEscPlus(t) - ct
\quad\text{and}\quad
\xiHomPlus(t) = \sqrt{1+\alpha c^2}\xHomPlus(t) - ct
\,.
\end{aligned}
\]
Assume that the time $t$ is \emph{greater than or equal to $T$}; and observe that according to \cref{hyp_param_relax} the quantity $\sqrt{1+\alpha c^2}$ is not larger than $\sqrt{2}$. Then it follows from hypotheses \cref{hyp_pos_relax} and from the two last hypotheses of \cref{hyp_param_relax} that
\begin{equation}
\label{hyp_relax_in_tf}
\begin{aligned}
\xiHomMinus(t)&\le -\frac{5}{3}\cCutZero t
&\quad&\text{and}\quad&
-\frac{1}{3}\cCutZero t &\le \xiEscMinus(t)\,, \\
\text{and}\quad
\xiEscPlus(t)&\le \frac{1}{3}\cCutZero t 
&\quad&\text{and}\quad&
\frac{5}{3}\cCutZero t &\le \xiHomPlus(t)
\,,
\end{aligned}
\end{equation}
see \cref{fig:setting_relax}. 
\begin{figure}[!htbp]
\centering
\includegraphics[width=\textwidth]{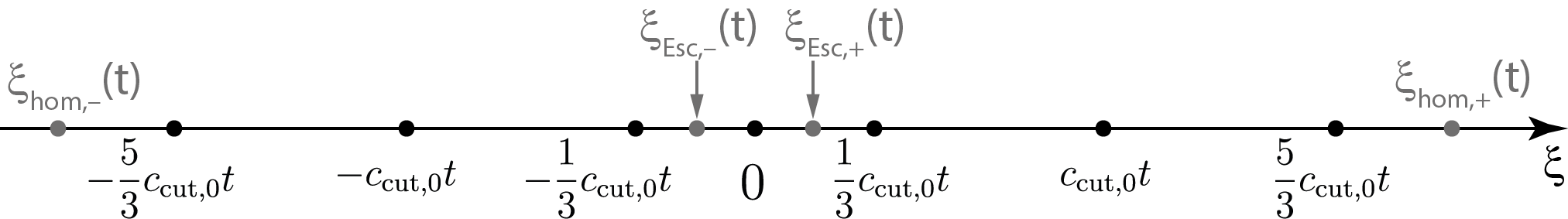}
\caption{Illustration of the notation and assumptions for the proof of \cref{prop:relax}.}
\label{fig:setting_relax}
\end{figure}
According to the definition of $\xEscPlus(t)$ and $\xEscMinus(t)$, 
\[
\begin{aligned}
\SigmaEscMinus(t) &\subset (-\infty,\xiHomMinus(t)] \cup [\xiEscMinus(t),+\infty) \\
\text{and}\quad
\SigmaEscPlus(t) &\subset (-\infty,\xiEscPlus(t)] \cup [\xiHomPlus(t),+\infty)
\,.
\end{aligned}
\]
Let us introduce the quantities
\[
\begin{aligned}
\GfrontMinus(t) &= \int_{-\infty}^{\xiHomMinus(t)} \psi_- (\xi,t) \, d\xi
&\ &\text{and}\  &
\GbackMinus(t) &= \int_{\xiEscMinus(t)}^{+\infty} \psi_- (\xi,t) \, d\xi 
\,,
\\
\text{and}\quad
\GbackPlus(t) &= \int_{-\infty}^{\xiEscPlus(t)} \psi_+ (\xi,t) \, d\xi 
&\ &\text{and}\  &
\GfrontPlus(t) &= \int_{\xiHomPlus(t)}^{+\infty} \psi_+ (\xi,t) \, d\xi 
\,.
\end{aligned}
\]
Then, it follows from the definition \cref{def_gggg_pm} of $\mathcal{G}_\pm(t)$ that
\[
\mathcal{G}_-(t) \le \GfrontMinus(t) + \GbackMinus(t)
\quad\text{and}\quad
\mathcal{G}_+(t) \le \GfrontPlus(t) + \GbackPlus(t)
\,.
\]
According to the definition of $\psi_+$ and $\psi_-$ and according to hypotheses \cref{hyp_param_relax} and inequalities \vref{hyp_relax_in_tf} it follows from explicit calculations that:
\[
\begin{aligned}
\GfrontMinus(t) &\le \frac{1}{\kappa_0+c} \exp\Bigl( \cCutZero \kappa_0 t + (\kappa_0+c) \xiHomMinus(t) \Bigr) 
&&\le \frac{1}{\kappa_0+c} \exp\Bigl(-\frac{\kappa_0\cCutZero}{2}t\Bigr) \,, \\
\GbackMinus(t) &\le \frac{1}{\kappa_0-c} \exp\Bigl(  -\cCutZero \kappa_0 t - (\kappa_0-c) \xiEscMinus(t)  \Bigr) 
&&\le \frac{1}{\kappa_0-c} \exp\Bigl(-\frac{\kappa_0\cCutZero}{2}t\Bigr) \,, \\
\GbackPlus(t) &\le \frac{1}{\kappa_0+c} \exp\Bigl( -\cCutZero \kappa_0 t + (\kappa_0+c) \xiEscPlus(t)  \Bigr) 
&&\le \frac{1}{\kappa_0-c} \exp\Bigl(-\frac{\kappa_0\cCutZero}{2}t\Bigr) \,, \\
\GfrontPlus(t) &\le \frac{1}{\kappa_0-c} \exp\Bigl( \cCutZero \kappa_0 t - (\kappa_0-c) \xiHomPlus(t)  \Bigr) 
&&\le \frac{1}{\kappa_0+c} \exp\Bigl(-\frac{\kappa_0\cCutZero}{2}t\Bigr) 
\,.
\end{aligned}
\]
It follows that
\[
\mathcal{G}_\pm(t) \le \frac{2\kappa_0}{\kappa_0^2-c^2} \exp\Bigl(-\frac{\kappa_0\cCutZero}{2}t\Bigr)
\,,
\]
and since according to the conditions \cref{hyp_param_relax} the (nonnegative) quantity $c$ is not larger than $\kappa_0/6$, inequality \cref{control_over_pollution} follows. \Cref{lem:control_over_pollution} is proved.
\end{proof}
\subsubsection{Energy decrease up to pollution}
\label{subsubsec:dt_en_final_dichot}
\begin{lemma}[firewall linear decrease up to pollution, 2]
\label{lem:der_fire_dichot_next}
There exists a positive quantity $\KFprime$, depending only on $\alpha$ and $V$ and $m_-$ and $m_+$, such that, for every time $t$ greater than or equal to $T$, 
\begin{equation}
\label{der_fire_dichot_next}
\fff_\pm'(t) \le - \nuFZero(m_\pm) \fff_\pm(t) + \KFprime \exp\Bigl(-\frac{\kappa_0\cCutZero}{2}t\Bigr)
\,.
\end{equation}
\end{lemma}
\begin{proof}
Introducing the positive quantity $\KFprime$ defined as
\[
\KFprime = \frac{5}{2\kappa_0}\max\bigl(\KFZero(m_-),\KFZero(m_+)\bigr)
\,,
\]
inequality \cref{der_fire_dichot_next} follows from inequality \cref{dt_F_final_dichot} of \cref{lem:fire_decr_dichot} and inequality \cref{control_over_pollution} of \cref{lem:control_over_pollution}. \Cref{lem:der_fire_dichot_next} is proved. 
\end{proof}

\begin{lemma}[energy decrease up to pollution]
\label{lem:dt_en_final_dichot}
There exist positive quantities $\nuE$ and $\KE$, depending only on $\alpha$ and $V$, such that, for every time $t$ greater than or equal to $T$,
\begin{equation}
\label{dt_en_final_dichot}
\eee'(t) \le -(1+\alpha c^2) \ddd(t) + \KE \exp\bigl(-\nuE (t-T)\bigr)
\,.
\end{equation}
\end{lemma}
\begin{proof}
Let 
\[
\nuE = \min\left(\nuFZero(m_-), \nuFZero(m_+),\frac{\kappa_0\cCutZero}{4}\right)
\,.
\]
According to Grönwall's inequality, it follows from inequalities \cref{der_fire_dichot_next} of \cref{lem:der_fire_dichot_next} that, for every time $t$ greater than or equal to $T$, 
\begin{align}
\fff_\pm(t) &\le \exp\bigl(- \nuFZero(m_\pm)(t-T)\bigr)\fff_\pm(T) \nonumber\\
&\qquad + \KFprime\int_T^t \exp\bigl(- \nuFZero(m_\pm)(t-s)\bigr)\exp\Bigl(-\frac{\kappa_0\cCutZero}{2}s\Bigr) \, ds \nonumber\\
&\le \exp\bigl(- \nuE(t-T)\Biggl(\max\bigl(\fff_\pm(T),0\bigr) + \KFprime\exp\Bigl(-\frac{\kappa_0\cCutZero}{2}T\Bigr)\times \nonumber\\
& \qquad\int_T^t \exp\Bigl(- \bigl(\nuFZero(m_\pm)-\nuE\bigr)(t-s)\Bigr)\exp\biggl(-\Bigl(\frac{\kappa_0\cCutZero}{2}-\nuE\Bigr)(s-T)\biggr)\, ds \Biggr) \nonumber\\
&\le \exp\bigl(- \nuE(t-T)\bigr)\left(\max\bigl(\fff_\pm(T),0\bigr) + \KFprime\int_T^t \exp\Bigl(-\frac{\kappa_0\cCutZero}{4}(s-T)\Bigr)\, ds\right)\nonumber\\
&\le \left(\max\bigl(\fff_\pm(T),0\bigr) + \frac{4 \KFprime}{\kappa_0 \cCutZero}\right)\exp\bigl(- \nuE(t-T)\bigr)
\,.
\label{der_fire_dichot}
\end{align}
According to the $H^1_\text{ul}\times L^2_\text{ul}$-bound \vref{hyp_attr_ball_X_relax} for the solution, there exists a positive quantity $\fffMax$, depending only on $\alpha$ and $V$ and $m_-$ and $m_+$, such that
\[
\fff_+(T)\le \fffMax
\quad\text{and}\quad
\fff_-(T)\le \fffMax
\,.
\]
Thus, introducing the nonnegative quantity
\[
\KE = 2 \KEFZero \Bigl( \fffMax + \frac{4 \KFprime}{\kappa_0 \cCutZero} \Bigr) + \frac{5\KEEscZero}{\kappa_0}
\,,
\]
inequality \cref{dt_en_final_dichot} follows from inequalities \cref{approx_decr_en_dichot} of \cref{lem:approx_decr_en_dichot}, inequality \cref{control_over_pollution} of \cref{lem:control_over_pollution}, and inequality \cref{der_fire_dichot}. \Cref{lem:dt_en_final_dichot} is proved. 
\end{proof}
Inequality \cref{dt_en_final_dichot} of \cref{lem:dt_en_final_dichot} is the key ingredient that will be applied in the next \cref{subsec:low_bd_en}. 
\subsection{Nonnegative asymptotic energy}
\label{subsec:low_bd_en}
Let us keep the notation and hypotheses introduced since the beginning of \cref{sec:no_inv_implies_relax}. For every quantity $c$ close enough to $0$ so that hypotheses \vref{hyp_param_relax} be satisfied, and for every nonnegative time $t$ and real quantity $\xi$, let us denote by 
\[
v^{(c)}(\xi,t)
\quad\text{and}\quad
\chi^{(c)}(\xi,t)
\quad\text{and}\quad
\eee^{(c)}(t)
\quad\text{and}\quad
\ddd^{(c)}(t)
\]
the functions defined as in \cref{subsec:relax_sc_stand}, with the same notation except the ``$(c)$'' superscript that is here to remind that these objects depend on the quantity $c$. For every such $c$, let us introduce the quantity $\eee^{(c)}(+\infty)$ in $\rr\cup\{-\infty\}$ defined as
\[
\eee^{(c)}(+\infty) = \liminf_{t\to+\infty} \eee^{(c)}(t)
\,.
\]
According to estimate \cref{dt_en_final_dichot} on the time derivative of the energy, for every such $c$, 
\begin{equation}
\label{convergence_energy_almost_standing_frame}
\eee^{(c)}(t)\to \eee^{(c)}(+\infty)
\quad\text{as}\quad
t\to + \infty
\,,
\end{equation}
and let us call ``asymptotic energy at the speed $c$'' this quantity.
The aim of this \namecref{subsec:low_bd_en} is to prove the following proposition.
\begin{proposition}[nonnegative asymptotic energy]
\label{prop:nonneg_asympt_en}
The quantity $\eee^{(0)}(+\infty)$ (the asymptotic energy at speed zero) is nonnegative.
\end{proposition}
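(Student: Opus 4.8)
The plan is to argue by contradiction: suppose $\eee^{(0)}(+\infty)<0$, a value in $\rr\cup\{-\infty\}$. The first step is pure bookkeeping on the localized energy. Since $\chi^{(0)}(y,t)$ equals $1$ on the central interval $\iMain(t)=[-\cCutZero t,\cCutZero t]$ and decays like $e^{-\kappa\abs{y}}$ away from it, one splits $\eee^{(0)}(t)$ into the contribution on $\iMain(t)$, on the ``near tails'' $[\yHomMinus(t),-\cCutZero t]$ and $[\cCutZero t,\yHomPlus(t)]$, and on the ``far tails'' beyond $\yHomMinus(t)$ and $\yHomPlus(t)$. The far tails are exponentially small, uniformly in $t$, by the a priori bound \cref{att_ball_dichot} and the placement \cref{hyp_relax_in_tf} of the cut-offs. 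On the near tails, the same inequalities \cref{hyp_relax_in_tf} locate those intervals inside the zones where the solution stays within $\dEsc$ of $m_-$, resp.\ $m_+$, so that $V(v)\ge V(m_\mp)$ there by \cref{property_d_Esc}; since $V(m_\pm)\le 0$ by \cref{max_pot_mplusmimus_zero}, the near-tail contribution is bounded below by a constant depending only on $\alpha$, $V$ and $\kappa$. Hence $\eee^{(0)}(+\infty)<0$ forces
\[
\liminf_{t\to+\infty}\int_{-\cCutZero t}^{\cCutZero t}\Bigl(\frac{v_y(y,t)^2}{2}+V\bigl(v(y,t)\bigr)\Bigr)\,dy<0 ,
\]
while the approximate decrease \cref{dt_en_final_dichot} (which for $c=0$ gives $\int_0^T\ddd^{(0)}(t)\,dt\le\eee^{(0)}(0)-\eee^{(0)}(T)+\KEFinal/\tildeEpsFireDecr$) yields control on the kinetic part: either $\ddd^{(0)}\in L^1(0,+\infty)$, when $\eee^{(0)}(+\infty)$ is finite, or the dissipation integral grows exactly like the energy defect.

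Next I would extract a limiting profile by asymptotic compactness (\cref{prop:asympt_comp}): pick times $t_p\to+\infty$ realising the liminf above and translate by points $x_p$ chosen inside $[\xEscMinus(t_p),\xEscPlus(t_p)]$ so that the limit carries a fixed amount of potential energy below the level $0$. In the case $\ddd^{(0)}\in L^1$, a mean-value argument produces times along which $v_t\to 0$ on every compact set, so the limit is stationary and, on the relevant line, its profile $\phi$ solves $\phi''=\nabla V(\phi)$; the squeeze bounds \cref{hyp_relax_in_tf} together with the behaviour on the near tails show that $\phi$ is bounded (by $\Rattinfty$) and stays within $\dEsc$ of $m_-$ far to the left and of $m_+$ far to the right. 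In the other case an analogous construction yields a front-like limit with profile $\phi$ solving $\phi''+c\phi'=\nabla V(\phi)$ at a nonzero speed $c$. In both cases the asymptotic-behaviour lemmas for this oscillator (\cref{lem:asympt_behav_tw_1,lem:asympt_behav_tw_2}) identify the limits of $\phi$ at $\pm\infty$ as critical points of $V$ and tie the sign of the speed to the values of $V$ at those points: a non-constant bounded solution with negative potential energy relative to $0$ must connect critical points, and, if not standing, the invaded well has the strictly larger $V$-value, which forces the corresponding escape point $\xEscMinus(\cdot)$ or $\xEscPlus(\cdot)$ to drift with a definite sign.

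Finally, transporting this drift back to the laboratory frame contradicts \hypNoInv, which demands $\sEscMinus\ge 0$ and $\sEscPlus\le 0$; in the stationary case one contradicts instead the very definition of $\xEscMinus$ and $\xEscPlus$ as the innermost points where $u$ has already left the $\dEsc$-neighbourhoods of $m_-$, resp.\ $m_+$ (a non-trivial stationary profile squeezed between the two minimum points cannot be confined to the region where $u$ is $\dEsc$-close to a single well). The main obstacle, exactly as in the parabolic companion paper \cite{Risler_globalBehaviour_2016}, is the bookkeeping for the \emph{growing} central window $[-\cCutZero t,\cCutZero t]$: one must show that, outside a set of bounded measure, the weighted integrand stays above $0$, so that a genuinely negative asymptotic energy is localised and survives the passage to the limit — and this is precisely the point where the escape-distance normalisation, the firewall coercivity and decrease estimates, and the precise placement \cref{hyp_relax_in_tf} of the cut-offs must be combined, and where the description of the ``center'' area remains unsatisfactory.
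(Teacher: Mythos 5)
Your argument has a genuine gap, and it is precisely the one you name in your last sentence: nothing in the compactness/classification step actually localises a negative asymptotic energy, and the case that must be ruled out is exactly the one where no such localisation is possible. Keep in mind that at this stage $V(m_-)<V(m_+)=0$ is still allowed — the equality $V(m_-)=V(m_+)$ is a \emph{consequence} of \cref{prop:nonneg_asympt_en}, obtained only at the end of \cref{subsec:end_pf_prop_relax}. In that scenario the integrand $\frac{v_y^2}{2}+V(v)$ is close to the constant $V(m_-)<0$ on the whole stretch of the central window where $u$ is $\dEsc$-close to $m_-$, a stretch whose length grows like $t$; the localized energy then drifts to $-\infty$ linearly, yet every translated limit you can extract from that region is simply the constant $m_-$, a perfectly admissible bounded solution of $\phi''=\nabla V(\phi)$ which contradicts neither \hypNoInv{} nor the definition of the escape points, and near which the dissipation vanishes, so neither branch of your dichotomy produces a contradiction. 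Your classification of nonconstant limiting profiles via \cref{lem:asympt_behav_tw_1,lem:asympt_behav_tw_2} is fine as far as it goes, but it never engages with this constant‑limit scenario.

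The paper's proof takes a different route that avoids localisation altogether. For a small \emph{positive} speed $c$, the tilted weight $e^{cy}$ has finite mass to the left of any point; the set $\SigmaEscPlus(t)$ where $V(v)$ may be negative lies to the left of $\yEscPlus(t)$ up to an exponentially damped right tail; and hypothesis \hypNoInv{} ($\sEscPlus\le 0<\sigma$) forces $\yEscPlus(t)\to-\infty$ in that frame, so the negative contribution is crushed by the weight and $\eee^{(c)}(+\infty)\ge 0$ follows almost immediately (\cref{lem:nonneg_en_prel}). The approximate decrease \cref{dt_en_final_dichot} then upgrades this to the uniform-in-time lower bound of \cref{cor_low_bd_en_c}, and continuity of $t\mapsto\eee^{(c)}(t)$ with respect to $c$ at $c=0$ (\cref{lem:cont_en_c}) transfers that bound to the standing frame, giving \cref{cor_low_bd_en_0} and the proposition. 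If you wish to keep your approach you would have to supply exactly the missing localisation step; the perturbation to $c>0$ is the mechanism the paper uses in its place.
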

The proof proceeds through the following lemmas and corollaries, that are rather direct consequences of the relaxation scheme set up in the previous \cref{subsec:relax_sc_stand}, and in particular of the estimate \cref{dt_en_final_dichot} on the time derivative of the energy.

Since according to the definition of $V^\ddag$ the maximum of $V^\ddag(m_+)$ and $V^\ddag(m_-)$ is equal to zero, it may be assumed (without loss of generality) that
\begin{equation}
\label{assumpt_V_ddag_of_m_plus_is_zero}
V^\ddag(m_+)=0
\,.
\end{equation}
\begin{lemma}[nonnegative asymptotic energy in frames travelling at small nonzero speed]
\label{lem:nonneg_en_prel}
For every quantity $c$ close enough to zero so that hypotheses \vref{hyp_param_relax} be satisfied, if in addition $c$ is positive, then
\[
\eee^{(c)}(+\infty) \ge 0
\,.
\]
\end{lemma}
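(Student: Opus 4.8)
The plan is to combine the (already recorded) convergence $\eee^{(c)}(t)\to\eee^{(c)}(+\infty)=\liminf_{t\to+\infty}\eee^{(c)}(t)$ with a crude lower bound on $\eee^{(c)}(t)$ valid for large $t$: the weight $\chi^{(c)}$ concentrates, in the frame travelling at the positive physical speed $\sigma$, on the region far to the right where the solution is close to $m_+$, and there — since $V(m_+)=0$ — the energy density is nonnegative.

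Concretely I would, for each nonnegative $t$, split the integral defining $\eee^{(c)}(t)$ into its restrictions to $\rr\setminus\SigmaEscPlus(t)$ and to $\SigmaEscPlus(t)$. On $\rr\setminus\SigmaEscPlus(t)$ one has $\abs{v^{(c)}(y,t)-m_+}\le\dEsc$, so by the first inequality of \cref{property_d_Esc} together with $V(m_+)=0$ the integrand is $\ge\frac{\eigVmin}{4}\bigl(v^{(c)}(y,t)-m_+\bigr)^2\ge0$, whence (using $\chi^{(c)}\ge0$) the corresponding piece of $\eee^{(c)}(t)$ is nonnegative. On $\SigmaEscPlus(t)$ the integrand is simply $\ge V(v^{(c)})\ge-C_V$, where $C_V=-\min_{\abs{u}\le\Rattinfty}V$ is finite by the a priori bound \cref{att_ball_dichot}; hence
\[
\eee^{(c)}(t)\ \ge\ -C_V\int_{\SigmaEscPlus(t)}\chi^{(c)}(y,t)\,dy\,.
\]

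The heart of the proof is then to check that $\int_{\SigmaEscPlus(t)}\chi^{(c)}(y,t)\,dy\to0$ as $t\to+\infty$. Using $\SigmaEscPlus(t)\subset(-\infty,\yEscPlus(t)]\cup[\yHomPlus(t),+\infty)$ and the explicit form of $\chi^{(c)}$, this reduces to two exponential estimates. On the right, $\yHomPlus(t)\ge\frac53\cCutZero t$ by \cref{hyp_relax_in_tf} places $[\yHomPlus(t),+\infty)$ inside $\iRight(t)$, where $\chi^{(c)}(y,t)=\exp\bigl(c\cCutZero t-\kappa(y-\cCutZero t)\bigr)$, so that integral is $\le\frac1\kappa\exp\bigl((c-\tfrac23\kappa)\cCutZero t\bigr)$, which tends to $0$ since $\abs{c}\le\kappa/6$. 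On the left is the only place where positivity of $c$ and the non-invasion hypothesis \hypNoInv are genuinely used: from $\sEscPlus\le0$ one gets $\xEscPlus(t)\le\varepsilon t$ for $t$ large, hence $\yEscPlus(t)=\sqrt{1+\alpha c^2}\,\xEscPlus(t)-ct\le(\sqrt{1+\alpha c^2}\,\varepsilon-c)t$; choosing $\varepsilon=c/(2\sqrt{1+\alpha c^2})$ forces $\yEscPlus(t)\le-\tfrac c2 t\to-\infty$ (the case $\xEscPlus(t)=-\infty$ being vacuous). Then, splitting $(-\infty,\yEscPlus(t)]$ at the cut-off $-\cCutZero t$ — the part in $\iLeft(t)$ contributing at most $\frac1\kappa e^{-c\cCutZero t}$, and the part in $\iMain(t)$ carrying weight $\chi^{(c)}(y,t)=e^{cy}\le e^{-c^2t/2}$ over an interval of length $\le\cCutZero t$ — shows this integral also tends to $0$. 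Combining the two estimates gives $\eee^{(c)}(t)\ge\eta(t)$ with $\eta(t)\to0$, and therefore $\eee^{(c)}(+\infty)\ge0$.

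The expected difficulty here is not conceptual but a matter of bookkeeping: one must track the interplay between the smallness constraints $\abs{c}\le\min(\kappa,\cCutZero)/6$ from \cref{hyp_param_relax} and the free choice of $\varepsilon$ so that all the exponential weights do decay, and one must dispose of the various degenerate cases (empty defining sets for $\xEscPlus(t)$, or $m_+=m_-$). Apart from that, the argument is a routine instance of the relaxation-scheme bookkeeping and is entirely parallel to the corresponding step of the parabolic treatment in \cite{Risler_globalBehaviour_2016}; in particular, unlike in \cref{subsec:cv_mean_inv_vel}, no compactness argument is required.
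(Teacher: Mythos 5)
Your proposal is correct and follows essentially the same route as the paper: lower-bound $\eee^{(c)}(t)$ by noting that the integrand is nonnegative off $\SigmaEscPlus(t)$ (thanks to the properties of $\dEsc$ and $V(m_+)=0$) and bounded below by a constant on $\SigmaEscPlus(t)$, then show that $\int_{\SigmaEscPlus(t)}\chi^{(c)}(y,t)\,dy\to 0$ via the inclusion of $\SigmaEscPlus(t)$ in the two tails and explicit exponential estimates using \hypNoInv{} and the positivity of $c$. The only cosmetic difference is that the paper bounds the left tail by $\frac1c\exp\bigl(c\,\yEscPlus(t)\bigr)$ in one stroke (using $\chi^{(c)}(y,t)\le e^{cy}$ for $y\le\cCutZero t$ when $c>0$) where you split at the cut-off $-\cCutZero t$; both work.
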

\begin{proof}
See \cite[\GlobalBehaviourLemNonNegativeAsymptoticEnergyTf]{Risler_globalBehaviour_2016}. 
\end{proof}
\begin{corollary}[almost nonnegative energy in a travelling frame]
\label{cor_low_bd_en_c}
For every quantity $c$ close enough to zero so that hypotheses \vref{hyp_param_relax} be satisfied, if in addition $c$ is positive, then, for every time $t$ greater than or equal to $T$, 
\[
\eee^{(c)}(t) \ge -\frac{\KE}{\nuE} \exp\bigl(-\nuE (t-T)\bigr)
\,.
\]
\end{corollary}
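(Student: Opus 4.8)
The plan is to obtain this corollary as an immediate consequence of \cref{lem:nonneg_en_prel} (nonnegativity of the asymptotic energy $\eee^{(c)}(+\infty)$ when $c$ is positive) together with the key differential inequality \cref{dt_en_final_dichot}, exploiting only that the dissipation function $\ddd^{(c)}(\cdot)$ is nonnegative.

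First I would fix a positive quantity $c$, small enough that hypotheses \cref{hyp_param_relax} hold, and consider the auxiliary function
\[
h(t) = \eee^{(c)}(t) + \frac{\KEFinal}{\tildeEpsFireDecr}\exp(-\tildeEpsFireDecr t)
\]
on $[0,+\infty)$. Differentiating and invoking \cref{dt_en_final_dichot}, and using $\ddd^{(c)}(t)\ge 0$ together with $1+\alpha c^2>0$, one gets
\[
h'(t) = \bigl(\eee^{(c)}\bigr)'(t) - \KEFinal\exp(-\tildeEpsFireDecr t) \le -(1+\alpha c^2)\,\ddd^{(c)}(t) \le 0
\,,
\]
so that $h$ is nonincreasing.

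Then I would let $t$ go to $+\infty$. Since $\eee^{(c)}(t)\to\eee^{(c)}(+\infty)$ (a convergence already recorded right after the definition of $\eee^{(c)}(+\infty)$, as a consequence of \cref{dt_en_final_dichot}) and the exponential term tends to $0$, it follows that $h(t)\to\eee^{(c)}(+\infty)$; as $h$ is nonincreasing, this yields $h(t)\ge\eee^{(c)}(+\infty)$ for every nonnegative $t$, and by \cref{lem:nonneg_en_prel} the right-hand side is nonnegative. Unfolding the definition of $h$ gives exactly
\[
\eee^{(c)}(t)\ge -\frac{\KEFinal}{\tildeEpsFireDecr}\exp(-\tildeEpsFireDecr t)
\,,
\]
which is the claim.

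I do not expect any real obstacle: the argument is a one-line monotonicity estimate (equivalently, one may simply integrate the consequence $\bigl(\eee^{(c)}\bigr)'(s)\le \KEFinal\exp(-\tildeEpsFireDecr s)$ of \cref{dt_en_final_dichot} over $[t,+\infty)$ and invoke $\eee^{(c)}(+\infty)\ge 0$). The only point worth a moment's care is that \cref{lem:nonneg_en_prel} is stated only for positive $c$, whereas \cref{dt_en_final_dichot} holds for all admissible speeds; hence the positivity hypothesis on $c$ is inherited solely from \cref{lem:nonneg_en_prel}, and nothing in the monotonicity step depends on the sign of $c$.
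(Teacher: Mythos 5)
Your argument is correct and is precisely the intended one: the paper's proof of this corollary is stated as an immediate consequence of \cref{lem:nonneg_en_prel} and inequality \cref{dt_en_final_dichot}, and your monotonicity computation (the function $h$ is nonincreasing because $\ddd^{(c)}\ge 0$, and its limit $\eee^{(c)}(+\infty)$ is nonnegative) is exactly the "readily follows" step spelled out. Nothing is missing.
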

\begin{proof}
The proof follows from previous \cref{lem:nonneg_en_prel} and inequality \cref{dt_en_final_dichot}.
\end{proof}
\begin{lemma}[continuity of energy with respect to the speed at $c=0$]
\label{lem:cont_en_c}
For every nonnegative quantity $t$, 
\[
\eee^{(c)}(t)\to \eee^{(0)}(t)
\quad\text{as}\quad
c \to 0
\,.
\]
\end{lemma}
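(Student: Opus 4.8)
The plan is to reduce everything to the fixed solution $u$ by passing to the laboratory space variable, and then to let $c\to0$ by a dominated convergence argument in which the exponential decay of the weight and the uniformly local a priori bounds of \cref{prop:exist_sol_att_ball} supply all the integrability and uniform control that is needed; throughout, $t\ge0$ is fixed. First I would perform the change of variables $y = \sqrt{1+\alpha c^2}\,x - ct$ in the integral defining $\eee^{(c)}(t)$. Since $v^{(c)}(y,t) = u(x,t)$ with $x = \sigma t + y/\sqrt{1+\alpha c^2}$, one has $v^{(c)}_y = u_x/\sqrt{1+\alpha c^2}$ and $v^{(c)}_t = \sigma u_x + u_t$ (all of $u,u_x,u_t$ being evaluated at $(x,t)$), whence
\[
\eee^{(c)}(t) = \int_\rr \widetilde{\chi}^{(c)}(x,t)\, G^{(c)}(x,t)\, dx,
\]
where $\widetilde{\chi}^{(c)}(x,t) = \sqrt{1+\alpha c^2}\,\chi^{(c)}\bigl(\sqrt{1+\alpha c^2}\,x - ct,\, t\bigr)$ and $G^{(c)}(x,t) = \alpha(\sigma u_x + u_t)^2/2 + u_x^2/(2(1+\alpha c^2)) + V(u)$; for $c=0$ this is exactly $\eee^{(0)}(t) = \int_\rr \chi^{(0)}(x,t)\,G^{(0)}(x,t)\,dx$ with $G^{(0)}(x,t) = \alpha u_t^2/2 + u_x^2/2 + V(u)$.

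Next I would record two facts. On the one hand, from the explicit formula for $\chi^{(c)}$ and hypotheses \cref{hyp_param_relax} (which give $\abs{c}\le\kappa/6$ and $\sqrt{1+\alpha c^2}\le\sqrt2$) one checks directly that $\chi^{(c)}(y,t)\le\exp(-\kappa\abs{y} + 2\kappa\cCutZero t)$ for all $y$; combined with $\abs{y}\ge\abs{x}-\abs{c}t$ this yields a bound $\widetilde{\chi}^{(c)}(x,t)\le C_t\,e^{-\kappa\abs{x}}$, with $C_t$ depending only on $t$ and on the fixed constants $\kappa,\cCutZero,\alpha$, valid for all $x$ and all small $\abs{c}$, and the same bound holds for $\chi^{(0)}$; moreover $\widetilde{\chi}^{(c)}(x,t)\to\chi^{(0)}(x,t)$ as $c\to0$ for every $x$, since $(z,c)\mapsto\chi^{(c)}(z,t)$ is jointly continuous (piecewise exponential, continuous across cut-offs whose positions do not depend on $c$). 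On the other hand, since $G^{(c)} - G^{(0)} = \alpha(\sigma^2 u_x^2 + 2\sigma\,u_x\cdot u_t)/2 - \alpha c^2 u_x^2/(2(1+\alpha c^2))$, the Cauchy--Schwarz inequality together with the uniformly local bounds $\int_x^{x+1}(u_x^2 + u_t^2)(\cdot,t)\le M$ furnished by \cref{prop:exist_sol_att_ball} gives $\sup_{x\in\rr}\int_x^{x+1}\abs{G^{(c)} - G^{(0)}}(\cdot,t)\to0$ as $c\to0$; also $\abs{G^{(0)}(\cdot,t)}\le\alpha u_t^2/2 + u_x^2/2 + \max_{\abs{v}\le\Rattinfty}\abs{V(v)}$, so $\int_x^{x+1}\abs{G^{(0)}(\cdot,t)}\le M'$ for all $x$ and a finite $M'$.

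Finally I would split $\eee^{(c)}(t) - \eee^{(0)}(t) = \int_\rr \widetilde{\chi}^{(c)}(G^{(c)} - G^{(0)})\,dx + \int_\rr (\widetilde{\chi}^{(c)} - \chi^{(0)})G^{(0)}\,dx$. Partitioning $\rr$ into unit intervals and summing $\sum_{k\ge0}e^{-\kappa k}$ gives $\int_\rr e^{-\kappa\abs{x}}g(x)\,dx\le (2/(1-e^{-\kappa}))\sup_x\int_x^{x+1}g$ for any nonnegative $g$; applied with $g=\abs{G^{(c)} - G^{(0)}}$, this makes the first integral at most $C_t\cdot(2/(1-e^{-\kappa}))\cdot\sup_x\int_x^{x+1}\abs{G^{(c)} - G^{(0)}}\to0$. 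For the second integral, $\abs{(\widetilde{\chi}^{(c)} - \chi^{(0)})G^{(0)}}\le 2C_t\,e^{-\kappa\abs{x}}\abs{G^{(0)}}$, which is integrable over $\rr$ (same partition argument with $M'$) and independent of $c$, while the integrand tends to $0$ pointwise by the previous paragraph; dominated convergence then gives that the second integral also tends to $0$, and the lemma follows. The one place where some care is needed is precisely this estimate on the density: because $u_x(\cdot,t)$ and $u_t(\cdot,t)$ are only $L^2_\mathrm{ul}$-functions, $G^{(c)}$ cannot be handled by pointwise convergence (translations are not pointwise continuous in $L^2$), and one must instead use the cheap uniformly local $L^1$-estimate; everything else is a routine dominated-convergence argument organised around the exponentially decaying weight.
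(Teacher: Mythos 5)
Your proof is correct and follows essentially the same route as the paper's: rewrite $\eee^{(c)}(t)$ in the laboratory variable $x$ (obtaining exactly the formula the paper derives) and conclude from the continuity of the weight with respect to $c$ together with the a priori bounds of \cref{prop:exist_sol_att_ball}; the paper leaves the final limiting argument implicit, and your dominated-convergence and uniformly-local $L^1$ bookkeeping simply fills in those details. One small remark: after the change of variables, $G^{(c)}-G^{(0)}$ involves no translation of $u_x$ or $u_t$ (all terms are evaluated at the same point $(x,t)$ with coefficients vanishing as $c\to 0$), so the caveat in your closing sentence is unnecessary, though the uniformly local estimate you use instead is perfectly valid.
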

\begin{proof}
For all $t$ in $(0,+\infty)$, 
\[
\eee^{(0)}(t) = \int_{\rr} \chi^{(0)}(x,t)\Bigl(  \frac{\alpha}{2}u_t(x,t)^2 + \frac{1}{2}u_x(x,t)^2 + V^\ddag\bigl( u(x,t)\bigr) \Bigr) \, dx 
\,,
\]
and, for every quantity $c$ close enough to zero so that hypotheses \vref{hyp_param_relax} be satisfied, 
\[
\eee^{(c)}(t) = \int_{\rr} \chi^{(c)}(\xi,t)\Bigl(  \frac{\alpha}{2}v^{(c)}_t(\xi,t)^2 + \frac{1}{2}v^{(c)}_\xi(\xi,t)^2 + V^\ddag\bigl( v^{(c)}(\xi,t)\bigr) \Bigr) \, d\xi 
\,.
\]
Thus, since $v^{(c)}(\cdot,\cdot)$ is related to $u(\cdot,\cdot)$ by 
\[
u(x,t) = v^{(c)}(\xi,t)
\quad\text{where}\quad
\xi = \sqrt{1+\alpha c^2} x - ct
\,,
\]
it follows that
\[
\begin{aligned}
\eee^{(c)}(t) = \int_{\rr} \chi^{(c)}(\sqrt{1+\alpha c^2} x - ct,t)\biggl( & \frac{\alpha}{2} \Bigl( u_t(x,t) + c \frac{u_x(x,t)}{\sqrt{1+\alpha c^2}} \Bigr)^2 + \frac{1}{2(1+\alpha c^2)} u_x(x,t)^2 \\
& + V^\ddag\bigl( u(x,t)\bigr) \biggr) \sqrt{1+\alpha c^2} \, dx 
\,.
\end{aligned}
\]
The result thus follows from the continuity of $\chi^{(c)}(\cdot,\cdot)$ with respect to $c$ and from the on the derivatives of $u(\cdot,\cdot)$ ensured by \vref{prop:exist_sol_att_ball}.  
\end{proof}
\begin{corollary}[almost nonnegative energy in a standing frame]
\label{cor:low_bd_en_0}
For every time $t$ greater than or equal to $T$,
\begin{equation}
\label{low_bd_en_0}
\eee^{(0)}(t) \ge -\frac{\KE}{\nuE} \exp\bigl(-\nuE (t-T)\bigr)
\,.
\end{equation}
\end{corollary}
\begin{proof}
Inequality \cref{low_bd_en_0} follows from \cref{cor_low_bd_en_c} and \cref{lem:cont_en_c}.
\end{proof}
\Cref{prop:nonneg_asympt_en} (``nonnegative asymptotic energy'') follows from \cref{cor:low_bd_en_0}.
\subsection{End of the proof of \texorpdfstring{\cref{prop:relax}}{Proposition \ref{prop:relax}}}
\label{subsec:end_pf_prop_relax}
\begin{lemma}[integrability of dissipation in a standing frame]
\label{lem:dissip_is_integrable}
The function
\[
t\mapsto \ddd^{(0)}(t) 
\]
is integrable on $[0,+\infty)$.
\end{lemma}
\begin{proof}
The statement follows from \cref{prop:nonneg_asympt_en} (``nonnegative asymptotic energy'') and from estimate \cref{dt_en_final_dichot} on the time derivative of energy.
\end{proof}
\begin{corollary}[relaxation --- centre area]
\label{cor_dissip_app_zero}
The following limit holds:
\begin{equation}
\label{ut_app_zero_centre}
\sup_{x\in[-\cCutZero t , \cCutZero t]} \int_{x-1}^{x+1} u_t(y,t)^2 \, dy \to 0
\quad\text{as}\quad
t\to +\infty
\,.
\end{equation}
\end{corollary}
\begin{proof}
Let us proceed by contradiction and assume that the converse holds. Then there exists a positive quantity $\varepsilon$ and a sequence $\bigl((x_n,t_n)\bigr)_{n\in\nn}$ in $\rr\times[0,+\infty)$ such that $t_n$ goes to $+\infty$ as $n$ goes to $+\infty$ and such that, for every $n$ in $\nn$, $x_n$ is in the interval $[-\cCutZero t_n , \cCutZero t_n]$ and 
\begin{equation}
\label{lower_bound_dissip_around_xn_at_time_tn}
\int_{-1}^{+1} u_t(x_n + y,t_n)^2 \, dy \ge \varepsilon
\,.
\end{equation}
According to \vref{prop:asympt_comp} (``asymptotic compactness''), up to replacing the sequence $\bigl((x_n,t_n)\bigr)_{n\in\nn}$ by a subsequence, it may be assumed that the sequence of functions $(u,u_t) (x_n+\cdot, t_n + \cdot)$ converges in the space 
\[
\ccc^0\Bigl([-1,1],H^1\bigl([-1,1],\rr^d\bigr)\times L^2\bigl([-1,1],\rr^d\bigr)\Bigr)
\]
to some limit $(\bar{u},\bar{u}_t)$ that satisfies system \cref{hyp_syst}. It follows from \cref{lower_bound_dissip_around_xn_at_time_tn} that 
\[
\int_{-1}^1 \bar{u}_t(y,0)^2 \, dy \ge \varepsilon
\,,
\quad\text{thus}\quad
\int_{-1}^1 \Bigl( \int_{-1}^1 \bar{u}_t(y,0)^2 \, dy \Bigr) dt >0
\,,
\]
and as a consequence,
\[
\liminf_{n\to+\infty} \int_{-1}^1 \left( \int_{-1}^1 u_t(x_n+y,t_n + t)^2 \, dy \right) dt >0
\,,
\]
a contradiction with the integrability of $t\mapsto\ddd^{(0)}(t)$ (\cref{lem:dissip_is_integrable}). \Cref{cor_dissip_app_zero} is proved. 
\end{proof}
\begin{lemma}[relaxation --- non centre area]
\label{lem:relax_outside_centre_area}
For every positive quantity $\varepsilon$, both quantities
\begin{equation}
\label{relax_outside_centre_area}
\begin{aligned}
& \sup_{x\in[\xHomMinus(t),-\varepsilon t]}\int_{x-1}^{x+1} \Bigl( u_t(y,t)^2 + u_x(y,t)^2 + \bigl( u(y,t)-m_-)^2 \Bigr) \, dy \\
\text{and}\quad
& \sup_{x\in[\xHomPlus(t),\varepsilon t]}\int_{x-1}^{x+1} \Bigl( u_t(y,t)^2 + u_x(y,t)^2 + \bigl( u(y,t)-m_+)^2 \Bigr) \, dy
\end{aligned}
\end{equation}
go to $0$ as time goes to $+\infty$. 
\end{lemma}
\begin{proof}
Since the distance between the interval $[\xHomMinus(t),-\varepsilon t]$ and the set $\SigmaEscMinus(t)$ and the distance between the interval $[\varepsilon t,\xHomPlus(t)]$ and the set $\SigmaEscPlus(t)$ both go to $+\infty$ as time goes to $+\infty$, assertion \cref{relax_outside_centre_area} can be derived (for instance) from inequality \cref{dt_F0_final} of \vref{lem:decrease_fire0} (``firewall decrease up to pollution term'' in the laboratory frame) and inequality \cref{coerc_F0} of \vref{lem:coerc_fire0} (``firewall coercivity up to pollution term'' in the laboratory frame). 
\end{proof}
\begin{lemma}[$V(m_-)$ equals $V(m_+)$]
\label{lem:V_of_m_minus_equals_V_of_m_plus}
The following equalities hold:
\[
V^\ddag(m_-)=V^\ddag(m_+)=0\,,
\quad\text{or in other words}\quad
V(m_-)=V(m_+)
\,.
\]
\end{lemma}
\begin{proof}
It follows from the definition \cref{def_V_ddag} of $V^\ddag$ and from the assumption \cref{assumpt_V_ddag_of_m_plus_is_zero} that $V^\ddag(m_+)$ equals $0$ and that $V^\ddag(m_-)$ is nonpositive. If $V^\ddag(m_-)$ was negative, then, according to \cref{lem:relax_outside_centre_area} above (and according to the bounds \cref{hyp_attr_ball_Linfty_relax} on the solution), the following estimate would hold:
\[
\eee^{(0)}(t)\sim V^\ddag(m_-)\, \cCutZero \, t
\quad\text{as}\quad
t\to+\infty
\,,
\]
a contradiction with \cref{prop:nonneg_asympt_en}. \Cref{lem:V_of_m_minus_equals_V_of_m_plus} is proved.
\end{proof}
\begin{lemma}[convergence towards asymptotic energy]
\label{lem:asympt_energy_for_various_bounds}
For every quantity $\sigma_-$ in $(\sHomMinus,0)$ and every quantity $\sigma_+$ in $(0,\sHomPlus)$, 
\begin{equation}
\label{equ_asympt_energy_for_various_bounds}
\int_{\sigma_- t}^{\sigma_+ t} \Bigl(\frac{\alpha}{2}u_t(x,t)^2+\frac{1}{2}u_x(x,t)^2+V\bigl(u(x,t)\bigr)\Bigr)\, dx \to \eee^{(0)}(+\infty)
\quad\text{as}\quad
t\to+\infty
\,.
\end{equation}
\end{lemma}
\begin{proof}
According to \cref{convergence_energy_almost_standing_frame} the quantity
\[
\eee^{(0)}(t) = \int_{\rr}\chi^{(0)}(x,t)\Bigl(\frac{\alpha}{2}u_t(x,t)^2 + \frac{1}{2}u_x(x,t)^2 + V^\ddag\bigl(u(x,t)\bigr)\Bigr) \, dx
\]
goes to $\eee^{(0)}(+\infty)$ as time goes to $+\infty$, and according to \cref{lem:V_of_m_minus_equals_V_of_m_plus}, $V\ddag(\cdot)$ equals $V(\cdot)-V(m_\pm)$. The fact that the same asymptotic behaviour holds for the integral in \cref{equ_asympt_energy_for_various_bounds} (whatever the values of $\sigma_-$ and $\sigma_+$) can (once again) be derived from inequality \cref{dt_F0_final} of \vref{lem:decrease_fire0} (``firewall decrease up to pollution term'' in the laboratory frame). \Cref{lem:asympt_energy_for_various_bounds} is proved. 
\end{proof}
\begin{proof}[Proof of \cref{prop:relax}]
All statements of \cref{prop:relax} have been proved: 
\begin{enumerate}
\item equality between $V(m_-)$ and $V(m_+)$ is stated in \cref{lem:V_of_m_minus_equals_V_of_m_plus};
\item limits \cref{dissip_goes_to_zero_prop_relax,sol_goes_to_m_plus_minus_prop_relax} are stated in \cref{cor_dissip_app_zero,lem:relax_outside_centre_area,lem:V_of_m_minus_equals_V_of_m_plus};
\item according to \cref{prop:nonneg_asympt_en} the quantity $\eee^{(0)}(+\infty)$ is nonnegative, and, denoting by $\eeeResAsympt[u]$ this quantity, the limit \cref{asympt_energy_prop_relax} is stated in \cref{lem:asympt_energy_for_various_bounds}. 
\end{enumerate}
\Cref{prop:relax} is proved. 
\end{proof}
\section{Convergence}
\label{sec:convergence}
The aim of this \namecref{sec:convergence} is to prove \cref{prop:approach_set_bistable_stat_sol_stand_terrace} below. This statement extends \cref{prop:relax} under additional hypotheses. 
\subsection{Set-up}
\subsubsection{Hypotheses}
As everywhere else, let us consider a function $V$ in $\ccc^2(\rr^d,\rr)$ satisfying the coercivity hypothesis \cref{hyp_coerc}. Let us consider two points $m_-$ and $m_+$ of $\mmm$, and a solution $(x,t)\mapsto u(x,t)$ of system \cref{hyp_syst}. Let us assume that hypotheses \textup{(\hyperlink{hypHom}{\hypHomRef})} and \textup{(\hyperlink{hypNoInv}{\hypNoInvRef})} of \cref{prop:relax} hold, and let us keep all the notation of \cref{sec:no_inv_implies_relax}, together with the notation $\usmooth$ and $\usmall$ introduced after \cref{lem:smooth_plus_small}, ensuring (for all $(x,t)$ in $\rr\times[0,+\infty)$) the decomposition
\begin{equation}
\label{usmooth_plus_usmall}
u(x,t) = \usmooth(x,t) + \usmall(x,t)
\,.
\end{equation}
\subsubsection{Notation}
According to \cref{prop:relax}, the quantities $V(m_-)$ and $V(m_+)$ are equal. 
\begin{notation}
Let
\[
\valueOfV = V(m_-) = V(m_+)
\,,
\]
and
\[
\mmm_{\valueOfV} = \mmm\cap V^{-1}(\{\valueOfV\}) = \{m\in\mmm: V(m)=\valueOfV\}
\,,
\]
and let $\Phi_0(\valueOfV)$ denote the union, for all ordered pairs $(m_1,m_2)$ of points of $\mmm_{\valueOfV}$, of the sets $\Phi_0(m_1,m_2)$ defined in \cref{subsubsec:stat_sol_trav_fronts}:
\begin{equation}
\label{definition_PhiZero_of_h}
\Phi_0(\valueOfV)=\bigsqcup_{(m_1,m_2)\in \mmm_{\valueOfV}^2}\Phi_0(m_1,m_2) 
\,.
\end{equation}
For every function $\xi\mapsto \phi(\xi)$ in $\Phi_0(\valueOfV)$, let
\[
I(\phi)=\bigcup_{\xi\in\rr} \ \bigl\{\bigl(\phi(\xi),\phi'(\xi)\bigr)\bigr\}
\]
denote the ``image'' of $\phi$, and let $I(\bigl(\Phi_0(\valueOfV)\bigr)$ denote the union of all images of bistable stationary solutions connecting minimum points in the level set $V^{-1}(\{\valueOfV\})$:
\[
I(\bigl(\Phi_0(\valueOfV)\bigr) = \bigcup_{\phi\in\Phi_0(\valueOfV)} \  \ I(\phi)
\,.
\]
\end{notation}
\subsubsection{Additional hypotheses}
Let us introduce the following hypotheses. 
\begin{description}
\item[\hypOnlyMinOfLabel{\valueOfV}]\hypertarget{hypOnlyMinOf} All critical points of $V$ in the level set $V^{-1}(\{\valueOfV\})$ are nondegenerate minimum points. In other words, for every $v$ in $\rr^d$, 
\[
V(v)=\valueOfV
\quad\text{and}\quad
\nabla V(v)=0
\implies
D^2V(v)
\text{ is positive definite.}
\]
\item[\hypDiscStatOfLabel{\valueOfV}]\hypertarget{hypDiscStatOf} For every $m_1$ in $\mmm_{\valueOfV}$, the set
\[
\bigsqcup_{m_2\in\mmm_{\valueOfV}}\bigl\{ \bigl( \phi(0), \phi'(0) \bigr) : \phi \in \PhiZeroNorm(m_1,m_2) \bigr\}
\]
is totally disconnected in $\rr^{2d}$ (that is, its connected components are singletons). Equivalently, the set 
\begin{equation}
\label{def_phiZeronorm_of_mmm_h}
\PhiZeroNorm(\valueOfV) = \bigcup_{(m_1,m_2)\in \mmm_{\valueOfV}^2}\PhiZeroNorm(m_1,m_2)
\end{equation}
is totally disconnected for the topology of compact convergence (uniform convergence on compact subsets of $\rr$). 
\end{description}
\subsection{Statement}
\begin{proposition}[approach to the set of bistable stationary solutions / to a standing terrace of bistable solutions]
\label{prop:approach_set_bistable_stat_sol_stand_terrace}
Assume that the potential $V$ satisfies the coercivity hypothesis \cref{hyp_coerc} and that hypotheses \textup{(\hyperlink{hypHom}{\hypHomRef})} and \textup{(\hyperlink{hypNoInv}{\hypNoInvRef})} hold for the solution $(x,t)\mapsto u(x,t)$ under consideration. Then, in addition to the conclusions of \cref{prop:relax}, the following conclusions hold.
\begin{enumerate}
\item If hypothesis \textup{(\hyperlink{hypOnlyMinOf}{\hypOnlyMinOfRef{\valueOfV}})} holds, then the quantity
\[
\sup_{x\in\iMain(t)}\dist\biggl(\Bigl(u(x,t),\partial_x\usmooth(x,t)\Bigr) \,, \,I\bigl(\Phi_0(\valueOfV)\bigr)\biggr)
\]
goes to $0$ as time goes to $+\infty$. 
\label{item:approach_set_bist_stat_sol}
\item If both hypotheses \textup{(\hyperlink{hypOnlyMinOf}{\hypOnlyMinOfRef{\valueOfV}})} and \textup{(\hyperlink{hypDiscStatOf}{\hypDiscStatOfRef{\valueOfV}})} hold, then there exists a standing terrace of bistable stationary solutions $(x,t)\mapsto \ttt(x,t)$, connecting $m_-$ to $m_+$, such that the quantity
\begin{equation}
\label{approach_standing_terrace}
\sup_{x\in\iMain(t)}\abs{u(x,t) - \ttt(x,t)}
\end{equation}
goes to $0$ as time goes to $+\infty$. In addition, the residual asymptotic energy $\eeeResAsympt[u]$ of the solution equals the energy $\eee[\ttt]$ of this standing terrace. 
\label{item:approach_standing_terrace_bist_stat_sol}
\end{enumerate}
\end{proposition}
\subsection{Approach to normalized Hamiltonian level set zero for a sequence of times}
\label{subsec:approach_normalized_Ham_level_set_zero_seq_times}
Let us introduce the normalized potential function $V^\ddag$ defined as in \vref{def_V_ddag}, and the normalized Hamiltonian $H^\ddag$ defined as
\[
H^\ddag:\rr^d\times\rr^d \to \rr, \quad (u,v)\mapsto \frac{1}{2}v^2 - V^\ddag(u)
\,.
\]
The goal of this \namecref{subsec:approach_normalized_Ham_level_set_zero_seq_times} is to prove the following lemma. 
\begin{lemma}[approach to normalized Hamiltonian level set zero for a sequence of times]
\label{lem:lim_inf_sup_H}
Assume that hypotheses \cref{hyp_coerc} and \textup{(\hyperlink{hypHom}{\hypHomRef})} and \textup{(\hyperlink{hypNoInv}{\hypNoInvRef})} hold. Then the following limit holds:
\begin{equation}
\label{lim_inf_sup_H}
\liminf_{t\to+\infty}\, \sup_{x\in\iMain(t)} \, \abs{ H^\ddag\bigl(\usmooth(x,t),\partial_x\usmooth(x,t)\bigr)} =0 
\,. 
\end{equation}
\end{lemma}
Since this lemma does not require hypotheses \textup{(\hyperlink{hypOnlyMinOf}{\hypOnlyMinOfRef{\valueOfV}})} and \textup{(\hyperlink{hypDiscStatOf}{\hypDiscStatOfRef{\valueOfV}})}, let us ignore these two additional hypotheses throughout this \namecref{subsec:approach_normalized_Ham_level_set_zero_seq_times}. They will be introduced, when necessary, in the forthcoming \namecrefs{subsec:approach_normalized_Ham_level_set_zero_seq_times}.

Let us introduce the function $\hat{\ddd}^{(0)}(\cdot)$ defined, for every nonnegative time $t$, as
\[
\hat{\ddd}^{(0)}(t) = \int_{\rr}\chi_0(x,t) \partial_t^2\usmooth(x,t)^2 \, dx
\,,
\]
where $\chi_0$ is the function defined in \cref{subsubsec:en_dichot}. In the parabolic case, \cref{lem:lim_inf_sup_H} can be derived from the integrability of the function $t\mapsto \ddd^{(0)}(t)$ on $[0,+\infty)$, see \cite[\GlobalRelaxationLemApproachZeroHamSequenceTimes]{Risler_globalRelaxation_2016}. In the hyperbolic case considered here, the integrability on $[0,+\infty)$ of $\hat{\ddd}^{(0)}(t)$ will also be needed. 
\subsubsection{Integrability of \texorpdfstring{$t\mapsto \hat{\ddd}^{(0)}(t)$}{t->hat Dzero(t)}}
\label{subsubsec:integrability_of_hatDzero_of_t}
The aim of this \namecref{subsubsec:integrability_of_hatDzero_of_t} is to prove the following lemma (the proof will require several steps). 
\begin{lemma}[integrability of the square integral of $u_{tt}$]
\label{lem:integrability_of_square_integral_of_utt}
The function 
\[
t\mapsto \hat{\ddd}^{(0)}(t)
\]
is integrable on $[0,+\infty)$. 
\end{lemma}
For every real quantity $x$ and nonnegative time $t$, let 
\[
w(x,t) = \partial_t\usmooth(x,t)
\,, \quad\text{so that}\quad
\hat{\ddd}^{(0)}(t) = \int_{\rr}\chi_0(x,t) w_t(x,t)^2 \, dx
\,.
\]
According to its definition \cref{def_Usmooth_Usmall,def_usmooth_usmall}, the function $\usmall$ satisfies the system
\[
\alpha\partial_t^2\usmall + \partial_t\usmall = - \usmall + \partial_x^2\usmall
\,,
\]
so that, according to system \cref{hyp_syst} and the decomposition \cref{usmooth_plus_usmall}, the function $\usmooth$ satisfies the system
\begin{equation}
\label{system_u_smooth}
\alpha\partial_t^2\usmooth + \partial_t\usmooth = -\nabla V(u) + \usmall + \partial_x^2\usmooth
\,,
\end{equation}
and its time derivative $w$ satisfies the system
\begin{equation}
\label{hyp_syst_ut}
\alpha w_{tt} + w_t = - D^2V(u)\cdot u_t + \partial_t\usmall + w_{xx}
\,.
\end{equation}
For every nonnegative time $t$, let 
\[
\begin{aligned}
\dddSmallZero(t) &= \int_{\rr} \chi_0(x,t)\left(\partial_t\usmall(x,t)\right)^2 \, dx \,, \\
\text{and}\quad
\hat{\eee}^{(0)}(t) &= \int_{\rr} \chi_0(x,t)\Bigl(\frac{\alpha}{2} w_t(x,t)^2 + \frac{1}{2} w_x(x,t)^2\Bigr)\, dx
\,.
\end{aligned}
\]
\begin{lemma}[time derivative of localized $w$-energy]
\label{lem:time_derivative_localized_w_energy}
There exists a positive quantity $\KhatEzeroDzero$, depending only on $V$ and $\alpha$, such that, for every nonnegative time $t$, 
\begin{equation}
\label{time_derivative_localized_w_energy}
\begin{aligned}
\frac{d}{dt}\hat{\eee}^{(0)}(t) &\le -\frac{1}{2} \hat{\ddd}^{(0)}(t) + \KhatEzeroDzero \ddd^{(0)}(t) + 4 \dddSmallZero(t)  \\
&\qquad + \frac{\kappa_0(1+\cCutZero)}{2} \int_{\rr\setminus\iMain(t)}\chi_0(x,t) w_x(x,t)^2\, dx
\,.
\end{aligned}
\end{equation}
\end{lemma}
\begin{proof}[Proof of \cref{lem:time_derivative_localized_w_energy}]
It follows from the hyperbolic system \cref{hyp_syst_ut} satisfied by $w$ that, for every nonnegative time $t$, 
\[
\begin{aligned}
\frac{d}{dt}\hat{\eee}^{(0)}(t) &= \int_{\rr} \biggl[\partial_t\chi_0\Bigl(\frac{\alpha}{2} w_t^2 + \frac{1}{2} w_x^2\Bigr)+\chi_0(\alpha w_t \cdot w_{tt} + w_x \cdot w_{xt}) \biggr] \, dx \\
&= \int_{\rr} \biggl[\partial_t\chi_0\Bigl(\frac{\alpha}{2} w_t^2 + \frac{1}{2} w_x^2\Bigr)+\chi_0\, w_t\cdot(\alpha w_{tt} -w_{xx})- \partial_x\chi_0\, w_x\cdot w_t\biggr] \, dx \\
&= - \hat{\ddd}^{(0)}(t) + \int_{\rr} \biggl[\partial_t\chi_0\Bigl(\frac{\alpha}{2} w_t^2 + \frac{1}{2} w_x^2\Bigr) +\chi_0\, w_t \cdot \bigl(-D^2V(u)\cdot u_t + \partial_t\usmall \bigr) \\
&\qquad - \partial_x\chi_0\, w_x \cdot w_t\biggr] \, dx 
\,.
\end{aligned}
\]
The following inequalities hold:
\[
\begin{aligned}
\partial_t\chi_0 &\le \kappa_0\cCutZero  \chi_0 \,,\\
\text{and}\quad
\abs{w_t \cdot D^2V(u)\cdot u_t} &\le \frac{1}{4}w_t^2 + \abs{D^2V(u)\cdot u_t}^2 \,, \\
\text{and}\quad
\abs{w_t \cdot \partial_t\usmall} &\le \frac{1}{16}w_t^2 + 4\abs{\partial_t\usmall}^2 \,, \\
\text{and}\quad
\abs{\partial_x\chi_0\, w_x\cdot w_t} &\le \chi_0\frac{\kappa_0}{2}( w_x^2 + w_t^2)
\,,
\end{aligned}
\]
and according to the definitions \cref{def_kappaZero,def_cCutZero} of $\kappa_0$ and $\cCutZero$,
\[
\frac{\alpha\kappa_0\cCutZero}{2}\le \frac{1}{32}
\quad\text{and}\quad
\frac{\kappa_0}{2}\le \frac{1}{8}
\,.
\]
It follows that
\begin{equation}
\label{time_derivative_localized_w_energy_proof}
\begin{aligned}
\frac{d}{dt}\hat{\eee}^{(0)}(t) &\le -\frac{1}{2} \hat{\ddd}^{(0)}(t) + \int_{\rr} \chi_0 \abs{D^2V(u)\cdot w}^2 \, dx + 4 \dddSmallZero(t) \\
&\qquad + \frac{\kappa_0(1+\cCutZero)}{2} \int_{\rr\setminus\iMain(t)}\chi_0\, w_x^2 \, dx
\,.
\end{aligned}
\end{equation}
Thus, introducing the quantities 
\[
\begin{aligned}
\eigVmax &= \max \bigl\{ \text{eigenvalues of } D^2V(v): v\in\rr^d, \quad \abs{v}\le \Rattinfty\bigr\} \,, \\
\text{and}\quad
\KhatEzeroDzero &= \eigVmax^2
\,,
\end{aligned}
\]
inequality \cref{time_derivative_localized_w_energy} follows from inequality \cref{time_derivative_localized_w_energy_proof}. \Cref{lem:time_derivative_localized_w_energy} is proved. 
\end{proof}
For every real quantity $x$ and nonnegative time $t$, let
\[
\begin{aligned}
\hat F^{(0)}(x,t) &= 2\alpha\Bigl(\frac{\alpha}{2}w_t(x,t)^2 + \frac{1}{2}w_x(x,t)^2\Bigr) + \Bigl(\alpha w (x,t)\cdot w_t(x,t) + \frac{1}{2}w(x,t)^2\Bigr) \\
&= \alpha^2 w_t(x,t)^2 + \alpha w_x(x,t)^2 + \alpha w (x,t)\cdot w_t(x,t) + \frac{1}{2}w(x,t)^2
\,,
\end{aligned}
\]
see the discussion in \cref{subsubsec:funct_sf} and the definition \cref{def_fireZero} of $F^\dag_0(x,t)$. It follows from this definition that
\begin{equation}
\label{coercivity_hat_F_zero}
\hat F^{(0)}(x,t) \ge \alpha w_x(x,t)^2 + \frac{1}{4} w(x,t)^2
\,.
\end{equation}
For every nonnegative time $t$, let
\[
\hat{\fff}^{(0)}_{\pm}(t) = \int_{\rr} \psi_{0,\pm}(x,t) \hat F^{(0)}(x,t)\, dx 
\,.
\]
According to inequality \cref{coercivity_hat_F_zero}, both quantities $\fff^{(0)}_{-}(t)$ and $\fff^{(0)}_{+}(t)$ are nonnegative, and, since
\[
\chi_0(x,t) = \psi_{0,-}(x,t)
\quad\text{for $x$ in $\iLeft(t)$, and} \quad
\chi_0(x,t) = \psi_{0,+}(x,t)
\quad\text{for $x$ in $\iRight(t)$,}
\]
it follows that
\[
\hat{\fff}^{(0)}_{-}(t) + \hat{\fff}^{(0)}_{+}(t) \ge \alpha\int_{\rr\setminus\iMain(t)}\chi_0(x,t) w_x(x,t)^2\, dx
\,,
\]
and thus, in view of \cref{time_derivative_localized_w_energy}, that
\begin{equation}
\label{time_derivative_localized_w_energy_with_hat_fff_zero}
\begin{aligned}
\frac{d}{dt}\hat{\eee}^{(0)}(t) &\le -\frac{1}{2} \hat{\ddd}^{(0)}(t) + \KhatEzeroDzero \ddd^{(0)}(t) + 4 \dddSmallZero(t) \\
&\qquad + \frac{\kappa_0(1+\cCutZero)}{2\alpha} \bigl(\hat{\fff}^{(0)}_{-}(t) + \hat{\fff}^{(0)}_{+}(t)\bigr)
\,.
\end{aligned}
\end{equation}
\begin{lemma}[linear decrease up to pollution for $\hat{\fff}^{(0)}_{\pm}(t)$]
\label{lem:time_derivative_hat_fff_zero}
There exist positive quantities $\nuHatFZero$ and $\KHatFZero$ such that, for every nonnegative time $t$, 
\begin{equation}
\label{time_derivative_hat_fff_zero}
\frac{d}{dt}\hat{\fff}^{(0)}_{\pm}(t) \le -\nuHatFZero \hat{\fff}^{(0)}_{\pm}(t) + \KHatFZero \ddd^{(0)}(t) + \left(4\alpha+\frac{1}{2}\right)\dddSmallZero(t)
\,.
\end{equation}
The quantity $\nuHatFZero$ depends only on $\alpha$, and the quantity $\KHatFZero$ depends only on $\alpha$ and $V$. 
\end{lemma}
\begin{proof}[Proof of \cref{lem:time_derivative_hat_fff_zero}]
It follows from the hyperbolic system \cref{hyp_syst_ut} satisfied by $w$ that, for every nonnegative time $t$, 
\[
\begin{aligned}
\frac{d}{dt}\hat{\fff}^{(0)}_{\pm}(t) =& \int_{\rr}\biggl[\partial_t \psi_{0,\pm} \hat F^{(0)} + \psi_{0,\pm}\Bigl(-\alpha w_t^2 -w_x^2 \\
+ (2\alpha w_t+w)&\cdot\left(-D^2V(u)\cdot u_t + \partial_t\usmall\right) \Bigr) - 2\alpha \partial_x \psi_{0,\pm} w_x\cdot w_t + \frac{1}{2}\partial_{xx}\psi_{0,\pm} w^2 \biggr]\, dx
\,.
\end{aligned}
\]
Since
\[
\abs{\partial_t \psi_0}\le \kappa_0\cCutZero \psi_{0,\pm}
\quad\text{and}\quad
\abs{\partial_x \psi_{0,\pm}} = \kappa_0 \psi_{0,\pm}
\quad\text{and}\quad
\partial_{xx}\psi_{0,\pm} \le \kappa_0^2 \psi_{0,\pm} 
\,,
\]
and since
\[
\begin{aligned}
\abs{2\alpha w_t \cdot D^2V(u)\cdot u_t} &\le \frac{\alpha}{4}w_t^2 + 4\alpha\abs{D^2V(u)\cdot u_t}^2 \,,\\
\text{and}\quad
\abs{2 w_x\cdot w_t} &\le w_x^2 + w_t^2 \,, \\
\text{and}\quad
\abs{2\alpha w_t\cdot\partial_t\usmall} &\le \frac{\alpha}{4} w_t^2 + 4\alpha\abs{\partial_t\usmall}^2 \,,\\
\text{and}\quad
\abs{w\cdot D^2V(u)\cdot u_t} &= \abs{\bigl(u_t - \partial_t\usmall\bigr)\cdot D^2V(u)\cdot u_t} \\
&\le \abs{u_t\cdot D^2V(u)\cdot u_t} + \frac{1}{2}\abs{D^2V(u)\cdot u_t}^2 + \frac{1}{2} \abs{\partial_t\usmall}^2 \,, \\
\text{and}\quad
\abs{w\cdot\partial_t\usmall}&\le \frac{1}{2}w^2 + \frac{1}{2}\abs{\partial_t\usmall}^2
\,,
\end{aligned}
\]
it follows  that
\[
\begin{aligned}
\frac{d}{dt}\hat{\fff}^{(0)}_{\pm}(t) &\le \kappa_0\cCutZero \hat{\fff}^{(0)}_{\pm}(t)+ \int_{\rr}\psi_{0,\pm}\biggl[\alpha\Bigl(-1 +\kappa_0+ \frac{1}{4}+ \frac{1}{4}\Bigr) w_t^2 +(-1+\alpha\kappa_0)w_x^2 \\
 + \left(4\alpha+\frac{1}{2}\right)&\abs{D^2V(u)\cdot u_t}^2  + \abs{u_t\cdot D^2V(u)\cdot u_t} + \frac{1+\kappa_0}{2} w^2 + (4\alpha + 1)\abs{\partial_t\usmall}^2\biggr]\, dx
\,.
\end{aligned}
\]
Let $\nuHatFZero$ be a (small) positive quantity to be chosen below. Since
\[
\hat F^{(0)}(x,t) \le \frac{3}{2}\alpha^2 w_t(x,t)^2 + \alpha w_x(x,t)^2 + w(x,t)^2
\,,
\]
it follows that
\begin{equation}
\label{d_over_dt_hat_fff_zero_step}
\begin{aligned}
\frac{d}{dt}\hat{\fff}^{(0)}_{\pm}(t) &+ \nuHatFZero \hat{\fff}^{(0)}_{\pm}(t) \le  \int_{\rr}\psi_{0,\pm}\biggl[ \alpha\Bigl(-\frac{1}{2}+\kappa_0 + \frac{3\alpha}{2}\left(\kappa_0\cCutZero + \nuHatFZero\right)\Bigr) w_t^2 \\
& + \Bigl(-1+\alpha\kappa_0 + \alpha\bigl(\kappa_0\cCutZero + \nuHatFZero\bigr)\Bigr)w_x^2+ 4\alpha\abs{D^2V(u)\cdot u_t}^2  \\
& + \abs{u_t\cdot D^2V(u)\cdot u_t} + \Bigl(\frac{1+\kappa_0}{2}+ \kappa_0\cCutZero + \nuHatFZero\Bigr) w^2  \\
& + \left(4\alpha + \frac{1}{2}\right)\abs{\partial_t\usmall}^2 \biggr]\, dx
\,.
\end{aligned}
\end{equation}
Let us choose 
\[
\nuHatFZero = \frac{1}{16\alpha}
\,.
\]
According to the definitions \cref{def_kappaZero,def_cCutZero} of $\kappa_0$ and $\cCutZero$, 
\[
\kappa_0\le\frac{1}{4}
\quad\text{and}\quad
\alpha\kappa_0\le \frac{1}{4}
\quad\text{and}\quad
\alpha\kappa_0\cCutZero \le \frac{1}{16}
\,,
\]
so that, according to the choice of $\nuHatFZero$ above, 
\[
-\frac{1}{2}+\kappa_0 + \frac{3\alpha}{2}(\kappa_0\cCutZero + \nuHatFZero) \le -\frac{5}{16}\le 0
\quad\text{and}\quad
-1+\alpha\kappa_0 + \alpha(\kappa_0\cCutZero + \nuHatFZero) -\frac{5}{8}\le 0
\,.
\]
Thus, introducing the quantity
\[
\KHatFZero = 4\alpha \eigVmax^2 + \eigVmax + \frac{1+\kappa_0}{2}+ \kappa_0\cCutZero + \nuHatFZero
\,,
\]
inequality \cref{time_derivative_hat_fff_zero} follows from inequality \cref{d_over_dt_hat_fff_zero_step} (using the fact that $\psi_{0,\pm}$ is less than or equal to $\chi_0$). \Cref{lem:time_derivative_hat_fff_zero} is proved. 
\end{proof}
\begin{proof}[Proof of \cref{lem:integrability_of_square_integral_of_utt}]
It follows from inequalities \cref{time_derivative_localized_w_energy_with_hat_fff_zero,time_derivative_hat_fff_zero} that, for every nonnegative time $t$, 
\[
\begin{aligned}
&\hat{\ddd}^{(0)}(t) \le -2 \frac{d}{dt}\hat{\eee}^{(0)}(t) + 2 \Bigl(\KhatEzeroDzero + \frac{\kappa_0(1+\cCutZero)\KHatFZero}{\alpha\nuHatFZero}\Bigr)\ddd^{(0)}(t) \\
&\quad + \left(8+\frac{\kappa_0(1+\cCutZero)}{\alpha\nuHatFZero}\left((4\alpha + \frac{1}{2}\right)\right)\dddSmallZero(t)
- \frac{\kappa_0(1+\cCutZero)}{\alpha\nuHatFZero}\frac{d}{dt}\left(\hat{\fff}^{(0)}_-(t)+\hat{\fff}^{(0)}_-(t)\right)
\,.
\end{aligned}
\]
As a consequence, for every nonnegative time $T$, since the quantities $\hat{\eee}^{(0)}(T)$ and $\hat{\fff}^{(0)}_-(T)$ and $\hat{\fff}^{(0)}_+(T)$ are nonnegative, it follows that
\[
\begin{aligned}
\int_0^T \hat{\ddd}^{(0)}(t) \, dt &\le  2 \hat{\eee}^{(0)}(0) + 2 \left(\KhatEzeroDzero + \frac{\kappa_0(1+\cCutZero)\KHatFZero}{\alpha\nuHatFZero}\right)\int_0^{+\infty} \ddd^{(0)}(t) \, dt \\
&\quad + \left(8+\frac{\kappa_0(1+\cCutZero)}{\alpha\nuHatFZero}\left((4\alpha + \frac{1}{2}\right)\right) \int_0^{+\infty}\dddSmallZero(t) \, dt \\
&\quad + \frac{\kappa_0(1+\cCutZero)}{\alpha\nuHatFZero}\left(\hat{\fff}^{(0)}_-(0)+\hat{\fff}^{(0)}_-(0)\right)
\,.
\end{aligned}
\]
According to \cref{lem:dissip_is_integrable}, and since according to \cref{lem:smooth_plus_small} the quantity $\norm{\Usmall(t)}_X$ goes to $0$ at an exponential rate as $t$ goes to $+\infty$, the quantity to the right of this inequality is finite; since this quantity does not depend on $T$, \cref{lem:integrability_of_square_integral_of_utt} is proved. 
\end{proof}
\subsubsection{Proof of \texorpdfstring{\cref{lem:lim_inf_sup_H}}{Lemma \ref{lem:lim_inf_sup_H}}}
\begin{proof}[Proof of \cref{lem:lim_inf_sup_H}]
Let us proceed by contradiction and assume that the converse is true. Then there exists a positive quantity $\delta$ such that, for every large enough positive time $t$, 
\begin{equation}
\label{hyp_H_not_small}
\sup_{x\in\iMain(t)}\abs{H^\ddag\bigl(\usmooth(x,t),\partial_x\usmooth(x,t)\bigr)}\ge\delta 
\,. 
\end{equation}
For every $(x,t)$ in $\rr\times[0,+\infty)$, let 
\[
\nnn(x,t) = \nabla V^\ddag\bigl(u(x,t)\bigr)-\nabla V^\ddag\bigl(\usmooth(x,t)\bigr)-\usmall(x,t)-\partial_t\usmall(x,t)
\,;
\]
it follows from system \cref{system_u_smooth} satisfied by $\usmooth$ that
\[
\partial_x \Bigl( H^\ddag\bigl(\usmooth,\partial_x\usmooth\bigr) \Bigr) = \partial_x\usmooth\cdot \left(\alpha \partial_t^2\usmooth + u_t + \nnn\right) 
\,
\]
where the arguments of $\usmooth$ and its partial derivatives and of $\nnn$ are $(x,t)$ everywhere. 
As a consequence, it follows from \cref{hyp_H_not_small} that
\[
\liminf_{t\to+\infty}\int_{\iMain(t)}\abs{\partial_x\usmooth(x,t)\cdot \left(\alpha \partial_t^2\usmooth(x,t) + u_t(x,t)+ \nnn(x,t)\right)}\, dx\ge 2\delta
\,.
\]
Since according to \cref{lem:smooth_plus_small} the quantity $\norm{\Usmall(t)}_X$ goes to $0$ at an exponential rate as $t$ goes to $+\infty$, it follows that the previous limit still holds if the term $\nnn(x,t)$ is dropped, that is,
\[
\liminf_{t\to+\infty}\int_{\iMain(t)}\abs{\partial_x\usmooth(x,t)\cdot \left(\alpha \partial_t^2\usmooth(x,t) + u_t(x,t)\right)}\, dx\ge 2\delta
\,.
\]
Thus it follows from Cauchy--Schwarz inequality and from the bound \cref{hyp_attr_ball_X_relax} on the $\Ltwoul$-norm of $u_x$ that the limit
\[
\liminf_{t\to+\infty}\, t \int_{\iMain(t)} \bigl(\alpha \partial_t^2\usmooth(x,t) + u_t(x,t) \bigr)^2\, dx
\]
is positive. Since 
\[
\begin{aligned}
\int_{\iMain(t)} \bigl(\alpha \partial_t^2\usmooth(x,t) + u_t(x,t) \bigr)^2\, dx &\le 
\int_{\rr}\chi_0(x,t)\bigl(\alpha \partial_t^2\usmooth(x,t) + u_t(x,t) \bigr)^2\, dx \\
&\le 2\int_{\rr}\chi_0(x,t)\bigl(\alpha^2 \partial_t^2\usmooth(x,t)^2 + u_t(x,t)^2 \bigr)\, dx \\
&= 2\alpha^2 \hat{\ddd}^{(0)}(t) + 2\ddd^{(0)}(t) 
\,,
\end{aligned}
\]
it follows that the limit
\[
\liminf_{t\to+\infty}\, t \left(\ddd^{(0)}(t) + \hat{\ddd}^{(0)}(t)\right)
\]
is positive, a contradiction with \cref{lem:dissip_is_integrable,lem:integrability_of_square_integral_of_utt}. \Cref{lem:lim_inf_sup_H} is proved.
\end{proof}
\subsection{Approach to normalized Hamiltonian level set zero for all times}
\begin{lemma}[approach to normalized Hamiltonian level set zero for all times]
\label{lem:H_small}
Assume that, in addition to hypotheses \cref{hyp_coerc} and \textup{(\hyperlink{hypHom}{\hypHomRef})} and \textup{(\hyperlink{hypNoInv}{\hypNoInvRef})}, hypothesis \textup{(\hyperlink{hypOnlyMinOf}{\hypOnlyMinOfRef{\valueOfV}})} holds. Then the following limit holds:
\[
\sup_{x\in\iMain(t)} \abs{H^\ddag\left(\usmooth(x,t),\partial_x\usmooth(x,t)\right)}\to 0
\quad\text{as}\quad t\to+\infty 
\,. 
\]
\end{lemma}
\begin{proof}
See the proof of \cite[\GlobalRelaxationLemApproachZeroHamAllTimes]{Risler_globalRelaxation_2016}.
\end{proof}
\subsection{Approach to the set of bistable stationary solutions in the normalized Hamiltonian level set zero}
The following lemma completes the proof of conclusion \cref{item:approach_set_bist_stat_sol} of \cref{prop:approach_set_bistable_stat_sol_stand_terrace}.
\begin{lemma}[approach to bistable stationary solutions in the normalized Hamiltonian level set zero]
\label{lem:app_bist}
Assume that, in addition to hypotheses \cref{hyp_coerc} and \textup{(\hyperlink{hypHom}{\hypHomRef})} and \textup{(\hyperlink{hypNoInv}{\hypNoInvRef})}, hypothesis \textup{(\hyperlink{hypOnlyMinOf}{\hypOnlyMinOfRef{\valueOfV}})} holds. Then the following limit holds:
\[
\sup_{x\in\iMain(t)}\dist\Bigl(\bigl(\usmooth(x,t),\partial_x\usmooth(x,t)\bigr) \,, \,I\bigl(\Phi_0(\valueOfV)\bigr)\Bigr)\to 0
\quad\text{as}\quad
t\to +\infty
\,.
\]
\end{lemma}
\begin{proof}
See the proof of \cite[\GlobalRelaxationLemApproachSetBistableStationarySolutions]{Risler_globalRelaxation_2016}.
\end{proof}
In view of \cref{lem:app_bist}, conclusion \cref{item:approach_set_bist_stat_sol} of \cref{prop:approach_set_bistable_stat_sol_stand_terrace} is proved. 
\subsection{Approach to a standing pattern of bistable stationary solutions}
The proof of conclusion \cref{item:approach_standing_terrace_bist_stat_sol} of \cref{prop:approach_set_bistable_stat_sol_stand_terrace} is identical to the proof of the same result in the parabolic case, see \cite[\GlobalRelaxationSecApproachSetBistableStatSolAndConvergenceStandTerr]{Risler_globalRelaxation_2016}. To keep track of the Escape points, the same method as the one used for travelling fronts in \cref{subsec:convergence} (again the ``smooth plus small'' decomposition) can be called upon. Once the standing terrace $\ttt(x,t)$ is defined and the convergence towards $0$ of the quantity \cref{approach_standing_terrace} is proved, the equality between the residual asymptotic energy $\eeeResAsympt[u]$ of the solution and the energy $\eee[\ttt]$ of the standing terrace can be proved by the same arguments as those of of \cite[\GlobalRelaxationSubsecValueAsymptoticEnergy]{Risler_globalRelaxation_2016}. 
\section{Proof of \texorpdfstring{\cref{thm:1,prop:resid_asympt_energy}}{Theorem \ref{thm:1} and Proposition \ref{prop:resid_asympt_energy}}}
\label{sec:proof_thm_1}
As everywhere else, let us consider a function $V$ in $\ccc^2(\rr^d,\rr)$ satisfying the coercivity hypothesis \cref{hyp_coerc}. Let us assume in addition that the generic hypotheses \cref{hyp_gen} hold for $V$, and let us consider a bistable solution $(x,t)\mapsto u(x,t)$ of system \cref{hyp_syst}. The conclusions of \vref{thm:1,prop:resid_asympt_energy} can be split into two parts.
\begin{enumerate}
\item The approach to the propagating terrace of bistable fronts travelling to the right, and to the one travelling to the left. 
\item On the remaining ``centre'' spatial domain, the fact that the time derivative of the solution goes to zero, and the fact that the ``residual asymptotic energy'' is nonnegative. 
\end{enumerate}
Concerning the first part, it is a rather direct consequence of \vref{prop:inv_cv} (``invasion implies convergence''), and the derivation of this first part from this proposition is unchanged with respect to the parabolic case; it is explained in details in \cite[\GlobalBehaviourSecProofTheorem]{Risler_globalBehaviour_2016}. 

As far as the second part is concerned, it may be assumed that between the ``last'' fronts travelling to the right and to the left, the hypotheses (and thus the conclusions) of \vref{prop:relax} (``no invasion implies relaxation'') hold. Then the conclusions of \cref{thm:1,prop:resid_asympt_energy} concerning the behaviour of the solution in this centre area follow from the conclusions of \cref{prop:relax,prop:approach_set_bistable_stat_sol_stand_terrace}. \Cref{thm:1,prop:resid_asympt_energy} are proved. 
\section{Spatial asymptotics of the profiles of travelling waves}
\label{sec:prop_stand_trav}
Let us assume that $V$ satisfies hypothesis \cref{hyp_coerc}, let $c$ denote a nonnegative quantity, and let us consider the differential system governing the profiles of waves travelling at the speed $c$ (or ``standing'' if $c$ equals $0$): 
\begin{equation}
\label{syst_trav_front_bis}
\phi''=-c\phi'+\nabla V(\phi) \,.
\end{equation}
A proof of the following lemma can be found, for instance, in \cite{Risler_globalBehaviour_2016}.
\begin{lemma}[spatial asymptotics of the profiles of travelling waves]
\label{lem:asympt_behav_tw_2}
Let $m$ be in $\mmm$, and let $\xi\mapsto \phi(\xi)$ be a global solution of the differential system \cref{syst_trav_front_bis} satisfying 
\[
\abs{\phi(\xi)-m}\le \dEsc(m)
\quad\text{for every }
\xi \text{ in }[0,+\infty)
\quad\text{and}\quad
\phi(\cdot) \not\equiv m
\,.
\]
Then the following conclusions hold. 
\begin{enumerate}
\item Both quantities $\abs{\phi(\xi)-m}$ and $\phi'(\xi)$ go to $0$ as $\xi$ goes to $+\infty$.
\label{item:cv_spatial_asymptotics_tw}
\item For all $\xi$ in $[0,+\infty)$, the scalar product $\bigl(\phi(\xi)-m\bigr) \cdot \phi'(\xi)$ is negative. 
\label{item:transv_spatial_asymptotics_tw}
\item For all $\xi$ in $(0,+\infty)$, the quantity $\abs{\phi(\xi)-m}$ is smaller than $\dEsc(m)$. 
\label{item:closer_spatial_asymptotics_tw}
\item The supremum $\sup_{\xi\in\rr}\abs{\phi(\xi)-m}$ is larger than $\dEsc(m)$. 
\label{item:escape_spatial_asymptotics_tw}
\item In addition to assertion \cref{item:cv_spatial_asymptotics_tw} above, the quantities
\[
e^{c\xi}\abs{\phi(\xi)-m}
\quad\text{and}\quad
e^{c\xi}\abs{\phi'(\xi)}
\]
go to $0$ at an exponential rate when $\xi$ goes to $+\infty$. 
\label{item:exp_cv_spatial_asymptotics_tw}
\end{enumerate}
\end{lemma}
\subsubsection*{Acknowledgements} 
I am indebted to Thierry Gallay and Romain Joly for their help and interest through numerous fruitful discussions. 
\emergencystretch=1em
\printbibliography 
\bigskip
\mySignature
\end{document}